\definecolor{lightgray}{gray}{1}
\newcolumntype{L}{>{\raggedleft}p{0.14\textwidth}}
\newcolumntype{R}{p{0.8\textwidth}}
\theoremstyle{definition}
\newtheorem{definition}{Definition}[section]
\newtheorem{theorem}[definition]{Theorem}
\newtheorem{lemma}[definition]{Lemma}
\newtheorem{proposition}[definition]{Proposition}
\newtheorem{corollary}[definition]{Corollary}
\newtheorem{question}[definition]{Question}
\newtheorem{remark}[definition]{Remark}
\theoremstyle{definition}
\numberwithin{equation}{section}
\newcommand{\alphas}{\boldsymbol{\alpha}}
\newcommand{\betas}{\boldsymbol{\beta}}
\newcommand{\Hcal}{\mathcal{H}}
\newcommand{\Mfrak}{\mathfrak{M}}
\newcommand{\Pfrak}{\mathfrak{P}}
\newcommand{\pfrak}{\mathfrak{p}}
\newcommand{\Z}{\mathbb{Z}}
\newcommand{\ra}{\rightarrow}
\newcommand{\Mhat}{\mathfrak{N}}
\newcommand{\Cscr}{\mathscr{C}}
\title{Three-manifolds with boundary and the Andrews-Curtis transformations}
\author{Neda Bagherifard}
\newcommand{\Addresses}{{
  \bigskip
  \footnotesize

  \textit{E-mail address}, Neda Bagherifard: \texttt{neda.bagherifard@gmail.com}
  }}
\date{}
\begin{document}
\maketitle
\begin{abstract}
We investigate an extended version of the stable Andrews-Curtis transformations, referred to as EAC transformations,  and compare it with  a notion of equivalence in a family of $3$-manifolds with boundary,  called the {\emph{simple balanced $3$-manifolds}}. A simple balanced $3$-manifold is a $3$-manifold with boundary, such that every connected component $N$ of it has unique positive and negative boundary components $\partial^+N$ and $\partial^-N$, such that $\pi_1(N)$ is the normalizer of the image of $\pi_1(\partial^\pm N)$ in $\pi_1(N)$. Associated with every simple balanced $3$-manifold $N$  is the EAC equivalence class of a balanced presentation of the trivial group,  denoted by $P_N$,  which remains unchanged as long as $N$ remains in a fixed equivalence class of simple balanced $3$-manifolds. In particular, the isomorphism class of the corresponding group is unchanged. Motivated by the Andrews-Curtis conjecture, we study the equivalence class of a trivial balanced $3$-manifold (obtained as the product of a closed oriented surface with the unit interval). We show that every balanced $3$-manifold in the trivial equivalence class admits a {\emph{simplifier}} to a trivial balanced $3$-manifold. 

\end{abstract}
\tableofcontents

\section{Introduction} 
\subsection{Group presentations and the Andrews-Curtis transformations}

\begin{definition} (c.f. \cite{John})
Let $F=F(X)$ be the free group on a set $X$ and $R$ be a subset of $F$. The group $G=( X|R)$ is defined as the quotient $F/N$ where $N$ is the smallest normal subgroup of $F$ containing $R$. $(X,R)$ is called  a \emph{presentation} of $G$. The elements of $X$ and $R$ are called the \emph{generators} and  the \emph{relators}, respectively. $G$ is called \emph{finitely presented} if   $G\simeq(X|R)$ where both $X$ and $R$ are finite.  A finite presentation $( X|R)$ is called  \emph{balanced} if $|X|=|R|$.
\end{definition}

\begin{sloppypar}
An {\emph{extended Andrews-Curtis transformation}} (or EAC-transformation for short) on a presentation $P=(a_1,\ldots,a_n|b_1,\ldots,b_m)$ of a group $G$  is defined to be one of the following transformations, or its inverse, which gives another presentation of $G$ \citep{WR} (see also \citep{Hog-Met}):
\end{sloppypar}

\begin{itemize}
\setlength\itemsep{-0.3em}
	\item[1.] {\bf{Composition}}: Replace the relator $b_i$ with $b_ib_j$ for some $j\neq i$;
	\item[2.] {\bf{Inversion}}: Replace the relator $b_i$ with $b_i^{-1}$;
	\item[3.] {\bf{Cancellation}}: Replace a relator $b_i=b_i'gg^{-1}b_i''$ with $b_i'b_i''$, where $g$ is one of $a_j$ or its inverse;
	\item[4.] {\bf{Stabilization}}: Add  $a_{n+1}$ as both a generator and a relator;
	\item[5.] {\bf{Replacement}}: Replace $a_ia_j$ or $a_ia_j^{-1}$ for $a_i$ in all the relators for some $j\neq i$.
\end{itemize} 

\begin{definition}
If $P'=( X'|R')$ is obtained from $P=(X| R)$ by a finite sequence of EAC transformations and their inverses (and renaming the generators), $P$ and $P'$ are called {\emph{EAC equivalent}} and we write $P\sim P'$. It then follows that $( X|R)\simeq ( X'|R')$  and  $|X|-|R|=|X'|-|R'|$. The set of EAC-equivalence classes of finite balanced group presentations is denoted by $\Pfrak$. 
\end{definition}

If we exclude replacement, the remaining four transformations are called the {\emph{stable Andrews-Curtis transformations}}. Correspondingly, we may talk about stable Andrews-Curtis equivalence (or SAC-equivalence).  The {\emph{stable Andrews-Curtis conjecture}} \citep{A-C}, which is widely believed not to be true, states that every balanced presentation of the trivial group is SAC-equivalent to the trivial presentation (see \citep{Brown, Bu-Mac, Miller, Shpil} for some potential counterexamples). In other words, the trivial group only admits one balanced SAC-equivalence class (see the survey \citep{Bu-Mac}). This is not true for non-trivial groups. For instance, $P=( a,b,c|ab,bc, ac^{-1})$ is a presentation of  $\Z$, which is not SAC-equivalent to the trivial presentation (i.e. the presentation with one generator $a$ and the trivial relator); since the sum of exponents of $a,b$ and $c$ in every relator remains even as we change $P$ by SAC-transformations, no presentation in the SAC-equivalence class of $P$ can have fewer than $3$ generators. Nevertheless, we have
\begin{align*}
P&\sim \left( a,b,c\ \big|\ (ab^{-1})b, bc, (ab^{-1})c^{-1}\right)
\sim \left( b,c\  \big|\ bc, b^{-1}c^{-1}\right)
\sim \left( b,c\ \big|\ (bc^{-1})c,(cb^{-1})c^{-1}\right)
\sim\left( c\ \big|\ 1\right).
\end{align*}
There is a surjective map $\mathfrak{q}$ from $\mathfrak{P}$ to the set  $\mathfrak{G}$ of isomorphism classes of finitely presented groups. Inspired by the stable Andrews-Curtis conjecture, one may ask about the size of $\mathfrak{q}^{-1}(G)$ for  $G\in\mathfrak{G}$, and weather the $\mathfrak{q}$-fiber of the trivial group $T\in \mathfrak{G}$ is trivial. \\

There is a $2$-complex $\mathcal{K}_P$ associated to each group presentation $P$ with one $0$-cell, one $1$-cell for each generator which is attached along its boundaries to the $0$-cell, and one $2$-cell for each relator which is attached along its boundary to the $1$-cells associated with the letters in the relator. Two presentations $P$ and $P'$ of the same group $G$ are EAC-equivalent iff $\mathcal{K}_P$ $3$-deforms to $\mathcal{K}_{P'}$ \citep{WR}. This gives an equivalent geometric expression for the SAC conjecture, since the EAC-equivalence class of $P$ matches its SAC-equivalence class when $\pi_1(\mathcal{K}_P)=1$ \citep{WR} (see also the topological survey \citep{Hog-Met} for the SAC conjecture). For many groups, including the trefoil group \citep{Dun} and many finite abelian groups \citep{Met}, the corresponding $\mathfrak{q}$-fibers are not trivial. \\

Let $P$ be a balanced presentation of the trivial group. $\mathcal{K}_P$ may be embedded in $\mathbb{R}^5$ and the boundary of a regular neighborhood of $\mathcal{K}_P$ in $\mathbb{R}^5$ is a homotopy $4$-sphere $\Sigma_P$. $\Sigma_P$ is homeomorphic to  $S^4$ \citep{freedman1982} but not necessarily diffeomorphic to the standard $S^4$. If $P$ is SAC equivalent to the trivial presentation of the trivial group, $\Sigma_P$ is diffeomorphic to the standard $S^4$. Conversely, corresponding to each handlebody decomposition of a homotopy $4$-sphere $\Sigma$ with no  $3$-handles, there is a balanced presentation $P_\Sigma$ of the trivial group, where the generators and the relators correspond to the $1$-handles and  the $2$-handles of $\Sigma$ respectively.  Akbulut and Kirby (in \citep{Akb}) construct a handle structure with no $3$-handles for a homotopy $4$-sphere $\Sigma_0$. $\Sigma_0$ and $P_{\Sigma_0}$ were considered potential counterexamples of the the smooth four dimensional Poincar\'{e} conjecture (SPC4 for short) and the SAC conjecture, respectively (see also \citep{Akb2}).  However $\Sigma_0$ was later proved to be diffeomorphic to the standard $S^4$ \citep{Gom}.  In fact, many of potential counterexamples to SPC4 are shown to be diffeomorphic to the standard $S^4$ in recent years. Nevertheless, many experts still believe that the SPC4 is incorrect and \citep{Freedman} proposes a method to disprove both the SAC conjecture and the SPC4.

\subsection{Heegaard diagrams and \texorpdfstring{$3$}{Lg}-manifolds}\label{HD}
Set $I=[-1,+1]$,  $\partial^+ I=\{+1\}$, $\partial^-I=\{-1\}$ and $I^\circ=(-1,+1)$ throughout the paper. Let  $N$ be a compact oriented $3$-manifold with boundary $\partial N=-\partial^-\amalg \partial^+$, where the orientation of $\partial^+$ (resp. $\partial^-$) matches with (resp. is the opposite of) the orientation inherited as the boundary of $N$. If each component of $N$  intersects precisely one component of  $\partial^+$ and one component of $\partial^-$ with  the same genus, $N$ is called a {\emph{balanced}} $3$-manifold. The space of all balanced  $3$-manifolds, up to homeomorphism, is denoted by $\mathfrak{M}$. Choose $N\in\Mfrak$ and consider a Heegaard diagram  
\[\Hcal=(\Sigma,\boldsymbol{\alpha}=\{\alpha_1,\dots,\alpha_k\},\boldsymbol{\beta}=\{\beta_1,\dots,\beta_k\})\]
for $N$, where $\Sigma$ is a  Heegaard surface in $N$ and $\alphas$   (respectively, $\betas$) is a collection of $k$ disjoint simple closed curves on $\Sigma$. $N$ is obtaining by attaching $2$-handles to $\Sigma\times I$ along $\alphas\times\{-1\}$ and $\betas\times\{1\}$. Since $N$ is balanced, each connected component of $\Sigma$ contains the same number of curves from $\alphas$ and $\betas$.  Heegaard diagrams are changed to one another by a sequence of Heegaard moves (isotopies, handle-slides and stabilizations/destabilizations). \\

Associated with every Heegaard diagram $\mathcal{H}$ as above, we may introduce a balanced group presentation $P_\Hcal$ as follows. First, choose an orientation on the curves in $\alphas$ and $\betas$, and a {\emph{start point}} $p_i$ on each $\beta_i$ away from the intersections of $\beta_i$ with $\alphas$, for $i=1,\ldots,k$. Associated with each $\beta_i$, we define a relator $b_i$ in the free group $F(a_1,\ldots,a_k)$. As $\beta_i$ is traversed following its orientation and starting from $p_i$, we face intersections with the curves $\alpha_{j_1},\ldots,\alpha_{j_r}$, where the intersection number is $\epsilon_1,\ldots,\epsilon_r\in\{-1,+1\}$, respectively. We then set $b_i=a_{j_1}^{\epsilon_1}\ldots a_{j_r}^{\epsilon_r}$, and define the group presentation $P_\Hcal$ by  $\left( a_1,\ldots,a_k|b_1,\ldots,b_k\right)$.\\

The choices of the orientation and the start points $p_1,\ldots,p_k$ do not change the EAC-equivalence class of the presentation $P_\Hcal$.  Moreover, if a Heegaard diagram $\Hcal'$ is obtained from $\Hcal$ by a Heegaard move, it follows that $P_{\Hcal'}$ is obtained from $P_\Hcal$ by   EAC-transformations, for compatible choices of the start points and the orientations. In fact, handle-slides in Heegaard diagrams among the $\beta$ curves correspond to compositions, {together with the inverse of some cancellations}, in group presentations, while handle-slides among the $\alpha$ curves correspond to replacements, together with the inverse of some cancellations. Isotopies in Heegaard diagrams correspond to cancellations in group presentations (and their inverses) and stabilizations in the Heegaard diagrams correspond to stabilizations in group presentations. 
In particular, it follows that the EAC-equivalence class of the group presentation $P_\Hcal$ only depends on the $3$-manifold $N$, giving a map $\pfrak_\alpha:\Mfrak\ra \Pfrak$. We denote the aforementioned equivalence class by 
\[\pfrak_\alpha(N)=P_N=[P_\Hcal]=\left[\left( a_1,\ldots,a_k|b_1,\ldots,b_k\right)\right].\]
If $N\in\mathfrak{M}$ is connected, $\mathfrak{q}(\pfrak_\alpha(N))$ is the quotient of $\pi_1(N)$ by the normal subgroup generated by $\iota_*^-\pi_1(\partial^-)$, where $\iota^\pm:\partial^\pm\rightarrow N$ are the inclusion maps.
If we change the role of $\alphas$ and $\betas$, we obtain a second map $\pfrak_\beta:\Mfrak\ra \Pfrak$, and $\mathfrak{q}(\pfrak_\beta(N))$ is the quotient of $\pi_1(N)$ by the normal subgroup generated by $\iota_*^+(\pi_1(\partial^+))$, provided that $N$ is connected. A connected balanced $3$-manifold $N\in\mathfrak{M}$ is called \emph{simple} if the quotient of $\pi_1(N)$ by the normal subgroup generated by either of $\iota_*^\pm(\pi_1(\partial^\pm))$ is trivial. An arbitrary $N\in\Mfrak$ is called simple if every connected component of $N$ is simple. For a simple balanced $3$-manifold $N\in\Mfrak$, $(\mathfrak{q}\circ\pfrak_\alpha)(N)$ and $(\mathfrak{q}\circ\pfrak_\beta)(N)$ are both trivial.
  
\subsection{The notion of equivalence}
 Let $N\in\Mfrak$ and  $\partial N=-\partial^-\amalg\partial^+$. We say that $N'$ is obtained from $N$ by adding a trivial connected component if $N'$ is the disjoint union of $N$ with a component $S\times I$ for a closed oriented surface $S$. A particular special case is when $S$ is a union of $2$-spheres, when we write $N\doteq N'$. Given a Heegaard diagram $\Hcal=(\Sigma,\alphas,\betas)$ for $N$, the Heegaard diagram $\Hcal'=(\Sigma'=\Sigma\amalg S,\alphas,\betas)$ represents $N'$. It follows that $\pfrak_\alpha(N)=\pfrak_\alpha(N')$ and $\pfrak_\beta(N)=\pfrak_\beta(N')$ if $N\doteq N'$ (or if $N$ and $N'$ differ in a number of trivial components).  By a solid cylinder $D\times I$ in $N$, we  always mean a homeomorphic image of the standard solid cylinder with the property  that $D\times \partial^\pm I\subset \partial^\pm$ and $D\times I^\circ\subset N^\circ$, where $D$ is the standard $2$-disk. Consider a pair of disjoint solid cylinders $D_i\times I$ in $N$, for $i=1,2$. Remove $D_i^\circ\times I$ from $N$ and identify $\partial D_1\times I$ with $\partial D_2\times I$ using the obvious orientation reversing map.  The resulting manifold $N'$ has two collections of boundary components, denoted by $\partial^{+}N'$ and $\partial^{-}N'$, where $\partial^\pm N'$ is  obtained  from $\partial^{\pm}$  by adding a $1$-handle. Moreover, $C=\partial D_1\times I$, which is identified with $\partial D_2\times I$ in $N'$, is a cylinder in $N'$.

\begin{definition}
In the above situation, we say $N'$ is obtained from $N$  by {\emph{adding the cylinder}} $C$, $N$ is obtained from $N'$  by {\emph{removing the cylinder}} $C$, and write $N'\overset{C}{\ra} N$. We say $N$ {\emph{simplifies}} to $N'$, and write $N\leadsto N'$, if there is a sequence $N'=N_0,N_1,\ldots,N_n=N$ in $\Mfrak$ and the cylinders $C_i\subset N_i$  such that $N_i\xrightarrow{C_i}N_{i-1}$ for $i=1,\ldots,n$ and $N'\doteq N''$, where $N''$ is connected and {\emph{irreducible}} (which means that  all the boundary components of $N''$ are spheres and the closed $3$-manifold $\overline{N}''$ obtained from $N''$ by attaching $3$-disks to its boundary spheres is irreducible).  
\end{definition}
	
If $N'\overset{C}{\ra} N$, the balanced Heegaard diagrams $\Hcal=(\Sigma,\alphas,\betas)$ and $\Hcal'=(\Sigma',\alphas,\betas)$ for $N$ and $N'$ may be chosen so that $\Sigma'$ is obtained from $\Sigma$ by adding a $1$-handle, away from the curves in $\alphas$ and $\betas$. It follows, in particular, that  $\pfrak_\alpha(N)=\pfrak_\alpha(N')$ and $\pfrak_\beta(N)=\pfrak_\beta(N')$. 

\begin{definition}
 We say $N,N'\in\Mfrak$ are {\emph{equivalent}}, and write $N\sim N'$,  if $N'$ is obtained from $N$ by a finite sequence of adding/removing {trivial connected components} and adding/removing cylinders.   $\pfrak_\alpha,\pfrak_\beta:\Mfrak\ra \Pfrak$ induce  maps from  $\Mhat=\Mfrak/\sim$ to $\Pfrak$, which are also denoted by $\pfrak_\alpha,\pfrak_\beta:\Mhat\ra \Pfrak$, by slight abuse of notation.	
\end{definition}

If a balanced (finite) group presentation $P=\left( X|R\right)$ representing a class $P\in\Pfrak$ is {\emph{realized}} by a Heegaard diagram $\Hcal=(\Sigma,\alphas,\betas)$ (after fixing  the start points and appropriate orientations for the curves in $\alphas$ and $\betas$), every presentation obtained by composition, inversion, stabilization or replacement is also realized by a Heegaard diagram. For instance, if $P'$ is obtained from $P$ by a composition which replaces $b_i$ with $b_ib_j$, then $P'=P_{\Hcal'}$ where $\Hcal'=(\Sigma',\alphas,\betas')$ is obtained as follows. The surface $\Sigma'$ is obtained from $\Sigma$ by attaching a $1$-handle with feet near $p_i$ and $p_j$. The set $\betas'$ of disjoint simple closed curves is obtained from $\betas$ by replacing $\beta_i'$ for $\beta_i$, where $\beta_i'$ is obtained by stretching a small part of $\beta_i$ in a neighborhood of $p_i$ by an isotopy (a finger move) over the attached $1$-handle so that it arrives near $p_j$, and doing a handle-slide over $\beta_j$. Similarly, if $P'$ is obtained from $P$ by a replacement, a Heegaard diagram $\Hcal'$ corresponding to $P'$ may be constructed from $\Hcal$ by adding a $1$-handle to $\Sigma$ and doing a handle-slide among curves in $\alphas$.  This is not necessarily true, however,  when $P'$ is obtained from $P$ by a cancellation. 
The above partial correspondence suggests a further study of the notion of equivalence. In fact, the main purpose of this paper is taking a number of steps in this direction. With the above definitions and concepts in place, the main results of this paper are the following two theorems.

 \begin{theorem}\label{thm1}
 Suppose that a $3$-manifold $N\in\Mfrak$ simplifies to $N'\in\Mfrak$. If $C$ is a cylinder without punctures in $N$, then there is a simplification
\[N=N_n\xrightarrow{C_n=C}N_{n-1}\xrightarrow{C_{n-1}}\cdots
\xrightarrow{C_2}N_1\xrightarrow{C_1}N_0,\quad\text{with}\ 
N'\doteq N_0.\] 
  \end{theorem}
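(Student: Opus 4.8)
The plan is to reduce the statement to a commutation principle for cylinder removals and then induct on the length $n$ of the given simplification $N=N_n\xrightarrow{C_n}\cdots\xrightarrow{C_1}N_0$ with $N'\doteq N_0$, producing a simplification of the same length whose first step removes the prescribed puncture-free cylinder $C$. I would first record two elementary facts about the operation $N'\xrightarrow{C}N$. The first is an \emph{isotopy invariance}: removing a cylinder along isotopic annuli (with isotopic capping solid cylinders $D_i\times I$) yields homeomorphic manifolds in $\Mfrak$, so I am free to put $C$ in general position. The second is a \emph{commutation lemma}: if two cylinders can be realized with disjoint supports — disjoint annuli together with disjoint capping solid cylinders — then removing them in either order yields the same manifold, and each descends to a cylinder in the manifold obtained by removing the other.

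I read the hypothesis ``without punctures'' as the condition that guarantees $C$ admits such a disjoint support with respect to $C_n$: a \emph{puncture} of $C$ is an intersection of $C$ (or of its capping solid cylinders) with another cylinder that cannot be removed by isotopy, and being puncture-free means no such obstruction occurs. Thus, in the inductive step, I would use general position together with the puncture-free hypothesis to isotope $C$ off $C_n$, obtaining disjoint supports. Applying the commutation lemma gives $N\xrightarrow{C}\tilde N\xrightarrow{\hat C_n}W$ and $N\xrightarrow{C_n}N_{n-1}\xrightarrow{C^{(1)}}W$ with the same terminal manifold $W$, where $\hat C_n$ and $C^{(1)}$ are the descended copies of $C_n$ and $C$. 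One then checks that $C^{(1)}$ is again puncture-free in $N_{n-1}$ — intuitively, cutting along the disjoint $C_n$ introduces no new threadings of $C$ — so that the inductive hypothesis applies to the shorter simplification $N_{n-1}\xrightarrow{C_{n-1}}\cdots\to N_0$, yielding a simplification of $N_{n-1}$ that removes $C^{(1)}$ first. Prepending $N\xrightarrow{C}\tilde N$ and splicing at $W$ via the commutation identity assembles the desired length-$n$ simplification of $N$ beginning with $C$.

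For the base case $n=0$ I would prove the key lemma that removing a puncture-free cylinder from a connected irreducible manifold with spherical boundary yields a manifold that is $\doteq$-equivalent to the original. Here irreducibility is essential: a puncture-free annulus running between two boundary spheres is forced to be boundary-parallel or to bound trivially, so cutting and capping it off changes the manifold only by trivial $S^2\times I$ summands. This makes the single step $N\xrightarrow{C}\tilde N$ with $\tilde N\doteq N'=N_0$ a valid simplification and anchors the induction.

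The main obstacle I anticipate is the general-position analysis underlying the commutation lemma — specifically, turning the informal puncture-free condition into a rigorous statement that $C$ and $C_n$ admit simultaneously disjoint annuli \emph{and} disjoint capping solid cylinders, and that this disjointness is preserved as $C$ is pushed across the removal of $C_n$ into $N_{n-1}$. I expect this to require an innermost-disk/outermost-arc argument to eliminate the intersection curves of the relevant annuli and capping disks, together with a careful verification that no puncture is created in the process. Confirming that the descended cylinder $C^{(1)}$ remains puncture-free, uniformly along the induction, is the delicate point on which the whole argument rests.
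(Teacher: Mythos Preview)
Your proposal rests on a misreading of the hypothesis. In this paper a \emph{punctured cylinder} is a genus-zero surface with one boundary circle on $\partial^+$, one on $\partial^-$, and possibly additional interior boundary circles called \emph{punctures}; a cylinder ``without punctures'' is simply an honest annulus $S^1\times I$ properly embedded with $S^1\times\{\pm1\}\subset\partial^\pm$. The phrase carries no information whatsoever about how $C$ meets the simplifier $\mathscr{C}$. So your key step --- ``use general position together with the puncture-free hypothesis to isotope $C$ off $C_n$'' --- is unsupported, and in fact generically false: $C$ can meet $C_n$ in essential closed curves, or in arcs running from $\partial^+$ to $\partial^-$ (what the paper calls SBCs), and neither can be removed by an innermost-disk argument because neither bounds a disk on $C$ or on $C_n$.

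This is not a technicality you can patch; it is the entire content of the theorem. The paper's proof occupies Sections~\ref{s-equiv}--\ref{s-type-2} and proceeds by modifying the \emph{simplifier}, not the cylinder $C$. One first classifies $C\cap\mathscr{C}$ as type~I (no arc of intersection joins $\partial^+$ to $\partial^-$) or type~II. For type~I, a sequence of disk-replacements and cylinder-slides (Lemmas~\ref{l10}--\ref{l13}, Proposition~\ref{p23}) produces an equivalent simplifier containing $C$. For type~II the argument is substantially harder: one introduces ``nice'' intersections, removes curves of types~$\textsf{i}$--$\textsf{iii}$ via an elaborate inductive scheme (Propositions~\ref{p26}, \ref{p28}), passes to semi-reduced simplifiers, slides non-punctured cylinders over one another (Proposition~\ref{p29}), and finally reduces to a product situation (Proposition~\ref{p31}). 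Your commutation lemma is essentially the trivial case where $C\cap C_n=\emptyset$ already; the work is in reaching that case, and your proposal contains no mechanism for doing so.
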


\begin{theorem}\label{thm2}
	If $N\in\Mfrak$  is equivalent to $S^2\times I$, it may be simplified to  some $N_0\doteq S^2\times I$.
\end{theorem}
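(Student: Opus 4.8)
The plan is to induct on the length $k$ of a chain of elementary moves $N=M_0\to M_1\to\cdots\to M_k=S^2\times I$ witnessing $N\sim S^2\times I$, where each arrow is one of the four generating operations (adding or removing a trivial connected component, adding or removing a cylinder). The base case $k=0$ is immediate: $S^2\times I$ is connected and irreducible, so $S^2\times I\leadsto S^2\times I$. In the inductive step I would peel off the first move $M_0\to M_1$ and apply the inductive hypothesis to the tail, which has length $k-1$, obtaining a simplification $M_1\leadsto N_0$ with $N_0\doteq S^2\times I$; the task is then to promote this to a simplification of $N=M_0$, by cases on the type of the first move.

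Before the case analysis I would record two facts. First, a prepending principle: if $N'\xrightarrow{C}N$ and $N\leadsto M$, then $N'\leadsto M$, obtained simply by placing the removal of $C$ at the top of the given sequence. Second, a product-reduction lemma: for every closed oriented surface $S$ one has $S\times I\leadsto S^2\times I$, proved by successively removing vertical cylinders $c\times I$ over a system of simple closed curves cutting $S$ down to a sphere. With these in hand, the move-removal and trivial-component cases are routine. If the first move removes a cylinder, so that $N\xrightarrow{C}M_1$, the prepending principle gives $N\leadsto N_0\doteq S^2\times I$ at once. If it adds a trivial component, $M_1=N\amalg(S\times I)$, I would use that a simplification respects connected components (cylinder removals never merge components) to restrict $M_1\leadsto N_0$ to the descendants of $N$; since every component of $N_0\doteq S^2\times I$ is a copy of $S^2\times I$, the restricted endpoint is again $\doteq S^2\times I$. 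If it removes a trivial component, $N=M_1\amalg(S\times I)$, I would run $M_1\leadsto N_0$ and $S\times I\leadsto S^2\times I$ on the two disjoint pieces and concatenate, landing on $N_0\amalg(S^2\times I)\doteq S^2\times I$.

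The crucial and only genuinely nontrivial case is when the first move adds a cylinder, so that $M_1\xrightarrow{C}N$ with $C$ the newly created cylinder. Here I would invoke Theorem~\ref{thm1} applied to the inductively produced simplification $M_1\leadsto N_0$: because $C$ is freshly added it has no punctures in $M_1$, so Theorem~\ref{thm1} furnishes a simplification of $M_1$ whose first step removes exactly $C$, say $M_1\xrightarrow{C}N\longrightarrow\cdots\longrightarrow N_0'$ with $N_0'\doteq N_0$. Since removing the cylinder $C$ from $M_1$ returns $N$, the tail of this reordered sequence is precisely a simplification $N\leadsto N_0'\doteq S^2\times I$, completing the induction.

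The main obstacle is exactly this last case, and its entire weight rests on Theorem~\ref{thm1}: the equivalence chain may insert cylinders that become entangled with the cylinders used in the inductive simplification, and it is Theorem~\ref{thm1} that lets one pull the freshly added cylinder to the front and cancel it against the move that created it. The point requiring care is the verification that a just-added cylinder is puncture-free in $M_1$, so that the hypothesis of Theorem~\ref{thm1} is genuinely met; I also expect the component-wise independence of simplifications (used in the trivial-component cases) and the product-reduction lemma $S\times I\leadsto S^2\times I$ to need explicit, if routine, justification.
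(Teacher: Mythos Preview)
Your proposal is correct and is precisely the argument the paper has in mind: the paper does not write out a proof of Theorem~\ref{thm2} at all, stating only that it ``follows as a corollary'' of Theorem~\ref{thm1}, and your induction on the length of an equivalence chain---with Theorem~\ref{thm1} doing all the work in the one nontrivial case where the first move \emph{adds} a cylinder---is exactly how that corollary is meant to be extracted. The auxiliary ingredients you flag (prepending a cylinder removal, componentwise restriction of a simplification as in Remark~\ref{r2.2}, and the reduction $S\times I\leadsto S^2\times I$ via vertical cylinders over a cut system) are all immediate from the definitions in Section~\ref{s-equiv}.
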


If $N\in\Mfrak$ is equivalent to $S^2\times I$, it follows that both $\pfrak_\alpha(N)$ and $\pfrak_\beta(N)$ are the same as the EAC-equivalence class of the trivial group presentation and $N$ is a simple balanced $3$-manifold. It is then interesting to ask the following question.
\begin{question}
	Are there simple balanced $3$-manifolds which are not equivalent to $S^2\times I$?
\end{question}
For a potential counterexample $N\in\mathfrak{M}$, to show that $N$ is not equivalent to $S^2\times I$, it suffices (by Theorem \ref{thm2}) to show that $N$ does not include a non-trivial cylinder. This may be investigated using Heegaard Floer theory, and will be pursued in an upcoming sequel.

The notion and equivalence of {\emph{simplifiers}} are studied in Section~\ref{s-equiv}.  Two types of intersections of a cylinder $C$ with a simplifier $\Cscr$ are distinguished, and are investigated separately in Sections~\ref{s-type-1} and~\ref{s-type-2}, respectively. This study gives a proof of Theorem~\ref{thm1}, while Theorem~\ref{thm2} follows as a corollary.

\textbf{Acknowledgements.} It is a pleasure to thank my advisor, Eaman Eftekhary, for his continuous support, all substantial helps, discussions and suggestions during the course of this work, and reading the drafts carefully. I would like to thank the Institute for Research in Fundamental Sciences  (IPM) for providing a supportive research environment.

\section{Simplifiers for balanced \texorpdfstring{$3$}{Lg}-manifolds}\label{s-equiv}
\subsection{Balanced \texorpdfstring{$3$}{Lg}-manifolds, punctured cylinders and simplifiers}
  Let $N\in\Mfrak$ and  $\partial N=-\partial^-\amalg\partial^+$.  Let $C\subset N$ be a surface of genus zero with one boundary component $\partial^+C$ on $\partial^+$, one boundary component  $\partial^-C$ on $\partial^-$, and $C\setminus\partial^\pm C\subset N^\circ$. We call $C$ a \emph{punctured cylinder} and each curve in $\partial C\setminus \partial^\pm C$ is called  \emph{a puncture of $C$}.	If $C$ has no punctures, it is called a {\emph{cylinder}} and is usually identified as a homeomorphic image of  $S^1\times I$  with $S^1\times\partial^\pm I\subset\partial^\pm$ and $S^1\times I^\circ\subset N^\circ$. If $N\xrightarrow{C}N'$, associated with $C\subset N$ we obtain a pair of solid cylinders in $N'$, which are denoted by $D_1^C\times I$ and $D_2^C\times I$. Given a simple closed curve $l$ in the interior of a punctured cylinder $C$,  the complement $C\setminus\{l\}$ has two components. If one component contains $\partial^+C\amalg\partial^-C$,  $l$ is  the boundary of a \emph{punctured disk} $D_{l,C}$ on $C$. Otherwise, if $\partial^+C$ and  $\partial^-C$ are  in different components, $l$ is called an \emph{essential curve} on $C$.\\
	 
  Let $\Cscr=(C_1,\dots,C_n)$ be a sequence of punctured cylinders in $N$, with $\partial^\pm C_i=l_i^\pm\subset\partial^\pm$  for $i=1,\ldots,n$, such that  $l_1^\pm,\dots,l_n^\pm$ are disjoint simple closed curves, and the interiors of $C_i$ are disjoint. Set $\partial^\pm\Cscr=l_1^\pm\cup\cdots\cup l_n^\pm \subset\partial^\pm N$ and $\Cscr^i=(C_{i+1},\ldots,C_n)$. Further, assume that
  \begin{enumerate}[label=\textbf{S.\arabic*},ref=S.\arabic*]
  \setlength\itemsep{-0.3em}
   \item\label{2} The complement of $\partial^\pm \Cscr$ in $\partial^\pm$  is a collection of punctured spheres;
   \item\label{6}  The punctures of $\cup_{k\leq i}C_k$   are disjoint essential curves on $\Cscr^i$,  called the {\emph{generating curves}}. 
  \end{enumerate}
  Let $N\xrightarrow{C_n} N_{n-1}=N[C_n]$ and $\partial N_{n-1}=\partial_{n-1}^+\amalg\partial_{n-1}^-$. Then $\{l_i^\pm\}_{i=1}^{n-1}$ is also a collection of curves on $\partial_{n-1}^\pm$ and   $\partial_{n-1}^\pm\setminus\{l_i^\pm\}_{i=1}^{n-1}$ is a union of punctured spheres. Those punctures of $C_i$ which are essential curves on $C_n$,  are the boundaries of disjoint disks in $N_{n-1}\setminus\cup_{i=1}^{n-1}C_i$. Let $C_i[C_n]$ denote the punctured cylinder in $N_{n-1}$, obtained by attaching these disks to $C_i$. We usually abuse the notation and denote $C_i[C_n]$ by $C_i\subset N_{n-1}$. The sequence $\Cscr_{n-1}=(C_1[C_n],\ldots,C_{n-1}[C_n])$ of punctured cylinders in $N_{n-1}$ satisfies  \ref{2} and \ref{6}. Continue by removing $C_i$ from $N_i$ to arrive at $N_{i-1}=N[\Cscr^{i-1}]$ with $\partial N_{i-1}=-\partial_{i-1}^-\amalg\partial_{i-1}^+$, for $i=n,\dots,1$.  Moreover, we obtain a sequence $\Cscr_i=(C_1[\Cscr^i],\ldots,C_{i}[\Cscr^i])$ of punctured cylinders in $N_i$, which is usually denoted by $(C_1,\ldots,C_i)$ (by abuse of notation). The punctured cylinder $C_i=C_i[\Cscr^j]\subset N_j$ is obtained from $C_i=C_i[\Cscr^k]\subset N_k$ by attaching disks to some of the punctures, if $j<k$. The notation (and abuse of notation) set here will be used frequently through this paper. We  say $N_0=N[\Cscr]$ is obtained from $N$ by removing $\mathscr{C}=\left(C_i\right)_{i=1}^n$ and write $N\overset{\mathscr{C}}{\leadsto} N_0$. Finally, we assume:  
  \begin{enumerate}[label=\textbf{S.\arabic*},ref=S.\arabic*]
   \setcounter{enumi}{2}
   \item\label{7} $N_0$ is irreducible and  all the components of ${N_0}$, except possibly one, are $S^2\times I$s.
  \end{enumerate}
\begin{definition}\label{d-simplifying-structure}
We   call $\mathscr{C}=\left(C_i \right)_{i=1}^n$ a \emph{simplifier} for $N$ which {\emph{simplifies}} it to $N_0$, if \ref{2}-\ref{7} are satisfied.   The simplifiers $N\overset{\Cscr}{\leadsto}N_0$ and $N\overset{\Cscr'}{\leadsto}N_0'$  are called {\emph{equivalent}} if $N_0\doteq N_0'$.
  \end{definition}
	
\begin{remark}\label{r0}
Let $N$ simplify to $N_0$ using a simplifier $\mathscr{C}=(C_i)_{i=1}^n$. By attaching disks to $\partial^\pm$ along $l_i^\pm$, and capping the resulting sphere boundary components with $3$-disks, we obtain two solid tori  of genus $g$ which are denoted by $B_\mathscr{C}^\pm$. Then 
\[\overline{N}_\mathscr{C}=N\cup_{\partial^+} B_\mathscr{C}^+\cup_{\partial^-} B_\mathscr{C}^-\] 
is a closed 3-manifold. Since we attached disks to $l_n^\pm$, $C_n$ corresponds to a sphere $S_n^2$ in $\overline{N}_\mathscr{C}$. Further, the punctures on all the punctured cylinders $C_i$, $1\leq i<n $ which are essential curves on $C_n$ become the boundaries of disjoint disks in $\overline{N}_\mathscr{C}$. Repeating the same argument for $C_i$, with $i=n,n-1,\ldots,1$,  we conclude that each one of these cylinders  corresponds to a sphere $S_i^2$ in $\overline{N}_\mathscr{C}$. Removing $C_i$ corresponds to decomposing $\overline{N}_\mathscr{C}$ along the sphere $S_i^2$. Thus, \ref{7} is equivalent to the condition that $\overline{N}_\mathscr{C}$ is the union of some components homeomorphic to $S^3$  with a connected irreducible 3-manifold.
\end{remark}

\begin{remark}\label{r1}
Suppose that $N$ has a simplifier $\mathscr{C}=(C)$, and that all the connected components of $\partial^+$ and $\partial^-$ are spheres. Then  $D_1^C\times I$ and $D_2^C\times I$ are (necessarily) in two different components  of $N_0$. Since at least one of these components  is $S^2\times I$,  we find $N\doteq N_0$.
  \end{remark}
	
  \begin{remark}\label{r2} Let $\Cscr=(C_1,\ldots,C_n)$ be a simplifier for $N\in\Mfrak$ as above. Then
   \begin{enumerate}[ref=\ref{r2}-\arabic*]
   \setlength\itemsep{-0.3em}	
    \item\label{r2.1} $\partial^+\setminus\partial^+\Cscr$ and $\partial^-\setminus\partial^-\Cscr$ have the same number of components.
    \item\label{r2.2} Let $N^1,\ldots,N^k$ be the connected components of $N$ and $N_0^i$ be obtained from $N^i$ by removing the simplifier $\mathscr{C}\cap N^i$, for $i=1,\dots,k$. Then $N_0=N_0^1\amalg\dots\amalg N_0^k$.
    \item\label{r2.3} Suppose that  $D_2^{C_n}\times I$ is not in the component $N'$ of $N_{n-1}$ which contains $D_1^{C_n}\times I$. If $\partial N'$ is a pair of spheres, then $N=\overline{N}'\#(N_{n-1} \setminus N')$.
    \item\label{r2.4} 
    Suppose that $N=N'\#M\overset{\Cscr}{\leadsto} N_0$, where $M$ is a closed $3$-manifold and $\Cscr\cap M=\emptyset$. Then $\Cscr$ may also be regarded as a simplifier for $N'$. If $N'\overset{\Cscr}{\leadsto}N'_0$, we have and $N_0=N'_0\#M$.
\item\label{r2.5}  If $C_i$ is a cylinder in $\mathscr{C}$ without punctures, set $C'_j=C_j$ for $j<i$, $C'_j=C_{j+1}$ for $i\leq j\leq n-1$ and $C'_{n}=C_i$. Then $\mathscr{C}'=(C'_1,\ldots,C'_n)$ is  a simplifier for $N$ which is equivalent to $\mathscr{C}$.
   \end{enumerate}
\end{remark}

  \begin{proposition}\label{p5}
   Let $\mathscr{C}=\left(C_i\right)_{i=1}^n$ be a simplifier for a connected manifold $N\in\Mfrak$.  Then:
   \begin{itemize}[leftmargin=.8in]\setlength\itemsep{-0.3em}
    \item[$A_\mathscr{C}^i(1).$]If an essential curve on $C_i$ bounds a disk in $N$, then $l_i^\pm$ bounds a disk in $\partial_i^\pm$.
	\item[$A_\mathscr{C}(2).$]Each 2-sphere $S$ in $N$ is separating, i.e. $N\setminus S$ has $2$ connected components.
	\item[$A_\mathscr{C}(3).$]If a sphere $S\subset N^\circ$  separates $\partial^+$ from $\partial^-$, $N$ has sphere boundary components. 
   \end{itemize}
  \end{proposition}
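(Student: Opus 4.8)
The engine for all three parts is the irreducibility of the closed manifold $\overline{N}_{\mathscr{C}}=N\cup_{\partial^+}B_{\mathscr{C}}^+\cup_{\partial^-}B_{\mathscr{C}}^-$ from Remark~\ref{r0}: by~\ref{7} it is a connected sum of copies of $S^3$ with a single irreducible piece, so every $2$-sphere in it bounds a ball. I would first dispatch $A_{\mathscr{C}}(2)$, which is the model argument. Given a sphere $S\subset N^\circ$, regard it inside $\overline{N}_{\mathscr{C}}$, where it bounds a ball $B$; thus $\overline{N}_{\mathscr{C}}\setminus S=B^\circ\sqcup(\overline{N}_{\mathscr{C}}\setminus\overline{B})$, a disjoint union of two nonempty open sets. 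Intersecting with $N$ gives $N\setminus S=(N\cap B^\circ)\sqcup(N\cap(\overline{N}_{\mathscr{C}}\setminus\overline{B}))$. Since $S\subset N^\circ$, a bicollar $S\times(-1,1)\subset N$ meets both sides of $S$, so both pieces are nonempty; hence $N\setminus S$ is disconnected. As $S$ is a connected two-sided surface, its complement then has exactly two components, proving $S$ is separating.

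For $A_{\mathscr{C}}(3)$, suppose the sphere $S\subset N^\circ$ separates $\partial^+$ from $\partial^-$. By $A_{\mathscr{C}}(2)$ it splits $N$ as $N=N_+\cup_S N_-$ with $\partial^+\subset N_+$ and $\partial^-\subset N_-$ (both boundary surfaces being connected because $N$ is). Capping yields $\overline{N}_{\mathscr{C}}=(N_+\cup B_{\mathscr{C}}^+)\cup_S(N_-\cup B_{\mathscr{C}}^-)$, and since $S$ bounds a ball, one side---say $N_+\cup B_{\mathscr{C}}^+$---is a $3$-ball $B$. So the genus-$g$ surface $\partial^+$ sits inside the ball $B$ and bounds the handlebody $B_{\mathscr{C}}^+$ on one side. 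The plan is to exploit that, by the construction of $B_{\mathscr{C}}^+$ together with~\ref{2}, the meridians $l_i^+$ form a complete meridian system cutting $\partial^+$ into planar pieces; feeding this and the ball $B$ into the irreducibility of $\overline{N}_{\mathscr{C}}$ should force $\partial^+$ to compress all the way down, giving $g=0$, i.e.\ $N$ has sphere boundary components. The symmetric case in which $N_-\cup B_{\mathscr{C}}^-$ is the ball is identical.

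For $A_{\mathscr{C}}^i(1)$ I would first pass to $N_i=N[\mathscr{C}^i]$, for which $\mathscr{C}_i=(C_1,\dots,C_i)$ is again a simplifier with the same $N_0$, so Remark~\ref{r0} applies to $N_i$ and its closed-up manifold is irreducible as well. The useful observation is that in $N_i$ the punctured cylinder $C_i$ becomes an honest cylinder: by~\ref{6} its punctures are essential curves on $C_{i+1},\dots,C_n$, and these are precisely the cylinders removed to form $N_i$, so every puncture gets capped. Hence an essential curve $l$ on $C_i$ is isotopic through $C_i$ to $l_i^+$ and to $l_i^-$, and after transporting the disk bounded by $l$ into $N_i$ and tubing it along the sub-cylinder of $C_i$ between $l$ and $l_i^\pm$, the curve $l_i^\pm$ bounds a disk in $N_i$. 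As $l_i^\pm$ also bounds the meridian disk of $B_{\mathscr{C}}^\pm$, it compresses to both sides of $\partial_i^\pm$; the sphere formed by the two disks bounds a ball in the closed-up $N_i$, and an innermost-disk argument across that ball is designed to isotope $l_i^\pm$ so as to exhibit it as the boundary of a disk on the surface $\partial_i^\pm$ itself.

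The hard part in both $A_{\mathscr{C}}^i(1)$ and $A_{\mathscr{C}}(3)$ is the same: turning a sphere assembled from two compressing disks into a genuine statement about the boundary surface. Irreducibility immediately yields a ball bounded by that sphere, but converting the ball into an isotopy of $l_i^\pm$ onto a trivial curve of $\partial_i^\pm$ (part~(1)), or into the full compression of $\partial^+$ to a sphere (part~(3)), requires careful bookkeeping with the planar structure of $\partial_i^\pm\setminus\{l_j^\pm\}$ from~\ref{2} and the nesting of generating curves from~\ref{6}. By contrast, the homological content is routine, and $A_{\mathscr{C}}(2)$ is essentially immediate once $\overline{N}_{\mathscr{C}}$ is known to be irreducible.
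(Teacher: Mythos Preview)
Your argument for $A_\mathscr{C}(2)$ is correct and considerably cleaner than the paper's: once $N$ is connected, $\overline{N}_\mathscr{C}$ is a single irreducible closed $3$-manifold by Remark~\ref{r0}, and the separating property for any $S\subset N^\circ$ follows immediately. The paper instead runs a double induction on $n=|\mathscr{C}|$ and on $|S\cap C_n|$, surgering $S$ along innermost disks on $C_n$; your global shortcut avoids all of that.

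For $A_\mathscr{C}^i(1)$ and $A_\mathscr{C}(3)$, however, the sketches do not close. In part~(3) you arrive at a genus-$g$ handlebody $B_\mathscr{C}^+$ sitting inside a $3$-ball $B$, and propose that irreducibility together with the planar decomposition of $\partial^+$ by $\{l_j^+\}$ ``should force'' $g=0$. It does not: any handlebody embeds in a ball, and the meridian disks of $B_\mathscr{C}^+$ lie on the \emph{wrong} side of $\partial^+$ to compress it inside $N_+$. Nothing you have written distinguishes this situation from the standard unknotted genus-$g$ handlebody in $S^3$ with a small ball removed from its exterior. In part~(1), two steps are unjustified. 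First, ``transporting the disk into $N_i$'' is exactly the hard work: the disk $D$ may meet $C_{i+1},\dots,C_n$, and while one can cap the resulting circles after cutting, keeping the result embedded and disjoint from the remaining $C_j$ is a genuine induction, not a remark. Second, even granting a disk $D_1\subset N_i$ with $\partial D_1=l_i^+$, the sphere $D_1\cup D_2$ bounding a ball does not by itself force $l_i^+$ to be inessential on $\partial_i^+$: a meridian of a solid torus bounds disks on both sides of the boundary torus in $S^3=\overline{N}$ yet is essential on the torus. Your ``innermost-disk argument across that ball'' needs the surface $\partial_i^+$ to interact with the ball in a controlled way, and you have not arranged that.

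What the paper actually does is run all three statements together in an induction on $n$. It first proves $A_\mathscr{C}^n(1)$ by clearing non-essential intersections of $D$ with $C_n$, passing to $N_{n-1}$, and then invoking the \emph{inductive} $A_{\mathscr{C}_{n-1}}(2)$ and $A_{\mathscr{C}_{n-1}}(3)$ to analyse the sphere $D_l\cup D_l'$; this is precisely where the conclusion ``$l_n^\pm$ bounds a disk in $\partial^\pm$'' is extracted, via the inductive hypothesis that the relevant component of $N_{n-1}$ has sphere boundary. Then $A_\mathscr{C}^i(1)$ for $i<n$, $A_\mathscr{C}(2)$, and $A_\mathscr{C}(3)$ are each proved using $A_\mathscr{C}^n(1)$ to produce the auxiliary sphere $C_n\cup D^+\cup D^-$ and reduce to $N_{n-1}$. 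So the three parts are genuinely interlocked in the induction; your attempt to decouple them via $\overline{N}_\mathscr{C}$ succeeds only for part~(2).
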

	
  \begin{proof}
 We use an induction on $n=|\mathscr{C}|$. For $n=0$,  all claims follow since $N$ is irreducible. First, we prove $A_\mathscr{C}^n(1)$.  Let $l$ be an essential curve on $C_n$ such that $l=\partial D$ for a disk $D\subset N$. We may assume that $D$  cuts $C_n$ transversely and  that the curves in $D^\circ \cap C_n$ are not essential  on $C_n$. Choose $l'\in C_n\cap D$ such that   $D_{l',C_n}\cap D=l'$. $l'$ is also the boundary of a disk $D'\subset D$. Remove $D'$ from $D$ and replace it with $D_{l',C_n}$ to obtain a disk bounded by $l$, which has fewer intersections with $C_n$.  If we continue this process, we obtain a disk $D_l$ with $\partial D_l=l$ and $D_l^\circ\cap C_n=\emptyset$.\\
	
Let $D_i\times I=D_i^{C_n}\times I\subset N_{n-1}$. Then $l\subset C_n$ corresponds to the curves $l_i\subset\partial D_i\times I$, $i=1,2$. Since $D_l^\circ\cap C_n=\emptyset$, there is a disk in $N_{n-1}$ which corresponds to $D_l$, and may be denoted by $D_l$ as well, by slight abuse of notation. In such situations, we simply say that {\emph{a copy of $D_l$ exists in $N_{n-1}$}}. The boundary of this new disk is one of $l_1$ and $l_2$, say $l_1$.  $l_1$ is also the boundary of a disk $D'_l\subset D_1\times I$, and we may assume that $D_l'=D_1\times\{1/2\}$. Thus, $S=D_l\cup D'_l$ is a sphere in $N_{n-1}$, which is separating by the induction hypothesis. Since $D_1\times\{0\}$ and $D_1\times\{1\}$ are on two different sides of $S$, it follows that $S$ separates the negative boundary from the positive boundary. 
 Let us first assume that $N_{n-1}$ is connected. Since $S$  separates $\partial^+N_{n-1}$ from $\partial^-N_{n-1}$, $(\partial D_2\times I)\cap S\neq\emptyset$. Therefore, $C_n\cap D_l^\circ\neq\emptyset$, which is impossible. It thus follows that $N_{n-1}$ is disconnected. Let $D_1\times I$ be in the component $N'$ of $N_{n-1}$. Since $S$ separates $\partial^+N'$ and $\partial^-N'$, by the induction hypothesis $\partial^+N'$ is a sphere. Thus, $\partial D_1\times\{1\}$ bounds a disk in $S^+$, disjoint from $D_1^\circ\times\{1\}$. As a result, $l_n^+$ bounds a disk in $\partial^+$. Similarly, $l_n^-$ bounds a disk in $\partial^-$. This completes the proof of $A_\mathscr{C}^n(1)$.\\
		
   Next, we prove $A_\mathscr{C}^i(1)$.  Let $l$ be an essential curve on $C_i$, $i<n$, which is  the boundary of a disk $D$ in $N$ which cuts the cylinders transversely. We can remove the components in $D\cap C_n$ which are not essential on $C_n$ one by one (following the procedure introduced in the proof of $A_\mathscr{C}^n(1)$) and assume that all the curves in $D\cap C_n$ are essential  on $C_n$. If $D\cap C_n=\emptyset$, there is a copy of $D$ in $N_{n-1}$, and by the induction hypothesis $l_i^\pm$ bounds a disk in $\partial^\pm_i$ (which completes the proof of $A_\mathscr{C}^i(1)$). Otherwise $D\cap C_n\neq\emptyset$. Since all the curves in $D\cap C_n$ are the boundaries of disks in $D$,  $A_\mathscr{C}^n(1)$ implies that $l_n^\pm=\partial D^\pm$ where $D^\pm\subset\partial^\pm$ is a disk. Therefore,  $D_1\times I$ and $D_2\times I$ are in two different components  of $N_{n-1}$. The positive boundary of one of these components, say $N'$ which contains $D_1\times I$,  is a sphere. This implies that $\partial^-N'$ is a sphere as well  and we have $N=(N_{n-1}\setminus N')\#\overline{N}'$ (by Remark \ref{r2}). Moreover, we may assume that the copies of $D^\pm$ in $N$ are in $\partial^\pm N'$ and therefore $\partial (D_1\times\{0\})=\partial D^-$.\\
   
Then $S=C_n\cup D^+\cup D^-$ is a sphere in $N$, which is included in $N^\circ$ after a  perturbation. Cutting $N$ along $S$, and attaching two $3$-disks to the resulting boundary spheres, we obtain $N_{n-1}\setminus N'$ and $\overline{N}'$. The intersection $D\cap S$ consists of some closed curves. Let $l'\in D\cap S$ and choose the disk $D'\subset S$ such that $l'=\partial D'$ and $(D')^\circ\cap D=\emptyset$. We then have $l'=\partial D''$ for a disk $D''\subset D$. If we replace $D''\subset D$ with $D'$, the resulting disk has fewer intersections with $S$. By repeating this process, we obtain a disk $D_l$ with $\partial D_l=l$ and $D_l\cap S=\emptyset$. If $l_i^+$ is not the boundary of a disk in $\partial^+_i$, it follows that $C_i\subset\mathscr{C}\cap(N_{n-1}\setminus N')$. Since $l$ is an essential curve on $C_i$, we have $D_l\subset N_{n-1}\setminus N'$, which is impossible by the induction hypothesis. Therefore, $l_i^+$ bounds a disk in $\partial_i^+$, and similarly, $l_i^-$ bounds a disk in $\partial_i^-$. This completes the proof of $A_\mathscr{C}^i(1)$.\\

   Let $A_\mathscr{C}^m(2)$ be the claim $A_\mathscr{C}(2)$ for spheres $S$ in $N$ with $|S\cap C_n|=m$. We prove $A_\mathscr{C}^m(2)$ by (a second) induction on $m$. If $m=0$, there is a copy of $S$ in $N_{n-1}$, denoted again by $S$. By the induction hypothesis, $S$ is separating in $N_{n-1}$. If $N_{n-1}$ is disconnected,  $N\setminus S$ clearly has two connected components. Suppose otherwise that $N_{n-1}$ is connected. Since $C_n\cap S=\emptyset$, it follows that $(D_i\times I)\cap S=\emptyset$. Each $D_i\times I$, $i=1,2$, connects $\partial^+N_{n-1}$ to $\partial^-N_{n-1}$. Therefore, the two boundary components of $N_{n-1}$ are in the same component of $N_{n-1}\setminus S$. Therefore, $D_1\times I$ and $D_2\times I$ are in the same component of $N_{n-1}\setminus S$. If follows that $N\setminus S$ has two connected components.\\
			
Having settled the case $m=0$, suppose $m>0$.  If there is a curve $l$ in $S\cap C_n$,  such that $l=\partial D$ for a disk $D\subset C_n$, we may choose $l$ such that $D^\circ\cap S=\emptyset$. If $S\setminus l= D_1\cup D_2$, the two spheres $S_i=D_i\cup D$, $i=1,2$, are such that $|S_i\cap C_n|<m$, $i=1,2$. By the induction hypothesis, $N\setminus S_i$ has two connected components. Let $N\setminus S_i=A_i\cup B_i$, $i=1,2$. Let $S_2\subset A_1$ and $B_2\subset A_1$. Attaching $S_1$ and $S_2$ along $D$, we obtain $S$, which is separating since
\[N\setminus S=(B_1\cup B_2\cup D)\dot\cup(A_1\setminus\overline{B}_2).\]
		
Therefore, we may assume that each curve in $S\cap C_n$ is  essential  on $C_n$.  Since every curve in $S\cap C_n$ bounds a disk in $S$,  $l_n^\pm$ bounds a disk $D^\pm\subset\partial^\pm$ and $S'=C_n\cup D^+\cup D^-$ gives a sphere in $N^\circ$, after a slight perturbation.  Cut $N$ along $S'$, and  attach $3$-disks to the  resulting boundary spheres. We may choose $D^-$ so that the outcome is the union of $\overline{N}'$ and $N_{n-1}\setminus N'$, where $N'$ is a component of $N_{n-1}$ with spherical boundary components. Note that $m=|S\cap S'|=|S\cap C_n|>0$. If $m=1$, $l\in S\cap S'$ bounds a disk $D\subset S'$. $l$ cuts $S$ to two disks $D'_1$ and $D'_2$. Then $S_i=D'_i\cup D$, $i=1,2$ give two spheres, one in $\overline{N}'$  and one in $N_{n-1}\setminus N'$. By the induction hypothesis, $S_1$ and $S_2$ are separating in $\overline{N}'$  and  $N_{n-1}\setminus N'$.  As we argued in the previous paragraph, one deduces from here that $S$ is separating in $N$. This gives $A_\mathscr{C}^1(2)$. If $m>1$, choose $l\in S\cap S'$ such that it bounds a disk $D\subset S'$ with $D^\circ \cap S=\emptyset$. Let $S\setminus l=D_1'\cup D_2'$ and set  $S_i=D_i'\cup D$, $i=1,2$. Then $S_1$ and $S_2$ are two spheres with $|S_i\cap C_n|<m$, $i=1,2$. By the induction hypothesis, $S_1$ and $S_2$ are separating and as we argued in the previous paragraph, it is deduced from that $S$ is also separating.\\

For $A_\mathscr{C}(3)$, let $S$ separate $\partial^+$ and $\partial^-$. $C_n$ intersects $S$ in an essential curve on $C_n$. Let
\[A_S=\{\gamma\in C_n\cap S\ |\ \gamma=\partial D,\ \ D\subset C_n\ \text{is\ a\ disk}\}\quad\text{and}\quad B_S=(S\cap C_n)\setminus A_S.\]
Choose $l\in A_S$ so that it bounds a disk $D$ on $C_n$, so that $D^\circ\cap S=\emptyset$. Let $l=\partial D'$ for a disk $D'\subset S$, such that $\partial^+$ and $\partial^-$ are on one side of the sphere $D\cup D'$. Let $S_1$ be obtained from $S$ by replacing $D'$ with $D$. It is clear that $S_1$ separates $\partial^+$ and $\partial^-$. We have $|A_{S_1}|<|A_S|$ and $B_{S_1}\neq\emptyset$. By repeating this process, we obtain a sphere $\overline{S}=S_k$ which separates $\partial^+$ and $\partial^-$, such that $|A_{\overline{S}}|=\emptyset$ and $B_{\overline{S}}\neq\emptyset$. By $A_\Cscr^n(1)$, the latter property implies  that there are  disks $D^\pm\subset\partial^\pm$ with $\partial D^\pm=l_n^\pm$. Then $S'=C_n\cup D^+\cup D^-$ is a sphere in $N^\circ$, after a slight perturbation. Cut $N$ along $S'$ and attach two $3$-disks to the resulting boundary spheres. We may choose $D^-$ so that the outcome is the union of  $\overline{N}'$ and $N_{n-1}\setminus N'$, where $N'$ is a component of $N_{n-1}$ with spherical boundary components. We prove $\partial^\pm$ are spheres by induction on $h=|B_{\overline{S}}|=|\overline{S}\cap S'|>0$. If $h=1$, $\overline{S}\cap S'=\{l\}$ and $l$ is the common boundary of the disks $D\subset S$ and $D'\subset S'$, whereas  $D\cap\overline{N}'\neq\emptyset$. If $S''$ is obtained from $\overline{S}$ by replacing $D'$ with $D$, $S''\cap S'=\emptyset$ and $S''$ is a sphere in $N_{n-1}\setminus N'$ which separates $\partial^+(N_{n-1}\setminus N')$ and $\partial^-(N_{n-1}\setminus N')$. Therefore, by the induction hypothesis, $\partial^\pm(N_{n-1}\setminus N')$  are spheres. This implies that $\partial^\pm$ are spheres as well.\\

Finally when $h>1$, choose $l\in \overline{S}\cap S'$ so that it bounds a disk $D\subset S'$ with $D^\circ\cap \overline{S}=\emptyset$. We have $l=\partial D'$ where  $D'\subset \overline{S}$ is a disk, and $D'\cap\overline{N}'\neq\emptyset$. If $S''$ is obtained from $\overline{S}$ by replacing $D'$ with $D$,  $|S''\cap S'|<|\overline{S}\cap S'|$ and $S''$ separates $\partial^+$ and $\partial^-$. Therefore, by the induction hypothesis, $\partial^\pm$ are spheres. This completes the proof of the proposition. 
\end{proof}
	
\begin{definition}
Let $N$ be a manifold with a simplifier $\mathscr{C}=\left(C_i\right)_{i=1}^n$. We call $\mathscr{C}$ a {\emph{reduced}} simplifier if no $l_i^\pm=\partial^\pm C_i$ is  the boundary of a disk in $\partial_i^+=\partial^\pm N[\Cscr^i]$, for  $i=1,\ldots,n$.
\end{definition}
\subsection{Adding a cylinder to a manifold with a simplifier}
\begin{remark}\label{r-sliding}
Fix a simplifier $\mathscr{C}=\left(C_i\right)_{i=1}^n$ for $N\in\Mfrak$ and the indices $0<i_1<i_2<i_3\leq n$. For $j=1,2$, let $l_j$   be a puncture on $C_{i_j}$, which is also generating curves on $C_{i_{j+1}}$. Let $l_2$ be adjacent to $l_1$ in the sense that there is a curve on $C_{i_2}$, with one boundary component on $l_1$ and one on $l_2$, such that this curve is disjoint from the generating curves on $C_{i_2}$ (Figure \ref{sliding}).\\
		\begin{figure}
			\def\svgwidth{15cm}
			\begin{center}
				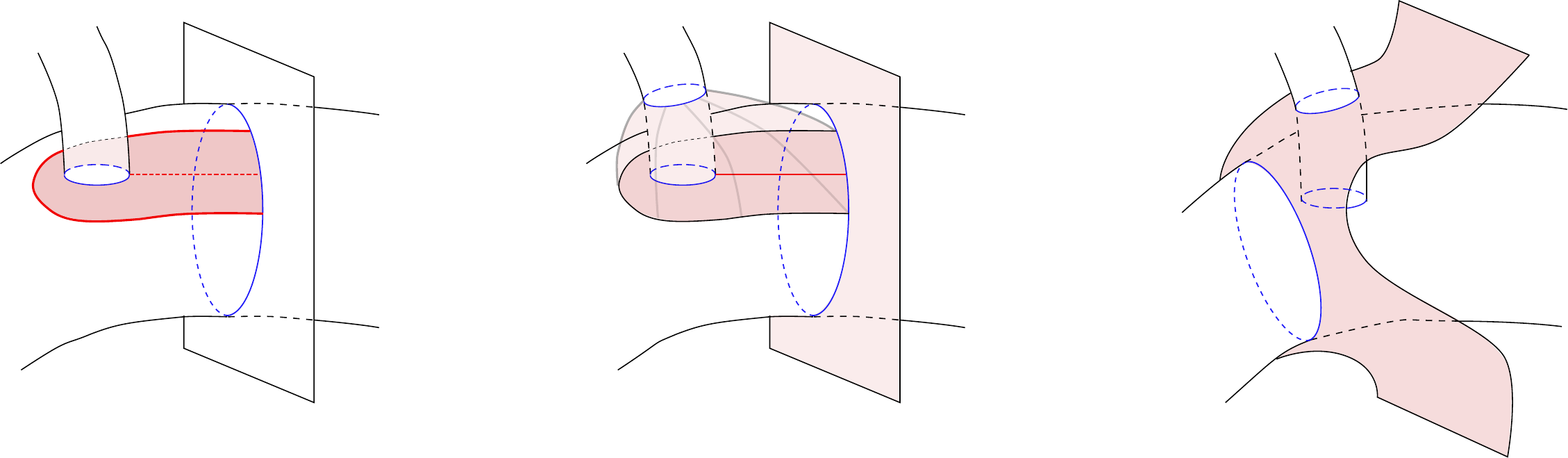
				\caption{The blue curves are the punctures, or  equivalently, the generating curves (left). $C_{i_1}'$ is obtained by attaching $D'$ to $C_{i_1}$ along the arc $l$ (middle), and a small perturbation (left).}
				\label{sliding}
			\end{center}
		\end{figure}
		
Choose a punctured disk $D\subset C_{i_2}$ such that $l_2$ is its only puncture and $\partial D=l\cup l'$ where $l\subset l_1$ and $l'\subset C_{i_2}$ are arcs, so that $l'$ is disjoint from the generating curves, as illustrated in Figure~\ref{sliding}. Let $D'$ be a disk obtained by applying an isotopy to the interior and the puncture of $D$ which moves it away from $C_{i_2}$ and has one puncture $l_1''$ on $C_{i_3}$, such that $\partial D'=\partial D$ and $(D')^\circ\cap\Cscr=\emptyset$. Let $C'_{i_1}$ be obtained by attaching $D'$ to $C_{i_1}$ along $l$, followed by a small perturbation (Figure \ref{sliding}). We say $C'_{i_1}$ is obtained from $C_{i_1}$ by {\emph{sliding}} the generating curve $l_1$ over the puncture $l_2$. Let $\mathscr{C}'$ be obtained from $\mathscr{C}$ by replacing $C'_{i_1}$ for $C_{i_1}$.  It is clear that $\mathscr{C}'$ is a simplifier equivalent to $\mathscr{C}$. If $\mathscr{C}$ is reduced, so is $\mathscr{C}'$. 
\end{remark}
	
\begin{lemma}\label{l10}
Let $N$ be a manifold with a simplifier $\mathscr{C}=\left(C_i\right)_{i=1}^n$. Let $D\subset C_i$ be a punctured disk such that $\partial D$ is disjoint from the generating curves on $C_i$. Let $D'$ be a disk in $N$ with $\partial D'=\partial D$ and $(D')^\circ\cap \mathscr{C}=\emptyset$. Let $C_i'$ be obtained from $C_i$ by removing $D$ and replacing it with $D'$, and $\mathscr{C}'$ be obtained from $\mathscr{C}$ by replacing $C_i'$ for $C_i$. Then $\mathscr{C}'$ is a simplifier for $N$ which is  equivalent to $\mathscr{C}$. If $\mathscr{C}$ is reduced, so is $\mathscr{C}'$.
\end{lemma}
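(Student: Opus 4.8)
The plan is to verify that $\mathscr{C}'$ satisfies \ref{2}--\ref{7} and that $N[\mathscr{C}']\doteq N[\mathscr{C}]$, the latter being the only substantial point (and the one that, together with Definition~\ref{d-simplifying-structure}, yields equivalence). Write $D=D_{l,C_i}$ with $l=\partial D$ a simple closed curve in $C_i^\circ$. Since $D$ contains no part of $\partial^\pm C_i$, the modification leaves every boundary curve $l_j^\pm$ untouched, so $\partial^\pm\mathscr{C}'=\partial^\pm\mathscr{C}$ and \ref{2} holds automatically. For \ref{6}, note that $C_i'=(C_i\setminus D)\cup_l D'$ is again genus zero (capping the separating curve $l$ by a disk), hence a punctured cylinder, and its interior is disjoint from the unchanged $C_j=C_j'$ because $(D')^\circ\cap\mathscr{C}=\emptyset$. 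Each generating curve on $C_i$ is essential, so it separates $\partial^+C_i$ from $\partial^-C_i$; being disjoint from $l$ by hypothesis, it must lie in the component $C_i\setminus D$ (a curve contained in the disk-region $D$ could not separate $\partial^\pm C_i$). Consequently these curves are undisturbed and remain essential on $C_i'$, while the punctures of $C_i'$ form a subset of those of $C_i$ and stay essential on the later cylinders. This gives \ref{6}.

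The heart of the matter is $N_0':=N[\mathscr{C}']\doteq N_0:=N[\mathscr{C}]$; condition \ref{7} for $\mathscr{C}'$ then follows from \ref{7} for $\mathscr{C}$, as $\doteq$ alters nothing but $S^2\times I$ summands. I would argue in the closed manifold of Remark~\ref{r0}. Because $\partial^\pm\mathscr{C}'=\partial^\pm\mathscr{C}$, the cappings agree and $\overline N_{\mathscr{C}'}=\overline N_{\mathscr{C}}$; by \ref{7} and Remark~\ref{r0} this closed manifold is a connected sum of copies of $S^3$ with a single irreducible piece, hence (connected sum with $S^3$ being trivial) irreducible. Each cylinder corresponds to a $2$-sphere; the systems for $\mathscr{C}$ and $\mathscr{C}'$ share the spheres $S_j$ for $j\ne i$, while $S_i$ and $S_i'$ differ only by replacing the capped disk $\widetilde D$ (namely $D$ together with the disks capping the punctures inside it) by $D'$. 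Thus $\widehat S:=\widetilde D\cup_l D'$ is an embedded $2$-sphere in $\overline N_{\mathscr{C}}$, and by irreducibility it bounds a ball $B$. Sliding $S_i$ across $B$ carries it to $S_i'$; since the separating property $A_{\mathscr{C}}(2)$ of Proposition~\ref{p5} lets me read each removal as a connected-sum splitting, and connected sum is commutative, decomposing $\overline N_{\mathscr{C}}$ along either sphere system produces the same multiset of prime pieces. Re-expanding to manifolds with boundary, the two removals yield $N_0$ and $N_0'$ with identical components apart from $S^2\times I$ factors, i.e. $N_0\doteq N_0'$. When $N$ is disconnected I apply this componentwise via Remark~\ref{r2.2}.

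Finally, reducedness is inherited: the curves $l_j^\pm$ are unchanged, and for $j\ge i$ one has $\mathscr{C}'^j=\mathscr{C}^j$, so $\partial^\pm N[\mathscr{C}'^j]=\partial^\pm N[\mathscr{C}^j]$; for $j<i$ the same argument gives $N[\mathscr{C}'^j]\doteq N[\mathscr{C}^j]$. In either case the extra $S^2\times I$ pieces contribute only sphere boundary components and do not affect whether $l_j^\pm$ bounds a disk, so no $l_j^\pm$ becomes disk-bounding. The delicate step is exactly the equivalence $N_0\doteq N_0'$, and within it the control of the ball $B$ bounded by $\widehat S$: when $B$ contains other decomposition spheres $S_j$, the passage from $S_i$ to $S_i'$ is not an ambient isotopy, and one must check carefully that reshuffling these summands across $B$ alters the outcome only by $S^3$ (equivalently $S^2\times I$) components. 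It is here that irreducibility of $N_0$, hypothesis \ref{7}, is indispensable: without it $\widetilde D$ and $D'$ could cobound a nontrivial region and $N_0'$ could genuinely differ from $N_0$.
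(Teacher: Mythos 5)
Your verifications of \ref{2} and \ref{6} are fine, but the central step of your equivalence argument rests on a false claim. You assert that $\overline{N}_{\mathscr{C}}$ is irreducible because it is ``a connected sum of copies of $S^3$ with a single irreducible piece.'' This misreads Remark \ref{r0}: the remark says that the \emph{result of decomposing} $\overline{N}_{\mathscr{C}}$ along all the spheres $S_i^2$ is a union of $S^3$'s and one irreducible manifold; it does not present $\overline{N}_{\mathscr{C}}$ as a connected sum of those pieces, because the spheres $S_i^2$ need not be separating in $\overline{N}_{\mathscr{C}}$ --- removing a cylinder from a connected manifold can leave it connected (this is a live case throughout the paper; see Remark \ref{r21} and the case ``$N_{n-1}$ is connected'' in the proof of Proposition \ref{p23}). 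Each non-separating decomposition sphere contributes an $S^2\times S^1$ summand. Concretely, take $N=T^2\times I$ with the reduced simplifier $\mathscr{C}=(\gamma\times I)$ for $\gamma\subset T^2$ essential: this simplifies $N$ to $S^2\times I$, yet $\overline{N}_{\mathscr{C}}$ is the double of a solid torus, i.e.\ $S^2\times S^1$, which is not irreducible. So your sphere $\widehat S=\widetilde D\cup D'$ has no reason to bound a ball $B$, and without $B$ the slide from $S_i$ to $S_i'$ is unavailable. Proposition \ref{p5} cannot rescue this: $A_{\mathscr{C}}(2)$ concerns spheres lying \emph{in} $N$, whereas $\widehat S$ runs through capping disks outside $N$.

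Even granting a ball, your final step --- that decomposing along either sphere system ``produces the same multiset of prime pieces,'' with the reshuffling across $B$ left as something ``one must check carefully'' --- is precisely the content of the lemma, and commutativity of $\#$ does not deliver it: decomposing a composite manifold along different sphere systems can yield genuinely different multisets of pieces (split $P_1\# P_2\# P_3$ as $P_1$ and $P_2\# P_3$, or as $P_1\# P_2$ and $P_3$). The paper resolves exactly this ambiguity, and does so locally rather than in $\overline{N}_{\mathscr{C}}$: in $N_i$ the punctures of $D$ are already capped, so $S=D\cup D'$ is an honest embedded sphere \emph{in} the manifold $N_i$, separating by Proposition \ref{p5}; cutting along $S$ splits off a closed summand $M$ with $N_i=N_i'\# M$ and $\mathscr{C}_i\cap M=\emptyset$, and $C_i$, $C_i'$ become isotopic in $N_i'$ by passing through the capping ball. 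This gives $N_0=(N_0^1\# M)\amalg(N_0'\setminus N_0^1)$ and $N_0''=(N_0^2\# M)\amalg(N_0'\setminus N_0^2)$, with $M$ attached to possibly different components $N_0^1,N_0^2$ of $N_0'$; condition \ref{7} then forces either $M\cong S^3$ (so $\mathscr{C}'$ is isotopic to $\mathscr{C}$) or $\overline{N}_0'\cong S^3$, in which case both outcomes are $\doteq M$. That dichotomy is the missing argument in your proposal, and it is carried out in a setting where the separating property is actually available.
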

\begin{proof}
By Remark  \ref{r2.2}, we can assume that $N$ is connected. In the manifold $N_i$, $D$ is a disk without punctures and there is a copy of $D'$ in $N_i$, which is again denoted by $D'$. Thus, $S=D\cup D'$ is a sphere in $N_i$ which is disjoint from $\mathscr{C}_i$, after a slight perturbation. By Proposition \ref{p5}, $S$ is separating. Since $S\cap\mathscr{C}_i=\emptyset$, it is clear that one component of $N_i\setminus S$ does not contain $\partial^+_i\amalg\partial^-_i$. Therefore, if we cut $N_i$ along $S$, and attach two $3$-disks $B_1$ and $B_2$ to the resulting boundary spheres, we obtain the manifolds $N'_i$ and $M$, where $M$ is a closed manifold and $\mathscr{C}_i\cap M=\emptyset$. By Remark \ref{r2.4}, $\mathscr{C}_i$ may also be regarded as a  simplifier $\mathscr{C}_i|_{N_i'}$ on $N'_i$. If $N'_0=N_i'[\mathscr{C}_i|_{N_i'}]$, we have $N_0=N'_0\# M$.	Note that $N'_0$ may be disconnected, so the connected sum is performed between $M$ and a connected component $N_0^1$ of $N'_0$. Therefore, 
\begin{equation}\label{eq1}
N_0=(N_0^1\# M)\amalg(N'_0\setminus N_0^1).
\end{equation}

$N'_i$ also inherits a simplifier from  $\mathscr{C}'_i$, which is denoted by $\mathscr{C}'_i|_{N_i'}$. Note that $\mathscr{C}_i|_{N_i'}$ and $\mathscr{C}'_i|_{N_i'}$  are equivalent, since  by moving $C_i$ through $B_1$, $C_i'$ is obtained.  If $N_0''=N_i[\Cscr_i']$, from $N_i=N'_i\# M$, we have $N_0''=N'_0\# M$, where $M$ is connected to the component $N_0^2$ of $N'_0$:
\begin{equation}\label{eq2}
N_0''=(N_0^2\# M)\amalg(N'_0\setminus N_0^2).
\end{equation}

If $M=S^3$, it is clear that $\mathscr{C}$ and $\mathscr{C}'$ are the same after an isotopy. So, let $M\neq S^3$. Since all the components of $N_0$, except possibly one, are $S^3$ and $N_0$ is irreducible, by (\ref{eq1}) we have $\overline{N}_0\cong M$ and  $\overline{N}'_0\cong S^3$. By (\ref{eq2}), $N_0''\doteq M$ and $\mathscr{C}_i'$ is a simplifier for $N_i$, which is equivalent to $\mathscr{C}_i$. Therefore, $\mathscr{C}$ and $\mathscr{C}'$ are equivalent simplifiers for $N$. 
\end{proof}
	
\begin{lemma}\label{l6}
Let $N'\xrightarrow{C} N$ and  $\mathscr{C}=\left(C_i\right)_{i=1}^n$ be a reduced simplifier for $N$. Let the solid cylinders $D_1\times I,D_2\times I\subset N$  associated with $C$ be such that $D_i\times\partial^\pm I$ is disjoint from $\partial^\pm\Cscr$. Then $N'$ admits a simplifier $\mathscr{C}'=(\Cscr'',C)$, such that $\Cscr''$ induces the simplifier $\left(C_i'\right)_{i=1}^n$ on $N$, which is reduced and equivalent to $\Cscr$. Moreover, $\partial^\pm C_i'=l_i^\pm=\partial^\pm C_i$ for $i=1,\ldots,n$. If $D_1\times I$ and $D_2\times I$ are disjoint from $C_n$, we can choose $C_n'$ equal to $C_n$. 
\end{lemma}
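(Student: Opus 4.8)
The goal is to realise $\Cscr'$ by first replacing $\Cscr$ with an equivalent reduced simplifier $(C_i')_{i=1}^n$ for $N$ whose cylinders are all disjoint from the interiors $D_1^\circ\times I$ and $D_2^\circ\times I$ of the two solid cylinders associated with $C$. Such cylinders lie in $N\setminus(D_1^\circ\times I\cup D_2^\circ\times I)$, which is exactly the part of $N$ surviving into $N'$, so each $C_i'$ descends to a punctured cylinder $C_i''\subset N'$, and removing the cylinder $C$ from $N'$ returns $C_i''$ to $C_i'$. Setting $\Cscr''=(C_1'',\dots,C_n'')$ and $\Cscr'=(\Cscr'',C)$ will then give the statement, the final assertion being the special case in which $C_n$ already misses the solid cylinders and is therefore left untouched. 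By Remark \ref{r2.2} I may assume $N$ is connected, and since the feet $D_j\times\partial^\pm I$ are disjoint from $\partial^\pm\Cscr$, every modification will be supported in $N^\circ$ and will preserve the boundary curves $l_i^\pm$.

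The key point is that $\Cscr$ can be pushed off the solid cylinders using only inessential compressions. Put $\Cscr$ in general position with respect to the two boundary spheres $\partial(D_j\times I)$, and, by a preliminary isotopy supported in $N^\circ$, arrange that all of $\partial\Cscr$—the curves $l_i^\pm$, already off the feet, together with the punctures—is disjoint from $D_1\times I\cup D_2\times I$. Then every component of $\Cscr\cap\partial(D_j\times I)$ is a circle $\gamma$ lying in $N^\circ$; being a circle on the sphere $\partial(D_j\times I)$, it bounds a disk $E$ on that sphere, and hence a disk in $N$. If $\gamma$ lay on $C_i$ as an essential curve, then $A_\Cscr^i(1)$ of Proposition \ref{p5} would force $l_i^\pm$ to bound disks in $\partial_i^\pm$, contradicting the hypothesis that $\Cscr$ is reduced. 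Thus every intersection circle is inessential on the cylinder carrying it and bounds a punctured disk $D_{\gamma,C_i}\subset C_i$; as the generating curves on $C_i$ are essential, none of them lies in $D_{\gamma,C_i}$, so $\partial D_{\gamma,C_i}=\gamma$ is disjoint from the generating curves.

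First I would remove these intersections one at a time. Choosing $E$ innermost on $\partial(D_1\times I)\cup\partial(D_2\times I)$ among all the intersection circles guarantees $E^\circ\cap\Cscr=\emptyset$; after a small push-off to the outside of the solid cylinder, $E$ becomes a disk in $N$ with $\partial E=\gamma$, with $E^\circ\cap\Cscr=\emptyset$, and disjoint from $D_1\times I\cup D_2\times I$. Since $\gamma$ is disjoint from the generating curves on $C_i$, Lemma \ref{l10} applies to the punctured disk $D_{\gamma,C_i}$ and the disk $E$, and replacing $D_{\gamma,C_i}$ by $E$ yields an equivalent reduced simplifier with strictly fewer intersections with the solid cylinders and with all boundary curves unchanged. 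Iterating (the total number of intersection circles strictly decreasing at each step) produces the desired $(C_i')_{i=1}^n$, an equivalent reduced simplifier with $\partial^\pm C_i'=l_i^\pm$ and with every $C_i'$ disjoint from $D_1\times I\cup D_2\times I$; any cylinder already disjoint from them, in particular $C_n$ under the extra hypothesis, is never touched, so there $C_i'=C_i$.

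It then remains to assemble $\Cscr'=(\Cscr'',C)$ and check it is a simplifier for $N'$. Removing $C$ from $N'$ recovers $N$ and exhibits $(C_1',\dots,C_n')$ as the induced configuration, which is a reduced simplifier equivalent to $\Cscr$, both simplifying $N$ to the same $N_0$ up to $\doteq$. Conditions \ref{2}--\ref{7} for $\Cscr'$ on $N'$ follow from the corresponding conditions for $\Cscr$ on $N$ together with the identity $N'[\Cscr']=(N'[C])[(C_i')]=N[(C_i')]\doteq N_0$, using that $\doteq$ preserves irreducibility and the ``all components $S^2\times I$ except one'' condition. The main obstacle is the disjointification carried out in the middle two paragraphs; everything hinges on reducedness, which rules out essential (genuinely compressing) intersection circles via Proposition \ref{p5}, so that each intersection is an inessential circle removable by a single application of Lemma \ref{l10}.
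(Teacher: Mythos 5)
There is a genuine gap at the heart of your argument: you treat $\partial(D_j\times I)$ as a $2$-sphere on which every intersection circle bounds a disk that can be pushed off into $N^\circ$. But this sphere is not contained in the interior of $N$ --- its two feet $D_j\times\{\pm 1\}$ lie on $\partial^\pm N$, and the punctured cylinders of $\Cscr$ meet the solid cylinder only along the lateral wall $\partial D_j\times I$, which is an annulus. A wall circle $\gamma$ that is essential on $\partial D_j\times I$ bounds no disk in the wall; any disk it bounds on the sphere $\partial(D_j\times I)$ contains a foot, and such a disk cannot be isotoped rel $\gamma$ into $N^\circ$ away from the solid cylinder (pushing the foot region ``outside'' the solid cylinder means leaving $N$, and pushing it inward gives the cross-sectional disk $D_j\times\{t\}$, which lies in $D_j^\circ\times I$). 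So your innermost-disk compression only removes the wall circles that are inessential on the wall; it cannot remove essential ones, and in general they cannot be removed at all --- a cylinder $C_i$ whose boundary curves separate the two feet must cross the solid cylinder. Consequently your conclusion, that $\Cscr$ can always be replaced by an equivalent reduced simplifier entirely disjoint from $D_1\times I\cup D_2\times I$, is strictly stronger than the lemma and false. Note that it would also make the lemma's final clause (``if $D_1\times I$ and $D_2\times I$ are disjoint from $C_n$, we can choose $C_n'=C_n$'') vacuous: the statement is phrased with an \emph{induced} simplifier $\left(C_i'\right)_{i=1}^n$ precisely because in general $C_i'\neq C_i$ and the cylinders $C_i''\subset N'$ acquire new punctures.

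The paper's proof diverges from yours exactly where yours breaks. It first shows, by reverse induction on the index (the statement $B_\Cscr(j_0)$, using reducedness, Proposition \ref{p5}, the hypothesis on the feet, and the sliding of generating curves from Remark \ref{r-sliding}), that all curves of $(\partial D_i\times I)\cap C_{n-j}$ are closed and bound punctured disks on the cylinders away from the generating curves. It then removes, via Lemma \ref{l10}, only those intersection curves bounding disks \emph{on the wall} $\partial D_i\times I$ (the set $A_{\Cscr^2}$) --- this is the analogue of your compression step, correctly restricted to the inessential wall curves. The essential wall curves that remain are handled differently: the punctured disks $D^j_1,\ldots,D^j_{k_j}\subset C_j$ containing them are deleted, so each $C_j'$ acquires new punctures which, in $N'$, are essential curves on $C$, i.e.\ generating curves on $C$; this is why $C$ is appended as the \emph{last} cylinder of $\Cscr'=(\Cscr'',C)$, so that condition \ref{6} is satisfied. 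Your proposal is missing this entire mechanism --- the conversion of unavoidable intersections into punctures/generating curves on $C$ --- which is the actual content of the lemma.
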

\begin{proof}
 Moving $\Cscr$ by an isotopy, we can assume that $D_i\times I$ cuts $\Cscr$ transversely,  away from  the punctures on the cylinders. Let $B_\Cscr(j_0)$ be the claim that every curve $l\in(\partial D_i\times I)\cap C_{n-j}$  is closed and bounds a punctured disk $D_l$ on $C_{n-j}$ away from the generating curves on $C_{n-j}$, for $0\leq j\leq j_0$. Since $\Cscr$ is reduced and $(\partial D_i\times I)\cap C_n$,  consists of closed curves, by Proposition~\ref{p5} $B_\Cscr(0)$ is satisfied {after removing the generating curves from $(\partial D_i\times I)\cap C_n$}. Suppose that $B_\Cscr(j)$ is satisfied for $j<j_0$ and let $l\in (\partial D_i\times I)\cap C_{n-j_0}$.  Since $D_i\times\partial^\pm I$ is disjoint from $\partial^\pm \Cscr$, every potential boundary point of $l$ is on a puncture $l_j$ of $C_{n-j_0}$, which  is  a generating curves on  $C_{n-j}$ for some $j<j_0$. Then, there is a curve $l'\in C_{n-j}\cap(\partial D_i\times I)$ which intersects the generating curve $l_j$ on $C_{n-{j}}$. This contradiction implies that all  the curves in $C_{n-{j_0}}\cap(\partial D_i\times I)$ are closed. Since $l_{n-{j_0}}^+$ is not the boundary of a disk in $\partial_{n-{j_0}}^+$, all  the curves in $C_{n-{j_0}}\cap(\partial D_i\times I)$ bound punctured disks in $C_{n-{j_0}}$ by Proposition~\ref{p5}.\\

If $l\in C_{n-{j_0}}\cap(\partial D_i\times I)$  bounds a punctured disk $D_l\subset C_{n-{j_0}}$, we can remove each generating curve $l'$ on $C_{n-{j_0}}$ from each $D_{l}$ by sliding it on the punctures of $D_l$. Using this process, we obtain an equivalent reduced simplifier $\mathscr{C}^1=\big(C^1_i\big)_{i=1}^n$ such that $B_{\Cscr^1}(j_0)$ is satisfied  (see Remark \ref{r-sliding}). By repeating this process, we obtain an equivalent reduced simplifier $\mathscr{C}^2=\big(C^2_i\big)_{i=1}^n$ such that $B_{\Cscr^2}(n-1)$ is satisfied. Furthermore, we have $\partial^\pm C^2_j=l_j^\pm$. Let 
		$$A_{\mathscr{C}^2}=\{l\in(D_i\times I)\cap C_j^2\ |\ i=1,2,\quad 1\leq j\leq n,\quad l=\partial D_l\quad\text{and}\quad D_l\subset\partial D_i\times I\},$$
		where $D_l$ are disks. Let $l\in A_{\Cscr^2}$ be such that $D_l^\circ\cap C_k^2=\emptyset$, $1\leq k\leq n$. Let $l=\partial D'$, $D'\subset C_j^2$. Using Lemma \ref{l10}, we can replace $D'$ with $D_l$ to obtain an equivalent reduced simplifier $\mathscr{C}^3$ such that $|A_{\mathscr{C}^3}|<|A_\mathscr{C}|$. So, if we repeat this process, we obtain an equivalent reduced simplifier  $\mathscr{C}^4=\big({C}_j^4\big)_{j=1}^n$ with $A_{\mathscr{C}^4}=\emptyset$. Thus, each curve in $(\partial D_i\times I)\cap\mathscr{C}^4$ is an essential curve on $\partial D_i\times I$. Note that if $C_j^4\cap(D_i\times I)\neq\emptyset$, then ${C}_j^4\cap(\partial D_i\times I)\neq\emptyset$ (otherwise, ${C}_j^4\subset D_i\times I$, especially $\partial^+C_j^4\subset D_i\times\{1\}$ which is impossible by our assumption on $\mathscr{C}^4$). Let $D_1^j,\ldots,D_{k_j}^j$ be punctured disks in $C_j^4$ such that $\partial D_l^j\in {C}_j^4\cap(\partial D_i\times I)$, and all the curves in ${C}_j^4\cap(\partial D_i\times I)$ are in $D_1^j\cup \cdots\cup D_{k_j}^j$. Let $C'_j$ be obtained by removing $(D_l^j)^\circ$ from ${C}_j^4$. Then, some generating curves which were punctures of $D_l^j$ are removed from some cylinders with indices greater that $j$. Then $\Cscr'=\big(C_1',\ldots,C_n',C\big)$ is a simplifier for $N$ with the desired properties.
\end{proof}
	
\begin{remark}\label{r7}
Let $\mathscr{C}=\left(C_i\right)_{i=1}^n$ be a simplifier for $N$, and $N'$ be a connected component of $N$ such that $\mathscr{C}\cap N'=\emptyset$. Thus, $N'$ is irreducible and the boundary of $N'$ consists of two spheres. Set $M=(N\setminus N')\# \overline{N}'$, where $\overline{N}'$ is connected (away from $\Cscr$) to one of the connected components of $N\setminus N'$ in the above connected sum.  So, there is a copy of $\mathscr{C}$ in $M$, denoted again by $\mathscr{C}$. Let  $M_n=M$ and $M_{i-1}$ denote the manifold obtained from $M_i=(N_i\setminus N')\# \overline{N}'$ by removing $C_i$.  All the connected components of $N_0$, except possibly one, are $S^2\times I$ and $N'$ is a connected component of $N_0$. Thus, $N_0\doteq M_0$, ${M_0}$ is irreducible and $\mathscr{C}$ is a simplifier for $M$.
\end{remark}
	
\begin{lemma}\label{l8}
Given a simplifier $\mathscr{C}$ for $N\in\Mfrak$, there is an equivalent reduced simplifier $\mathscr{C}'$ with $\partial^\pm\Cscr'\subset\partial^\pm\Cscr$.
\end{lemma}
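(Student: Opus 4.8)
The plan is to induct on the length $n=|\mathscr{C}|$, with Lemma~\ref{l6} as the main engine. By Remark~\ref{r2.2} it suffices to produce a reduced simplifier on each connected component of $N$, so I would assume $N$ is connected (the intermediate manifolds obtained by removing cylinders may be disconnected, but they carry shorter simplifiers, so the induction hypothesis still applies to each of their components). The base case $n=0$ is immediate, since the empty simplifier is reduced. For the step, observe that the top cylinder $C_n$ has no punctures: by \ref{6} its punctures would have to be essential curves on cylinders of index larger than $n$, and there are none. Thus $C_n$ is a genuine cylinder and $N\xrightarrow{C_n}N_{n-1}$ leaves the induced simplifier $\mathscr{C}_{n-1}=(C_1,\dots,C_{n-1})$ of length $n-1$ on $N_{n-1}$.

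The argument then splits on whether $l_n^\pm=\partial^\pm C_n$ bounds a disk on $\partial^\pm$. I would first record that, because every manifold in the sequence lies in $\Mfrak$ and is therefore balanced, the curves $l_n^+$ and $l_n^-$ are forced to be of the same topological type: removing $C_n$ compresses $\partial^\pm$ along $l_n^\pm$, and for $N_{n-1}$ to be balanced the resulting multisets of boundary genera on the two sides must coincide. A short check of the three possibilities (bounding a disk, nonseparating essential, separating essential) shows that $l_n^+$ bounds a disk on $\partial^+$ if and only if $l_n^-$ bounds a disk on $\partial^-$. This yields the clean dichotomy: either neither bounds a disk, or both do.

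In the first case, I would apply the induction hypothesis to $\mathscr{C}_{n-1}$ on $N_{n-1}$ to obtain an equivalent reduced simplifier $\mathscr{C}''$ with $\partial^\pm\mathscr{C}''\subset\partial^\pm\mathscr{C}_{n-1}$, and then feed $N\xrightarrow{C_n}N_{n-1}$ together with $\mathscr{C}''$ into Lemma~\ref{l6} (after an isotopy placing the feet of $C_n$ off $\partial^\pm\mathscr{C}''$). This produces a simplifier $\mathscr{C}'=(\mathscr{C}''',C_n)$ for $N$ whose induced simplifier on $N_{n-1}$ is reduced, equivalent to $\mathscr{C}''$, and has the same boundary curves. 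The crucial point is that $\mathscr{C}'$ is itself reduced: for a cylinder below $C_n$ the boundary surface obtained by deleting from $N$ the cylinders of $\mathscr{C}'$ above it is literally the surface controlling the corresponding cylinder of the reduced induced simplifier on $N_{n-1}$ (delete $C_n$ first, then the higher cylinders), so reducedness transfers verbatim; and the top cylinder $C_n$ is harmless precisely because we are in the case where $l_n^\pm$ bounds no disk. Equivalence follows from $N[\mathscr{C}']\doteq N_{n-1}[\mathscr{C}'']\doteq N_0$, and $\partial^\pm\mathscr{C}'=\partial^\pm\mathscr{C}''\cup\{l_n^\pm\}\subset\partial^\pm\mathscr{C}$.

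In the second case both $l_n^\pm$ bound disks $D^\pm\subset\partial^\pm$, so $C_n$ is a trivial cylinder: the sphere $C_n\cup D^+\cup D^-$ (equivalently the sphere $S_n^2$ of Remark~\ref{r0} in $\overline{N}_{\mathscr{C}}$) is inessential, hence bounds a ball by the irreducibility condition \ref{7}, and removing $C_n$ only splits off a trivial $S^2\times I$ component, giving $N\doteq N_{n-1}$. Applying the induction hypothesis to $\mathscr{C}_{n-1}$ on $N_{n-1}$ and discarding the trivial component via Remark~\ref{r7} then yields the required reduced simplifier for $N$, with boundary curves again contained in $\partial^\pm\mathscr{C}$. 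I expect this second case to be the main obstacle: the delicate points are the precise proof that $N\doteq N_{n-1}$—that the split-off piece is exactly $S^2\times I$ rather than a component with higher-genus boundary, which is exactly where the balanced hypothesis and \ref{7} are genuinely used—and the verification that a reduced simplifier can never place a cylinder inside the discarded $S^2\times I$, so that the transfer of $\mathscr{C}''$ back to $N$ is legitimate.
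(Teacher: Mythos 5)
Your induction scheme, the base case, the balancedness argument that $l_n^+$ bounds a disk in $\partial^+$ if and only if $l_n^-$ bounds one in $\partial^-$, and your first case (feed the reduced simplifier supplied by the induction hypothesis into Lemma~\ref{l6}, with the feet of $C_n$ isotoped off its boundary curves, and check reducedness of the top cylinder from the case assumption) all match the paper's proof of Lemma~\ref{l8}. The genuine gap is in your second case. You claim that when $l_n^\pm$ bound disks $D^\pm\subset\partial^\pm$, the sphere $S=C_n\cup D^+\cup D^-$ is inessential, ``hence bounds a ball by the irreducibility condition \ref{7}'', so that $N\doteq N_{n-1}$. This is false: condition \ref{7} constrains $N_0$, not $N$, and it explicitly permits \emph{one} component of $N_0$ to be a nontrivial irreducible manifold, and that exceptional component can lie inside the piece $N'$ split off by $S$. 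Concretely, let $P$ be the Poincar\'e sphere, $g\geq 1$, and $N=P\#(\Sigma_g\times I)$, with simplifier $\mathscr{C}=(C_1,\ldots,C_g,C_n)$ consisting of $g$ disjoint vertical annuli over a cut system of $\Sigma_g$ together with a cylinder $C_n$ whose associated sphere $S$ encloses the $P$-summand. Then $N_{n-1}=(P\setminus 2B^3)\amalg(\Sigma_g\times I)$ and $N_0=(P\setminus 2B^3)\amalg(S^2\times I)$, so \ref{2}--\ref{7} hold; yet $S$ bounds no ball, and $N\not\doteq N_{n-1}$, since $N$ is connected while $N_{n-1}$ has two components that are not $S^2\times I$s, and $\doteq$ only adds or removes $S^2\times I$ components. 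So the assertion you yourself flag as the delicate point --- that the split-off piece is exactly $S^2\times I$ --- is not merely delicate; it is not true, and no refinement of the balancedness argument will prove it.

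What rescues the case, and is the paper's actual route, is that $N\doteq N_{n-1}$ is never needed: one only has to transfer the reduced simplifier across a connected sum. By Remark~\ref{r2.3}, $N=\overline{N}'\#(N_{n-1}\setminus N')$, where $N'$ is the component of $N_{n-1}$ with sphere boundary. The reduced simplifier $\mathscr{C}''$ on $N_{n-1}$ from the induction hypothesis satisfies $\mathscr{C}''\cap N'=\emptyset$, essentially by the observation you make (every simple closed curve on a sphere bounds a disk, so reducedness leaves no room for a cylinder boundary on $\partial N'$) --- note this argument nowhere needs $N'\cong S^2\times I$. Then Remark~\ref{r7}, whose actual content is that a simplifier disjoint from a component $N'$ with sphere boundary survives the replacement of $N'\amalg X$ by $\overline{N}'\# X$ with $\doteq$-equal outcomes, hands you $\mathscr{C}''$ as a reduced simplifier for $N$ equivalent to $\mathscr{C}$, regardless of whether $\overline{N}'$ is $S^3$. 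Your citation of Remark~\ref{r7} as ``discarding the trivial component'' misreads it; read it as the connected-sum transfer and your second case closes exactly as in the paper, with $\partial^\pm\mathscr{C}''\subset\partial^\pm\Cscr$ as required.
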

\begin{proof}
 Let $\mathscr{C}=\left(C_i\right)_{i=1}^n$. We use an induction on $n$. If $n=0$, $\mathscr{C}$ is reduced, so assume $n>0$.  Since $\Cscr_{n-1}$ is a simplifier for $N_{n-1}$, by the induction hypothesis, there is  a reduced simplifier $\mathscr{C}'$ for $N_{n-1}$ which is equivalent to $\mathscr{C}$ and satisfies the conditions of the lemma. If $\partial^+C_n$ does not bound a disk in $\partial^+$, by adding $C_n$ to $\Cscr'$ we obtain a reduced simplifier for $N$ by Lemma \ref{l6}. \\
		
Now, let $\partial^+C_n$ be the boundary of a disk in $\partial^+$. By Remark \ref{r2.3}, $N=(N_{n-1}\setminus N')\# \overline{N}'$ where $N'$ is a component of $N_{n-1}$ with spherical boundary components. By the induction hypothesis, $N_{n-1}$ has a reduced simplifier $\mathscr{C}'=(C'_i)_{i=1}^m$ satisfying the conditions of the lemma. From Remark \ref{r2.2}, $\mathscr{C}'\cap N'$ is a simplifier for $N'$. Since the boundary components of $N'$ are spheres, we have $\mathscr{C}'\cap N'=\emptyset$. By Remark \ref{r7}, $\mathscr{C}'$ is a simplifier for $N$ and if $N'_0$ is obtained from $N$ by removing $\mathscr{C}'$, we have $N'_0\doteq N_0$. Thus, $\mathscr{C}$ and $\mathscr{C}'$ are equivalent.
\end{proof}
	
\begin{corollary}\label{c9}
Let $N'\xrightarrow{C}N$, where $N$ has a simplifier. Then $N'$ has a reduced simplifier. 
\end{corollary}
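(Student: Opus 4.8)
The plan is to assemble the corollary from the two preceding lemmas, using Lemma~\ref{l8} both to prepare the hypotheses of Lemma~\ref{l6} and to clean up its conclusion. Since $N$ is assumed to carry a simplifier, Lemma~\ref{l8} first lets me replace it by a \emph{reduced} simplifier $\mathscr{C}=(C_i)_{i=1}^n$ for $N$, with boundary curves $\partial^\pm\mathscr{C}$; this reducedness is exactly what Lemma~\ref{l6} requires.

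Next I would arrange the geometric hypothesis of Lemma~\ref{l6}. On the $N$--side of $N'\xrightarrow{C}N$, the cylinder $C$ is recorded by a pair of disjoint solid cylinders $D_1\times I,D_2\times I\subset N$, whose feet $D_i\times\partial^\pm I$ are disks on $\partial^\pm N$. By \ref{2} the set $\partial^\pm\setminus\partial^\pm\mathscr{C}$ is a nonempty union of punctured spheres, so after shrinking each foot and perturbing its center off the finitely many disjoint curves of $\partial^\pm\mathscr{C}$, I may isotope the two solid cylinders---keeping them disjoint and dragging them along a collar of the boundary---so that every $D_i\times\partial^\pm I$ is disjoint from $\partial^\pm\mathscr{C}$. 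This only changes $C$, and hence $N'$, up to isotopy. Lemma~\ref{l6} then applies and produces a simplifier $\mathscr{C}'=(\mathscr{C}'',C)$ for $N'$.

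The one point needing care is that $\mathscr{C}'$ is not automatically reduced: Lemma~\ref{l6} guarantees only that $\mathscr{C}''$ induces a reduced simplifier on $N$, whereas the final cylinder $C$ may have $\partial^\pm C$ bounding a disk in $\partial^\pm N'$ (this occurs precisely when the two feet lie on different boundary components, at least one of them a sphere). Rather than treat this case by hand, I would simply invoke Lemma~\ref{l8} a second time, now applied to the simplifier $\mathscr{C}'$ of $N'\in\Mfrak$, to obtain an equivalent reduced simplifier for $N'$; this finishes the argument. Thus the only genuinely geometric step is the isotopy of the feet off $\partial^\pm\mathscr{C}$, and everything else is bookkeeping that combines Lemmas~\ref{l8} and~\ref{l6}.
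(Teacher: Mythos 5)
Your proof is correct and follows essentially the same route as the paper, whose entire proof of Corollary~\ref{c9} is that it ``follows directly from Lemma~\ref{l6} and Lemma~\ref{l8}''. You have merely made the implicit steps explicit --- the isotopy putting the feet $D_i\times\partial^\pm I$ off $\partial^\pm\Cscr$ so Lemma~\ref{l6} applies, and the second invocation of Lemma~\ref{l8} to repair the possible failure of reducedness at the final cylinder $C$ --- both of which are accurate readings of what the paper's one-line proof requires.
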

\begin{proof}
		This follows directly from Lemma \ref{l6} and Lemma \ref{l8}.
\end{proof}
	
\begin{lemma}\label{l11}
		Let  $\mathscr{C}=\left(C_i\right)_{i=1}^n$ be a simplifier for $N\in\Mfrak$ and $l\subset \partial^\pm$ be a closed curve which is disjoint from $\partial^\pm\Cscr$ and bounds a disk in $N$. Then  $l$ bounds a disk in $\partial^\pm$.
\end{lemma}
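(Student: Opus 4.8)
The plan is to argue by induction on the length $n=|\mathscr{C}|$. By Remark~\ref{r2.2} I may assume $N$ is connected (the curve $l$ lies on a single component of $\partial^+$, on which $\mathscr{C}$ restricts to a simplifier), and by symmetry I assume $l\subset\partial^+$. For $n=0$ the manifold $N=N_0$ is irreducible, so every component of $\partial^+$ is a sphere and any simple closed curve on it bounds a disk. For the inductive step I would remove the last cylinder, passing to $N_{n-1}=N[C_n]$ with its induced simplifier $\mathscr{C}_{n-1}$ of length $n-1$, and it is convenient to distinguish whether $l_n^+=\partial^+C_n$ bounds a disk in $\partial^+$. Note that, by \ref{6} applied with $i=n$, the cylinder $C_n$ carries no punctures of its own, so $C_n\cong S^1\times I$; in particular a simple closed curve on $C_n$ is inessential exactly when it bounds a disk on $C_n$, and the core of $C_n$ is an essential curve isotopic in $N$ to $l_n^+$.

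The first task is to isotope the disk $D$ with $\partial D=l$ off $C_n$. After making $D$ transverse to $C_n$, the intersection $D\cap C_n$ consists of closed curves (since $\partial D=l$ is disjoint from $l_n^+$ and $C_n^\circ\subset N^\circ$), each of which bounds a subdisk of $D$. A curve inessential on $C_n$ bounds an honest disk on $C_n$, and may be removed by the innermost-disk swap used in the proof of Proposition~\ref{p5}; repeating this deletes every inessential curve. If an essential curve on $C_n$ still survives, it bounds a disk in $N$, so $A_{\mathscr{C}}^n(1)$ provides disks $D^\pm\subset\partial^\pm$ with $\partial D^\pm=l_n^\pm$. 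If $l\subset D^+$ then $l$ already bounds a subdisk of $D^+$ and I am done; otherwise $l\cap D^+=\emptyset$, and capping $C_n$ at both ends yields a sphere $S'=C_n\cup D^+\cup D^-$ (pushed into $N^\circ$) meeting $D$ only along $D\cap C_n$. Since every curve of $D\cap S'$ bounds a disk on the sphere $S'$, the same innermost-disk argument removes all of them, after which $D$ is disjoint from $C_n$ and lies in $N_{n-1}$, on the side of $S'$ containing $l$ (the relevant component being identified through Remark~\ref{r2.3}).

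Once $D\cap C_n=\emptyset$, the disk $D$ lives in $N_{n-1}$, so the induction hypothesis applied to $\mathscr{C}_{n-1}$ shows that $l$ bounds a disk in $\partial_{n-1}^+$. The main obstacle is transferring this conclusion back to $\partial^+$: the surface $\partial_{n-1}^+$ is obtained from $\partial^+$ by compressing along $l_n^+$, and such a compression can create a bounding disk where there was none. This is exactly where condition \ref{2} is used. By \ref{2}, $l$ lies in a planar piece $P$ of $\partial^+\setminus\partial^+\mathscr{C}$ and separates it into two planar subsurfaces $P_1\sqcup P_2$. If $l_n^+\not\subset\partial P$, then $P$ is unaffected by the compression and the disk persists in $\partial^+$. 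If $l_n^+\subset\partial P$, then capping $l_n^+$ turns $P$ into the corresponding piece of $\partial_{n-1}^+$, and the induction tells me one side of $l$ there is a disk; this means either the side of $l$ not meeting $l_n^+$ is already a disk in $\partial^+$ (and I am done), or the other side is an annulus cobounded by $l$ and $l_n^+$, i.e. $l$ is parallel to $l_n^+$ in $\partial^+$.

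It remains to dispose of this parallel case, and it closes using $A_{\mathscr{C}}^n(1)$ once more. Gluing the annulus $P_1$ to $D$ produces a disk in $N$ bounded by $l_n^+$, hence the core of $C_n$ (being essential on $C_n$ and isotopic to $l_n^+$) bounds a disk in $N$; then $A_{\mathscr{C}}^n(1)$ forces $l_n^+$ to bound a disk $\Delta^+$ in $\partial^+$, and $l$ bounds $\Delta^+\cup P_1$, or a subdisk of it. In particular the parallel case cannot occur when $l_n^+$ does not bound a disk in $\partial^+$, and is harmless otherwise. All the moves involved --- transversality, innermost-disk swaps, and the separation properties of spheres --- are the ones already established in Proposition~\ref{p5} and Remarks~\ref{r2.3}--\ref{r2.4}, so the only genuinely new point is the planar bookkeeping of the transfer step, which I expect to be the crux of the argument.
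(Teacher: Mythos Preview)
Your overall plan---induct on $n$, isotope $D$ off $C_n$, apply the induction hypothesis in $N_{n-1}$, then transfer the resulting disk $D_3\subset\partial_{n-1}^+$ back to $\partial^+$---matches the paper's, and the first two steps are fine. The gap is exactly where you suspected: the transfer step. Your planar bookkeeping is not a complete case analysis. First, the claim ``if $l_n^+\not\subset\partial P$ then the disk persists in $\partial^+$'' is unjustified: nothing prevents $D_3$ from leaving $P$ across some $l_i^+\subset\partial P$ and then swallowing a capping disk $D_i^{C_n}\times\{1\}$. Second, and more seriously, your dichotomy ``either the side not meeting $l_n^+$ is a disk, or the other side is an annulus cobounded by $l$ and $l_n^+$'' misses the case where \emph{both} capping disks lie in $D_3$. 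In that case $D_3\setminus\big((D_1^{C_n}\times\{1\})^\circ\cup(D_2^{C_n}\times\{1\})^\circ\big)$ is a pair of pants, and after regluing along $l_n^+$ in $\partial^+$ you get a once-punctured torus bounded by $l$, not an annulus; your $A_{\mathscr{C}}^n(1)$ argument does not apply, and no surface-level manoeuvre produces a disk for $l_n^+$.

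The paper closes this gap with a genuinely $3$-dimensional argument rather than surface bookkeeping. It first reduces (via Lemma~\ref{l8}) to $\mathscr{C}$ reduced, which guarantees $l_n^+$ does not bound in $\partial^+$ and that every curve of $D\cap C_n$ bounds a disk on $C_n$ (so the innermost swap already empties $D\cap C_n$ without ever invoking $A^n_\mathscr{C}(1)$). Then, assuming some $D_1^{C_n}\times\{1\}\subset D_3$, it forms the sphere $S=D_2\cup D_3$ in $N_{n-1}$, uses $A_\mathscr{C}(2)$ and the existence of $C_{n-1}$ (here $n>1$; the paper treats $n=1$ separately) to show $S$ cannot separate $\partial_{n-1}^+$ from $\partial_{n-1}^-$, and finally observes that the solid cylinder $D_1^{C_n}\times I$ enters the interior of $S$ near $D_1^{C_n}\times\{1\}$ and would have to exit through $D_2$ to reach $\partial_{n-1}^-$---contradicting $D_2\cap C_n=\emptyset$. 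This rules out the presence of \emph{any} capping disk in $D_3$, so $D_3\subset\partial^+$ and you are done. The point you are missing is precisely this solid-cylinder crossing argument; the transfer cannot be completed by looking only at $\partial^\pm$.
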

\begin{proof}
		By Lemma \ref{l8} and  Remark \ref{r2.2}, we may assume that $N$ is connected and $\Cscr$ is reduced. We prove the lemma by  induction on $n=|\mathscr{C}|$. Let $l\subset\partial^+$ be the boundary of a disk  $D$ in $N$ such that $D^\circ\subset N^\circ$. If $n=0$, it is clear that $l$ bounds a disk in $\partial^+$ (since the boundary components are spheres). If $n=1$ and $l$ is not the boundary of a disk in $\partial^+$, then there is a cylinder $C\subset\partial^+$ with  $\partial C=l\amalg l_1^+$. Thus, $l_1^+=\partial (D\cup C)$, which  is impossible by Proposition \ref{p5}. \\
		
		Suppose   $n>1$. $D\cap C_n$ consists of some closed curves (we assume $D$ intersects $\mathscr{C}$ transversely). By Proposition \ref{p5}, all the curves in $D\cap C_n$ are the boundaries of disks in $C_n$. Let $l'\in D\cap C_n$ be the boundary of a disk $D'\subset C_n$ such that $(D')^\circ \cap D=\emptyset$. $l'$ bounds a disk $D''\subset D$. Remove $D''$ from $D$ and replace it with $D'$. Denote the resulting disk with $D_1$. We have $\partial D_1=l$ and $|D_1\cap C_n|<|D\cap C_n|$. By repeating this process, we can obtain a disk $D_2$ such that $\partial D_2=l$ and $D_2\cap C_n=\emptyset$. Thus, $l$ bounds a disk  in $N_{n-1}$ as well. By the induction hypothesis, $l$ bounds a disk $D_3\subset\partial^+_{n-1}$. If $D_i^{C_n}\times\{1\}\cap D_3=\emptyset$ for $i=1,2$, there is a copy of  $D_3$ in $N$, denoted again by $D_3$, and the lemma is proved.\\
		
		 Otherwise, at least one of $D_i^{C_n}\times\{1\}$, say $D_1^{C_n}\times\{1\}$, is inside $D_3$. Then $S=D_2\cup D_3$ is a sphere which is separating by Proposition \ref{p5}. $\partial^+_{n-1}$ and $\partial^-_{n-1}$ cannot have components on the two sides of $S$. Let us first assume that $N_{n-1}$ is connected. Since $n>1$, if $\partial^+_{n-1}$ and $\partial^-_{n-1}$ are on the two sides of $S$, $C_{n-1}$ should intersect $S$ in some closed curves, and at least one of these closed curves is an essential curve on $C_{n-1}$. Since each curve on $S$ bounds a disk, by Proposition \ref{p5}, this cannot happen.	We may thus assume that $N_{n-1}$ has two connected components. Let $S$ be in the component $N'$ of $N_{n-1}$.  Since $l_n^+$ is not the boundary of a disk in $\partial^+$, the two connected components of $N_{n-1}$ have non-spherical boundary components. So $N'\cap\mathscr{C}\neq\emptyset$. We can assume $C_{n-1}\subset N'$ and as before, $S$ cannot separate $\partial^+_{n-1}$ from $\partial^-_{n-1}$.\\

		The connected component of  $N_{n-1}\setminus S$ which does not contain the boundary of $N_{n-1}$, is called the interior of $S$. $D_1^{C_n}\times I$ enters  the interior of $S$ in a neighborhood of $D_1^{C_n}\times\{1\}$. Thus, in order to reach the negative boundary (i.e. $\partial^-_{n-1}$), it should intersect $S$ in some other disks, which are necessarily on $D_2$. The boundary of at least one of these disks is an essential curve on $\partial D_1^{C_n}\times I$. But, $D_2$ is disjoint from $C_n$. This contradiction  completes the proof of the lemma.
	\end{proof}
	
	Let $N$,  $\mathscr{C}=\left(C_i\right)_{i=1}^n$ and $l_i^\pm=\partial^\pm C_i$ be as before. Let $D$ be a punctured disk in $C_k$, for some $1\leq k\leq n$, with $\partial D=l_1\cup l_2$ where $l_1\subset l_k^+$, $l_2^\circ\subset C_k^\circ$, and $l_2$ is disjoint from the generating curves on $C_k$. Let $D'$ be a punctured disk in $N$ such that  $(D')^\circ\cap \mathscr{C}=\emptyset$ and its punctures are essential curves on $C_j$, for $j>k$. Furthermore, we assume that these punctures are disjoint from the generating curves and that $\partial D'=l_1'\cup l_2$  where $l'_1\subset\partial^+$  is disjoint from $l_i^+$, $1\leq i\leq n$ (Figure \ref{figl12}-left). Suppose that $C'_k$ is obtained from $C_k$ by removing $D$ and replacing it with $D'$, and  $\mathscr{C}'$ is obtained from $\mathscr{C}$ by replacing $C_k$ with $C_k'$.

\begin{figure}
			\def\svgwidth{9cm}
			\begin{center}
				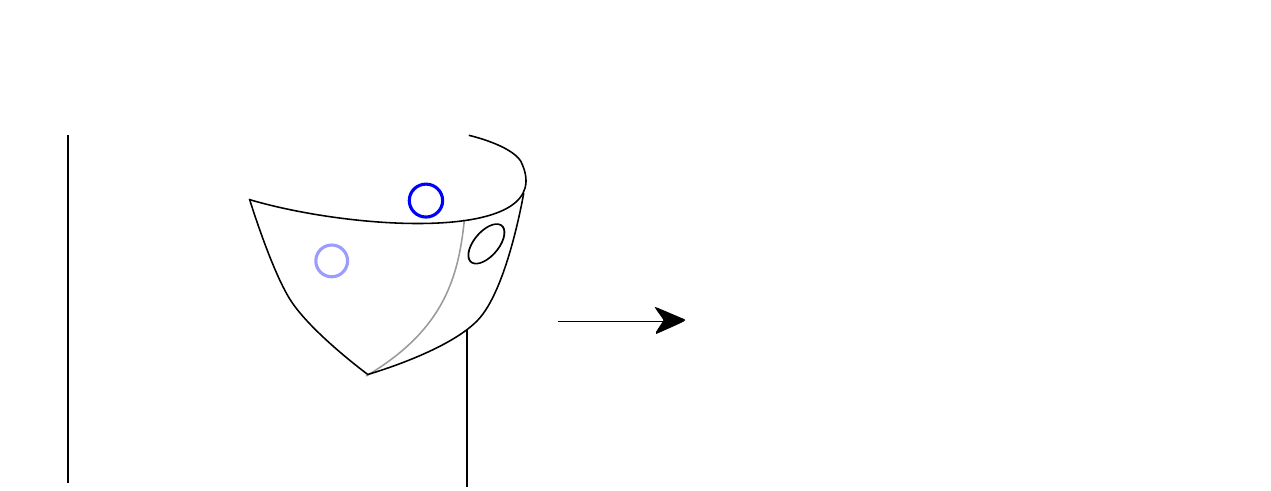
				\caption{ {$C'_k$ is obtained from $C_k$ by removing $D$ and replacing it with $D'$. The curves in blue denote the punctures on $C_k$ and $C'_k$.
				}}
				\label{figl12}
			\end{center}
		\end{figure}
			
\begin{lemma}\label{l12}
		Let $N$, $\mathscr{C}$ and $\mathscr{C}'$ be as above. Then, $\mathscr{C}'$ is a simplifier for $N$ which is equivalent to $\mathscr{C}$. If $\mathscr{C}$ is reduced, so is $\mathscr{C}'$.
\end{lemma}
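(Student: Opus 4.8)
The plan is to run the proof in close parallel with that of Lemma~\ref{l10}, the one genuinely new feature being that $\partial D\neq\partial D'$. Since $l_1\subset l_k^+$ lies on the positive boundary, replacing $D$ by $D'$ alters the boundary curve $\partial^+C_k=l_k^+$ into $l_k'^+:=(l_k^+\setminus l_1)\cup l_1'$, so $D\cup_{l_2}D'$ is a disk rather than a sphere. First I would reduce to the case that $N$ is connected (Remark~\ref{r2.2}), since the move is supported near the component containing $C_k$. Then I would pass to $N_k=N[\mathscr{C}^k]$ and work with the induced simplifier $\mathscr{C}_k$. There the punctures of $D$ and of $D'$, being essential curves on the cylinders $C_j$ with $j>k$, are capped by disks, so both $D$ and $D'$ become honest disks meeting exactly along the arc $l_2$. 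Hence $D\cup_{l_2}D'$ is an embedded disk in $N_k$ whose boundary is the closed curve $\gamma:=l_1\cup l_1'$.

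The crucial step is to cap $\gamma$ off inside the boundary. After a small pushoff of $l_1$ off $l_k^+$ into the adjacent component $P$ of $\partial_k^+\setminus\partial^+\mathscr{C}_k$ (this is where the hypotheses that $l_2$ and the punctures of $D'$ avoid the generating curves are used), the curve $\gamma$ lies in $\partial_k^+$ and is disjoint from $\partial^+\mathscr{C}_k$. Since $\gamma$ bounds the disk $D\cup D'$ in $N_k$, Lemma~\ref{l11}, applied to the simplifier $\mathscr{C}_k$ for $N_k$, produces a disk $E\subset\partial_k^+$ with $\partial E=\gamma$. The sphere $S:=D\cup D'\cup E$, perturbed into $N_k^\circ$, is then embedded and separating by Proposition~\ref{p5}, and $E$ exhibits $l_k'^+$ as isotopic to $l_k^+$ in $\partial_k^+$.

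From here the equivalence follows as in Lemma~\ref{l10}: cutting $N_k$ along $S$ and capping with $3$-disks yields a decomposition $N_k=N_k'\#M$ with $M$ closed and $\mathscr{C}_k\cap M=\emptyset$, and sliding $C_k$ across the capping ball converts it into $C_k'$. Thus both $\mathscr{C}_k$ and $\mathscr{C}_k'$ simplify $N_k$ to manifolds differing only in which component absorbs the summand $M$, and since all components of $N_0$ save one are $S^2\times I$, irreducibility forces $N_0'\doteq N_0$. Finally, because $l_k'^+$ is isotopic to $l_k^+$ in $\partial_k^+$ across $E$, the complement $\partial^+\setminus\partial^+\mathscr{C}'$ remains a union of punctured spheres (condition \ref{2}), and $l_k'^+$ bounds a disk in $\partial_k^+$ if and only if $l_k^+$ does; hence reducedness of $\mathscr{C}$ is inherited by $\mathscr{C}'$.

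The main obstacle is the production and control of the capping disk $E$. Because $l_1\subset l_k^+$, the curve $\gamma$ is not a priori disjoint from $\partial^+\mathscr{C}$, so Lemma~\ref{l11} cannot be invoked in $N$ itself; one must first pass to $N_k$, cap the punctures of $D$ and $D'$, and push $l_1$ into the correct punctured-sphere component. More delicate still is ensuring that $E$ realizes a \emph{genuine} isotopy from $l_k^+$ to $l_k'^+$: the disk supplied by Lemma~\ref{l11} must be shown to contain no other boundary curve $l_i^+$ in its interior, equivalently that $\gamma$ bounds a disk within $P$ rather than surrounding punctures of $P$. Establishing this innermost/thin-bigon property for $E$, so that the boundary change crosses none of the remaining simplifier curves and therefore cannot create a curve that newly bounds a disk, is the key point on which the verification of \ref{2} and of reducedness rests.
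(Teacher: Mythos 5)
You have reconstructed the paper's route almost exactly---pass to $N_k$ where $D$ and $D'$ become genuine disks meeting along $l_2$, cap the closed curve $l_1\cup l_1'$ (after a pushoff of $l_1$ off $l_k^+$) by a disk $E\subset\partial_k^+$ via Lemma~\ref{l11}, form the sphere $S=D\cup E\cup D'$, and perform a Lemma~\ref{l10}-type replacement---but your write-up has a genuine gap exactly where your last paragraph says it does: you never establish that $E$ is disjoint from $C_1,\ldots,C_k$ (equivalently, that no $l_i^+$ lies in $E^\circ$ and no cylinder enters the region cut off by $S$). This is not a refinement that can be deferred: your middle paragraph already uses it when asserting $N_k=N_k'\#M$ with $M$ \emph{closed} and $\mathscr{C}_k\cap M=\emptyset$, and, as you yourself note, the verification of \ref{2} and of reducedness rests on it. Note also that Proposition~\ref{p5} alone does not even give that $M$ is closed; it only says $S$ separates, not which side contains the boundary.

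The paper closes both points with a short containment argument that you should supply. Perturb $D$ slightly off $C_k$, so that $S$ is disjoint from $C_k$; since $C_k$ joins $\partial_k^+$ to $\partial_k^-$, the sphere $S$ cannot separate the positive from the negative boundary, so the component of $N_k\setminus S$ missing $\partial_k^+\amalg\partial_k^-$ is a well-defined interior (and $M$ is closed). If some $C_i$ with $1\leq i\leq k$ met $E$, it would enter this interior, and to reach $\partial_k^-$ it would have to cross $S$ again through $S\setminus E=D\cup D'$; but $(D')^\circ\cap\mathscr{C}=\emptyset$ by hypothesis, and $D\cap C_i=\emptyset$ because $C_i$ can meet $C_k$ only along generating curves of $C_k$, which are essential on $C_k$ and hence cannot lie inside the punctured disk $D$ (while $\partial D=l_1\cup l_2$ avoids them by hypothesis). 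This contradiction yields $E\cap\mathscr{C}_k=\emptyset$, which is precisely the ``innermost'' property you flag. With it in hand, the paper also streamlines your final step: rather than sliding $C_k$ across the capping ball, it observes that $D'\cup E$ is a disk in $N_k$ with the same boundary $l_1\cup l_2$ as $D$ and with interior disjoint from $\mathscr{C}_k$, so Lemma~\ref{l10} applies verbatim to give an equivalent reduced simplifier in $N_k$; since the punctures of $D'$ are essential curves on $C_j$ for $j>k$, this descends to the simplifier $\mathscr{C}'$ in $N$, with $\partial^+C_k'$ obtained from $l_k^+$ by sliding over the $l_j^+$, $j>k$, from which \ref{2} and the preservation of reducedness follow as you intended.
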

\begin{proof}
		In $N_k$, $D$ and $D'$ are disks without punctures. Thus, $l_1\cup l_1'$ bounds the disk $D\cup D'\subset N_k$. Furthermore, $l_1\cup l_1'\subset\partial^+_k$ and is disjoint from $l_i^+$, $1\leq i\leq k$. From Lemma \ref{l11}, $l_1\cup l_1'$ bounds a disk $D''\subset\partial_k^+$. Set $S=D\cup D''\cup D'$. $S$ is a sphere disjoint from $C_k$. Thus, $S$ cannot separate $\partial^+_k$ and $\partial^-_k$. The connected component of  $N_k\setminus S$ which has $S$ on the boundary  and does not contain $\partial^+_k\amalg\partial^-_k$ is called the interior of $S$. $D''$ is disjoint from $C_i$, $1\leq i\leq k$. Otherwise, $C_i$ intersects $S$ in $D''$ and enters the interior of $S$. So, to reach $\partial^-_k$, $C_i$ should intersect $S$ somewhere in $S\setminus D''$. Since $D$ and $D'$ are  disjoint from $C_i$, this cannot happen.\\
		
		In $N_k$, remove $D$ from $C_k$ and replace it with $D'\cup D''$, to obtain $C'_k$. By Lemma \ref{l10}, $\mathscr{C}$ and the new collection $\mathscr{C}'$ of (punctured) cylinders are two equivalent simplifiers for $N_k$. Since the punctures of $D'$ are essential curves on $C_i$, for $i>k$, $\mathscr{C}'$  is also a simplifier in $N$ and, as stated above, it is equivalent to $\mathscr{C}$. Note that $=\partial^+C'_k$ is obtained by sliding $l_k^+$ over $l_i^+$, $i>k$.
\end{proof}
	
	With $N$ and $\mathscr{C}$ as above,  let $C$ be a cylinder in $N$ with 	$l^\pm=\partial^\pm C\subset\partial^\pm$,  $C^\circ\subset N^\circ$ and  $l^\pm\cap \partial^\pm\Cscr =\emptyset$. So, there are subsurfaces $\Sigma_{l^\pm}\subset\partial^\pm$ such that $l^\pm$ are among the  boundary components of $\Sigma_{l^\pm}$ and the other boundary components of $\Sigma_{l^\pm}$ are in $\partial^\pm\Cscr$. Let us further assume that {$l_i^+\subset \Sigma_{l^+}$ if and only if $l_i^-\subset\Sigma_{l^-}$}. Let $C$ be such that if $l_i^+\subset\Sigma_{l^+}$, then $C_i$ is a cylinder without punctures. Furthermore, if $C$ intersects $C_j$, then $l_j^+\cap\Sigma_{l^+}=\emptyset$ and $C\cap C_j$ consists of essential curves on $C$ which are the boundaries of (punctured) disks in $C_j$ and are disjoint from the generating curves on $C_j$. We call $\Sigma_{l^\pm}$ the surfaces associated with $l^\pm$. Define $\mathscr{C}'=\left(C_i'\right)_{i=1}^{n+1}$ by setting $C'_{n+1}=C$, while $C'_i$ is obtained as follows for $i\leq n$: if $D_j^i$, $1\leq j\leq k_i$, are (punctured) disks in $C_i$ such that $\partial D_j^i\subset C_i\cap C$, and all the curves in $C_i\cap C$ are in one of $D_j^i$, $1\leq j\leq k_i$, then, $C'_i$ is obtained from $C_i$ by removing $(D_j^i)^\circ$, $1\leq j\leq k_i$. It is clear that $\partial^+C'_i=l_i^+$. Let $N'_{i}=N[C_{i+1}',\ldots,C_{n+1}']$.
	
\begin{lemma}\label{l13}
		Let $N$, $\mathscr{C}$ and $\mathscr{C}'$ be as above. Then $\mathscr{C}'$ is a simplifier for $N$ and is equivalent to $\mathscr{C}$. If $l_i^+$ bounds a disk in $\partial^+N_i'$, it also bounds a   disk in $\partial^+_i$.
\end{lemma}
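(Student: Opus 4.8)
The plan is to verify the simplifier axioms \ref{2}, \ref{6}, \ref{7} for $\mathscr{C}'$, to establish the equivalence $N[\mathscr{C}']\doteq N[\mathscr{C}]$, and then to settle the disk statement and the preservation of reducedness. By Remark \ref{r2.2} I may assume $N$ is connected. First I would record the elementary facts: by construction $C'_1,\dots,C'_n$ are disjoint from $C=C'_{n+1}$, since the disks $D^i_j$ along which the $C_i$ met $C$ have been excised, so $C$ may legitimately be removed first. Condition \ref{2} holds because $l^\pm$ is disjoint from $\partial^\pm\Cscr$ and is a boundary component of the associated surface $\Sigma_{l^\pm}$ whose remaining boundary lies on $\partial^\pm\Cscr$, so that $\partial^\pm\Cscr'=\partial^\pm\Cscr\cup l^\pm$ still cuts $\partial^\pm$ into punctured spheres. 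For \ref{6} I would check that each surviving puncture of $C_i$ remains essential on the later $C'_k$, while every new boundary curve $\partial D^i_j$ created by the excision is, by hypothesis, essential on $C=C'_{n+1}$; excising a disk whose boundary avoids the generating curves keeps the remaining generating curves disjoint and essential.

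The main step, and the one I expect to be hardest, is \ref{7} together with $N[\mathscr{C}']\doteq N[\mathscr{C}]$. Here I would remove $C$ first and compare the two removal sequences. After $C$ is removed, each intersection curve $\partial D^i_j$ is essential on $C$ and hence bounds a disk in $N[C]$; capping $C'_i$ along these disks produces a punctured cylinder that differs from $C_i$ only by the moves of Lemmas \ref{l10} and \ref{l12} --- a replacement of an interior disk, respectively a sliding of punctures onto cylinders of larger index --- each of which preserves the equivalence class of the simplifier. It follows that removing $(C'_1,\dots,C'_n)$ from $N[C]$ produces the same irreducible manifold as removing $\mathscr{C}$ from $N$, up to trivial components. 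In the closed-manifold language of Remark \ref{r0}, adding $C$ contributes exactly one further separating sphere in $\overline N_{\mathscr{C}'}$, and since $C$ carries no punctures this sphere only splits off copies of $S^3$; the irreducibility characterization of \ref{7} then transfers from $\mathscr{C}$ to $\mathscr{C}'$, giving both \ref{7} and $N[\mathscr{C}']\doteq N[\mathscr{C}]$. The delicate point is the bookkeeping: excising $D^i_j$ also deletes from the later cylinders the generating curves that punctured it, and one must check that after $C$ is removed the capped $C'_i$ reduce to the original $C_i$ without creating or destroying a non-$S^3$ summand.

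For the disk statement I would compare $\partial^+N'_i=\partial^+N[C'_{i+1},\dots,C'_{n+1}]$ with $\partial^+_i=\partial^+N[\mathscr{C}^i]$. Since $\partial^+C'_j=l_j^+=\partial^+C_j$ and $l^+$ is disjoint from every $l_j^+$, removing cylinders compresses $\partial^+$ along the same disjoint curves in both cases, with $\partial^+N'_i$ compressed once more along $l^+$; as disjoint compressions commute, $\partial^+N'_i$ is obtained from $\partial^+_i$ by compressing along $l^+$. The simplifier conditions force every component of $\partial^+_i$ to be a punctured sphere, so $l^+$ separates and the question reduces to whether the planar region that $l_i^+$ cuts off contains the $l^+$-boundary: if not, $l_i^+$ already bounds a disk in $\partial^+_i$, and if so, $l_i^+$ is isotopic in $\partial^+_i$ to $l^+$, whence Lemma \ref{l11} applied to $l^+$ (which is disjoint from $\partial^+\Cscr_i$) yields the required disk for $l_i^+$. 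Preservation of reducedness then follows: for $i\le n$ the contrapositive of the disk statement shows that $l_i^+$ bounds no disk in $\partial^+N'_i$ when $\mathscr{C}$ is reduced, for $C=C'_{n+1}$ the associated-surface hypothesis prevents $l^\pm$ from bounding a disk in $\partial^\pm$, and the arguments for the negative boundary are identical. This disk statement is the secondary difficulty, precisely because $l_i^+\in\partial^+\Cscr'$, so Lemma \ref{l11} cannot be invoked for $l_i^+$ itself and must be routed through $l^+$.
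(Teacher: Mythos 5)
Your checklist is the right one, but both load-bearing steps fail as written. For the equivalence and condition \ref{7}, your plan to ``remove $C$ first'' and compare the capped $C_i'$ with $C_i$ via Lemmas \ref{l10} and \ref{l12} does not parse: those lemmas replace a (punctured) disk of a simplifier by another disk with the same boundary \emph{inside one fixed ambient manifold}, whereas $C_i$ lives in $N$ and the capped $C_i'$ lives in $N[C]$ --- the caps (copies of $D^{C}_1\times\{t\}$) do not exist in $N$, and $C_i$ does not survive into $N[C]$, so there is no manifold in which the two differ by such moves. The appeal to Remark \ref{r0} likewise begs the question: $\overline{N}_{\Cscr'}$ is not $\overline{N}_{\Cscr}$ with one extra sphere, since the solid tori $B^\pm_{\Cscr'}$ are attached along $\{l_j^\pm\}\cup\{l^\pm\}$ rather than $\{l_j^\pm\}$, so the closed manifold itself changes; and the claim that the sphere coming from $C$ ``only splits off copies of $S^3$'' is precisely condition \ref{7} for $\Cscr'$, the thing to be proved. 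The paper's proof instead inducts on $k=|\Sigma_{l^+}\cap\partial^+\Cscr|$. For $k=0$, $l^\pm$ bound disks $D^\pm$ disjoint from $\partial^\pm\Cscr$, $S=D^+\cup C\cup D^-$ is a separating sphere, and an analysis of how $\partial D^\pm$ are identified with $\partial D_i^{C}\times\{\pm1\}$ (Figure \ref{figl13-2}) forces $N[C]$ to be disconnected, $N=N''\#\overline{M}$; then Lemma \ref{l10} pushes $\Cscr$ off $S$, and Remark \ref{r2.4} together with \ref{7} for $\Cscr$ yields the dichotomy $\overline{M}=S^3$ or $N_0''=S^3$, giving $N[\Cscr']\doteq N[\Cscr]$. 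For $k>0$ one reorders so that $l_n^+\subset\Sigma_{l^+}$ (so $C_n$ is unpunctured), removes $C_n$, and applies the induction hypothesis in $N_{n-1}$. Your proposal contains neither the disconnection argument nor this induction; note that for $k>0$ your starting picture is unavailable, since $l^\pm$ do not bound disks in $\partial^\pm$ and removing $C$ first creates no sphere boundary.

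The disk statement has a false step and a vacuous citation. The components of $\partial_i^+$ are closed surfaces, generally of positive genus --- condition \ref{2} only makes the complement of the remaining curves $l_1^+,\dots,l_i^+$ planar --- so $l^+$ need not separate $\partial_i^+$. More seriously, in the only nontrivial case (a disk for $l_i^+$ in $\partial^+N_i'$ containing a surgery cap, so that $l_i^+$ is isotopic to $l^+$ in $\partial_i^+$), your invocation of Lemma \ref{l11} requires $l^+$ to bound a disk in the $3$-manifold $N_i$, which is not given; indeed, by the contrapositive of Lemma \ref{l11}, if $l^+$ bounded a disk in $N_i$ it would already bound one in $\partial_i^+$, so in exactly the problematic case that lemma yields nothing. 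The statement cannot be a surface-topology fact at all: surgery along $l^+$ makes every curve isotopic to $l^+$ bound a disk in $\partial^+N_i'$ whether or not it bounds one in $\partial_i^+$. The paper obtains the statement (in contrapositive form) as a byproduct of the same decomposition: if $l_i^+$ bounds no disk in $\partial_i^+$, then $N_i'=N_i''\cup M$ and $N_i=N_i''\#\overline{M}$, whence $l_i^+$ bounds no disk in $\partial^+N_i'$, with the general case again handled by the induction on $k$. Your verifications of \ref{2} and \ref{6} are fine, but they are the easy part.
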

\begin{proof}
		If $n=0$, the claim follows from Remark \ref{r1}. For $n>0$, we use an induction on $k=|\Sigma_{l^+}\cap\partial^+\Cscr|$. If $k=0$, $l^\pm$ bounds a disk $D^\pm\subset\partial^\pm$ where $D^\pm\cap\partial^\pm\Cscr=\emptyset$. Then $S=D^+\cup C\cup D^-$ is a separating sphere. By Remark \ref{r2.2}, we can assume $N$ is connected. $\partial^+$ and $\partial^-$ are on the same side of $S$; otherwise, each cylinder $C_i$ intersects $C$ in at least one essential curve on $C_i$ which is not possible.\\
		
		If $N'=N[C]$, it follows that $\partial^\pm N'=\partial_C^\pm\amalg S^\pm$, where $S^\pm$ are spheres which include a copy of $D^\pm$, denoted again by $D^\pm\subset S^\pm$. Identify $\partial D^+$ with $\partial D_1^{C}\times\{1\}$ and assume that $S^+=D^+\cup(D_1^{C}\times\{1\})$, without loss of generality. Let us first assume that $\partial D^-$ is identified with $\partial D_2^{C}\times\{-1\}$ and $S^-=D^-\cup(D_2^{C}\times\{-1\})$. Then, $D^-\cup\partial_C^-$ and $D^+\cup\partial_C^+$ are on different sides of $S$ in  $N$ (Figure \ref{figl13-2}),  which is not possible. Therefore, $\partial D^-$ is identified with $\partial D_1^{C}\times\{-1\}$, $N'$ is disconnected and $N'=N''\cup M$, where $M$ has two sphere boundary components. Similar to Remark \ref{r2.3}, we find $N=N''\# \overline{M}$.\\

		\begin{figure}
			\def\svgwidth{7.5cm}
			\begin{center}
				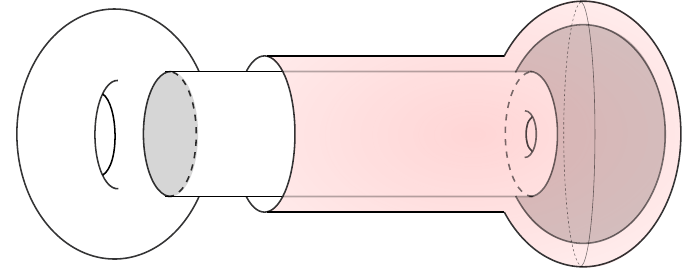
				\caption{If $\partial D^-$ is identified with $\partial D_2^{C}\times\{-1\}$, $D^-\cup\partial_C^-$ and $D^+\cup\partial_C^+$ are on two different sides of $S$ in the manifold $N$.}
				\label{figl13-2}
			\end{center}
		\end{figure}
		
		Set $m=|S\cap\mathscr{C}|$. Let $l'\in S\cap\mathscr{C}$ be the boundary of a disk $D'\subset S$ with $(D')^\circ\cap\mathscr{C}=\emptyset$. If $l'\subset C_i$, it bounds a punctured disk $D''\subset C_i$ (note that if $C_i$ intersects $S$, it intersects $C\subset S$). From Lemma \ref{l10}, if we remove $D''$ from $C_i$ and replace it with $D'$, we obtain an equivalent simplifier $\mathscr{C}_1$ with $|\mathscr{C}_1\cap S|<m$. Repeating this process, we obtain an equivalent simplifier $\mathscr{C}''$ which is disjoint from $S$. Thus $\mathscr{C}''\cap\overline{M}=\emptyset$. From Remark \ref{r2.4}, $\mathscr{C}''$ is a simplifier for $N''$ and if $N''_0=N''[\Cscr'']$, we have $N_0=N_0''\# \overline{M}$. One should note that $\mathscr{C}'=\mathscr{C}''$ in $N''$. If $N'_0=N[\Cscr']$, we have $N'_0=N_0''\cup M$. If $\overline{M}=S^3$, we have $N'_0\doteq N_0$, while in case $\overline{M}\neq S^3$ we find $N_0''=S^3$. Thus, $N'_0\doteq N_0$. This means that $\mathscr{C}'$ is a simplifier for $N$ and is equivalent to $\mathscr{C}$. Suppose that $l_i^+$ does not bound a disk in $\partial^+_i$. By the above discussion, $N'_i=N_i''\cup M$ and $N_i=N_i''\# \overline{M}$, where $N'_i=N[(\Cscr')^i]$. So, it is clear that $l_i^+$ does not bound a disk in $\partial^+N'_i$.\\
		
		Let us now assume that  $k>0$. According to Remark \ref{r2}, we can assume $l_n^+\subset\Sigma_{l^+}$. In $N_{n-1}$, $\Sigma_{l^+}$ changes to a subsurface of $\partial^+_{n-1}$, again denoted by $\Sigma_{l^+}$, which is the surface associated with $l^+$ in $\partial^+_{n-1}$. Furthermore, $|\Sigma_{l^+}\cap(\cup _{i=1}^{n-1}l_i^+)|<k$. Therefore, by the induction hypothesis, there is a simplifier $\mathscr{C}'=(C'_1,\ldots,C'_{n-1},C)$ in $N_{n-1}$ equivalent to $\mathscr{C}_{n-1}$. Adding $C_n$ to $N_{n-1}$, we obtain a simplifier $\mathscr{C}''=(C''_1,\ldots,C_n'',C)$ with $C''_n=C_n$, and $C'_i=C''_i[C_n]$, $i<n$. By the induction hypothesis, each $C'_i$, $i<n$, is obtained from $C_i$ by removing $(D_j^i)^\circ$. So, it is clear that each $C''_i$ is also obtained from $C_i$ by removing $(D_j^i)^\circ$ in $N$.
\end{proof}
\begin{remark}\label{r14}
Let $\mathscr{C}=\left(C_i\right)_{i=1}^n$ be a reduced simplifier as before and $\Sigma_{l^+}\subset\partial^+$ be a cylinder with $\partial\Sigma_{l^+}=l^+\cup l_n^+$, $C'_n\subset C_n$ be a subcylinder with $\partial C'_n=l^+_n\cup l'$ (where $l'$ is an essential curve on $C_n$) and $C'_n$ does not intersect the generating curves and  $C'\subset N$ be a cylinder with $\partial C'=l^+\cup l'$, $(C')^\circ\subset N^\circ$ and $(C')^\circ\cap\mathscr{C}=\emptyset$.     Let $C$ be the cylinder obtained from $C_n$ by replacing $ C'_n$ with $ C'$ (c.f. the construction before Lemma \ref{l13}).  Let $\mathscr{C}'=(C_1,\ldots,C_n,C)$ and $\mathscr{C}''=(C_1,\ldots,C_{n-1},C)$. It is clear that $\mathscr{C}''$ is a reduced simplifier. By Lemma \ref{l13}, $\mathscr{C}'$ is equivalent to $\mathscr{C}$. In $N'_{n-1}=N[C]$, $l^\pm_n$ are the boundaries of disks $D^\pm$, disjoint from $l_j^\pm$, $1\leq j<n$. By Lemma \ref{l13}, $\Cscr'[C]$ is equivalent to $\Cscr''[C]$ in  $N'_{n-1}$. Thus,  $\mathscr{C}'$ is equivalent to $\mathscr{C}''$ in $N$. Hence, $\mathscr{C}$ is equivalent to $\mathscr{C}''$. 
	\end{remark}

	\subsection{Nice intersections} 	
	We assume that $N$ is  connected and has a reduced simplifier $\mathscr{C}=\left(C_i\right)_{i=1}^n$,  unless stated otherwise. 
Let $C$ be a cylinder without punctures in $N$ with  $l^\pm=\partial^\pm C$  which intersects $\mathscr{C}$ transversely. $C\cap C_i$ is a $1$-dimensional submanifold  of $C$ and $C_i$. Each component of this submanifold can be a closed $1$-manifold or a $1$-manifold with boundary. We refer to the boundary components of a $1$-manifold with boundary as its {\emph{legs}}. $l\in C\cap C_i$ is called a \emph{simple bordered curve}, or a \emph{SBC} for short, if it has one leg on  $l^+$ and one on $l^-$. We call $C\cap\mathscr{C}$ of type $\textrm{I}$ if it does not contain any SBCs and we call $C\cap\mathscr{C}$ of type $\textrm{II}$ otherwise. It  is clear that if there is a SBC in $C\cap \mathscr{C}$,  none of the curves in $C\cap\mathscr{C}$ are essential curves on $C$. Furthermore, if a curve in $C\cap\mathscr{C}$ is an essential curve on $C$,  none of the curves in $C\cap\mathscr{C}$ are SBCs.

\begin{definition}\label{d14}
		We say that the (transverse) intersection of a cylinder $C$ with a punctured cylinder $C_i$ in  $\Cscr=\left(C_i\right)_{i=1}^n$  is {\emph{almost nice}} if every generating curve on $C_i$ separates the punctures on $C_i$ from $\partial_i^-$ and each component $l\in C\cap C_i$ which intersects the generating curves on $C_i$,  has at least one leg  on $\partial^-$. Moreover, if $l$ has exactly only one leg on $\partial^-$, then it intersects each generating curve on $C_i$ exactly once. The intersection of $C$ with $C_i$ is called {\emph{nice}} if it is almost nice and each component $l\in C\cap C_i$ which intersects the generating curves on $C_i$,  has exactly one leg on $\partial^-$, while there are no punctured disks or separating punctured cylinders  $D\subset C_i$ with $l^\circ\subset D^\circ$ and $\partial D\cap C\cap C_i^\circ=\emptyset$. Here, by a separating punctured cylinder $D$ on $C_i$ we mean a punctured cylinder so that in $C_i\setminus D^\circ$, $\partial_i^+$ and $\partial_i^-$ are in different connected components. 
\end{definition}

\begin{figure}
\def\svgwidth{9.5cm}
\begin{center}
\begingroup%
  \makeatletter%
  \providecommand\color[2][]{%
    \errmessage{(Inkscape) Color is used for the text in Inkscape, but the package 'color.sty' is not loaded}%
    \renewcommand\color[2][]{}%
  }%
  \providecommand\transparent[1]{%
    \errmessage{(Inkscape) Transparency is used (non-zero) for the text in Inkscape, but the package 'transparent.sty' is not loaded}%
    \renewcommand\transparent[1]{}%
  }%
  \providecommand\rotatebox[2]{#2}%
  \newcommand*\fsize{\dimexpr\f@size pt\relax}%
  \newcommand*\lineheight[1]{\fontsize{\fsize}{#1\fsize}\selectfont}%
  \ifx\svgwidth\undefined%
    \setlength{\unitlength}{1091.01498358bp}%
    \ifx\svgscale\undefined%
      \relax%
    \else%
      \setlength{\unitlength}{\unitlength * \real{\svgscale}}%
    \fi%
  \else%
    \setlength{\unitlength}{\svgwidth}%
  \fi%
  \global\let\svgwidth\undefined%
  \global\let\svgscale\undefined%
  \makeatother%
  \begin{picture}(1,0.31396659)%
    \lineheight{1}%
    \setlength\tabcolsep{0pt}%
    \put(0,0){\includegraphics[width=\unitlength,page=1]{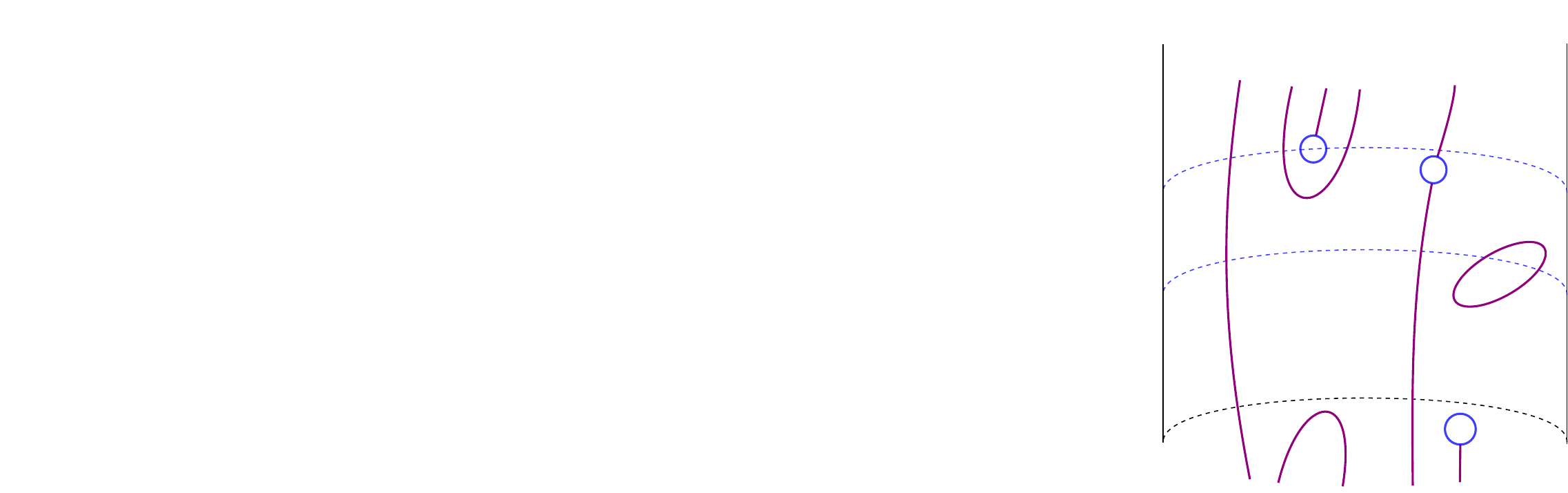}}%
    \put(0.6811653,0.09351812){\color[rgb]{0,0,0}\makebox(0,0)[lt]{\lineheight{0}\smash{\begin{tabular}[t]{l}$C_i$\end{tabular}}}}%
    \put(0,0){\includegraphics[width=\unitlength,page=2]{Nice.pdf}}%
    \put(-0.00124866,0.08996611){\color[rgb]{0,0,0}\makebox(0,0)[lt]{\lineheight{0}\smash{\begin{tabular}[t]{l}$C_i$\end{tabular}}}}%
    \put(0,0){\includegraphics[width=\unitlength,page=3]{Nice.pdf}}%
  \end{picture}%
\endgroup%
\quad\quad\quad\quad
\caption{Left: $C\cap C_i$ is almost nice, but not nice. Right: $C\cap C_i$ is  nice.}
\label{fig5}
\end{center}
\end{figure}

\begin{lemma}\label{r16}
Let $C$ and $\Cscr=\left(C_i\right)_{i=1}^n$ be as before. Suppose that the intersection of $C$ with $C_j$ is nice and of type $\textrm{I}$, for $j_0\leq j\leq n$. Then for $j_0\leq j\leq n$, every curve $l\in C\cap C_j$ is disjoint from the generating curves on $C_j$ and has no legs on the punctures of $C_j$. 
\end{lemma}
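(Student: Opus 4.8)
The plan is to run a downward induction on the index $j$, from $j=n$ down to $j=j_0$, proving simultaneously that no component of $C\cap C_j$ meets a generating curve on $C_j$ and that no component has a leg on a puncture of $C_j$. The starting point is a bookkeeping of where a leg can sit. Since $C$ is an honest cylinder, its only boundary is $l^+\cup l^-\subset\partial^\pm$, so an endpoint of an arc $l\in C\cap C_j$ lies either on $\partial C$ or on $\partial C_j$; tracing these out under transversality, a leg occurs at a point of $l^+\cap l_j^+$ (hence on $\partial^+$, and on $l^+$), at a point of $l^-\cap l_j^-$ (hence on $\partial^-$, and on $l^-$), or on a puncture of $C_j$. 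I would also record, from property~\ref{6}, that every puncture of $C_j$ arises as a generating curve on a later cylinder $C_{j'}$ with $j'>j$; in particular $C_n$ has no punctures. Crucially, since such a puncture $p$ sits in the interior of $C_{j'}$ as a generating curve, each point of $C\cap p$ is at once a leg (of an arc of $C\cap C_j$) on the puncture $p$ and a transverse crossing of the generating curve $p$ by an arc of $C\cap C_{j'}$.

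For the inductive step at index $j$ (the base case $j=n$ being the identical argument, with ``$C_n$ has no punctures'' replacing the inductive hypothesis), I first rule out legs on punctures. Each puncture $p$ of $C_j$ is a generating curve on some $C_{j'}$ with $j'>j$, and since $j'\ge j_0$ the intersection $C\cap C_{j'}$ is nice of type~$\textrm{I}$; by the inductive hypothesis no arc of $C\cap C_{j'}$ meets a generating curve on $C_{j'}$, so $C\cap p=\emptyset$. Letting $p$ range over the punctures of $C_j$ shows that no component of $C\cap C_j$ has a leg on a puncture. I then rule out intersections with generating curves: if some $l\in C\cap C_j$ met a generating curve on $C_j$, niceness forces $l$ to have exactly one leg on $\partial^-$; its other leg, not being on $\partial^-$ and—by the step just completed—not on a puncture, must lie on $\partial^+$, hence on $l^+$. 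Then $l$ would have one leg on $l^+$ and one on $l^-$, i.e.\ it would be an SBC, contradicting that $C\cap\mathscr{C}$ is of type~$\textrm{I}$. This closes the induction and proves the lemma.

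The main obstacle I anticipate is not any single estimate but the correct set-up of the correspondence between a leg on a puncture of $C_j$ and a crossing of the corresponding generating curve on the later cylinder $C_{j'}$, together with choosing the downward direction of induction so that this correspondence feeds the conclusion already established at the higher index $j'$ into the proof at index $j$. Once that correspondence is in place, the role of type~$\textrm{I}$ is clean: it excludes the single remaining configuration, an arc joining $\partial^+$ to $\partial^-$, and niceness is used only to guarantee that any component meeting a generating curve is an arc with exactly one endpoint on $\partial^-$.
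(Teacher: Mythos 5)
Your proof is correct and follows essentially the same route as the paper: a reverse induction on $j$, with the base case $j=n$ settled because $C_n$ has no punctures, and the inductive step exploiting the correspondence between a leg on a puncture of $C_j$ and a crossing of the same curve, viewed as a generating curve, by an arc of $C\cap C_{j'}$ for some $j'>j$, with niceness forcing exactly one leg on $\partial^-$ and type $\textrm{I}$ excluding the SBC configuration. The only difference is cosmetic: you dispose of legs on punctures first and then of generating-curve intersections, while the paper argues in the opposite order, but the content is identical.
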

\begin{proof}
We use reverse induction on $j$. If $l\in C\cap C_n$ intersects the generating curves on $C_n$, $l$ has exactly one leg on $\partial^-$. Since $C_n$ has no punctures, the other leg of $l$ is on $\partial^+$, which means $l$ is a SBC. Since $C\cap C_n$ is of type $\textrm{I}$, this cannot happen. Suppose now that the claim is true for every $l\in C\cap C_k$, for every $j_0\leq j<k\leq n$. To prove the inductive step, let $l\in C\cap C_j$ intersect the generating curves on $C_j$. So, $l$ has exactly one leg on $\partial^-$. Since $C\cap C_j$ is of type $\textrm{I}$, the other leg of $l$ is on a puncture $l_k$ of $C_j$ where $l_k$ is a generating curve on $C_k$, for some $k>j$. Thus, there is a curve $l'_k\in C\cap C_k$ such that $l'_k$ intersects the generating curve $l_k$  on $C_k$.  By the inductive hypothesis, this is impossible. A similar argument proves that there is  no $l\in C\cap C_j$ such that  $l$ has a leg on a puncture of $C_j$ (Figure \ref{figr16}), completing the proof by induction. 
\end{proof}		
		
		\begin{figure}
	\def\svgwidth{10cm}
		\begin{center}
			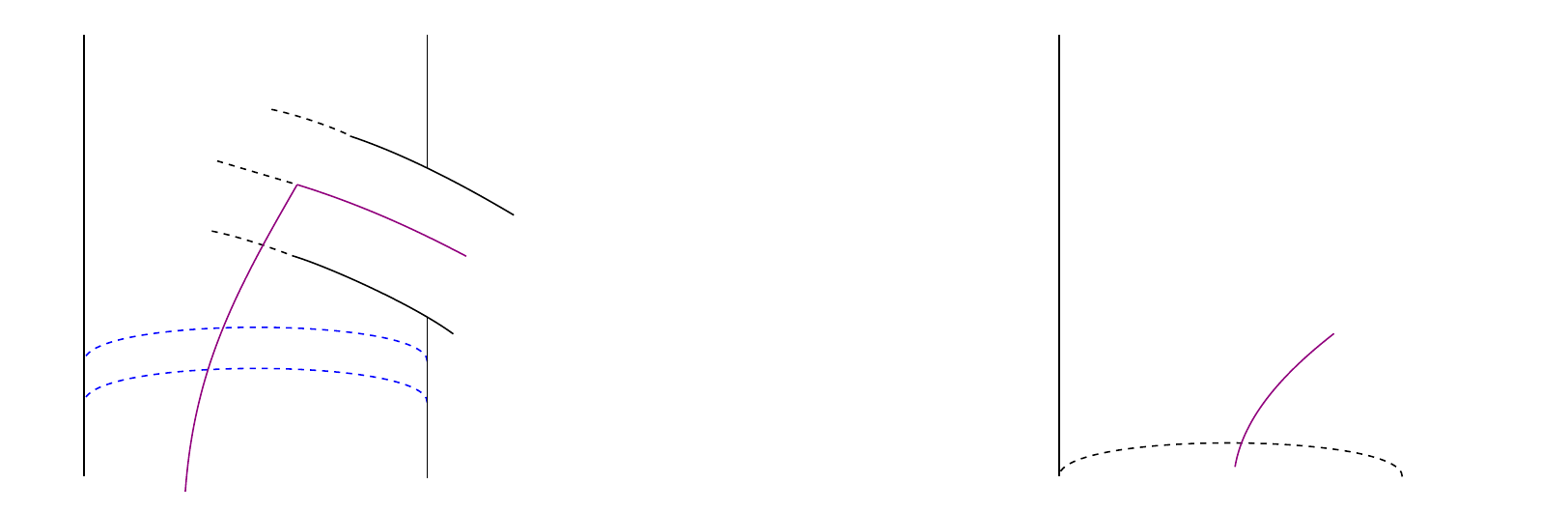
	\caption{$l\in C\cap C_j$ intersects the (blue) generating curves on $C_j$  and has one leg on $\partial^-$ and one on a puncture $l_k$ of $C_j$ (left).  $l\in C\cap C_j$ has a leg on a puncture $l_k$ of $C_j$. $l_k$ is a generating curve on a $C_k$ and there is a curve $l'_k\in C\cap C_k$ which intersects the generating curve $l_k$ (right). }
	\label{figr16}
	\end{center}
	\end{figure}

	\begin{lemma}\label{l17}
		Suppose that $C\cap C_k$ is nice, for $k>j$. There is an equivalent  reduced simplifier $\mathscr{C}'=\{C'_i\}_{i=1}^n$, with nice intersections with $C$ and of the same type as $\mathscr{C}$, such that $C'_k$ is the same as $C_k$  for $k>j$. 
	\end{lemma}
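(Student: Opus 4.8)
The plan is to prove the lemma by induction on $j$. When $j=0$ the hypothesis already guarantees that $C\cap C_k$ is nice for every $k$, so $\mathscr{C}'=\mathscr{C}$ works. For the inductive step I would assume the statement for $j-1$ and reduce the whole problem to a single-level cleanup: it suffices to produce an equivalent reduced simplifier $\mathscr{C}''$, of the same type as $\mathscr{C}$, with $C''_k=C_k$ for $k>j$ and with $C\cap C''_j$ also nice. Indeed, $\mathscr{C}''$ then has $C\cap C''_k$ nice for all $k\geq j$, i.e. for all $k>j-1$, so applying the inductive hypothesis to $\mathscr{C}''$ yields the desired $\mathscr{C}'$; since that application keeps the cylinders of index $\geq j$ fixed, in particular $C'_k=C_k$ for $k>j$, as required. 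Thus the entire content is to make the single intersection $C\cap C_j$ nice while leaving $C_{j+1},\dots,C_n$ untouched.

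For this single-level cleanup I would proceed in the order dictated by Definition \ref{d14}. First I would arrange the separation condition, that every generating curve on $C_j$ separates the punctures of $C_j$ from $\partial^-_j$: the generating curves on $C_j$ are exactly the punctures of the lower cylinders $C_i$ ($i<j$) that are essential on $C_j$, so sliding such a curve over a puncture of $C_j$ as in Remark \ref{r-sliding} repositions it while modifying only a cylinder of index $<j$, leaving $C_{j+1},\dots,C_n$ fixed; iterating pushes all generating curves to the $\partial^+_j$ side of the punctures. Next I would delete the inessential components of $C\cap C_j$: a closed curve bounding a disk on $C_j$ whose interior misses the rest of the intersection is capped by an innermost disk and pushed across by Lemma \ref{l10}, strictly lowering $|C\cap C_j|$, and a trivial arc cutting off a punctured disk whose boundary avoids the generating curves is removed the same way (using Lemma \ref{l12} when a boundary curve must also be slid). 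Each such move returns an equivalent reduced simplifier of the same type.

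The heart of the step is the analysis of the arcs $l\in C\cap C_j$ that meet the generating curves. Here I would exploit the niceness of $C\cap C_k$ for $k>j$ through the mechanism of Lemma \ref{r16}: a leg of $l$ on a puncture of $C_j$ would force a curve of $C\cap C_k$ meeting a generating curve on the higher cylinder $C_k$, which niceness there controls, so in type $\textrm{I}$ such legs are excluded outright and in type $\textrm{II}$ the SBC structure pins the legs of $l$ down to $\partial^+$ or $\partial^-$. Using the separation already achieved, I would then isotope $l$ and excise its superfluous intersections by the disk-replacement moves until it has exactly one leg on $\partial^-$ and crosses each generating curve once, and finally remove any remaining separating punctured disk or cylinder $D\subset C_j$ with $l^\circ\subset D^\circ$ and $\partial D\cap C\cap C^\circ_j=\emptyset$ by Lemma \ref{l10} or Remark \ref{r14}. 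One must check that no move creates a new leg shared by $l^+$ and $l^-$, so that the type is preserved, and that reducedness is maintained via the corresponding clauses of the invoked lemmas.

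The main obstacle I anticipate is the bookkeeping that keeps $C_{j+1},\dots,C_n$ fixed throughout. Because the punctures of $C_j$ are precisely the generating curves on the higher cylinders, every alteration of $C_j$ threatens to change that higher data; the resolution is that each move invoked (Remark \ref{r-sliding}, Lemma \ref{l10}, Lemma \ref{l12}, Remark \ref{r14}) acts only on the attaching disks of cylinders of index $\leq j$ and leaves $C_{j+1},\dots,C_n$ unchanged as embedded surfaces, at worst deleting a generating curve from one of them, which cannot destroy the niceness of the unchanged intersection $C\cap C_k$. Verifying this invariance simultaneously with a terminating complexity measure on $C\cap C_j$ (for instance the number of intersection curves, ordered lexicographically with the number of legs on $\partial^-$) and with the preservation of type and reducedness is where the real care is required.
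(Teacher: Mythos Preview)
Your inductive setup is fine, and you correctly identify Remark~\ref{r-sliding} as the relevant tool. But you have misidentified where the work lies, and as a result you load the argument with machinery (Lemmas~\ref{l10}, \ref{l12}, Remark~\ref{r14}, ``isotope $l$ and excise its superfluous intersections'') that the paper does not use and that you do not make precise.

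The paper's proof uses \emph{only} sliding of generating curves. The point you are missing is that the niceness conditions constrain only those components of $C\cap C_j$ that actually meet a generating curve; so instead of modifying the intersection $C\cap C_j$, one simply repositions the generating curves on $C_j$. In type~$\mathrm{I}$, Lemma~\ref{r16} shows no curve of $C\cap C_j$ has a leg on a puncture of $C_j$, and then the generating curves can be slid over the punctures (and perturbed) so that they miss $C\cap C_j$ entirely, making the niceness conditions vacuous. In type~$\mathrm{II}$ one slides so that only curves with exactly one leg on $\partial^-$ meet the generating curves. No disk replacements, no change to $C\cap C_j$, and no use of Lemmas~\ref{l10} or \ref{l12}. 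Your step ``isotope $l$ and excise its superfluous intersections by the disk-replacement moves until it has exactly one leg on $\partial^-$ and crosses each generating curve once'' is therefore both unnecessary and, as written, a genuine gap: you give no mechanism by which those moves force a prescribed leg structure on $l$.

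You also have the effect of sliding on the higher cylinders backwards. Sliding a generating curve of $C_j$ over a puncture $l_k$ creates a \emph{new} generating curve on $C_k$ (for $k>j$), parallel and arbitrarily close to an old one; it does not delete one. The paper's observation is that because the new generating curves can be chosen arbitrarily close to the old ones, the already-nice intersection $C\cap C_k$ remains nice. Your invariance claim ``at worst deleting a generating curve'' would not by itself justify this, even if the direction were right.
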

	
\begin{figure}
	\def\svgwidth{15cm}
		\begin{center}
			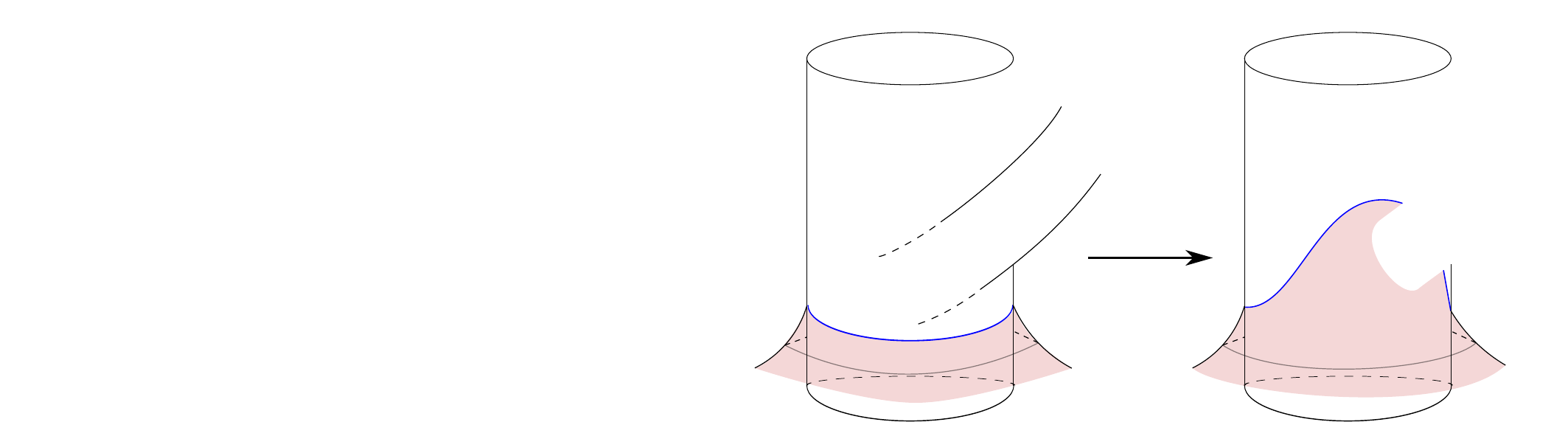
	\caption{After sliding the generating curves over the punctures, one may assume that the generating curves are disjoint from $C\cap C_j$ (left). Sliding the generating curve $l_i$ of $C_j$ on the puncture $l_k$, creates more generating curves on $C_k$, which are arbitrarily close to $l_k$ (right).}
	\label{figl17-2}
	\end{center}
\end{figure}
			
	\begin{proof}
Let us first assume that $C\cap\mathscr{C}$ is of type $\textrm{I}$. According to Lemma \ref{r16}, each $l\in C\cap C_k$, for $k>j$, is disjoint from the generating curves on $C_k$. Since the punctures of $C_j$ are generating curves on $C_k$, for $k>j$, there is no curves $l\in C\cap C_j$ with a leg on a puncture of $C_j$. By sliding the generating curves  on the punctures and a slight perturbation, we can separate the generating curves on $C_j$ from $C\cap C_j$. Therefore, the requirements of Definition \ref{d14} are satisfied (Figure \ref{figl17-2}-left). Sliding the generating curves on the punctures of $C_j$ may create more generating curves on $C_k$, for $k>j$. The new generating curves can be chosen arbitrarily close to the old generating curves on $C_k$ (Figure \ref{figl17-2}-Left). Thus, $C\cap C_k$, for $k>j$, remains nice. If we repeat this process for all the punctured cylinders $C_i$, for $i<j$, we obtain a reduced simplifier $\mathscr{C}'$ which is equivalent to $\mathscr{C}$ and satisfies the conditions of the lemma.\\

		Suppose now that  $C\cap\mathscr{C}$ is of type $\textrm{II}$. As for the intersections of type $\textrm{I}$, after sliding the generating curves of $C_j$ over its punctures, we may assume that every curve $l\in C\cap C_j$ which intersects the generating curves has precisely one leg on $\partial_j^-$ (Figure~\ref{figl17-4}). Again, note that by sliding the generating curves on the punctures of $C_j$, some new generating curves are created on $C_k$, for $k>j$, which can be chosen arbitrarily close to the old generating curves on $C_k$. Thus, $C\cap C_k$ remains nice for $k>j$. Repeating this process  for all the punctured cylinders $C_i$ with $i<j$, we obtain a reduced simplifier that satisfies the conditions of the lemma.  
\end{proof}
\begin{figure}[b]
	\def\svgwidth{9cm}
		\begin{center}
\begingroup%
  \makeatletter%
  \providecommand\color[2][]{%
    \errmessage{(Inkscape) Color is used for the text in Inkscape, but the package 'color.sty' is not loaded}%
    \renewcommand\color[2][]{}%
  }%
  \providecommand\transparent[1]{%
    \errmessage{(Inkscape) Transparency is used (non-zero) for the text in Inkscape, but the package 'transparent.sty' is not loaded}%
    \renewcommand\transparent[1]{}%
  }%
  \providecommand\rotatebox[2]{#2}%
  \newcommand*\fsize{\dimexpr\f@size pt\relax}%
  \newcommand*\lineheight[1]{\fontsize{\fsize}{#1\fsize}\selectfont}%
  \ifx\svgwidth\undefined%
    \setlength{\unitlength}{854.02226772bp}%
    \ifx\svgscale\undefined%
      \relax%
    \else%
      \setlength{\unitlength}{\unitlength * \real{\svgscale}}%
    \fi%
  \else%
    \setlength{\unitlength}{\svgwidth}%
  \fi%
  \global\let\svgwidth\undefined%
  \global\let\svgscale\undefined%
  \makeatother%
  \begin{picture}(1,0.4106354)%
    \lineheight{1}%
    \setlength\tabcolsep{0pt}%
    \put(0,0){\includegraphics[width=\unitlength,page=1]{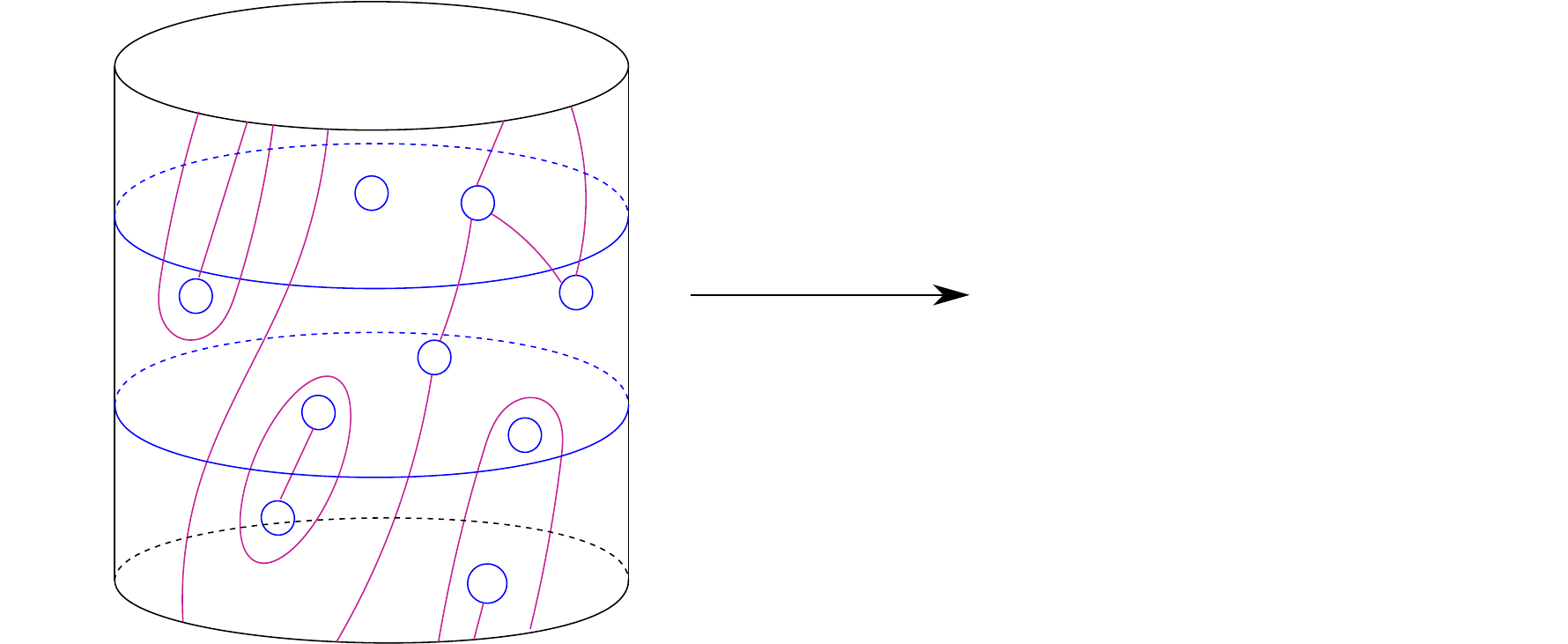}}%
    \put(-0.00159516,0.13236156){\color[rgb]{0,0,0}\makebox(0,0)[lt]{\lineheight{0}\smash{\begin{tabular}[t]{l}$C_j$\end{tabular}}}}%
    \put(0,0){\includegraphics[width=\unitlength,page=2]{l17-4.pdf}}%
  \end{picture}%
\endgroup%

	\caption{After sliding the generating curves on $C_j$ over its punctures,  every curve in $C\cap C_j$ which cuts the generating curves has precisely one leg on $\partial^-_j$.}
	\label{figl17-4}
	\end{center}
\end{figure}

\section{Intersections of type \texorpdfstring{$\textrm{I}$}{Lg}}\label{s-type-1}
\subsection{Removing non-closed intersections and the spheres in \texorpdfstring{$N$}{Lg}}
Let  $\Cscr=\left(C_i\right)_{i=1}^n$ be a simplifier for $N\in\Mfrak$ as before, and $C$ be a cylinder in $N$ so that the transverse intersection of $C$ with $\Cscr$ is  of type $\textrm{I}$. In this section, we prove that $C$ sits in a simplifier $\mathscr{C}'$ for $N$ which is equivalent to $\Cscr$. 
	By Lemma \ref{l17}, we can assume that the intersection of $C$ with $\mathscr{C}$ is nice and by Lemma \ref{r16}, each curve in $C\cap\mathscr{C}$ is either closed, or has both legs on $\partial^+$, or has both legs on $\partial^-$. 

\begin{lemma}\label{l18}
If  the intersection of  $C$ with the reduced simplifier $\mathscr{C}$ is nice and of type $\textrm{I}$, there is an equivalent reduced simplifier which intersects $C$ nicely and their intersection only contains closed curves.
\end{lemma}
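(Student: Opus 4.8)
The plan is to induct on the number of components of $C\cap\mathscr{C}$ that cut off a disk or a half-disk on the cylinder $C$. First I would fix the local picture. By Lemma~\ref{l17} we may assume the intersection is nice, and by Lemma~\ref{r16} (together with type~$\mathrm{I}$) every component of $C\cap\mathscr{C}$ is either a circle or an arc with both legs on $\partial^+$ or both on $\partial^-$; moreover, since each $C_i$ meets $C$ away from the generating curves and the $C_i$ have disjoint interiors, these components are pairwise disjoint on $C$. As $C$ is an annulus, every such arc is boundary-parallel and cuts off a half-disk on $C$, while a circle cuts off a disk on $C$ exactly when it is inessential on $C$ (an essential circle is parallel to $l^\pm$ and cuts nothing off). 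Call the arcs together with the inessential circles the \emph{trivial} components and let $m$ be their number; if $m=0$ there are no arcs and we are done, so assume $m>0$.

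Next I would pick a trivial component $\kappa$ that is \emph{innermost} on $C$, i.e. the region $\Delta\subset C$ it bounds — a disk if $\kappa$ is a circle, or a half-disk with $\partial\Delta=\kappa\cup\gamma$, $\gamma\subset\partial C$, if $\kappa$ is an arc — satisfies $\Delta^\circ\cap\mathscr{C}=\emptyset$; when $\kappa$ is an arc, minimality also forces $\gamma^\circ$ to contain no leg. If $\kappa$ is a circle it lies on some $C_i$ and, by Lemma~\ref{r16}, is disjoint from the generating curves on $C_i$; since it bounds the disk $\Delta$ in $N$ and $\mathscr{C}$ is reduced, Proposition~\ref{p5}~($A_\mathscr{C}^i(1)$) prevents $\kappa$ from being essential on $C_i$, so it bounds a punctured disk there. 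Applying Lemma~\ref{l10} to that punctured disk, with $\Delta$ (pushed slightly off $C$) as the replacement disk, gives an equivalent reduced simplifier in which $\kappa$ — and any components of $C\cap C_i$ contained in the punctured disk — have disappeared while no new intersection with $C$ is created, so $m$ drops. If $\kappa$ is an arc, say on $C_j$, I would remove it by an ambient isotopy of $C_j$ that slides the intersection arc $\kappa$ across the clean disk $\Delta$ to $\gamma\subset\partial^\pm$ and off the boundary. Since $\Delta^\circ$ is disjoint from $\mathscr{C}$ and $\gamma^\circ$ meets no other curve of $\partial^\pm\mathscr{C}$, this isotopy is supported near $\Delta$, destroys the two legs of $\kappa$ without creating any new intersection, and leaves the topology of $C_j$ — in particular all of its punctures — untouched, so again $m$ drops by one.

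Finally I would verify that each move produces a genuinely equivalent reduced simplifier meeting $C$ nicely and of the same type: an ambient isotopy carries $\mathscr{C}$ to an isotopic family, so $N[\mathscr{C}]$ is unchanged up to homeomorphism and \textbf{S.1}--\textbf{S.3} and reduced-ness persist; it is performed away from the generating curves, so niceness is retained; and it creates no simple bordered curve, so the type stays $\mathrm{I}$. Iterating until $m=0$ leaves only circles, which is the assertion. The step I expect to be the crux is the arc-removal isotopy: one must be certain that sliding $\kappa$ off across $\Delta$ really is a legitimate ambient isotopy that disturbs neither the other cylinders nor the simplifier axioms, and this is precisely why $\Delta$ is first made clean (via Lemma~\ref{l10}, reduced-ness and Proposition~\ref{p5}) and why $\gamma^\circ$ is arranged to contain no leg. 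A secondary point to check is that the circle-removal through Lemma~\ref{l10} never raises the count $m$ and keeps the remaining intersection nice, which holds because the replacement disk is pushed off $C$ and only deletes components lying inside the punctured disk.
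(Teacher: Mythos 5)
Your proof is correct, but your arc-removal step takes a genuinely different route from the paper's. The circle case is essentially identical in both arguments: an innermost circle bounds a clean disk on $C$, Proposition~\ref{p5} ($A^i_{\mathscr{C}}(1)$) together with reducedness rules out essentiality on $C_i$, so the circle bounds a punctured disk there, and Lemma~\ref{l10} performs the swap. For the arcs, however, the paper first chooses an arc $l$ whose half-disk $D_l\subset C$ meets $\mathscr{C}$ only in closed curves, clears those via Lemma~\ref{l10}, and then eliminates $l$ by the replacement move of Lemma~\ref{l12}, whose justification is global: it passes through Lemma~\ref{l11} and a separating-sphere argument, because in Lemma~\ref{l12} the replacement disk is arbitrary and the swap need not be an isotopy. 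You instead exploit that the configuration at hand is outermost: once $\Delta^\circ\cap\mathscr{C}=\emptyset$ and $\gamma^\circ$ carries no legs (which, as you correctly note, cleanliness forces, since a leg on $\gamma^\circ$ would be the endpoint of a component entering $\Delta^\circ$), a regular neighborhood of $\Delta$ is a ball meeting $\partial^+$ in a disk around $\gamma$ and meeting $\mathscr{C}$ only in the band of $C_j$ around $\kappa$ --- a band that avoids the generating curves and punctures of $C_j$ by Lemma~\ref{r16} --- so the standard outermost-arc slide across $\Delta$ is a proper ambient isotopy supported in that ball, during which the two legs travel along $\gamma$ toward each other and cancel. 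This is more elementary and more local than the paper's route, and it makes equivalence, reducedness, \ref{2}--\ref{7} and niceness automatic, whereas in this outermost situation the detour through Lemma~\ref{l12} buys nothing extra (that lemma is needed in its full strength elsewhere, e.g.\ in Lemma~\ref{l25} and in \S\ref{3.2}, where the replacement disk is far from outermost); your unified induction on innermost trivial regions also merges the paper's two passes (legs on $\partial^+$, then on $\partial^-$) into one. One shared elision worth flagging: when Lemma~\ref{l10} is applied, the punctured disk $D'_{\kappa}\subset C_i$ may contain generating curves of $C_i$ in its interior, along which earlier cylinders are attached, so the sphere in the proof of Lemma~\ref{l10} could not be perturbed off $\mathscr{C}$; as in the proof of Lemma~\ref{l6}, one should first slide these generating curves off using Remark~\ref{r-sliding}. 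The paper's own proof of this lemma omits the same point, and the one-line fix is identical in both arguments.
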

\begin{proof}
Suppose that $l\in C\cap C_k$ has nonempty boundary with $\partial l\subset\partial^+$. Let $D_l\subset C$ be a disk with $\partial D_l=l\cup l'$ and $l'\subset l^+=\partial^+C$. Choose $l$ such that $D_l\cap \mathscr{C}$ only consists of closed curves. Let $l_i\subset D_l^\circ\cap C_i$ be such that $l_i=\partial D_i$ where $D_i\subset C$ is a disk and $D_i^\circ\cap\mathscr{C}=\emptyset$ (Figure \ref{F1}). According to Proposition \ref{p5}, $l_i\subset C_i$ bounds  a punctured disk $D'_i\subset C_i$. By Lemma \ref{l10}, we can replace $D_i'$ with $D_i$ to obtain an equivalent reduced simplifier whose intersections with $C$ is a proper subset of the intersections of $\Cscr$ with $C$. By repeating this process, we  obtain an equivalent reduced simplifier $\mathscr{C}'=\{C_i'\}_{i=1}^n$ such that the intersection $C\cap\mathscr{C}'\subset C\cap \Cscr$ is of type $\textrm{I}$ and $D_l^\circ\cap\mathscr{C}'=\emptyset$. There is a disk $D_l'\subset C_k'$ such that $\partial D_l'=l\cup l''$, where $l''\subset l_k^+$. By Lemma \ref{l12}, we can replace $D'_l$ with $D_l$ to obtain an equivalent  simplifier with less non-closed intersections with $C$ (in comparison with the number of non-closed intersections in $C\cap\mathscr{C}$). By repeating this process, we  obtain an equivalent nice reduced simplifier $\mathscr{C}''$ such that $C\cap\mathscr{C}''$ does not contain curves with both legs on $\partial^+$. By a similar argument, we can remove all the curves in $C\cap\mathscr{C}''$ with two boundary components on $\partial^-$. 
\end{proof}
	
\begin{figure}[b]
		\def\svgwidth{5cm}
	\begin{center}
		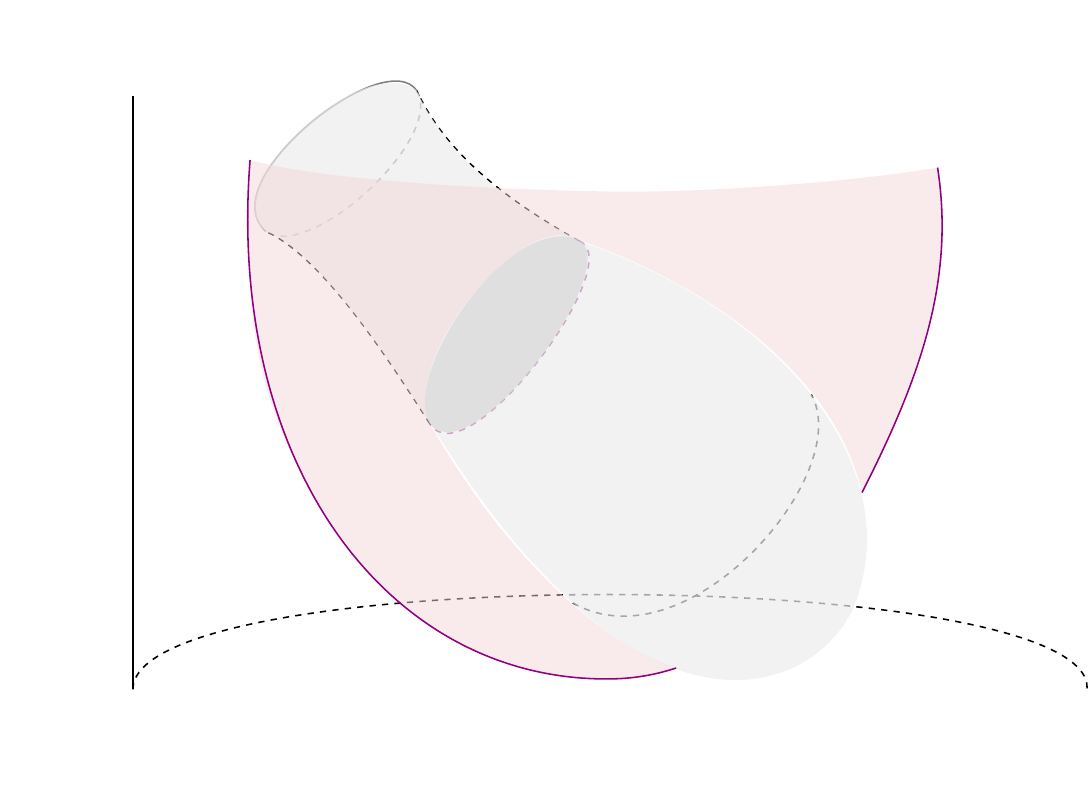
		
		\caption{ $l\in C\cap C_k$, $\partial l\subset\partial^+$ and $l$ is part of the boundary of a disk $D_{l}\subset C$ such that $(D_l)^\circ\cap\mathscr{C}$ only consists of closed curves. $l_i\in D_l\cap C_i$ bounds a disk
		$D_{i}\subset C$ and a punctured disk $D'_i\subset C_i$ with $(D_{i})^\circ\cap\mathscr{C}=\emptyset$.}
		\label{F1}
		\end{center}
\end{figure}

\begin{lemma}\label{l19}
		Let $N$ be a manifold with a reduced simplifier $\mathscr{C}=\left(C_i\right)_{i=1}^n$ which simplifies it to $N_0\doteq S^2\times I$.  Then every sphere $S\subset N$ which does not separate $\partial^+$ and $\partial^-$, bounds a $3$-disk. 
\end{lemma}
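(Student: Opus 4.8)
The plan is to reduce the statement to the single fact that the closed manifold $\overline{N}_\mathscr{C}$ of Remark~\ref{r0} is a $3$-sphere, after which a short separation argument carried out inside $S^3$ finishes the proof. So first I would invoke Remark~\ref{r0}: associated with the reduced simplifier $\mathscr{C}$ we have
\[\overline{N}_\mathscr{C}=N\cup_{\partial^+}B_\mathscr{C}^+\cup_{\partial^-}B_\mathscr{C}^-,\]
where $B_\mathscr{C}^\pm$ are connected genus-$g$ handlebodies, each $C_i$ corresponds to a sphere $S_i^2\subset\overline{N}_\mathscr{C}$, and removing $C_i$ is realized as a sphere decomposition along $S_i^2$. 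By the equivalence recorded in Remark~\ref{r0}, condition~\ref{7} says that $\overline{N}_\mathscr{C}$ is the connected sum of the capped-off components of $N_0$. Since $N_0\doteq S^2\times I$, every component of $N_0$ is an $S^2\times I$, which caps its two boundary spheres to an $S^3$; as each summand is then $S^3$, I conclude $\overline{N}_\mathscr{C}\cong S^3$, which in particular is irreducible.

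Next, given a sphere $S\subset N$ that does not separate $\partial^+$ from $\partial^-$, I would push it into $N^\circ$ by an isotopy, so that it is disjoint from the two connected handlebodies $B_\mathscr{C}^+$ and $B_\mathscr{C}^-$. Regarded inside $\overline{N}_\mathscr{C}\cong S^3$, the sphere $S$ bounds closed $3$-balls $W_1,W_2$ on its two sides, with $W_1\cup W_2=\overline{N}_\mathscr{C}$ and $W_1\cap W_2=S$. Because $B_\mathscr{C}^+$ and $B_\mathscr{C}^-$ are connected and disjoint from $S$, each lies entirely in $W_1^\circ$ or in $W_2^\circ$. If they lie on opposite sides, say $B_\mathscr{C}^+\subset W_1$ and $B_\mathscr{C}^-\subset W_2$, then $\partial^+\subset B_\mathscr{C}^+\subset W_1$ and $\partial^-\subset B_\mathscr{C}^-\subset W_2$ would sit in distinct components of $N\setminus S$, so $S$ would separate $\partial^+$ from $\partial^-$, contradicting the hypothesis. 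Hence $B_\mathscr{C}^+$ and $B_\mathscr{C}^-$ lie on the same side, say both in $W_2$; then $W_1$ is disjoint from $B_\mathscr{C}^+\cup B_\mathscr{C}^-$, so $W_1\subset N$, and $S=\partial W_1$ bounds a $3$-disk in $N$, as desired.

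The hard part will be the first step, namely making the identification $\overline{N}_\mathscr{C}\cong S^3$ rigorous. Here I must check that the trivial $S^2\times I$ summands permitted by the relation $\doteq$ cause no trouble (each still caps to $S^3$, and connected sum with $S^3$ is the identity, so the exceptional component allowed by \ref{7} is itself $S^3$), and that the phrase ``$S$ separates $\partial^+$ from $\partial^-$'' is used in exactly the sense of $A_\mathscr{C}(3)$ in Proposition~\ref{p5}, i.e. $\partial^+$ and $\partial^-$ lying in different connected components of $N\setminus S$. Once $\overline{N}_\mathscr{C}\cong S^3$ is in hand, the separation dichotomy above is routine, using only that $S^3$ is irreducible and that each $B_\mathscr{C}^\pm$ is connected. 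If one prefers to avoid the global identification, the same conclusion can instead be obtained by the double induction used in Proposition~\ref{p5} (an outer induction on $n=|\mathscr{C}|$ and an inner induction on $m=|S\cap C_n|$), cutting $S$ along the curves of $S\cap C_n$, replacing inessential pieces, and pushing the resulting spheres into $N_{n-1}$; but I would present the argument via $\overline{N}_\mathscr{C}$ as the main line, since it is considerably shorter.
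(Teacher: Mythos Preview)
Your proof is correct and takes a genuinely different route from the paper. The paper argues by a double induction: an outer induction on $n=|\mathscr{C}|$ and, for the inductive step, an inner induction on $m=|S\cap C_n|$. When $m=0$ the sphere descends to $N_{n-1}$ and one checks that the ball it bounds there avoids the solid cylinders $D_i^{C_n}\times I$; when $m>0$ one uses Proposition~\ref{p5} to find an innermost disk on $C_n$, surgers $S$ into two spheres $S_1,S_2$ with fewer intersections, and reassembles the ball from the balls bounded by $S_1$ and $S_2$. Your argument bypasses all of this by invoking Remark~\ref{r0} once to identify $\overline{N}_\mathscr{C}\cong S^3$ and then applying Alexander's theorem together with a connectedness dichotomy for the handlebodies $B_\mathscr{C}^\pm$.

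What each approach buys: yours is shorter, more conceptual, and makes transparent \emph{why} the hypothesis $N_0\doteq S^2\times I$ is exactly what is needed (it forces the ambient closed manifold to be $S^3$). The paper's inductive argument is more self-contained, stays entirely inside $N$ and its successive simplifications $N_i$, and does not require appealing to Alexander's theorem in $S^3$; it also parallels the structure of the surrounding Proposition~\ref{p5} and Lemma~\ref{l20}, which may be why the author chose it. One small point you should make explicit: your dichotomy uses that each $B_\mathscr{C}^\pm$ is connected, which needs $N$ (hence $\partial^\pm$) connected; this is harmless by Remark~\ref{r2.2}, but should be said. With that addition your main line is complete, and the double-induction alternative you sketch at the end is indeed exactly what the paper does.
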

\begin{proof}
		We use an induction on $n$. If $n=0$,  $N=N_0\doteq S^2\times I$ and the claim is clear. To prove the inductive step, we can assume that $S\cap C_n$ consists of some closed curves and that $S\subset N^\circ$. Set $m=|S\cap C_n|$.  We prove the inductive step  by (a second) induction on $m$. If $m=0$, there is a copy of $S$ in $N_{n-1}$ which is also denoted  by $S$.  Since $S\cap C_n=\emptyset$, $S$ cannot separate $\partial^+_{n-1}$ and $\partial^-_{n-1}$. Thus, by the induction hypothesis, $S=\partial B$, where $B$ is a $3$-disk in $N_{n-1}$. Since $S\cap C_n=\emptyset$, we have $B\cap(D_i^{C_n}\times I)=\emptyset$ for $i=1,2$. Therefore, $S$ bounds a disk in $N$.\\
		
		Now, suppose that $m>0$. Each curve $l\in S\cap C_n$ bounds a disk in $S$. By Proposition \ref{p5}, $l$ bounds a disk $D$  in $C_n$. Choose $l$ such that $D^\circ\cap S=\emptyset$. Cut $S$ along $D$ and attach copies of $D$ to the  resulting boundaries to obtain the spheres $S_1$ and $S_2$ (having $D$ in common) with $|S_i\cap C_n|<m$, for $i=1,2$. $S_1$ and $S_2$ cannot separate $\partial^+$ and $\partial^-$. Otherwise, $C_n$ intersects $S_1$ or $S_2$ in at least one essential curve on $C_n$. Such a curve bounds a disk on $S_1$ or $S_2$ which is impossible by Proposition \ref{p5}. By the induction hypothesis, $S_i=\partial B'_i$, for $i=1,2$, where $B'_1$ and $B'_2$ are $3$-disks. If $(B'_1)^\circ\cap(B'_2)^\circ=\emptyset$, we have $S=\partial(B'_1\cup B'_2)$. If $B'_1\subset B'_2$, we have $S=\partial(B'_2\setminus(B'_1)^\circ)$.
Thus, $S$ bounds a $3$-disk in $N$. This completes the second induction, and thus the inductive step of the first induction.
\end{proof}

\begin{lemma}\label{l20}
With $N\in\Mfrak$ and  $\Cscr$  as before, let $S_1,\dots,S_k$ be disjoint spheres in $N^\circ$ which do not bound disks in $N$ and do not separate $\partial^+$ from $\partial^-$. Then, these spheres are parallel.
\end{lemma}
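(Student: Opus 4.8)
The plan is to pass to the closed manifold $\overline{N}_{\mathscr{C}}=N\cup_{\partial^+}B_{\mathscr{C}}^+\cup_{\partial^-}B_{\mathscr{C}}^-$ of Remark~\ref{r0} and analyse each $S_j$ there. Recall from Remark~\ref{r0} that \ref{7} forces $\overline{N}_{\mathscr{C}}$ to be a connected sum of copies of $S^3$ with a connected irreducible manifold; since an $S^3$-summand is trivial, $\overline{N}_{\mathscr{C}}$ is itself irreducible. Fix $S=S_j$. By Proposition~\ref{p5}, $S$ separates $N$. Each cap $B_{\mathscr{C}}^\pm$ is a connected handlebody glued to $\partial^\pm$ and is disjoint from $S\subset N^\circ$, so it lies entirely on one side of $S$; as $S$ does not separate $\partial^+$ from $\partial^-$, both caps lie on the same side. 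Hence exactly one side of $S$, call it $X_j$, is disjoint from $\partial N$, while the other side $Y_j$ contains $\partial^+\amalg\partial^-$. Because $Y_j$ has more than one boundary component it is never a ball, so the hypothesis that $S_j$ does not bound a disk in $N$ is exactly the statement that $X_j$ is not a ball.

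Next I would show $\overline{X_j}\cong\overline{N}_{\mathscr{C}}$ for every $j$, where $\overline{X_j}$ is $X_j$ with $S_j$ capped by a $3$-disk. In $\overline{N}_{\mathscr{C}}$ the sphere $S_j$ still separates, its two sides being $X_j$ and $\overline{Y_j}=Y_j\cup B_{\mathscr{C}}^+\cup B_{\mathscr{C}}^-$. By irreducibility $S_j$ bounds a $3$-ball, which is one of these two sides; it cannot be $X_j$ (else $X_j$ is a ball), so it is $\overline{Y_j}$, and therefore $\overline{N}_{\mathscr{C}}=X_j\cup_{S_j}\overline{Y_j}=\overline{X_j}$.

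I would then establish that the regions $X_1,\ldots,X_k$ are nested. Since the $S_j$ are disjoint, for any pair either $X_i\subset X_j$, or $X_j\subset X_i$, or $X_i^\circ\cap X_j^\circ=\emptyset$. In the last case $S_j$, and hence the whole region $X_j$, lies in the $3$-ball $B_i=\overline{Y_i}$ bounded by $S_i$ in $\overline{N}_{\mathscr{C}}$. As $B_i$ is irreducible, $S_j$ bounds a ball inside $B_i$; this ball cannot be the complementary side $B_i\setminus X_j^\circ$, whose boundary $S_i\amalg S_j$ is disconnected, so it must be $X_j$ — contradicting that $X_j$ is not a ball. Thus the disjoint case is excluded, the $X_j$ are totally ordered, and we may relabel so that $X_1\subsetneq X_2\subsetneq\cdots\subsetneq X_k$.

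Finally, for consecutive indices set $W_i=\overline{X_{i+1}\setminus X_i^\circ}$, a compact manifold with $\partial W_i=S_i\amalg S_{i+1}$. Since $S_i$ separates $X_{i+1}$ into $X_i$ and $W_i$, capping along $S_i$ gives $\overline{X_{i+1}}\cong\overline{X_i}\#\overline{W_i}$. By the second step both $\overline{X_i}$ and $\overline{X_{i+1}}$ are homeomorphic to the irreducible $\overline{N}_{\mathscr{C}}$, so $\overline{N}_{\mathscr{C}}\cong\overline{N}_{\mathscr{C}}\#\overline{W_i}$; uniqueness of prime decomposition (equivalently, $A\cong A\#B$ with $A$ irreducible forces $B\cong S^3$) yields $\overline{W_i}\cong S^3$, i.e. $W_i\cong S^2\times I$. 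Hence each $S_i$ and $S_{i+1}$ cobound a product region and all the spheres are parallel. I expect the main obstacle to be the bookkeeping of the two sides of each $S_j$: verifying that connectedness of the caps $B_{\mathscr{C}}^\pm$ prevents $S_j$ from splitting $\partial^+$ or $\partial^-$ and correctly identifies the $\partial N$-free side as the non-ball side; once irreducibility of $\overline{N}_{\mathscr{C}}$ is in place, the connected-sum arithmetic is routine.
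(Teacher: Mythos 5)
Your argument has a genuine gap at its very first step: the claim that $\overline{N}_{\mathscr{C}}$ is irreducible. Remark \ref{r0} does not say that $\overline{N}_{\mathscr{C}}$ is a connected sum of $S^3$'s with an irreducible manifold; it says that the pieces obtained \emph{after decomposing along the spheres} $S_i^2$ are $S^3$'s together with one connected irreducible piece. These two statements coincide only if every $S_i^2$ is separating in $\overline{N}_{\mathscr{C}}$ at its stage, i.e.\ only if removing each cylinder $C_i$ disconnects the manifold at hand. That is false in general (the paper repeatedly treats the case where $N_{n-1}$ remains connected after removing $C_n$, e.g.\ in the proofs of Proposition \ref{p5}, Lemma \ref{l11} and Proposition \ref{p23}). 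When a cut is along a non-separating sphere one picks up an $S^2\times S^1$ summand, so in general $\overline{N}_{\mathscr{C}}\cong M\#\left(S^2\times S^1\right)^{\# r}$ with $M$ the irreducible piece and $r\geq 0$ counting the non-separating cuts. A minimal example: let $N$ be obtained from $S^2\times I$ by adding a single trivial cylinder $C$ (two vertical solid cylinders tubed together), so $\partial^\pm$ are tori and $\mathscr{C}=(C)$ is a reduced simplifier satisfying \ref{2}--\ref{7} with $N_0=S^2\times I$; here the sphere $S_C^2=C\cup D^+\cup D^-$ is non-separating in $\overline{N}_{\mathscr{C}}$ and $\overline{N}_{\mathscr{C}}\cong S^2\times S^1$, which is not irreducible. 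Taking a further connected sum of $N$ with an irreducible closed $M\neq S^3$ (still a legal input, since $N_0\# M$ has irreducible capping) gives $\overline{N}_{\mathscr{C}}\cong M\#(S^2\times S^1)$, which is not even prime.

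Once irreducibility (or primeness) fails, the rest of your scheme collapses: in $M\#(S^2\times S^1)$ a separating sphere need not bound a ball on either side, so you cannot conclude that $\overline{Y_j}$ is a ball, hence not that $\overline{X_j}\cong\overline{N}_{\mathscr{C}}$, and the nesting argument and the cancellation $\overline{N}_{\mathscr{C}}\cong\overline{N}_{\mathscr{C}}\#\overline{W_i}\Rightarrow\overline{W_i}\cong S^3$ both lose their footing. (Your bookkeeping of sides is fine: the caps $B_{\mathscr{C}}^\pm$ are connected and disjoint from $S_j$, and the non-ball side is correctly identified; the problem is solely the global irreducibility input.) This is precisely why the paper does not argue in $\overline{N}_{\mathscr{C}}$ at all: it inducts on $n$, removing $C_n$ and performing surgery on the curves of $S_i\cap C_n$ (using Proposition \ref{p5} to trade disks), handles a disconnected $N_{n-1}$ via Lemma \ref{l19} applied to the $S^2\times I$ components guaranteed by \ref{7}, and only ever invokes irreducibility of the capped-off manifold in the base case $n=0$, where $N=N_0$ and $\overline{N}$ genuinely is irreducible. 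If you want to salvage a global argument, you would first have to prove that under the hypotheses of the lemma no $S_i^2$ is non-separating, which is essentially equivalent to the separation statements the paper establishes inductively --- so the induction is not avoidable by this route.
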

\begin{proof}
		We use an induction on $n$. If $n=0$, $N_0=N$. Since $\overline{N}$ is irreducible, each $S_i$ bounds a $3$-disk $D_i\subset\overline{N}$. Since $S_i$ does not separate $\partial^+$ and $\partial^-$ and  does not bound a $3$-disk in $N$, $\partial^+\amalg\partial^-\subset D_i$. Therefore, $D_i$ are not disjoint and $S_1,\ldots,S_k$ are parallel. \\
		
		When $n>0$, we can assume that $S_1,\ldots, S_k$ intersect $C_n$ transversely. We prove the inductive step by (a second) induction on $m=\sum_{i=1}^k|S_i\cap C_n|$. If $m=0$, there is a copy of each $S_i$  in $N_{n-1}$,  yet denoted by $S_i$. If  $S_i$ bounds a $3$-disk $B\subset N_{n-1}$, then $\partial^+_{n-1}\amalg\partial^-_{n-1}$ is in  $N_{n-1}\setminus B$. Since $S_i$ does not intersect $D_1^{C_n}\times I$ and $D_2^{C_n}\times I$, it follows that $B$ does not cut them either. Therefore, $S_i$ bounds a $3$-disk in $N$ as well, which is not possible. It thus follows that $S_1,\ldots, S_k$ do not bound disks in $N_{n-1}$.  Furthermore, $S_i$ does not separate $\partial^+_{n-1}$ from $\partial^-_{n-1}$, for  $1\leq j\leq k$. Otherwise, at least one of $D_1^{C_n}\times I$ and $D_2^{C_n}\times I$(say the first one) intersects $S_i$ in an essential curve on $\partial D_1^{C_n}\times I$, which cannot happen. The only problem in using the induction hypothesis is that $N_{n-1}$ may be  disconnected. If this is the case, since $N$ is connected, $N_{n-1}$ has two connected components, denoted $N_{n-1}^1$ and $N_{n-1}^2$. Set $\mathscr{C}^i=\mathscr{C}\cap N_{n-1}^i$, $i=1,2$. By Remark \ref{r2.2}, $\mathscr{C}_{n-1}\cap N_{n-1}^i$ is a simplifier for $N_{n-1}^i$. If $N_0^i=N_{n-1}^i[\mathscr{C}_{n-1}\cap N_{n-1}^i]$, we have $N_0=N_0^1\amalg N_0^2$. Thus, we may assume that $N^1_0\doteq S^2\times I$ . If one of the spheres, say  $S_j$, is in $N_{n-1}^1$, $S_j$ bounds a $3$- disk in $N_{n-1}^1$ by Lemma \ref{l19}, which  is not possible. Thus,  $S_1,\ldots,S_k$ are all in the same component of $N_{n-1}$. Therefore, by the induction hypothesis $S_1,\ldots,S_k$ are parallel in $N_{n-1}$. By Proposition \ref{p5}, each $S_i$ is a separating sphere in $N_{n-1}$. Thus, $\partial^+_{n-1}$ and $\partial^-_{n-1}$ are on one side of each $S_i$, which is called its interior. The region bound between any two spheres $S_i$ and $S_j$ (which is homeomorphic to $S^2\times I$) is thus disjoint from $D_1^{C_n}\times I$ and $D_2^{C_n}\times I$, since $S_i$ and $S_j$ are disjoint from these solid cylinders. Therefore, $S_i$ and $S_j$ are also parallel in $N$.\\

		Let us now assume that  $m>0$. Each $l\in S_i\cap C_n$ bounds a disk $D_l$ in  $C_n$. Choose $l$ such that $D_l^\circ\cap S_j=\emptyset$ for $j=1,\ldots,k$. Cut $S_i$ along $D_l$ and attach two copies of $D_l$ to the two resulting circular boundaries. We then obtain the spheres $S_i^1$ and $S_i^2$ such that
$$|S_i^1\cap C_n|+|S_i^2\cap C_n|+\sum_{j\neq i}|S_j\cap C_n|<m.$$
		$S_i^1$ and $S_i^2$ do not separate $\partial^+$ and $\partial^-$; otherwise $C_n$ intersects $S_i^1$ and $S_i^2$ in at least one essential curve on $C_n$ which is not possible by Proposition \ref{p5}. Since $S_i$ does not bound a $3$-disk, at least one of $S_i^1$ and $S_i^2$ (say $S_i^1$) does not bound $3$-disk.   By the induction hypothesis,  $S_1,\ldots,S_{i-1},S_i^1,S_{i+1},\ldots,S_k$ are parallel. Since $S_i$ does not bound a $3$-disk, $S_i^2$ is not parallel to  $S_i^1$. It follows that $S_i^2$ bounds a $3$-disk in $N$, since otherwise, by the induction hypothesis $S_i^1$ and $S_i^2$ are parallel. From here, it follows that $S_i$ is parallel with the other spheres.
\end{proof}

\subsection{Proof of the main theorem for intersections of type \texorpdfstring{$\mathbf{\mathrm{I}}$}{Lg}}
\begin{remark}\label{r21} If  $N_{n-1}$ is disconnected, it has two components $N_{n-1}^1$ and $N_{n-1}^2$. We may assume that $D_i^{C_n}\times I$ is in the component $N_{n-1}^i$.  First, suppose that $\mathscr{C}\cap N_{n-1}^i=\emptyset$. Then the boundary of $N_{n-1}^i$ consists of two spheres and $(\partial_{n-1}^+\cap N_{n-1}^1)\setminus(D_i^{C_n}\times\{1\})$ is a disk $D_i'$. It then follows that there is a copy of $D_i'$ in $N$ with $l_n^+=\partial D_i'$ which is not possible. Thus, $\mathscr{C}\cap N_{n-1}^i\neq\emptyset$, for $i=1,2$. In particular, when $n=1$, this argument implies that $N_0$ is connected.
\end{remark}	
\begin{proposition}\label{p23}
		If the cylinder $C$ in $N\in\Mfrak$ cuts the reduced simplifier  $\Cscr=\left(C_i\right)_{i=1}^n$ transversely and of type   $\textrm{I}$,  there is a simplifier  equivalent to $\mathscr{C}$ which includes $C$.
\end{proposition}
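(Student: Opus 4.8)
The plan is to first normalise the intersection and then insert $C$ into the simplifier by means of the construction preceding Lemma~\ref{l13}. First I would invoke Lemma~\ref{l17} to make the intersection of $C$ with $\mathscr{C}$ nice (still of type~$\textrm{I}$), and then Lemma~\ref{l18} to arrange that $C\cap\mathscr{C}$ consists only of closed curves; in particular $l^\pm=\partial^\pm C$ then become disjoint from $\partial^\pm\mathscr{C}$. Since $C$ is an honest cylinder, each such closed curve is either inessential on $C$ (bounding a disk on $C$) or essential on $C$ (parallel to the core). To clear the inessential ones, take an innermost-on-$C$ curve $l\in C\cap C_i$ bounding a disk $D_l\subset C$ with $D_l^\circ\cap\mathscr{C}=\emptyset$. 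By $A_\mathscr{C}^i(1)$ of Proposition~\ref{p5} together with the fact that $\mathscr{C}$ is reduced, $l$ cannot be essential on $C_i$ (otherwise $l_i^\pm$ would bound disks in $\partial^\pm_i$), so $l$ bounds a punctured disk on $C_i$; Lemma~\ref{l10} then lets me replace it and delete the intersection. Iterating, I may assume every curve of $C\cap\mathscr{C}$ is essential on $C$, hence mutually parallel.

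The remaining curves split into two kinds: those essential on $C$ but inessential on $C_i$ (bounding a punctured disk on $C_i$), which are exactly the form required by the hypotheses of Lemma~\ref{l13}, and those essential on both $C$ and $C_i$. Eliminating this second, ``bad'' family is the crux of the argument and the step I expect to be the main obstacle.

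For the bad curves I would argue by induction on $n$ and on the number of bad intersections. Choose a bad curve $l\in C\cap C_i$ that is innermost on $C$ towards $\partial^-$, so that the sub-cylinder $A\subset C$ between $l^-$ and $l$ satisfies $A^\circ\cap\mathscr{C}=\emptyset$. Passing to $N_{i-1}=N[\mathscr{C}^{i-1}]$, the surface $C_i$ is replaced by the two solid cylinders $D_1^{C_i}\times I$ and $D_2^{C_i}\times I$, and $l$, being essential on $C_i$, becomes a core-parallel circle on the lateral boundary of one of them, say $D_1^{C_i}\times I$. Capping $A$ off with the corresponding sub-solid-cylinder and its foot disk produces a disk in $N_{i-1}$ bounded by $l^-$; since $l^-$ is disjoint from $\partial^-\mathscr{C}_{i-1}$, Lemma~\ref{l11} shows that $l^-$ bounds a disk in $\partial^-_{i-1}$. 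This disk furnishes an isotopy that slides $A$ off $C_i$ through the product region of the solid cylinder, removing the intersection $l$ and decreasing the number of bad curves. The delicate points here, which I would treat exactly as in the proof of Lemma~\ref{l17}, are that generating curves and punctures lying on the relevant sub-annulus of $C_i$ must first be slid away (via Remark~\ref{r-sliding} and Lemma~\ref{l12}), and that one must verify, using $A_\mathscr{C}(2)$ and $A_\mathscr{C}(3)$ of Proposition~\ref{p5} and, where components of $N_{i-1}$ become $S^2\times I$, the parallelism statements of Lemmas~\ref{l19} and~\ref{l20}, that the isotopy crosses only product regions and creates no new essential intersections. Ensuring that this reduction is monotone and terminates is where the real work lies.

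Once every curve of $C\cap\mathscr{C}$ is essential on $C$ and bounds a punctured disk on its $C_i$, I would conclude as follows. Using Remark~\ref{r2.1} to match up the complementary components of $\partial^+\setminus\partial^+\mathscr{C}$ and $\partial^-\setminus\partial^-\mathscr{C}$, I construct the associated surfaces $\Sigma_{l^\pm}\subset\partial^\pm$ with $l_i^+\subset\Sigma_{l^+}$ if and only if $l_i^-\subset\Sigma_{l^-}$, so that $C$ satisfies the hypotheses of the construction preceding Lemma~\ref{l13}. Setting $C'_{n+1}=C$ and trimming each $C_i$ along $C\cap C_i$ as in that construction, Lemma~\ref{l13} yields a simplifier $\mathscr{C}'=(C'_1,\dots,C'_n,C)$ equivalent to $\mathscr{C}$ and containing $C$. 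The degenerate situation in which some $l^\pm$ already bounds a disk in $\partial^\pm$ is absorbed by Remark~\ref{r1} and the $k=0$ base case of Lemma~\ref{l13}, completing the proof.
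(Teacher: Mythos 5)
Your normalization phase coincides with the paper's: Lemma~\ref{l17}, then Lemma~\ref{l18}, then the innermost-disk argument with Proposition~\ref{p5} and Lemma~\ref{l10} to make every curve of $C\cap\mathscr{C}$ closed and essential on $C$. But the step you yourself flag as ``the real work'' is where the proposal breaks, and the mechanism you sketch for it is not the one that can work. First, your removal of a bad curve needs an innermost \emph{bad} curve $l$ with $A^\circ\cap\mathscr{C}=\emptyset$; the curve of $C\cap\mathscr{C}$ nearest $l^-$ may well be one of your ``good'' curves (essential on $C$, bounding a punctured disk on its cylinder), and good curves cannot be pushed off $C$, so such an $l$ need not exist. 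More seriously, the ``product-region isotopy'' does not exist precisely in the main case: after the degenerate case is disposed of (the paper does this via Lemma~\ref{l13} with $\Sigma_{l^+}$ a disk containing no $l_i^+$), one may assume $l^\pm$ bound no disks in $\partial^\pm$, and then the disk in $\partial^-_{n-1}$ produced by Lemma~\ref{l11} \emph{must} contain at least one foot disk $D_j^{C_n}\times\{-1\}$ --- if it contained none, it would descend to a disk in $\partial^-$ bounded by $l^-$, a contradiction. There is then no product region to slide through. What the paper actually does at this point is a case analysis (both feet inside leads to a contradiction via a separating sphere and Remark~\ref{r21}; exactly one foot inside yields a cylinder $C^+\subset\partial^+$ with $\partial C^+=l^+\cup l_n^+$), followed by constructing the two spheres $S_\pm$ and proving, via the parallelism Lemma~\ref{l20} (or Lemma~\ref{l19} when $N_{n-1}$ disconnects --- this is where hypothesis~\ref{7} on $N_0$ enters), that one of them bounds a $3$-ball; the reduction move is then not an isotopy at all but the \emph{replacement} of the sub-cylinder $\bar{C}_n\subset C_n$ by the sub-cylinder $C_+\subset C$ (Remark~\ref{r14}), which strictly decreases $|C\cap C_n|$. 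Note also that isotoping $C$ is not available to you: the simplifier you must produce contains the given $C$, so all modifications have to be made on $\mathscr{C}$; and the swap move of Remark~\ref{r14} exists only for the last cylinder, which is automatically puncture-free --- this is why the paper's induction peels off $C_n$, empties $C\cap C_n$, recurses on $\mathscr{C}'_{n-1}$ in $N[C_n'']$, and re-inserts $C_n''$ at the expense of new punctures, instead of operating at an arbitrary punctured $C_i$ as you propose.

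Your closing step also fails as stated. Lemma~\ref{l13} requires, besides the matching condition $l_i^+\subset\Sigma_{l^+}$ iff $l_i^-\subset\Sigma_{l^-}$ (which Remark~\ref{r2.1} alone does not deliver), that every $C_i$ with $l_i^+\subset\Sigma_{l^+}$ be a cylinder \emph{without punctures}, and that $C$ meet only cylinders whose tops avoid $\Sigma_{l^+}$. In your situation --- after merely removing doubly-essential curves --- these conditions are neither verified nor arrangeable: cylinders with tops in $\Sigma_{l^+}$ may be punctured and may meet $C$. In the paper, Lemma~\ref{l13} enters this proof only in the degenerate case where $l^+$ bounds a disk $D^+\subset\partial^+$, so that $\Sigma_{l^+}=D^+$ contains no $l_i^+$ and the conditions are vacuous. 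In sum: your outline is correct through the reduction to essential intersections, and you have located the crux accurately, but the two devices you substitute for it --- a product-region isotopy and a general application of Lemma~\ref{l13} --- do not apply; the missing content is the ball-bounding-sphere argument together with the sub-cylinder swap of Remark~\ref{r14} and the induction on $n$ built around them.
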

\begin{proof}
		 We use an induction on $n$. If $n=0$, $\mathscr{C}'=\{C\}$ is a simplifier equivalent to $\mathscr{C}$ (see Remark \ref{r1}). For the inductive step, by Lemmas \ref{l17} and \ref{l18} we can assume that $C\cap\mathscr{C}$ is nice and consists of closed curves. Given $l\in C\cap C_i$, let us first assume that $l=\partial D_l$, for a disk $D_l\subset C$. By Proposition \ref{p5}, $l$ bounds a punctured disk $D_l'$ in $C_i$. Moreover, $l$ can be chosen so that $D_l^\circ\cap\mathscr{C}=\emptyset$. By Lemma \ref{l10}, we can replace $D_l'$ in $C_i$ with $D_l$ to obtain an equivalent reduced simplifier with  fewer  intersections with $C$ bounding disks on $C$. Repeating this process, we may assume that all the curves in $C\cap\mathscr{C}$ are essential curves on $C$.\\ 
		
Next, assume that $l^+=\partial^+C$ bounds a  disk $D^+\subset\partial^+$. If $l\in C\cap C_i$ (which implies that $l$ is essential on $C$), let  $C_l\subset C$ be a cylinder with two boundary components $l$ and $l^+$. Then $l$ bounds the disk $D_+=C_l\cup D^+$. $l\subset C_i$ bounds a (punctured) disk in $C_i$ and $D^+\cap\{l_i^+\}_{i=1}^n=\emptyset$. By Lemma \ref{l13}, there is an equivalent reduced simplifier which includes $C$. We may thus assume that $l^+$ does not bound a disk in $\partial^+$, and similarly, $l^-=\partial^-C$ does not bound a disk in $\partial^-$.\\

 Let $l\in C\cap C_n$ (which is essential on $C$) bound a disk $D_l\subset C_n$ with $C\cap D_l^\circ=\emptyset$. Let $C_l\subset C$ be a sub-cylinder with two boundary components given by $l$ and $l^+$. Thus, $l^+$ bounds the disk  $D_l\cup C_l$. By Lemma \ref{l11}, $l^+$ also bounds a disk in $\partial^+$, which is not possible. It follows from this observation that all the curves in $C\cap C_n$ are essential curves on $C_n$. Choose $l\in C\cap C_n$  such that $C_l^\circ\cap C_n=\emptyset$ and set $C_+=C_l$. There is a copy of $C_+$ in $N_{n-1}$, which is again denoted by $C_+$. In $N_{n-1}$, $l$ bounds a disk $D_+$ which is disjoint from $C_i$, $1\leq i\leq n-1$. Thus, $l^+$ bounds the disk $D_+\cup C_+$ in $N_{n-1}$ (Figure \ref{figp23-2}-left). Again by Lemma \ref{l11}, $l^+$ bounds the disk $D^+\subset\partial^+_{n-1}$. Similarly, $l^-$ bounds a disk $D^-\subset\partial^-_{n-1}$. If $D^+$ is disjoint from both $D_1^{C_n}\times\{1\}$ and $D_2^{C_n}\times\{1\}$, it follows that there is a copy of $D^+$ in $\partial^+$, also denoted $D^+$, such that $l^+=\partial D^+$. This means that $l^+$ bounds a disk in $\partial^+$, which contradicts our assumption.\\

Therefore, $D^+$ includes at least one of $D_1^{C_n}\times\{1\}$ and $D_2^{C_n}\times\{1\}$. Let us first assume that it includes both of the aforementioned disks (Figure \ref{figp23-2}-left). If $n=1$, $l^+$ bounds a disk in $\partial^+$ which is not possible. Therefore $n>1$, and we may choose the disk $D_+\subset D_1^{C_n}\times I$ which bounds $l$ as before. The sphere $S_+=D^+\cup D_+\cup C_+$ in $N_{n-1}$ is then separating. If $N_{n-1}$ is connected, $S_+$ cannot separate $\partial^+_{n-1}$ and $\partial^-_{n-1}$, since otherwise, each $C_i$  intersects $S_+$ in some curves and at least one of them is essential on $C_i$. This cannot happen since all the curves on $S_+$ bound disks. On the other hand, if $N_{n-1}$ is disconnected, $S_+$ is included in a component $N_{n-1}^1$ of $N_{n-1}$. Then  $\mathscr{C}\cap N_{n-1}^1\neq\emptyset$, by Remark \ref{r21}. As we argued in the previous case, it follows that  $S_+$ cannot separate $\partial^+_{n-1}\cap N_{n-1}^1$ and $\partial^-_{n-1}\cap N_{n-1}^1$. Therefore $\partial^+_{n-1}$ and $\partial^-_{n-1}$ are on the same side of $S_+$. The component of  $N_{n-1}\setminus S_+$ which contains $\partial^+_{n-1}\cup\partial^-_{n-1}$ is called the exterior of $S_+$ and the other component which has $S_+$ on its boundary is called the interior $S_+$. $D_2^{C_n}\times I$ enters the interior of $S_+$ in a neighborhood of $\partial^+_{n-1}$. To reach $\partial^-_{n-1}$, it should intersects $S_+$ again on $D_+\cup C_+$. But $C_+^\circ\cap C_n=\emptyset$, so $C_+^\circ\cap (D_2^{C_n}\times I)=\emptyset$. Furthermore, $D_+\cap (D_2^{C_n}\times I)=\emptyset$. This rules out the possibility that $D^+$ includes both of $D_1^{C_n}\times\{1\}$ and $D_2^{C_n}\times\{1\}$.\\

\begin{figure}
			\def\svgwidth{15cm}
			\begin{center}
			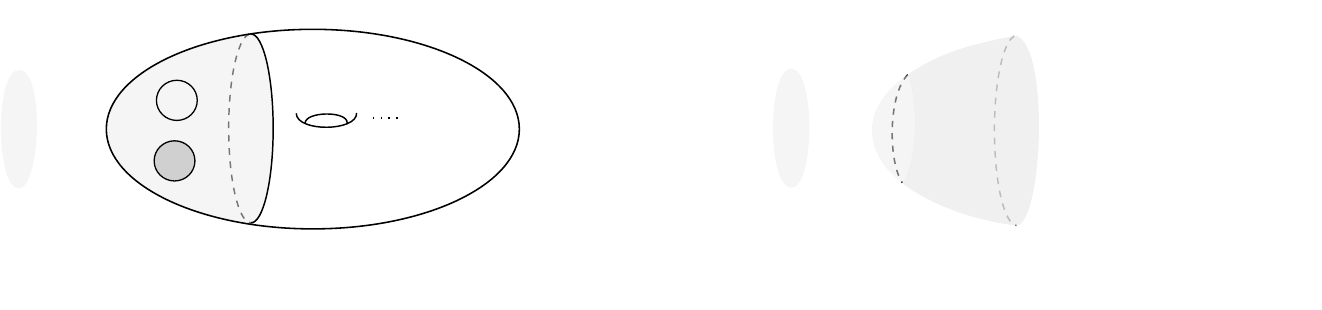
			\caption{In  $N_{n-1}$,  $l$ bounds a disk $D_+$ and $l^+$ bounds the disk $D^+$, where $D^+$ is a disk in $\partial^+_{n-1}$.  $D_i^{C_n}\times\{1\}$  are both in $D^+$ for $i=1,2$ (left). $D_1^{C_n}\times\{1\}\subset D^+$, $D^+\cap D_2^{C_n}\times\{1\}=\emptyset$ and  $\partial C^+=l^+\cup(\partial D_1^{C_n}\times\{1\})$ (right).}
			\label{figp23-2}
			\end{center}
\end{figure}
				
		The above two observations imply that precisely one of $D_1^{C_n}\times\{1\}$ and $D_2^{C_n}\times\{1\}$ is in $D^+$. Since $D_1^{C_n}\times I$ intersects $S_+$ in $D_+$, $D_1^{C_n}\times\{1\}\subset D^+$ (Figure \ref{figp23-2}-right).   $S_+=D^+\cup D_+\cup C_+$ cannot separate $\partial^+_{n-1}$ and $\partial^-_{n-1}$. This was shown above when $n>1$. If $n=1$, by Remark  \ref{r21}, $N_0$ is connected. Now, if $S_+$ separates $\partial^+_0$ and $\partial^-_0$, $D_2^{C_n}\times I$ intersects $S_+$ in at least one essential curve on $\partial D_2^{C_n}\times I$. $C_+\cap C_1=\emptyset$ and $D^+\cap (D_2^{C_n}\times\{1\})=\emptyset$. Thus, $D_2^{C_n}\times I$ intersects $D_+$ which is not possible. There is a cylinder $C^+\subset\partial^+_{n-1}$ with two boundary components given by $l^+$ and $(\partial D_1^{C_n})\times\{1\}$. Since $(C^+)^\circ\cap D_i^{C_n}\times I=\emptyset$ for $i=1,2$, there is a copy of $C^+$ in $\partial^+$ such that $\partial C^+=l^+\cup l_n^+$.\\

\begin{figure}[!h]
		\def\svgwidth{15cm}
		\begin{center}
		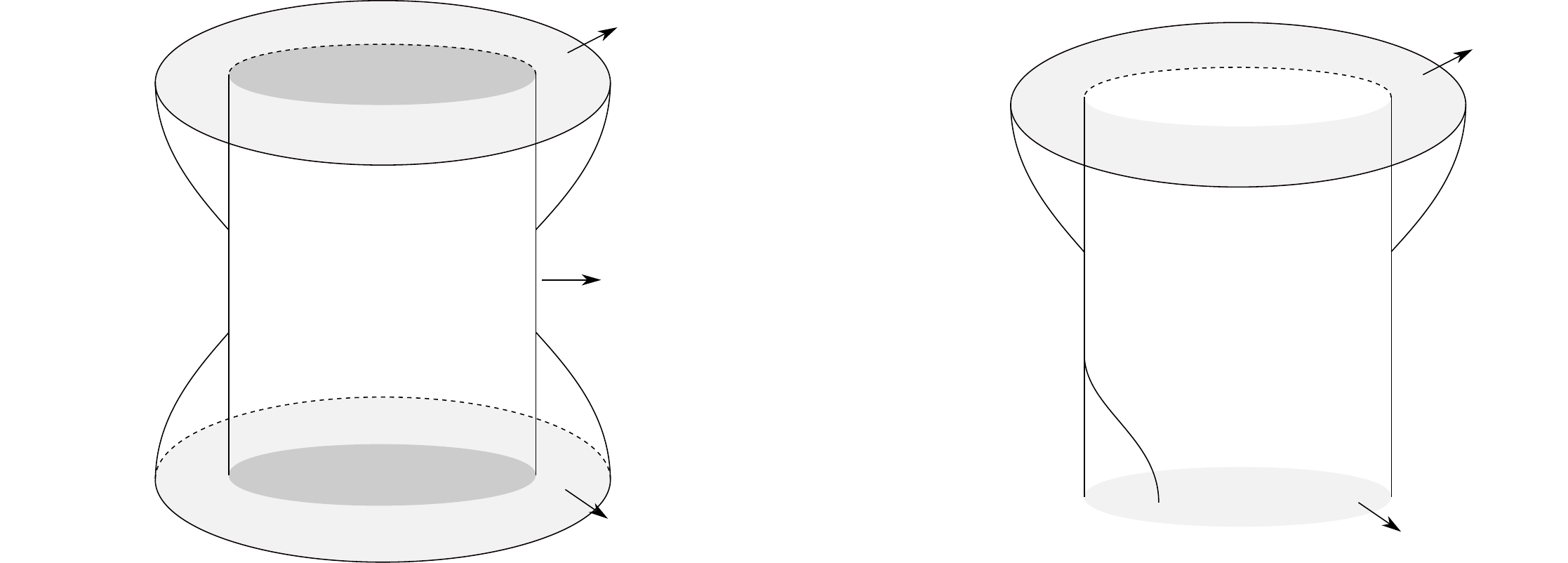
		\caption{$\partial C^-=l^-\cup(\partial D_1^{C_n}\times\{0\})$. $S_+=D_+\cup C_+\cup D'_+$ and $S_-=D_-\cup C_-\cup D'_-$ are disjoint spheres in $N_{n-1}$, where $D'_\pm=(D_1^{C_n}\times\{\pm 1\})\cup C^+$ (left). If $\partial C^-=l^-\cup(\partial D_2^{C_n}\times\{-1\})$,  $C_+$ and $C_-$ are as illustrated in $N$ (right).}
		\label{figp23-4}
		\end{center}
\end{figure}		
		
Similarly, if for a curve $l'\in C\cap C_n$ and a sub-cylinder $C_-\subset C$ we have $\partial C_-=l^-\cup l'$ and $C_-^\circ\cap C_n=\emptyset$, there is a cylinder $C^-\subset\partial^-$ with $\partial C^-=l^-\cup l_n^-$ (note that we may have $l=l'$).  $C^-$ is a cylinder in $\partial^-_{n-1}$ and $\partial C^-=l^-\amalg (\partial D_i^{C_n}\times\{-1\})$ for $i=1$ or $i=2$.\\ 			
			
If $i=1$, $C_+$ and $C_-$ are two disjoint cylinders in $N_{n-1}$, where each one has a boundary component on $\partial D_1^{C_n}\times I$ (Figure \ref{figp23-4}-left). Set $S_-=D_-\cup C_-\cup D'_-$, where $D_-$ is a disk in $D_1^{C_n}\times I$ which is bounded by $l'$ and $D'_-=(D_1^{C_n}\times \{-1\})\cup C^-$.  As discussed for  $S_+$, $S_-$ is a sphere which does not separate $\partial^+_{n-1}$ and $\partial^-_{n-1}$. $S_+$ and $S_-$ are in the same component $N'$ of $N_{n-1}$. Suppose that $S_+$ and $S_-$ do not bound $3$-disks in $N'$. By Lemma \ref{l20}, $S_+$ and $S_-$ are parallel in $N'$. On the other hand, $\partial^+_{n-1}$ is adjacent to $S_+$ and $\partial_{n-1}^-$ is adjacent to $S_-$. Since $\partial_{n-1}^+$ and $\partial_{n-1}^-$ are not in the region $B\simeq S^2\times I$  bounded between $S_+$ and $S_-$, they are on two different components of $N'\setminus B$. Therefore, $S_+$ and $S_-$ separate $\partial_{n-1}^+$ and $\partial_{n-1}^-$. As a result, at least one of $S_+$ or $S_-$, say $S_+$ bounds  a $3$-disk.\\
			
If $i=2$,	take $D_-\subset D_2^{C_n}\times I$ to be a disk with $\partial D_-=l'$. Set $S_-=D_-\cup C_-\cup D'_-$, where $D'_-=(D_2^{C_n}\times\{-1\})\cup C^-$ is a disk.  $S_-$ does not separate $\partial^+_{n-1}$ and $\partial^-_{n-1}$, as before. If $N_{n-1}$ is connected, $S_+$ and $S_-$ are in the same component. Similar to the case $i=1$, it follows that one of $S_+$ and $S_-$, say $S_+$, bounds a $3$-disk. On the other hand, if $N_{n-1}$ is disconnected, $S_+$ and $S_-$ are in two different components $N_{n-1}^1$ and $N_{n-1}^2$ of $N_{n-1}$, respectively. If $N_0^i=N_{n-1}^i[\mathscr{C}_{n-1}\cap N_{n-1}^i]$ for $i=1,2$, we have $N_0=N_0^1\amalg N_0^2$. So, at least for one of ${N_0^1}$ or ${N_0^2}$, say ${N_0^1}$, $N_0^1\doteq S^2\times I$. Lemma \ref{l19} implies that $S_+$ bounds a $3$-disk.\\
				
The above argument allows us to assume that $S_+=\partial B$, where $B$ is a $3$-disk. Let $\bar{C}_n\subset C_n$ be a sub-cylinder with two boundary components $l_n^+$ and $l$. Move the cylinders $C_i$ out of $B$ by an isotopy, for $1\leq i\leq n-1$. Then, all the intersections of $C_i$ with $C_+$ are removed for $i<n$. It is also clear that by moving the generating curves,  we can remove them from $\bar{C}_n$. Let $C'_n$ be obtained from $C_n$ by replacing $\bar{C}_n$ with $C_+$. By Remark \ref{r14}, $\left(C_1,\ldots, C_{n-1},C_n'\right)$ is a reduced simplifier, equivalent to $\mathscr{C}$ such that $|C\cap C'_n|<|C\cap C_n|$. By repeating this process, we obtain an equivalent reduced simplifier $\Cscr'=\left(C_1,\ldots, C_{n-1},C_n''\right)$ such that $C\cap C''_n=\emptyset$. Thus, there is a copy of $C$ in $N[C''_n]$, which is again denoted by $C$. Moreover the intersection of $C$ with $\Cscr'_{n-1}$  is of type $\textrm{I}$. By the induction hypothesis, there is a reduced simplifier, which is equivalent to $\Cscr'_{n-1}$ and includes $C$. Adding $C_n''$ to this reduced simplifier (in the expense of creating some new punctures) we obtain a simplifier $\mathscr{C}''$ in $N$ which includes $C$. In fact, since $C$ is disjoint from $C_n''$, $D_i^{C_n''}\times I$  are disjoint from $C$. This completes the proof of the proposition.
\end{proof}

\section{Intersections of type \texorpdfstring{$\textrm{II}$}{Lg}}\label{s-type-2}	
Throughout this section, $\mathscr{C}=\left(C_i\right)_{i=1}^n$ is a reduced simplifier for $N$ and  the intersection of the cylinder $C$ with $\mathscr{C}$ is transverse and of type $\textrm{II}$. The goal is to obtain an equivalent simplifier which includes $C$. The number of generating curves on $C_i$ is denoted by $g_i(\mathscr{C})$.
	
\subsection{Removing closed intersections}	
\begin{lemma}\label{l24}
	Suppose that $C\cap C_i$ is almost nice for $1\leq i\leq n$. Then there is an equivalent reduced simplifier $\mathscr{C}'=\{C'_i\}_{i=1}^n$ with $C\cap C_i'\subset C\cap C_i$ and $g_i({\mathscr{C}'})\leq g_i(\mathscr{C})$, such that  each curve in $C\cap\mathscr{C}'$ has nonempty boundary. In particular, if $C\cap C_i$ is nice then  $C\cap C_i'$ is nice as well. 
	
\end{lemma}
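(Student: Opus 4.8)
The plan is to remove the closed curves of $C\cap\mathscr{C}$ one at a time by an innermost-disk surgery, mirroring the argument of Lemma \ref{l18} but in the complementary case (deleting closed rather than bordered curves). By Remark \ref{r2.2} I may assume $N$ is connected, since $C$ together with every cylinder it meets lies in a single component. Because $C\cap\mathscr{C}$ is of type $\textrm{II}$ it contains an SBC, and hence, by the observation preceding Definition \ref{d14}, no curve of $C\cap\mathscr{C}$ is essential on $C$; in particular every closed curve of $C\cap\mathscr{C}$ bounds a disk on $C$. I will induct on the number of closed curves in $C\cap\mathscr{C}$, at each step producing an equivalent reduced simplifier with $C\cap C_j'\subset C\cap C_j$ and $g_j$ non-increasing, and with strictly fewer closed intersection curves.

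For the inductive step, assume $C\cap\mathscr{C}$ has a closed curve. The disks on $C$ bounded by the closed curves of $C\cap\mathscr{C}$ are pairwise nested or disjoint, so I can pick a closed curve $l\in C\cap C_i$ bounding an innermost disk $D_l\subset C$; as $D_l^\circ$ misses $\partial C$, any component of the intersection inside $D_l$ would be closed, contradicting innermostness, so $D_l^\circ\cap\mathscr{C}=\emptyset$. Since $l$ is closed it has no legs, so by almost-niceness (a component meeting the generating curves must have a leg on $\partial^-$) $l$ is disjoint from the generating curves on $C_i$. On $C_i$ the curve $l$ is either essential or bounds a punctured disk; if it were essential it would, bounding the disk $D_l$ in $N$, force $l_i^\pm$ to bound a disk in $\partial_i^\pm$ by $A_\mathscr{C}^i(1)$ of Proposition \ref{p5}, contradicting that $\mathscr{C}$ is reduced. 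Hence $l$ bounds a punctured disk $D_l'\subset C_i$ with $\partial D_l'=l$ disjoint from the generating curves. Lemma \ref{l10} now lets me replace $D_l'$ by $D_l$, producing an equivalent reduced simplifier $\mathscr{C}^1$; pushing the new piece slightly off $C$ deletes $l$ and, since $D_l^\circ\cap\mathscr{C}=\emptyset$, creates no new intersections, so $C\cap C_j^1\subset C\cap C_j$ for all $j$, while filling the punctures of $D_l'$ only removes generating curves, giving $g_j(\mathscr{C}^1)\le g_j(\mathscr{C})$. As the number of closed intersection curves strictly drops, iterating terminates in a simplifier $\mathscr{C}'$ every curve of whose intersection with $C$ has nonempty boundary.

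It remains to check that each surgery preserves almost-niceness, so that the induction may proceed, and preserves niceness when present. This is where care is needed: I must verify that deleting an innermost closed curve together with the generating curves captured inside $D_l'$ leaves intact the defining conditions of Definition \ref{d14}. The essential point is that every component of $C\cap C_i$ meeting the generating curves is non-closed and untouched by the surgery, since the removed curve $l$ and the filled punctures sit inside the punctured-disk region $D_l'$, which is disjoint from the generating curves on $C_i$ (these being essential on $C_i$ and hence unable to lie in a disk region); thus the conditions governing such components carry over verbatim, and removing generating curves preserves the separation condition and, in the nice case, cannot create a forbidden separating punctured cylinder or punctured disk. I expect this bookkeeping — confirming that the local modification neither disturbs the bordered components nor manufactures a bad separating cylinder — to be the main obstacle, while the innermost-disk removal itself is routine given Proposition \ref{p5} and Lemma \ref{l10}.
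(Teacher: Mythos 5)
There is a genuine gap at the crux of your argument, in the sentence ``as $D_l^\circ$ misses $\partial C$, any component of the intersection inside $D_l$ would be closed, contradicting innermostness, so $D_l^\circ\cap\mathscr{C}=\emptyset$.'' This inference is false in the paper's setting: a component of $C\cap C_k$ need not have its legs on $\partial C$. The boundary of such a component lies in $\partial C\cap C_k$ \emph{or} in $C\cap \partial C_k$, and $\partial C_k$ contains, besides $l_k^\pm\subset\partial^\pm$, the punctures of $C_k$, which are curves in $N^\circ$ sitting on other cylinders. So a non-closed arc of $C\cap C_k$ with both legs on punctures of $C_k$ can perfectly well lie inside $D_l^\circ$ without violating innermostness among closed curves (and an innermost choice in the strong sense you need is not available in general). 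Indeed this configuration is exactly the one the paper's Figure of legged intersections depicts, and ruling it out is the actual content of the paper's proof of this lemma, not a routine innermost-disk observation.

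The paper closes this hole as follows, and this is the missing step you would need: choose the closed curve $l$ so that $D_l^\circ\cap\mathscr{C}$ contains no \emph{closed} curves (this weaker innermost choice is always available). If some $l_k\in C\cap C_k$ has $l_k^\circ\subset D_l^\circ$, then $l_k$ has nonempty boundary, and since $D_l^\circ$ misses $\partial^+\amalg\partial^-$, each leg of $l_k$ lies on a puncture $l_{k'}$ of $C_k$; that puncture is a generating curve on some $C_{k'}$ with $k'>k$, so there is a curve $l'_{k'}\in C\cap C_{k'}$ meeting a generating curve on $C_{k'}$. Almost-niceness (Definition \ref{d14}) then forces $l'_{k'}$ to have a leg on $\partial^-$, which is impossible because $l'_{k'}$ lies in $D_l^\circ$. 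Only after this does $D_l^\circ\cap\mathscr{C}=\emptyset$ follow, and Lemma \ref{l10} applies. Ironically, you invoke precisely this almost-niceness mechanism later, but only to argue preservation of almost-niceness under the surgery, not where it is indispensable. The rest of your proposal is sound and matches the paper: disjointness of $l$ from the generating curves on $C_i$ follows from almost-niceness since closed curves have no legs; Proposition \ref{p5} together with reducedness shows $l$ bounds a punctured disk on $C_i$ rather than being essential; the replacement via Lemma \ref{l10} decreases the number of closed intersections while keeping $C\cap C_j'\subset C\cap C_j$ and $g_j$ non-increasing; and the preservation of (almost) niceness is, as you say, immediate from the inclusion of intersections, so your worry there is less serious than the gap above.
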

\begin{proof}
		Let $l\in C\cap C_i$ be a closed curve. Since $C\cap\mathscr{C}$ is of type $\textrm{II}$, $l$ bounds a disk $D_l$ on $C$. By Proposition \ref{p5}, $l$ bounds a (punctured) disk $D_l'$ on $C_i$. Choose $l\in C\cap C_i$ such that  $D_l^\circ\cap \mathscr{C}$ does not contain any closed curves. Since $C\cap C_i$ is (almost) nice, $l$ is disjoint from the generating curves on $C_i$. Thus, there is no curve in $C\cap\mathscr{C}$ with a leg on $l$. We also have $D_l^\circ\cap\mathscr{C}=\emptyset$. In fact, if $l_k\in C\cap C_k$ and $l_k^\circ\subset D_l^\circ$,  $l_k$ has nonempty boundary. It is clear that $\partial l_k\cap(\partial^+\amalg\partial^-)=\emptyset$. So, there is a puncture $l_{k'}\subset C_k$ such that $\partial l_k\cap l_{k'}\neq\emptyset$. Then $l_{k'}$ is a generating curve on a punctured cylinder $C_{k'}$ with $k'>k$. Therefore, there is a curve $l'_{k'}\in C\cap C_{k'}$ such that  $l'_{k'}\cap l_{k'}\neq\emptyset$ (Figure \ref{figl24}). By the definition of (almost) nice intersections, $l'_{k'}$ has a leg on $\partial^-$. This is not possible since $l'_{k'}\subset D_l$. So, $D_l^\circ\cap\mathscr{C}=\emptyset$. By Lemma \ref{l10}, replacing $D_l'$ with $D_l$  we obtain an equivalent reduced simplifier  $\mathscr{C}'$ such that $C\cap\mathscr{C}'$ is of type $\textrm{II}$, $C\cap C_i^1$ is (almost) nice, and $\Cscr'$ has fewer  closed intersections with $C$. Repeating this process, we obtain the desired equivalent reduced simplifier.
\end{proof}
		
\begin{figure}[b]
	\def\svgwidth{6cm}
		\begin{center}
\begingroup%
  \makeatletter%
  \providecommand\color[2][]{%
    \errmessage{(Inkscape) Color is used for the text in Inkscape, but the package 'color.sty' is not loaded}%
    \renewcommand\color[2][]{}%
  }%
  \providecommand\transparent[1]{%
    \errmessage{(Inkscape) Transparency is used (non-zero) for the text in Inkscape, but the package 'transparent.sty' is not loaded}%
    \renewcommand\transparent[1]{}%
  }%
  \providecommand\rotatebox[2]{#2}%
  \newcommand*\fsize{\dimexpr\f@size pt\relax}%
  \newcommand*\lineheight[1]{\fontsize{\fsize}{#1\fsize}\selectfont}%
  \ifx\svgwidth\undefined%
    \setlength{\unitlength}{259.62151162bp}%
    \ifx\svgscale\undefined%
      \relax%
    \else%
      \setlength{\unitlength}{\unitlength * \real{\svgscale}}%
    \fi%
  \else%
    \setlength{\unitlength}{\svgwidth}%
  \fi%
  \global\let\svgwidth\undefined%
  \global\let\svgscale\undefined%
  \makeatother%
  \begin{picture}(1,0.52801523)%
    \lineheight{1}%
    \setlength\tabcolsep{0pt}%
    \put(0,0){\includegraphics[width=\unitlength,page=1]{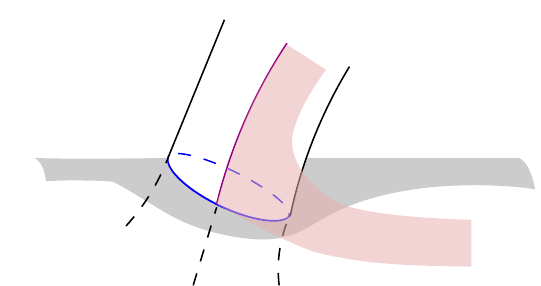}}%
    \put(0.37434755,0.4983748){\color[rgb]{0,0,0}\makebox(0,0)[lt]{\lineheight{0}\smash{\begin{tabular}[t]{l}$C_{k'}$\end{tabular}}}}%
    \put(0.53420031,0.41997834){\color[rgb]{0,0,0}\makebox(0,0)[lt]{\lineheight{0}\smash{\begin{tabular}[t]{l}$l'_{k'}$\end{tabular}}}}%
    \put(0,0){\includegraphics[width=\unitlength,page=2]{l24.pdf}}%
    \put(0.34772663,0.25681621){\color[rgb]{0,0,0}\makebox(0,0)[lt]{\lineheight{0}\smash{\begin{tabular}[t]{l}$l_{k'}$\end{tabular}}}}%
    \put(0.59385977,0.0089311){\color[rgb]{0,0,0}\makebox(0,0)[lt]{\lineheight{0}\smash{\begin{tabular}[t]{l}$l_k$\end{tabular}}}}%
    \put(0.78925779,0.26203951){\color[rgb]{0,0,0}\makebox(0,0)[lt]{\lineheight{0}\smash{\begin{tabular}[t]{l}$C_k$\end{tabular}}}}%
    \put(0,0){\includegraphics[width=\unitlength,page=3]{l24.pdf}}%
    \put(0.7760991,0.13041162){\color[rgb]{0,0,0}\makebox(0,0)[lt]{\lineheight{1.25}\smash{\begin{tabular}[t]{l}$C$\end{tabular}}}}%
  \end{picture}%
\endgroup%

	\caption{$l_k\in C\cap C_k$ has a leg on the puncture $l_{k'}$ of $C_k$. This puncture is a generating curve on a $C_{k'}$.  $l'_{k'}\in C\cap C_{k'}$  intersects $l_{k'}$.}
	\label{figl24}
	\end{center}
\end{figure}

\begin{remark}\label{r-a-l24}

If $\mathscr{C}$ is  not reduced, but $C\cap C_i$ is (almost) nice and closed curves in $C\cap C_i$ bound  disks in $C$ and  punctured disks in $C_i$ for $i=1,\ldots,n$, the above proof shows that there is an equivalent simplifier (not necessarily reduced) that satisfies the properties stated in Lemma \ref{l24}.
\end{remark}

\subsection{Removing the intersections with both legs on \texorpdfstring{$\boldsymbol{\partial^+}$}{Lg}}	
\begin{lemma}\label{l25}
If   $C\cap C_i$ is almost nice, for $1\leq i\leq n$, there is an equivalent reduced simplifier $\mathscr{C}'=\{C'_i\}_{i=1}^n$  such that $C\cap C_i'\subset C\cap C_i$, $g_i({\mathscr{C}'})\leq g_i(\mathscr{C})$, and each curve in $C\cap\mathscr{C}'$ has nonempty boundary which is not a subset of $\partial^+$. In particular, if $C\cap C_i$ is nice then $C\cap C_i'$ is nice as well.
\end{lemma}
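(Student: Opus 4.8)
The plan is to follow the template of Lemma~\ref{l18}, but to strip off only the arcs whose two legs both lie on $\partial^+$, leaving every SBC (and hence the type~$\textrm{II}$ nature of the intersection) in place. First I would apply Lemma~\ref{l24} to replace $\mathscr{C}$ by an equivalent reduced simplifier for which $C\cap\mathscr{C}$ contains no closed curves, so that every component is an arc; that lemma also guarantees $C\cap C_i'\subset C\cap C_i$, $g_i(\mathscr{C}')\leq g_i(\mathscr{C})$, and that (almost) niceness is preserved. It then suffices to show that while some component $l\in C\cap C_k$ has $\partial l\subset\partial^+$, one such arc can always be deleted, strictly decreasing their number and keeping all the bookkeeping intact.

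Fix such an $l$. Since $C\cap C_k$ is almost nice and $l$ has no leg on $\partial^-$, it is disjoint from the generating curves on $C_k$. On the annulus $C$ the arc $l$ cuts off a disk $D_l$ with $\partial D_l=l\cup l'$, $l'\subset l^+=\partial^+C$, and I would choose $l$ so that $D_l$ is innermost among the disks cut off by such arcs. This choice forces $D_l^\circ\cap\mathscr{C}=\emptyset$ and, moreover, forces the interior of $l'$ to carry no leg at all: a leg of some component $m$ in the interior of $l'\subset\partial C$ would push $m$ into $D_l^\circ$, and as distinct members of $\mathscr{C}$ are disjoint, $m$ could not escape across $l$, so it would be trapped in $D_l$ --- either producing a strictly smaller such disk (contradicting minimality) or failing to reach its far leg on $\partial^-$ or on a puncture. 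In particular $l'$ meets none of the curves $l_i^+$, which is precisely the condition on the $\partial^+$-arc demanded by Lemma~\ref{l12}.

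On the simplifier side, $l$ is an arc on the genus-zero surface $C_k$ with both endpoints on $l_k^+$ and disjoint from the generating curves; it therefore separates $C_k$, and the side carrying a sub-arc $l''\subset l_k^+$ but not $l_k^-$ contains no generating curve (by almost-niceness) and so is a punctured disk $D_l'$ with $\partial D_l'=l\cup l''$. Taking $D=D_l'$ and $D'=D_l$, Lemma~\ref{l12} lets me replace $D_l'$ by $D_l$ and produces an equivalent reduced simplifier $\mathscr{C}'$ from which, after the small perturbation that pushes the new $C_k'$ off $C$, the arc $l$ has disappeared. Because $D_l$ carries no punctures, no new punctures --- and hence no new generating curves --- are created, while the punctures of $C_k$ lying inside $D_l'$ are capped off; consequently $C\cap C_i'\subset C\cap C_i$, $g_i(\mathscr{C}')\leq g_i(\mathscr{C})$, reducedness is preserved, all SBCs survive, and (almost) niceness is maintained since we only delete an arc and some generating curves. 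Iterating, the number of arcs with both legs on $\partial^+$ strictly drops at each step and no new intersection curves appear, so after finitely many moves every component of $C\cap\mathscr{C}'$ has nonempty boundary not contained in $\partial^+$, as required.

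I expect the crux to be the legitimacy of the single move, i.e. checking the hypotheses of Lemma~\ref{l12}: that the boundary arc $l'\subset\partial^+$ avoids all the $l_i^+$, and that $l$ bounds a punctured disk of the prescribed shape on $C_k$. Both reduce to the innermost choice of $D_l$ together with the almost-nice hypothesis, which keeps $l$ off the generating curves and confines it to the puncture side of $C_k$; once these are in hand the monotonicity of $g_i$, the persistence of niceness and of the type, and the preservation of reducedness are routine consequences of $C\cap C_i'\subset C\cap C_i$ and of the fact that punctures are only capped, never introduced.
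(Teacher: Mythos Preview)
Your approach is the same as the paper's: apply Lemma~\ref{l24} first, then iteratively peel off an innermost arc with both legs on $\partial^+$ via Lemma~\ref{l12}. The structural outline, the choice of innermost $D_l$, the identification of the punctured disk $D_l'\subset C_k$, and the bookkeeping ($C\cap C_i'\subset C\cap C_i$, $g_i$ non-increasing, niceness preserved) all match.

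There is one soft spot in your justification of $D_l^\circ\cap\mathscr{C}=\emptyset$. You dispose of a trapped curve $m$ by saying it would ``fail to reach its far leg on $\partial^-$ or on a puncture.'' The $\partial^-$ case is clear, but nothing prevents $m$ from having a leg on a puncture \emph{inside} $D_l$: on $C$, such a leg is simply a point lying on another intersection curve, and that point can perfectly well sit in $D_l^\circ$. The paper closes this by a one-step chain argument: if a leg of $m$ lies on a puncture $l_{k'}$ of $C_k$, that puncture is a generating curve on some $C_{k'}$, so there is a curve $l'_{k'}\in C\cap C_{k'}$ passing through that point; by (almost) niceness $l'_{k'}$ has a leg on $\partial^-$, yet $l'_{k'}$ is itself trapped in $D_l$ (it can neither cross $l$ nor land on it, since $l$ misses the generating curves), a contradiction. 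The same chain argument is what actually shows $l'$ meets no $l_i^+$. Once you insert this step, your proof is complete and coincides with the paper's.
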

\begin{proof}
By Lemma~\ref{l24}, we may assume that all the curves in $C\cap\mathscr{C}$ have nonempty boundary. Suppose that $l\in C\cap C_i$ has both legs on $\partial^+$. There is a disk $D_l\subset C$ such that $\partial D_l=l\cup l'$, with $l'\subset l^+=\partial^+C$.  Choose $l$ such that $D_l$ does not contain another curve in $C\cap\mathscr{C}$ with both legs on $\partial^+$. Since $C\cap C_i$ is (almost) nice, $l$ is disjoint from the generating curves on $C_i$. Thus,  no  curve in $C\cap\mathscr{C}$ has a leg on $l$. \\
		
	By the choice of $D_l$, if $l_k\in C\cap C_k$ and $l_k^\circ\subset D_l^\circ$,   at least one leg of $l_k$ is on a puncture $l_{k'}$ of $C_k$, which  is a generating curve on a $C_{k'}$ (with $k'>k$). Thus, there is a curve $l'_{k'}\in C\cap C_{k'}$ such that $l'_{k'}\cap l_{k'}\neq\emptyset$ (Figure \ref{figl24}). Since the intersection is (almost) nice, $l'_{k'}$ has a leg on $\partial^-$.  This is not possible since $(l'_{k'})^\circ\subset D_l^\circ$. Therefore, $D_l^\circ\cap\mathscr{C}=\emptyset$. There is a punctured disk $D_l'\subset C_i$ such that $\partial D_l'=l\cup l''$, where $l''\subset l_i^+=\partial^+ C_i$. By Lemma \ref{l12}, replacing $D_l$ with $D_l'$, we obtain an equivalent reduced simplifier $\mathscr{C}'$ such that $C\cap\mathscr{C}'$ has fewer curves (in comparison with $C\cap\mathscr{C}$) with both legs on $\partial^+$. Furthermore, $C\cap\mathscr{C}'$ is of type $\textrm{II}$ and $C\cap C_i'$ is (almost) nice. Repeating this process, we obtain an equivalent reduced simplifier with the desired properties.
\end{proof}
	
\begin{remark}\label{r-a-l25}
	If $\mathscr{C}$  is not reduced, but $C\cap C_i$ is (almost) nice and does not contain any closed curves for $i=1,\ldots,n$, the above argument implies that there is an equivalent simplifier (not necessarily reduced) which satisfies the properties stated in Lemma \ref{l25}.
\end{remark}
	
	Let $l\in C\cap C_i$ be  a SBC and $l'$ be a curve component in $C\cap C_i$, such that $\partial l'\cap l\neq\emptyset$. The curve $l'$ is then of one of the following types (Figure \ref{SBC}):
	\begin{align*}
	\textsf{i}.&\  \partial l'\subset l,&
	\textsf{ii}.&\  \partial l'\cap l^+\neq\emptyset,&
	\textsf{iii}.&\  \partial l'\cap l^-\neq\emptyset&&\text{and}&
	\textsf{iv}.&\  \partial l'\cap(C^\circ\setminus l)\neq\emptyset.
	\end{align*}
Next, we refine a reduced simplifier with properties stated in  Lemma~\ref{l25}, to construct an equivalent simplifier $\mathscr{C}'$  so that $C\cap \Cscr'$ does not contain curves of types $\textsf{i}$ and $\textsf{ii}$. 

	\begin{figure}
	\def\svgwidth{3.5cm}
		\begin{center}
\begingroup%
  \makeatletter%
  \providecommand\color[2][]{%
    \errmessage{(Inkscape) Color is used for the text in Inkscape, but the package 'color.sty' is not loaded}%
    \renewcommand\color[2][]{}%
  }%
  \providecommand\transparent[1]{%
    \errmessage{(Inkscape) Transparency is used (non-zero) for the text in Inkscape, but the package 'transparent.sty' is not loaded}%
    \renewcommand\transparent[1]{}%
  }%
  \providecommand\rotatebox[2]{#2}%
  \newcommand*\fsize{\dimexpr\f@size pt\relax}%
  \newcommand*\lineheight[1]{\fontsize{\fsize}{#1\fsize}\selectfont}%
  \ifx\svgwidth\undefined%
    \setlength{\unitlength}{317.2546002bp}%
    \ifx\svgscale\undefined%
      \relax%
    \else%
      \setlength{\unitlength}{\unitlength * \real{\svgscale}}%
    \fi%
  \else%
    \setlength{\unitlength}{\svgwidth}%
  \fi%
  \global\let\svgwidth\undefined%
  \global\let\svgscale\undefined%
  \makeatother%
  \begin{picture}(1,0.9258759)%
    \lineheight{1}%
    \setlength\tabcolsep{0pt}%
    \put(0,0){\includegraphics[width=\unitlength,page=1]{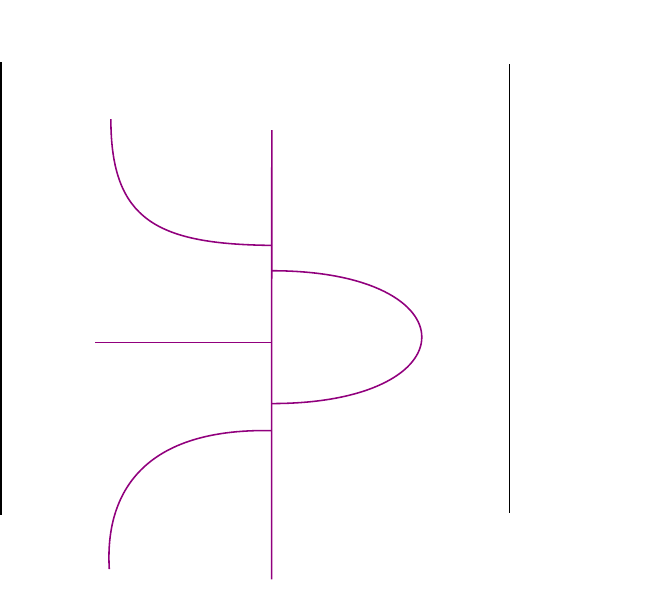}}%
    \put(0.45129191,0.14438919){\color[rgb]{0,0,0}\makebox(0,0)[lt]{\lineheight{0}\smash{\begin{tabular}[t]{l}$l$\end{tabular}}}}%
    \put(0.65683676,0.40492127){\color[rgb]{0,0,0}\makebox(0,0)[lt]{\lineheight{0}\smash{\begin{tabular}[t]{l}$\textsf{i}$\end{tabular}}}}%
    \put(0.0484096,0.66036109){\color[rgb]{0,0,0}\makebox(0,0)[lt]{\lineheight{0}\smash{\begin{tabular}[t]{l}$\textsf{ii}$\end{tabular}}}}%
    \put(0.0388291,0.39646045){\color[rgb]{0,0,0}\makebox(0,0)[lt]{\lineheight{0}\smash{\begin{tabular}[t]{l}$\textsf{iv}$\end{tabular}}}}%
    \put(0.03870465,0.16080381){\color[rgb]{0,0,0}\makebox(0,0)[lt]{\lineheight{0}\smash{\begin{tabular}[t]{l}$\textsf{iii}$\end{tabular}}}}%
    \put(0.65977932,0.01000096){\color[rgb]{0,0,0}\makebox(0,0)[lt]{\lineheight{0}\smash{\begin{tabular}[t]{l}$l^-$\end{tabular}}}}%
    \put(0.29317888,0.82822294){\color[rgb]{0,0,0}\makebox(0,0)[lt]{\lineheight{0}\smash{\begin{tabular}[t]{l}$l^+$\end{tabular}}}}%
    \put(0,0){\includegraphics[width=\unitlength,page=2]{sbc.pdf}}%
  \end{picture}%
\endgroup%

	\caption{$l\in C\cap\mathscr{C}$ is a SBC. The four types of intersection of curves in $C\cap\mathscr{C}$ with at least one leg on $l$ are illustrated.}
	\label{SBC}
	\end{center}
	\end{figure}

	\subsection{Removing  a special intersection of type \texorpdfstring{$\textsf{i}$}{Lg}}\label{3.1}
	Suppose that $C\cap C_i$ is (almost) nice for $i=1,\ldots,n$, and does not contain closed curves or curves with both legs on $\partial^+$. Let $l\in C\cap C_i$ be a SBC and $l_k\in C\cap C_k$ be such that $\partial l_k\subset l$. There is a disk $D_{k}\subset C$ such that $\partial D_{k}=l_k\cup\bar{l}$, where $\bar{l}\subset l$. Choose $l_k$ such that $D_{k}^\circ$ does not contain intersections of type $\textsf{i}$. $l_k\subset C_k$ has legs on the punctures of $C_k$. Thus, $l_k$ does not intersect the generating curves on $C_k$. So, there is no  curve in $C\cap \mathscr{C}$ with legs on $l_k$.\\

	On the other hand, $D_{k}^\circ\cap\mathscr{C}=\emptyset$. In fact, if there is some $l_j\in C\cap C_j$ with $l_j^\circ\subset D_{k}^\circ$, then $\partial l_j\neq\emptyset$ and by the choice of  $l_k$, we have $\partial l_j\nsubseteq l$. Thus, at least one leg of $l_j$ is in $D_{k}^\circ$ and on a puncture $l_{j'}$ of $C_j$. $l_{j'}$ is a generating curve on a punctured cylinder $C_{j'}$, with $j'>j$. Therefore, there is a second curve $l'_{j'}\in C\cap C_{j'}$ which intersects $l_{j'}\subset C_{j'}$. So, $\partial l'_{j'}\cap\partial^-\neq\emptyset$. This cannot happen since $(l'_{j'})^\circ\subset(D_k)^\circ$.\\
	
	Since $l$ intersects each generating curve on $C_i$ exactly once, the two legs of $l_k$ are on  different punctures $l_{k}^1$ and $l_{k}^2$ of $C_k$, which are generating curves on $C_i$. Let $\bar{C}\subset C_i$ be a subcylinder with $\partial\bar{C}=l_{k}^1\cup l_{k}^2$. Since the intersection of $C$ with $\Cscr$ is almost nice, $\bar{C}$ has no punctures. However, it may include some generating curves, which are the intersections with cylinders $C_s$ with $s<i$, as illustrated in Figure~\ref{special1-2}, where a neighborhood of $D_{k}\cup\bar{C}$ is pictured. \\
	
	\begin{figure}
	\def\svgwidth{16cm}
		\begin{center}
			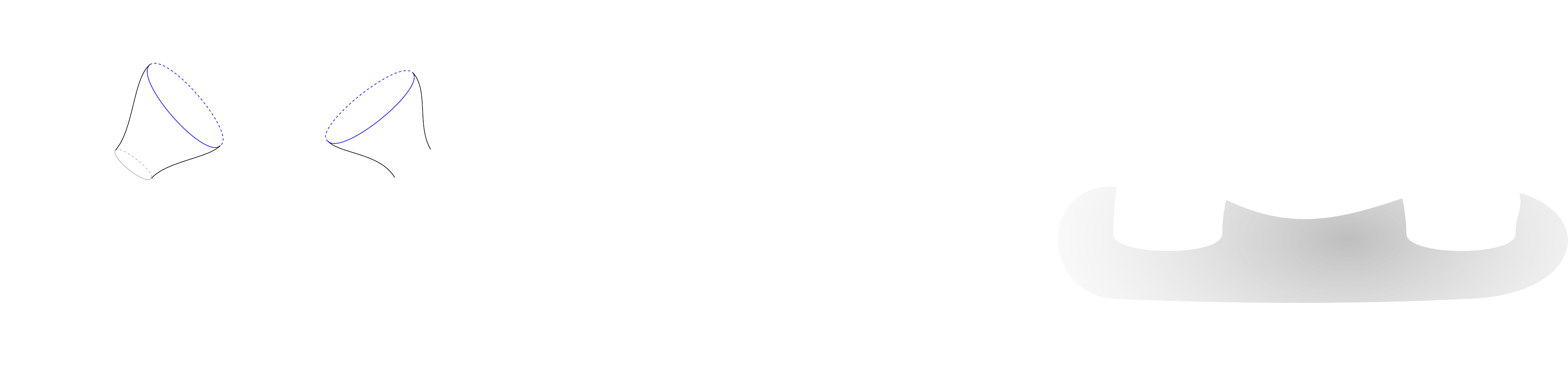
	\caption{The legs of $l_k\in C\cap C_k$ are on $l_{k}^1\cap l$ and $l_{k}^2\cap l$, where $l_{k}^1$ and $l_{k}^2$ are punctures on $C_k$ and generating curves on $C_i$. The boundary of the subcylinder $\bar{C}\subset C_i$ is  $l_{k}^1\amalg l_{k}^2$ (1).  $\widetilde{C}$ is a parallel copy of $\bar{C}$ (2). $D$ is obtained by cutting $\widetilde{C}$ along  $\widetilde{C}\cap D_{k}$, and attaching parallel copies of $D_{k}$ to the cut edges (3). $\partial D$ bounds the punctured disk $D'\subset C_k$.}
	\label{special1-2}
	\end{center}
	\end{figure}

	Let $\widetilde{C}$ be a parallel copy of $\bar{C}$ such that $\widetilde{C}^\circ\cap \mathscr{C}=\emptyset$ and $\partial \widetilde{C}\subset C_k^\circ$ (Figure \ref{special1-2}-(2)). By cutting  $\widetilde{C}$ along its intersection with $D_{k}$, and gluing two parallel copies of $D_{k}$ along the cut edges, we obtain a disk $D$ with $D^\circ\cap\mathscr{C}=\emptyset$. $\partial D\subset C_k$ bounds a punctured disk $D'\subset C_k$, which contains  the two punctures $l_{k}^1$ and $l_{k}^2$ (Figure \ref{special1-2}-(3)). Furthermore, $\partial D$ is disjoint from the generating curves on $C_k$.  Lemma \ref{l10} implies that by replacing $D'$ with $D$ we obtain an equivalent reduced simplifier $\mathscr{C}'=(C_1,\ldots,C_{k-1},C_k',C_{k+1},\ldots,C_n)$, where  two of the punctures of $C_k$ are removed in $C_k'$. These punctures are generating curves on $C_i$. Thus, $g_i({\mathscr{C}'})<g_i(\mathscr{C})$. \\

Moreover, $C\cap C_k'$ is almost nice. To see this, first note that for $l'\in C\cap C_k$ with a leg on $l_{k}^1$ (or $l_{k}^2$), there is a curve $\bar{l}\in C\cap C_i$ such that a leg of $l'$ is on $\bar{l}$. $\bar{l}\cap\bar{C}$ consists of curves with legs on $l_{k}^1$ or $l_{k}^2$. Thus, there is a curve $\widetilde{l}$ in $\bar{l}\cap\bar{C}$ with one leg given by $l'\cap \bar{l}$. The other leg of $\widetilde{l}$ corresponds to a curve $l''\in C\cap C_k$ (possibly with $l'=l''$).  In Figure \ref{special1-4}, if $l'=l'_1$, then we have $l'=l''$, while for $l'=l'_2$ we have $l'\neq l''$. The boundaries of $l'$ and $l''$, which are on $l_{k}^1$ or $l_{k}^2$, are identified in $C'_k$, giving a new intersection curve in $C\cap C'_k$. Let $\bar{l}\in C\cap C'_k$ intersect the generating curves on $C'_k$. If $\bar{l}$ is in $C\cap C_k$,  $\bar{l}$ has a leg on $\partial^-$. If $\bar{l}$ has exactly one leg on $\partial^-$, it intersects each generating curve exactly once. On the other hand, if $\bar{l}\notin C\cap C_k$,  there are curves $l',l''\in C\cap C_k$ such that $\bar{l}$ is obtained by identifying a leg of $l'$ with a leg of $l''$, as described above. Thus, $l'$ or $l''$ should intersect the generating curves. Since $C\cap C_k$ is almost nice, $l'$ or $l''$ has a leg on $\partial^-$. So, $\bar{l}$ also has a leg on $\partial^-$. If $\bar{l}$ has only one leg on $\partial^-$, then only one of $l'$ or $l''$, say $l'$, intersects the generating curves. By definition, $l'$ intersects all the generating curves in exactly one point. Thus, $\bar{l}$ intersects  all the generating curves in exactly one point. Since the punctures of $C'_k$ are a subset of the punctures of $C_k$,  it follows that $C\cap C_k'$ is almost nice.
		
\begin{figure}
	\def\svgwidth{14cm}
		\begin{center}
			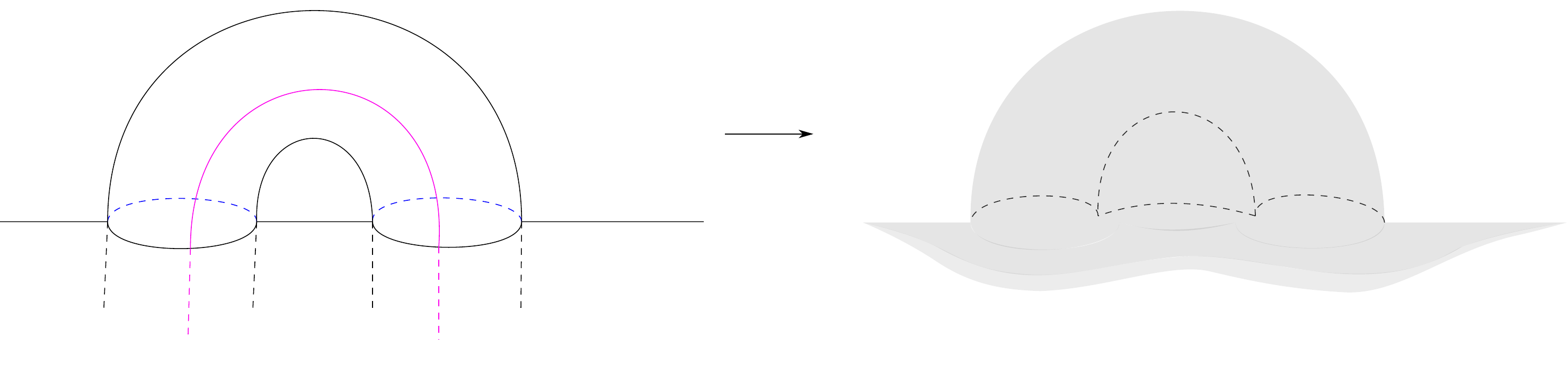
	\caption{$l'_i,l''_i\in C\cap C_k$ have a leg on $\tilde{l}_i\in C\cap \bar{C}$ for $i=1,2$. $l'_1=l''_1$ and $l'_2\neq l''_2$ (left).  $l'_1$ gives a closed curve in $C\cap C'_k$ and $l'_2$ and $l''_2$  give one curve in $C\cap C'_k$ (right).
	}
	\label{special1-4}
	\end{center}
\end{figure}

	\subsection{Removing a special intersection of type \texorpdfstring{$\textsf{ii}$}{Lg}}\label{3.2}
	Suppose that $C\cap C_i$ is (almost) nice and does not contain closed curves or curves with both legs on $\partial^+$ for $i=1,\ldots, n$. Let $l\in C\cap C_i$ be a SBC and $l_k\in C\cap C_k$ have one leg on $l$ and one on $\partial^+$. Then there is a disk $D_{k}\subset C$ such that $\partial D_{k}=l_k\cup\bar{l}\cup\bar{l}^+$, where $\bar{l}\subset l$ and $\bar{l}^+\subset l^+$, see Figure \ref{Special-C}-(1). Suppose that $D_k^\circ$ does not include curves of type $\textsf{i}$ or $\textsf{ii}$. The curve $l_k\subset C_k$ has one leg on $l_k^+$ and one on a puncture of $C_k$. Thus, $l_k$ does not intersect the generating curves on $C_k$ and there is no curve in  $C\cap\mathscr{C}$ with legs  on $l_k$. On the other hand, $D_{k}^\circ\cap C=\emptyset$. In fact, if $l_j\in C\cap C_j$ and $l_j^\circ\subset D_{k}^\circ$, then $l_j$ has nonempty boundary, and by our assumption on $l_k$, at least one leg of $l_j$ is on a puncture $l'_j$ of $C_j$ and in $D_{k}^\circ$. The curve $l'_j$ is a generating curve on some $C_{j'}$, with $j'>j$. So, there is a curve $l'_{j'}\in C\cap C_{j'}$ which intersects  $l'_{j}$ on $C_{j'}$. Therefore, one leg of $l'_{j'}$ is on $\partial^-$. This cannot happen since $l'_{j'}\subset D_{k}^\circ$.\\

	\begin{figure}
	\def\svgwidth{15cm}
		\begin{center}
			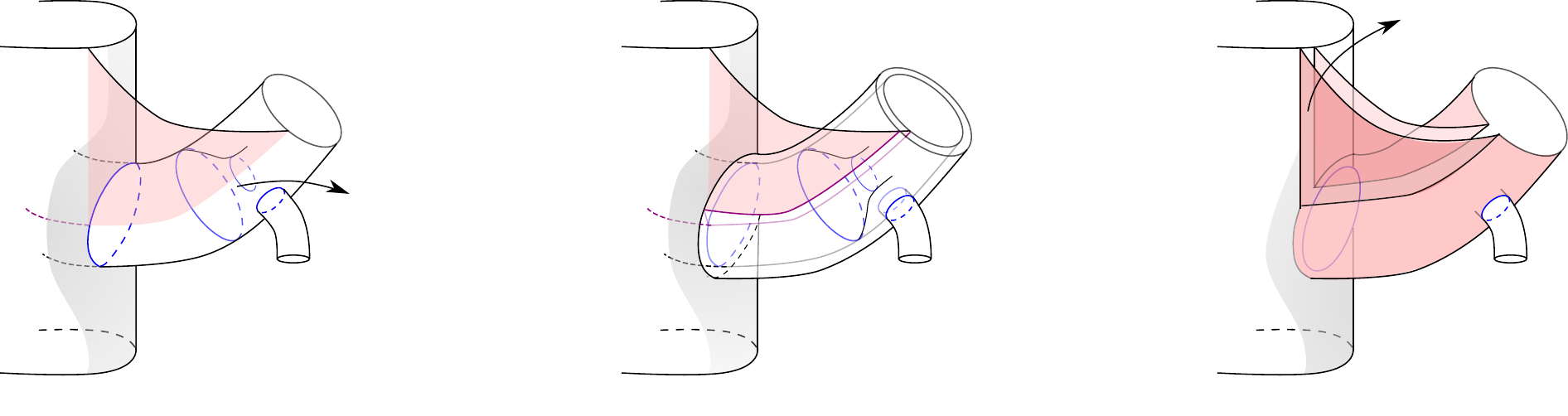
	\caption{The boundary of $l_k\in C\cap C_k$ is on $\partial^+$ and  $l_{i}\cap l$, with $l_{i}$ a puncture on $C_k$ and a generating curve on $C_i$. The boundary of the subcylinder $\bar{C}\subset C_i$ is $l_{i}\amalg l_i^+$ (1).  $\widetilde{C}$  is a parallel copy of $\bar{C}$ (2). $D$ is obtained by cutting $\widetilde{C}$ along $\widetilde{C}\cap D_k$ and gluing parallel copies of  $D_{k}$ to the cut edges (3). $\partial D=l'\cup l''$ where  $l'\subset\partial^+$ and $l''\subset C_k$.}
	\label{Special-C}
	\end{center}
	\end{figure}	
	
	The curve $l_k\subset C_k$ has one leg on $l_k^+$ and one on a puncture $l_i$ of $C_k$, where $l_i$ is a generating curve on $C_i$. Let $\bar{C}\subset C_i$ be a punctured subcylinder with $\partial \bar{C}=l_i^+\amalg l_i$.  Let $\widetilde{C}$ denote a parallel copy of $\bar{C}$  such that $\partial\widetilde{C}=\tilde{l}_i^+\cup\tilde{l}_i$, where $\tilde{l}_i^+\subset\partial^+$  is disjoint from $l_j^+$, for $j=1,\ldots, n$, and $\tilde{l}_i\subset C_k$ is disjoint from $l_i$ and the generating curves on $C_k$. The punctures of $\widetilde{C}$ are essential curves on $C_j$, for $j>i$, while $\widetilde{C}^\circ\cap\mathscr{C}=\emptyset$ (Figure \ref{Special-C}-(2)).\\

	By cutting $\widetilde{C}$ along its intersection with $D_k$ and gluing parallel copies of  $D_{k}$ to the cut edges, a punctured disk $D$ is obtained with $\partial D=l'\cup l''$, where $l'\subset\partial^+$, $l''\subset C_k$ and $D^\circ\cap\mathscr{C}=\emptyset$. The punctures of $D$ are essential curves on $C_j$, for $j>i$, and $l''$ is disjoint from the generating curves on $C_k$. There is a punctured disk $D'\subset C_k$ with  a single puncture $l_i$ and  $\partial D'=l''\cup l^*$, where $l^*\subset l_k^+$ (Figure \ref{Special-C}-(3)). By Lemma \ref{l12}, if we replace $D'$ with $D$, we obtain a punctured cylinder $C_k'$ and an equivalent reduced simplifier $\mathscr{C}'=\left(C_1,\ldots,C_{k-1},C_k',C_{k+1},\ldots,C_n\right)$, with  $g_i({\mathscr{C}'})=g_i(\mathscr{C})-1$ and   $g_j({\mathscr{C}'})=g_j(\mathscr{C}_j)$ for $j<i$. Since the new generating curves on $C_j$ may be chosen sufficiently close to (some) old generating curves for $j>i$, if $C\cap C_j$ is nice, so is $C\cap C_j'$.\\

Moreover, $C\cap C'_k$ is almost nice. To see this, first note that for each curve $l'\in C\cap C_k$ with a leg on $l_i$, there is a curve $\tilde{l}\in C\cap C_i$ such that a leg of $l'$ is on $\tilde{l}$ (Figure~\ref{fig-Special-D}-(1)). The intersection $\tilde{l}\cap\bar{C}$ consists of curves with legs on $l_i$, or $l_i^+$, or on a puncture of $\bar{C}$. Thus, there is a curve $\tilde{l}_1$ in $\tilde{l}\cap\bar{C}$ with one leg given by $l'\cap \tilde{l}$. If the other leg of $\tilde{l}_1$ is on $l_i^+$ or on a puncture of $\bar{C}$, there is a corresponding curve in $C\cap C'_k$,  denoted by $l^*$, such that $l^*$ has a leg on $\partial^+$ or on a puncture of $C'_k$ (Figure~\ref{fig-Special-D}-(2)). Then $l'\in C\cap C_k$ intersects the generating curves iff $l^*\in C\cap C'_k$  intersects the generating curves. Suppose that $l^*\in C\cap C'_k$ intersects the generating curves on $C'_k$. Since $C\cap C_k$ is (almost) nice, $l'$ has a leg on $\partial^-$. If $l'$ has exactly one leg on $\partial^-$, it intersects each generating curve on $C_k$ exactly once. As a result $l^*$  intersects each generating curve on $C_k'$ exactly once, and the claim follows.\\

If the other leg of $\tilde{l}_1$ is on $l_i$, there is a curve $l''\in C\cap C_k$ (possibly with $l''=l'$) which shares this leg (Figure \ref{fig-Special-D}-(3)). The boundaries of $l'$ and $l''$  on $l_i$ are identified in $C'_k$, giving a curve $l^*$ in $C\cap C'_k$ (Figure~\ref{fig-Special-D}-(4)). If $l^*$ intersects the generating curves on $C'_k$,  $l'$ or $l''$, say $l'$,  intersects the generating curves on $C_k$. Since $C\cap C_k$ is (almost) nice, $l'\subset C_k$ has one leg on $\partial^-$. Thus, so does $l^*$. If $l^*$ has exactly one leg on $\partial^-$, the same is true for $l'$. Since $C\cap C_k$ is (almost) nice,   $l'$  intersects the generating curves on $C_k$ exactly once. Hence, so does $l^*$. This completes the proof of the  claim.

\begin{figure}
	\def\svgwidth{14cm}
	\begin{center}
	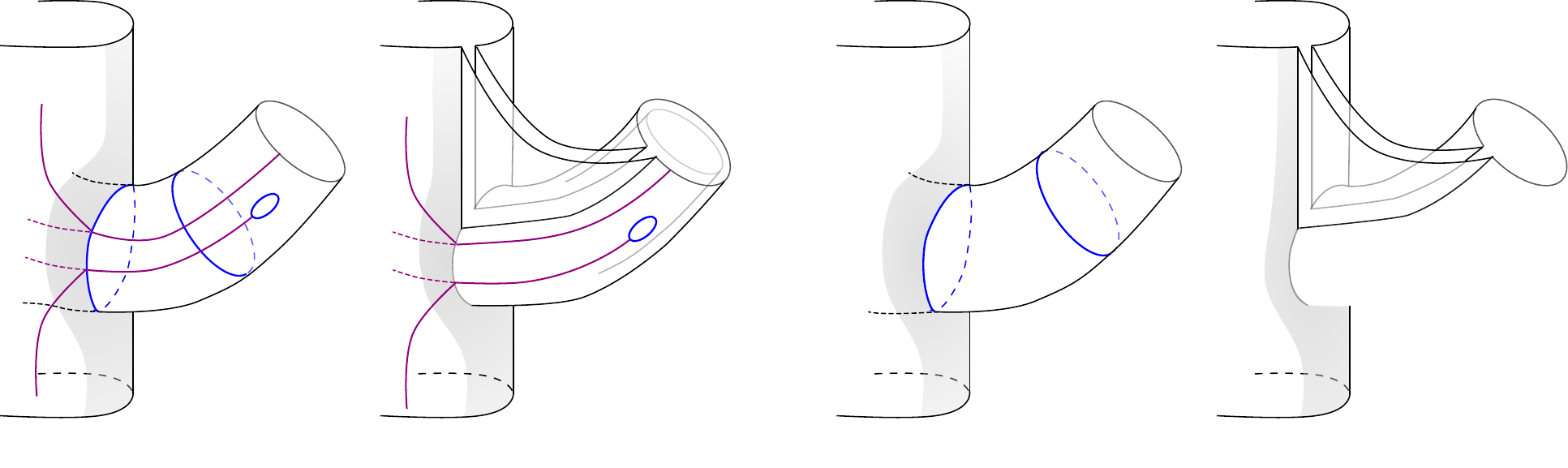
	\caption{ $l'_i, l''_i\in C\cap C_k$  have boundary on $\tilde{l}_i\cap l_i$, for $i=1,2$. $\tilde{l}_1$ and $\tilde{l}_2$ have one leg  on $\partial^+$ and the other one on a puncture of $\bar{C}$, respectively (1).  $l'_1$ (resp., $l'_2$) gives a curve $l_1^*$ (resp., $l_2^*$) in $C\cap C'_k$ with one leg on $\partial^+$ (resp., on a puncture of $C'_k$) (2). $l'_1\neq l''_1$, while $l'_2=l''_2$ (3).  $l'_1$ and $l''_1$ give a curve $l^*_1$ in $C\cap C'_k$ and $l'_2$ gives a closed curve $l^*_2$ in $C\cap C'_k$ (4). }
	\label{fig-Special-D}
	\end{center}
\end{figure}

\subsubsection{Reducing the number of generating curves}\label{3.3}
Suppose that $\mathscr{C}=\left(C_i\right)_{i=1}^n$ is a simplifier such that the following are satisfied: 
\begin{itemize}[leftmargin=.6in]\setlength\itemsep{-0.3em}
\item[$B_\mathscr{C}(i).1$] $C\cap C_j$ is almost nice for $1\leq j<i$ and is nice for $i\leq j\leq n$ ;
\item[$B_\mathscr{C}(i).2$]\label{z3} No curve in $C\cap\mathscr{C}$ is closed or has both legs on $\partial^+$;
\item[$B_\mathscr{C}(i).3$]\label{z4}If $l\in C\cap C_j$ is a SBC and $l'\in C\cap C_t$ is of type $\textsf{i}$ or $\textsf{ii}$ with $j,t+1>i$, then  $\partial l'\cap l=\emptyset$;
\item[$B_\mathscr{C}(i).4$]\label{z5}If a curve in $C\cap C_j$, for $j\geq i$, intersects the generating curves on $C_j$,  it is a SBC.
\end{itemize}
	
	Let $m_k(\mathscr{C})$ denote the number of curves of type $\textsf{i}$ or $\textsf{ii}$ with a leg on SBCs in $C\cap C_k$ and $h\geq i$ be the smallest index with  $m_h(\mathscr{C})\neq 0$. Let $l\in C\cap C_h$ be a SBC and $l_k\in C\cap C_k$ be of type $\textsf{i}$ or $\textsf{ii}$ such that $\partial l_k\cap l\neq\emptyset$ and $D_{k}^\circ\subset C$ includes no intersection curves of type $\textsf{i}$ or $\textsf{ii}$. Since $h\geq i$, then $k<i$ by definition and $B_\mathscr{C}(i).3$. By the discussion of $\S$\ref{3.1} and $\S$\ref{3.2}, we obtain an equivalent reduced simplifier $\Cscr'=\{C_i'\}_{i=1}^n$ which satisfies $B_{\Cscr'}(i).1$, while $g_j({\Cscr'})=g_j({\mathscr{C}})$ for $i\leq j<h$, and $g_h({\Cscr'})<g_h({\mathscr{C}})$. \\
	
We next show that $\Cscr'$ satisfies $B_{\Cscr'}(i).3$.  For a SBC $\tilde{l}\in C\cap C_j'$ with $j>i$, and an intersection $l''\in C\cap  C_t'$ of type $\textsf{i}$ or $\textsf{ii}$ with  $\partial l''\cap l\neq\emptyset$ we have $C\cap C_j=C\cap  C_j'$. For $t=k$, we already observed that $t=k<i$. If $t\neq k$, $C\cap C_t'=C\cap C_t$ (since $ C_t'$ is the same as $C_t$ possibly with a different set of generating curves).  So, $l''\in C\cap C_t$ and by $B_{\mathscr{C}}(i).3$, $t<i$.\\
	
Finally, we show that $B_{\Cscr'}(i).4$ is satisfied. Let $l\in C\cap  C_j'$, intersect the generating curves on $ C_j'$ for some $j\geq i$. If  $C_j'=C_j$, then $l$ is a SBC, since $\mathscr{C}$ satisfies $B_\mathscr{C}(i).4$. On the other hand, if $C_j'$  is obtained from $C_j$ by removing some generating curves, then $l\in C\cap C_j$ also intersects the generating curves on $C_j$ (note that $C\cap C_j=C\cap  C_j'$). Since $\mathscr{C}$ satisfies $B_\mathscr{C}(i).4$, it follows again that $l$ is a SBC. Finally, if $C_j'$ is obtained from $C_j$ by adding some generating curves, then these generating curves are arbitrarily close to the ones on $C_j$. Thus, $l$ also intersects the generating curves on $C_j$. Since $\mathscr{C}$ satisfies $B_\mathscr{C}(i).4$, it follows again that $l$ is a SBC.\\
	
$C\cap\Cscr'$ may include closed curves or curves with both legs on $\partial^+$ (Figure \ref{fig-special2-4}).  Lemma \ref{l25} gives an equivalent reduced simplifier  $\mathscr{C}''=\{C_j''\}_{j=1}^n$ such that $B_{\mathscr{C}''}(i).1$ and $B_{\mathscr{C}''}(i).2$ are satisfied,  $g_j({\mathscr{C}''})\leq g_j(\mathscr{C})$ for $i\leq j< h$, and $g_h({\mathscr{C}''})<g_h(\mathscr{C})$. Moreover, $B_{\mathscr{C}''}(i).3$ is satisfied;  if $\tilde{l}\in C\cap C_j''$ is a SBC for $j>i$ and $l''\in C\cap C_t''$ is a curve of type $\textsf{i}$ or $\textsf{ii}$ with a leg on $\tilde{l}$, $\tilde{l}\in C\cap C_j'$ and $l''\in C\cap C_t'$ by  Lemma \ref{l25}. Since  $B_{\Cscr'}(i).3$ is satisfied, we have $t<i$. Similarly, $B_{\mathscr{C}''}(i).4$ is satisfied. Note that we may have $m_k({\mathscr{C}''})\neq m_k(\mathscr{C})$ (Figure \ref{fig-special2-5}).

\begin{figure}
\def\svgwidth{15cm}
\begin{center}
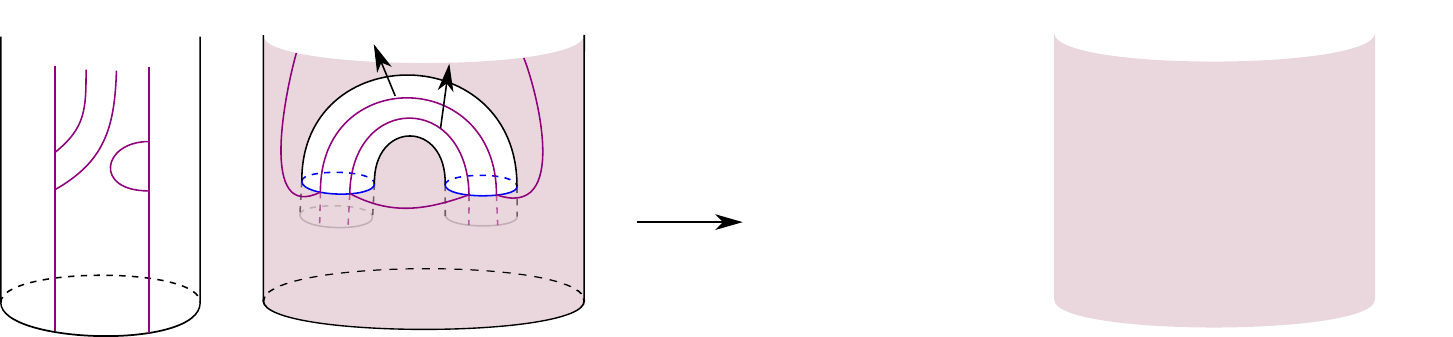
\caption{$l_1,l_2\in C\cap C_k$ are of type $\textsf{ii}$ with one leg on the SBC $\tilde{l}_1\in C\cap C_h$, while $l_3\in C\cap C_k$ is of type $\textsf{i}$ with both legs on the SBC $\tilde{l}_2\in C\cap C_h$ (left).  $l_1$ and $l_2$ give $l'_1\in C\cap  C'_k$ with both legs on $\partial^+$, while $l_3$ gives $l'_3\in C\cap C'_k$ (right). 
}
\label{fig-special2-4}
\end{center}
\end{figure}

\begin{figure}
\def\svgwidth{15cm}
\begin{center}
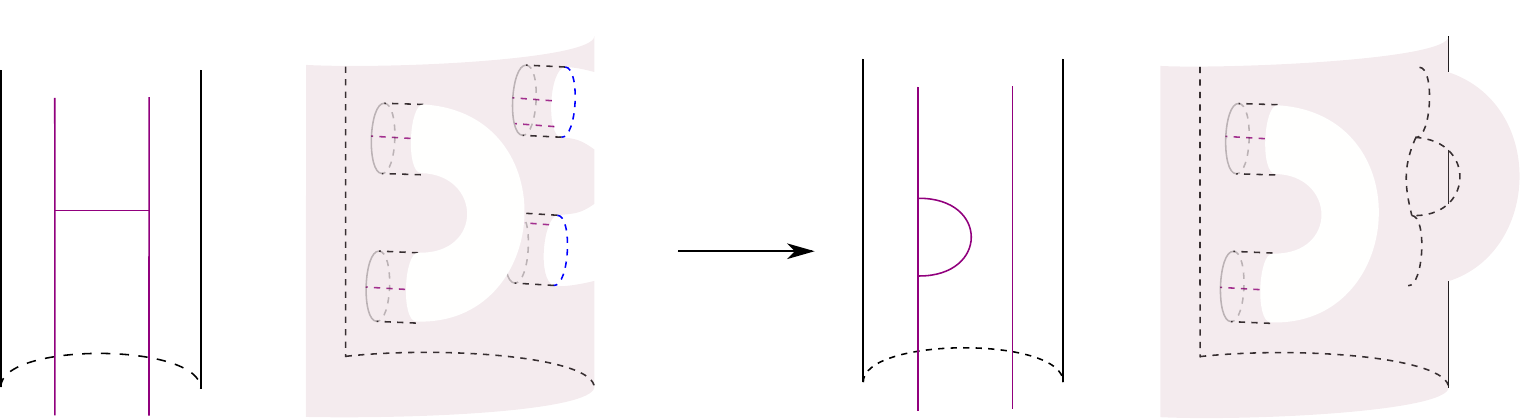
\caption{ The two legs of either of $l_1,l_2\in C\cap C_k$ is on the SBCs  $\tilde{l}_1\in C\cap C_h$ and  $\tilde{l}_2\in C\cap C_j$ (left).  $l_1$ and $l_2$ are changed to  $l'_1\in C\cap C'_k$, which is of type $\textsf{i}$ (right). }
\label{fig-special2-5}
\end{center}
\end{figure}	

\section{Removing intersections of type \texorpdfstring{$\textrm{II}$}{Lg}}
Having fixed the cylinder $C\subset N$, for a simplifier $\Cscr$,  let $B_\Cscr$ be  the following statement:\\

$\mathbf{B_\Cscr:}\quad$ {\emph{$\Cscr$ is reduced,  $C\cap \mathscr{C}$ is nice of type $\textrm{II}$ and if $l\in C\cap\mathscr{C}$, then $l$ is not closed, $\partial l\not\subset \partial^+$, $l$ is not of type $\textsf{i}$ or $\textsf{ii}$, and if $l$ intersects the generating curves, it is a SBC}}.  
\subsection{Removing type \texorpdfstring{$\textsf{i}$}{Lg} and type \texorpdfstring{$\textsf{ii}$}{Lg} intersections}
\begin{proposition}\label{p26}	
Let $N$ be a manifold with a reduced simplifier $\mathscr{C}=\left(C_i\right)_{i=1}^n$ and $C$ be a cylinder in $N$ such that $C\cap\mathscr{C}$ is nice and of type $\textrm{II}$. There is an equivalent simplifier $\mathscr{C}'=\big(C'_j\big)_{i=1}^n$ such that $B_{\Cscr'}$ is satisfied.
\end{proposition}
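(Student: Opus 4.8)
The plan is to prove, by descending induction on a threshold index $i\in\{1,\dots,n+1\}$, the stronger statement that there is an equivalent reduced simplifier satisfying $B_{\mathscr{C}}(i).1$–$B_{\mathscr{C}}(i).4$ and for which moreover $m_h(\mathscr{C})=0$ for every $h\geq i$ (no intersection curve of type $\textsf{i}$ or $\textsf{ii}$ is attached to a SBC lying on a cylinder of index $\geq i$). Taking $i=1$ recovers exactly $B_{\mathscr{C}'}$: all four conditions then hold at every index and no curve of type $\textsf{i}$ or $\textsf{ii}$ survives, while $B_{\mathscr{C}}(1).2$ supplies the absence of closed curves and of curves with $\partial l\subset\partial^+$, and $B_{\mathscr{C}}(1).4$ is the SBC property. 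The base case $i=n+1$ is immediate: $B_{\mathscr{C}}(n+1).3$, $B_{\mathscr{C}}(n+1).4$ and the condition on $m_h$ are vacuous, $B_{\mathscr{C}}(n+1).1$ holds because $C\cap\mathscr{C}$ is already nice, and I would apply Lemma~\ref{l24} and then Lemma~\ref{l25} to install $B_{\mathscr{C}}(n+1).2$ while preserving niceness and reducedness.

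For the inductive step, passing from threshold $i+1$ to $i$, I would proceed in three stages. First, invoke Lemma~\ref{l17} with $j=i$ to make $C\cap C_i$ nice while leaving the already nice cylinders of index $>i$ untouched, which secures $B_{\mathscr{C}}(i).1$; the hypothesis $m_h=0$ for $h>i$ then yields $B_{\mathscr{C}}(i).3$ directly, since it forbids precisely the type-$\textsf{i}$/$\textsf{ii}$ curves attached to SBCs of index $>i$. Second, I would establish $B_{\mathscr{C}}(i).4$ at the single new index $i$. Third, with all four conditions in force, I would run the reduction procedure of \S\ref{3.3} repeatedly: each pass selects the least $h\geq i$ with $m_h\neq0$ and an innermost attached type-$\textsf{i}$ or $\textsf{ii}$ curve, then applies the surgery of \S\ref{3.1} or \S\ref{3.2} through Lemma~\ref{l10} or Lemma~\ref{l12}, strictly decreasing $g_h(\mathscr{C})$ while fixing $g_j(\mathscr{C})$ for $i\leq j<h$, with Lemmas~\ref{l24} and~\ref{l25} reapplied between passes to restore $B_{\mathscr{C}}(i).2$. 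Since $(g_i,\dots,g_n)$ strictly decreases in lexicographic order, this loop terminates with $m_h=0$ for all $h\geq i$, finishing the step.

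Stage two is where I expect the real difficulty, and I would single it out as the main obstacle. Niceness of $C\cap C_i$ forces a generating-curve-crossing component to have exactly one leg on $\partial^-$, so it is a SBC unless its other leg lies on a puncture of $C_i$; such a component is a type-$\textsf{iii}$ (or $\textsf{iv}$) curve attached to a SBC on some $C_m$ with $m>i$, and niceness by itself does not forbid it. The plan is to remove these by sliding the offending generating curves of $C_i$ over its punctures in the sense of Remark~\ref{r-sliding}, exploiting that the cylinders modified by these slides have index $<i$ and are only required to be almost nice, and that the inductive control $m_h=0$ for $h>i$ keeps the SBCs on higher cylinders free of type-$\textsf{i}$/$\textsf{ii}$ attachments so that no new such attachment is created downstairs.

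Beyond stage two, the persistent obstacle throughout is the simultaneous preservation of all four invariants rather than any isolated construction. Each surgery of \S\ref{3.1}–\S\ref{3.2} modifies a cylinder of index $<i$ and only guarantees it almost nice, so I must verify, as those subsections do, that niceness of the higher-index cylinders is undisturbed—this relies on the fact that the newly created generating curves may be chosen arbitrarily close to the old ones—and that the cleanup Lemmas~\ref{l24} and~\ref{l25} do not reopen $B_{\mathscr{C}}(i).3$ or $B_{\mathscr{C}}(i).4$. Checking that these bookkeeping conditions hold after every surgery, while a single well-founded measure continues to descend so that the nested loops terminate, is the technical heart of the argument.
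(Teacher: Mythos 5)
Your overall architecture is the paper's: a reverse induction on a threshold index carrying the conditions $B_{\mathscr{C}}(i).1$--$4$, with Lemma \ref{l17} supplying niceness, Lemmas \ref{l24} and \ref{l25} clearing closed curves and curves with both legs on $\partial^+$, and the surgeries of \S\ref{3.1}--\S\ref{3.3} run in a terminating loop. Your bookkeeping differs harmlessly in two respects: you carry $m_h=0$ for $h\geq i$ inside the inductive statement (the paper re-derives it in each step via the stabilization indices $d_i\leq d_{i+1}\leq\cdots\leq d_n$), and your termination measure --- lexicographic descent of $(g_i,\dots,g_n)$ --- is valid and in fact cleaner: each pass of \S\ref{3.1}/\S\ref{3.2} fixes $g_j$ for $i\leq j<h$ and strictly decreases $g_h$, the cleanup lemmas never increase any $g_j$, and the possible growth of $g_j$ for $j>h$ (new generating curves created on higher cylinders) is irrelevant to lexicographic order, which is well-founded.

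The genuine gap is exactly where you predicted difficulty: stage two. Sliding the generating curves of $C_i$ over its punctures in the sense of Remark \ref{r-sliding} cannot establish $B_{\mathscr{C}}(i).4$. An offending component $l\in C\cap C_i$ has one leg on $\partial^-$ and the other on a puncture $p$ of $C_i$, and the almost-nice condition you just installed in stage one requires \emph{every} generating curve on $C_i$ to separate \emph{all} the punctures of $C_i$ from $\partial^-_i$. Hence $l$ crosses each generating curve an odd number of times, and these crossings are topologically forced: no slide that preserves the separation property can remove them, while sliding a generating curve over $p$ itself, so that it no longer separates $p$ from $\partial^-$, destroys $B_{\mathscr{C}}(i).1$. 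The paper handles this point not by an isotopy but by a non-existence proof: the leg of $l$ on $p$ lies on a SBC at some higher index $s>i$ (the puncture $p$ is a generating curve on $C_s$, crossed only by SBCs thanks to condition $4$ there), and from this configuration the paper manufactures a curve $l'$ of type $\textsf{ii}$ attached to a higher-index SBC, contradicting condition $3$ --- precisely the configuration your invariant $m_h=0$ for $h>i$ forbids. This is also why the paper verifies condition $4$ at the new index \emph{after} the \S\ref{3.3} loop, when $m_j=0$ for $j\geq i$ is available; your reorganization supplies that hypothesis up front, so your ordering is fine and your stage two is repairable within your own scheme by substituting this contradiction argument for the sliding --- but as written, the sliding step fails. (Separately, your stage one tacitly assumes $m_h=0$ for $h>i$ survives the application of Lemma \ref{l17}; this needs the same check the paper performs when deducing $B_{\mathscr{C}''}(i-1).3$, namely that the new generating curves created by sliding lie arbitrarily close to the old ones, which you flag in your final paragraph but should carry out.)
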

\begin{proof}
 Let $B_\mathscr{C}(i)$ be the statement that "{\emph{there is a reduced simplifier $\mathscr{C}'=\{C'_j\}_{j=1}^n$, equivalent to $\mathscr{C}$ and with nice intersections with $C$ such that $B_{\mathscr{C}'}(i).2,3,4$ (from $\S$\ref{3.3}}) are satisfied}". We may assume that $n>0$, and note that $B_\mathscr{C}(1)$ implies the proposition. In fact, let $l\in C\cap C_j$ be a SBC. If $l'\in C\cap C_t$ is such that $\partial l'\cap l\neq\emptyset$ and $l'$ is of type $\textsf{i}$ or $\textsf{ii}$, by $B_{\mathscr{C}}(1).3$ we have $j=1$. Since there is no generating curves on $C_1$, there is no curve $l'\in C\cap C_t$ with $\partial l'\cap l\neq\emptyset$. Therefore, $C\cap\mathscr{C}$ does not contain curves of type $\textsf{i}$ or $\textsf{ii}$. \\

We prove $B_\mathscr{C}(i)$ by reverse induction on $i$. If $i=n$, using Lemma \ref{l25} we obtain an equivalent reduced simplifier $\mathscr{C}'=\{C'_i\}_{i=1}^n$ such that $C\cap\mathscr{C}'$ is nice and  $B_{\mathscr{C}'}(n).2$ is satisfied. $B_{\mathscr{C}'}(n).3$ is vacuously true. Let $l\in C\cap C_n'$ intersect the generating curves on $C'_n$. Since $C\cap C'_n$ is nice, $l$ has exactly one leg on $\partial^-$. Since $C'_n$ has no punctures, the other leg  of $l$ is on $\partial^+$, i.e. $l$ is a SBC. Thus, $B_{\mathscr{C}'}(n).4$ is satisfied, implying $B_\mathscr{C}(n)$.\\ 

Let us assume that  $B_\mathscr{C}(i)$ is satisfied and $\mathscr{C}'$ be the corresponding equivalent simplifier. We conclude $B_{\mathscr{C}}(i-1)$ as follows. Using \S \ref{3.3}, we obtain a sequence of the simplifiers $\mathscr{C}^{(k)}$ with $\mathscr{C}^{(0)}=\Cscr'$ so that $B_{\mathscr{C}^{(k)}}(i)-1,2,3,4$ are satisfied and $g_i({\mathscr{C}^{(k)}})\geq g_i({\mathscr{C}^{(k+1)}})\geq0$, for $d\geq 0$. Choose $d_i$ such $g_i({\mathscr{C}^{(d)}})=g_i({\mathscr{C}^{(d+1)}})$ for $d\geq d_i$. It follows that $m_i({\mathscr{C}^{(d)}})=0$, for all $d\geq d_i$. In fact, if $m_i({\mathscr{C}^{(d)}})\neq 0$ for some $d\geq d_i$, then the smallest index $h\geq i$ with $m_h({\mathscr{C}^{(d)}})\neq 0$ is $i$. Thus by \S \ref{3.3}, $g_i({\mathscr{C}^{(d+1)}})<g_i({\mathscr{C}^{(d)}})$, which is not possible. Therefore for $d\geq d_i$,  the smallest index $h\geq i$ with $m_h({\mathscr{C}^{(d)}})\neq0$, is at least $i+1$, $g_{i+1}({\mathscr{C}^{(d)}})\geq g_{i+1}({\mathscr{C}^{(d+1)}})$, and there is some $d_{i+1}$ so that $g_{i+1}({\mathscr{C}^{(d)}})= g_{i+1}(\mathscr{C}^{(d+1)})$ for $d\geq d_{i+1}$. \\

Similarly, we can show that  $m_{i+1}({\mathscr{C}^{(d)}})=0$ for all $d\geq d_{i+1}$. Repeating this process, we obtain a sequence $d_i\leq d_{i+1}\leq\dots\leq d_n$ such that $g_j(\mathscr{C}^{(d)})=g_j(\mathscr{C}^{(d_j)})$ and  $m_j({\mathscr{C}^{(d)}})=0$ for $j=i,\ldots,n$ and $d\geq d_j$. As a result, $m_j({\mathscr{C}^{(d)}})=0$ for all $d\geq d_n$ and $j\geq i$. Then $B_{\mathscr{C}^{(d_n)}}(i).1,2,3,4$ are satisfied. Lemma \ref{l17} gives an equivalent reduced simplifier $\Cscr''=(C_1'',\ldots,C_n'')$, such that for $j>i-1$, $C_j''$ is the same as $C_j^{(d_n)}$ (i.e. the $j$th punctured cylinder in $\Cscr^{(d_n)}$), possibly with more generating curves which are arbitrarily close to the old ones. Furthermore, $C\cap \Cscr''$ is nice. Note that by construction $C\cap C_j^{(d_n)}=C\cap C_j''$ for $j\geq i$. \\

To prove $B_{\Cscr''}(i-1).3$, let $l\in C\cap C_j''$ be a SBC with $j\geq i$ and $l'\in C\cap \tilde{C}_k'$ with $\partial l'\cap l\neq\emptyset$ be of type $\textsf{i}$ or $\textsf{ii}$. The boundary of $l'$, away from $\partial^+$, is on the punctures of $C_k''$, which are generating curves on $C_j''$. Such generating curves do not exist in $C_j^{d_n}$ (since $m_j({\mathscr{C}^{(d_n)}})=0$). Thus, $k<i-1$. \\

Finally, we prove $B_{\Cscr''}(i-1).4$. For $j\geq i-1$, let $l\in C\cap C_j''$ intersect the generating curves on $C_j''$. If $j\geq i$, from $C\cap C_j^{(d_n)}=C\cap C_j''$, it follows that $l\in C\cap C_j^{(d_n)}$. Since the generating curves on $C_j''$ are the same as, or close to, the generating curves on $C_j^{(d_n)}$, the curve $l\in C\cap C_j^{(d_n)}$ also intersects the generating curves on $C_j^{(d_n)}$. By  $B_{\mathscr{C}^{(d_n)}}(i).4$, $l$ is a SBC. We may thus assume that $j=i-1$. If $l$ is not a SBC,  since $C\cap C_{i-1}''$ is nice and $l$ intersects the generating curves on $C_{i-1}''$, it follows that $l$ has exactly one leg on $\partial^-$. The other leg of $l$ is on a puncture of $C_{i-1}''$. Then, there is a curve $l'\in C\cap C_{i-1}''$ with one leg on $\partial^+$ and one on a puncture $l_t$ of $C_{i-1}''$. $l_t$ is a generating curve on a $C_t''$, for some $t>i-1$. Thus, there is a curve $l'_t\in C\cap C_t''$ that intersects $l_t$. Since $t>i-1$, $l'_t$ is a SBC. However, $\partial l'\cap l'_t\neq\emptyset$, the curve $l'$ is of type $\textsf{ii}$, and $l'\in C\cap C_{i-1}''$. This contradicts $B_{\Cscr''}(i-1).3$, and proves our claim.\\

Applying  Lemma \ref{l25}, we obtain an equivalent reduced simplifier $\Cscr^*$ such that $B_{\Cscr^*}(i-1).2$ is satisfied. A similar argument as in $\S$\ref{3.3} proves that  $B_{\Cscr^*}(i-1).3,4$ are also satisfied. This gives $B_\mathscr{C}(i-1)$ and completes the proof of the (reverse) induction.
\end{proof}

\subsection{Intersections of type \texorpdfstring{$\textsf{iii}$}{Lg}}\label{3.5}
\begin{lemma}\label{l27}
Let $B_\Cscr$ be satisfied for $\mathscr{C}=\left(C_i\right)_{i=1}^n$. Then there is an equivalent simplifier $\mathscr{C}'=\left(C'_i\right)_{i=1}^n$  such that $B_{\Cscr'}$ is satisfied, while  $C\cap \Cscr'$ does not contain curves with both legs on $\partial^-$. Moreover,  $g_i({\mathscr{C}'})\leq g_i(\mathscr{C})$ and $C\cap C'_i\subset C\cap C_i$ for all $1\leq i\leq n$.
\end{lemma}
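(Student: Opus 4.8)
The plan is to follow the pattern of Lemma~\ref{l25}, peeling off one \emph{innermost} curve with both legs on $\partial^-$ at a time, and to lean on the control already supplied by $B_\Cscr$: no curve of $C\cap\Cscr$ is closed, has both legs on $\partial^+$, or is of type $\textsf{i}$ or $\textsf{ii}$, and any curve meeting the generating curves is an SBC. Consequently a curve with both legs on $\partial^-$ misses all generating curves, and since the cylinders of $\Cscr$ have disjoint interiors it also misses every other component of $C\cap\Cscr$; such curves are therefore completely inert with respect to the SBCs.

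Concretely, I would first pick $l\in C\cap C_k$ with both legs on $\partial^-$ bounding, together with a subarc $\gamma\subset l^-=\partial^-C$, a disk $D_l\subset C$ whose interior contains no further curve with both legs on $\partial^-$. As $\partial D_l\subset l\cup l^-$, the disk $D_l$ avoids $l^+$, hence $\partial^+$; and since nothing crosses $l$, any component $l'$ with $(l')^\circ\subset D_l^\circ$ has all legs on $\gamma\subset\partial^-$ and, being non-closed, has both legs on $\partial^-$ --- against the innermost choice. Thus $D_l^\circ\cap\Cscr=\emptyset$. On $C_k$ the arc $l$ stays in the collar of $\partial_k^-$ (it crosses no generating curve, and these separate the punctures of $C_k$ from $\partial_k^-$), so it cuts off an \emph{unpunctured} disk $D_l'\subset C_k$ with $\partial D_l'=l\cup\gamma''$, $\gamma''\subset l_k^-$.

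The decisive move is to replace $D_l'$ by $D_l$, setting $C_k'=(C_k\setminus D_l')\cup D_l$, pushed slightly off $C$, and $\Cscr'=(C_1,\dots,C_{k-1},C_k',C_{k+1},\dots,C_n)$. This is the mirror of the slide of Lemma~\ref{l12} with $\partial^-$ in place of $\partial^+$; because the transferred disk $D_l$ carries no punctures and has $D_l^\circ\cap\Cscr=\emptyset$, it is in fact the puncture-free special case. The move trades the subarc $\gamma''\subset l_k^-$ for $\gamma\subset l^-$, which is disjoint from $\partial^-\Cscr$, so condition~\ref{2} and reducedness persist, no new puncture (hence no new generating curve) is created, and the net effect on $C\cap\Cscr$ is to delete $l$ (and possibly other inert curves lying in $D_l'$). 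Arguing as in $\S\ref{3.1}$--$\S\ref{3.2}$ one checks that $C\cap C_k'$ stays nice and that no forbidden curve is reborn, so $B_{\Cscr'}$ holds with $g_i(\Cscr')\le g_i(\Cscr)$ and $C\cap C_i'\subset C\cap C_i$. Iterating strictly lowers the number of both-legs-on-$\partial^-$ curves and terminates.

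The hard part is the replacement step. In contrast to the $\partial^+$ situation there is no off-the-shelf $\partial^-$-slide: Lemma~\ref{l12} is stated only for a boundary arc on $\partial^+$, and the niceness axioms are genuinely asymmetric in $\partial^\pm$. One must therefore either transcribe the equivalence argument of Lemma~\ref{l12} with the two boundaries interchanged, or reduce the move to Lemmas~\ref{l10} and~\ref{l11} by perturbing the closed curve $\gamma\cup\gamma''\subset\partial^-$ off $l_k^-$ into the adjacent punctured-sphere component and showing it bounds a disk in $\partial^-$; the delicate point in the latter route is to ensure that this disk is disjoint from the remaining curves $l_j^-$, so that the slide is realized by an honest isotopy of the entire simplifier.
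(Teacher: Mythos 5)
Your proposal is, in substance, the paper's own proof of Lemma~\ref{l27}: choose an innermost curve $l$ with both legs on $\partial^-$, show the disk it cuts off on $C$ has interior disjoint from $\Cscr$, swap the disk $\bar{D}\subset C_k$ (with boundary arc on $l_k^-$) for the disk on $C$, and induct on the number of such curves. Your worry about the replacement step is resolved exactly along the second route you sketch: the paper simply invokes Lemma~\ref{l10}, which is legitimate here precisely because the transferred disk carries no punctures and its boundary arc lies on $l^-=\partial^- C$, disjoint from $\partial^-\Cscr$, so reducedness, condition \textbf{S.1} and the generating-curve data persist. Note also that your collar argument for $\bar{D}$ being unpunctured is dispensable: Lemma~\ref{l10} allows punctured disks, and deleting punctures of $C_k$ only removes generating curves from higher cylinders, consistent with the asserted inequality $g_i(\mathscr{C}')\leq g_i(\mathscr{C})$.

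One step of your write-up is incomplete as stated, namely the sentence asserting that any component $l'$ with $(l')^\circ\subset D_l^\circ$ ``has all legs on $\gamma$.'' The fact that nothing crosses $l$ only rules out legs on $l$; it does not rule out a leg of $l'$ on a puncture of the cylinder containing it, at an interior point of $C$ where the corresponding generating-curve circle crosses $D_l^\circ$. This is exactly the case the paper's proof handles: such a leg lies on a puncture $l_{k'}$ which is a generating curve on some $C_{k'}$ of higher index, so there is a curve $l'_{k'}\in C\cap C_{k'}$ meeting that generating curve; by $B_\Cscr$ this curve is a SBC and hence has a leg on $\partial^+$, yet it is trapped in $D_l$ (it cannot cross $l$, which misses the generating curves, and curves of $C\cap\Cscr$ meet $\gamma\subset l^-$ only in legs), a contradiction. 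You state the required mechanism in your preamble (``any curve meeting the generating curves is a SBC'') but apply it only to curves with both legs on $\partial^-$; it must also be run for puncture legs before concluding $D_l^\circ\cap\Cscr=\emptyset$. With that patch, your argument coincides with the paper's.
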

\begin{proof}
	Let $l\in C\cap C_i$ be such that $\partial l\subset\partial^-$. There is a disk $D^-\subset C$ with $\partial D^-=l\cup l'$, where $ l'\subset l^-$. Choose $l$ such that $(D^-)^\circ\cap\mathscr{C}$ does not contain curves with both legs on $\partial^-$. Since $C\cap\mathscr{C}$ is nice, $l$ is disjoint from the generating curves on $C_i$. Thus, there is no curve in $C\cap\mathscr{C}$ with a leg on $l$. We claim that $(D^-)^\circ\cap\mathscr{C}=\emptyset$. In fact, if $l_k\in C\cap C_k$ and $l_k^\circ\subset(D^-)^\circ$, by the choice of $D^-$ and the fact that $l_k$ has nonempty boundary, at least one leg of $l_k$ is on a puncture $l_{k'}$ of $C_k$, which  is a generating curve on  $C_{k'}$, for some $k'>k$. Thus, there is a curve $l'_{k'}\in C\cap C_{k'}$ that intersects the generating curve $l_{k'}\subset C_{k'}$. Hence, $l'_{k'}$ is a SBC. In particular, $l'_{k'}$ has a leg on $\partial^+$. Since $(l'_{k'})^\circ\subset(D^-)^\circ$, this cannot happen.\\

	There is a disk $\bar{D}\subset C_i$ such that $\partial \bar{D}=l\cup l''$, with $ l''\subset l_i^-$. Using Lemma \ref{l10}, if we replace $\bar{D}$ with $D^-$, we obtain an equivalent reduced simplifier $\mathscr{C}''=\big(C_i''\big)_{i=1}^n$ such that $C\cap\mathscr{C}''$ has fewer curves with both legs on $\partial^-$. It is clear that  $g_i(\Cscr'')\leq g_i(\Cscr)$ and $C\cap C_i''\subset C\cap C_i$. This latter fact implies that  $B_{\Cscr''}$ is satisfied.  By repeating this process, we obtain an equivalent reduced simplifier $\mathscr{C}'=\left(C_i'\right)_{i=1}^n$, with the desired property.
	\end{proof}

	Suppose that $\mathscr{C}=\left(C_i\right)_{i=1}^n$ satisfies $B_\Cscr$ and $C\cap\Cscr$ does not contain curves with boundary on $\partial^-$.  Let $n_i(\mathscr{C})$ denote the number of type $\textsf{iii}$ intersections which have a leg on a SBC in $C\cap C_i$, for $1\leq i\leq n$. Let $l\in C\cap C_i$ be a SBC and $l_k\in C\cap C_k$ have one leg on $l$ and one on $\partial^-$. There is a disk $D_{k}\subset C$ with $\partial D_{k}= l_k\cup \bar{l}^-\cup \bar{l}$, where $\bar{l}^-\subset l^-$ and $\bar{l}\subset l$. Suppose that $D_{k}^\circ$ does not contain any intersections of type $\textsf{iii}$. The curve $l_k\subset C_k$ has nonempty boundary with one leg on $l_k^-$ and one on a puncture $l_i$ of $C_k$, where $l_i$ is a generating curve on $C_i$. Thus, $l_k$ does not intersect the generating curves on $C_k$. So there are no curves in $C\cap\mathscr{C}$ with legs on $l_k$. We claim that $D_{k}^\circ\cap \mathscr{C}=\emptyset$.  In fact, let us assume that $l_j\in C\cap C_j$ and $l_j^\circ\subset(D_{k})^\circ$. Then $l_j$ has nonempty boundary and by the choice of $l_k$, at least one leg of $l_j$ is in $(D_{k})^\circ$, so it is on a puncture $l_{j'}$ of $C_j$, which  is a generating curve on a $C_{j'}$ with $j'>j$. Thus, there is a curve $l'_{j'}\in C\cap C_{j'}$ that intersects the generating curve $l_{j'}\subset C_{j'}$. Therefore, $l'_{j'}$ is a SBC  and has a leg on $\partial^+$. Since $l'_{j'}\subset D_{k}^\circ$, this cannot happen. \\

	\begin{figure}
\def\svgwidth{15cm}
\begin{center}
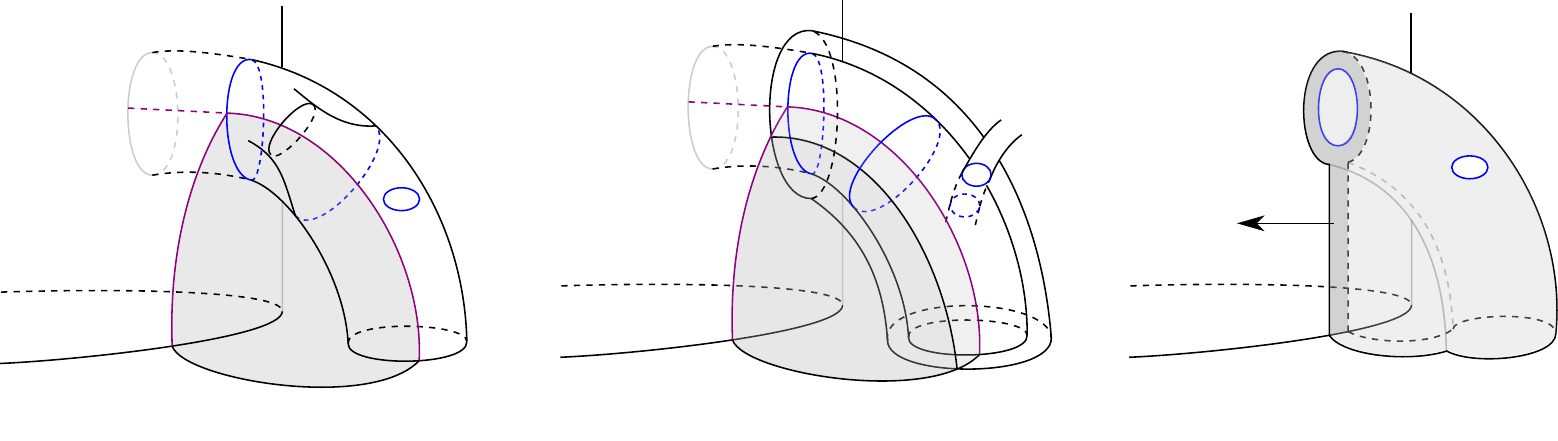
\caption{$l_k\in C\cap C_k$ has one leg on $\partial^-$ and one leg  on $l_{i}\cap l$, with $l_{i}$  a puncture on $C_k$ and a generating curve on $C_i$. The boundary of the subcylinder $\bar{C}\subset C_i$ is $l_{i}\amalg l_i^-$ (1).  $\widetilde{C}$ is a parallel copy of $\bar{C}$. Its punctures are essential curves, disjoint from the generating curves on $C_j$, for $j>i$ (2). $D$ is obtained by cutting $\widetilde{C}$ along  $\widetilde{C}\cap D_{k}$ and gluing parallel copies of $D_{k}$ to the cut edges (3).  $\partial D=l'\cup l''$ where  $l'\subset\partial^+$ and $l''\subset C_k$. }
\label{fig-type-3-3}
\end{center}
\end{figure}
	
	Let $\bar{C}\subset C_i$ be a punctured subcylinder with \ boundary \ $l_i^-\amalg l_i$. Let $l_s$ be a generating curve in $(\bar{C})^\circ$ which is a puncture on a punctured cylinder $C_s$ with $s<i$. A neighborhood of $l_s$ and $D_{k}\cup \bar{C}$ is illustrated in Figure \ref{fig-type-3-3}-(1). Let $\widetilde{C}$ denote a parallel copy of $\bar{C}$ such that $\widetilde{C}^\circ\cap\mathscr{C}=\emptyset$, one boundary component of $\widetilde{C}$ is on $C_k$, disjoint from $l_i$ and the generating curves on $C_k$, and the other boundary component of $\widetilde{C}$ is on $\partial^-$, disjoint from $l_j^-$ for $1\leq j\leq n$ (Figure \ref{fig-type-3-3}-(2)).\\

The punctures of $\widetilde{C}$ are essential curves on $C_j$ and disjoint from the generating curves, for $j>i$. By cutting $\widetilde{C}$ along  $\widetilde{C}\cap D_{k}$ and gluing two parallel copies of $ D_{k}$ to the cut edges, we obtain a punctured disk $D$ with $\partial D=l'\cup l''$, where $l'\subset\partial^-$ and $l''\subset C_k$, such that $(D)^\circ\cap\mathscr{C}=\emptyset$. The punctures of $D$  are essential curves on $C_j$, for $j>i$. There is  a punctured disk $D'\subset C_k$ with a single puncture $l_i$ and with  $l''\subset\partial D'$ (Figure \ref{fig-type-3-3}-(3)). Applying Lemma \ref{l10}, by replacing $D'$  with $D$ we obtain an equivalent reduced simplifier  $\mathscr{C}'=\left(C_1,\ldots,C_{k-1},C_k',C_{k+1},\ldots, C_n\right)$.
\begin{lemma}\label{new-lemma}
For $\Cscr'$ as above, $B_{\Cscr'}$ is satisfied, $g_j(\mathscr{C}')=g_j(\mathscr{C})$ for $j<i$ and $g_i(\mathscr{C}')<g_i(\mathscr{C})$. 
\end{lemma}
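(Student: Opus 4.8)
The plan is to establish the three conclusions in increasing order of difficulty, treating the reduced-simplifier property and the generating-curve counts as bookkeeping and concentrating the work on the preservation of $B_{\Cscr'}$.

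That $\Cscr'$ is a reduced simplifier equivalent to $\Cscr$ is already recorded in the construction preceding the lemma, via Lemma~\ref{l10}; so the first clause of $B_{\Cscr'}$ requires nothing further. For the generating-curve counts I would argue combinatorially. Passing from $C_k$ to $C_k'$ changes the punctures of $C_k$ only by deleting the single puncture $l_i$ of $D'$ and inserting the punctures of $D$, and no other cylinder is altered. The deleted curve $l_i$ is a generating curve on $C_i$ (so $k<i$), while every inserted puncture is an essential curve on some $C_j$ with $j>i$. Hence no generating curve lying on a $C_j$ with $j<i$ is created or destroyed, giving $g_j(\Cscr')=g_j(\Cscr)$ for $j<i$, whereas on $C_i$ exactly one generating curve is removed and none is added, giving $g_i(\Cscr')=g_i(\Cscr)-1<g_i(\Cscr)$.

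The substance of the proof is that $C\cap\Cscr'$ is again nice and of type~$\mathrm{II}$ and obeys the classification built into $B_{\Cscr'}$: no curve is closed, none has $\partial l\subset\partial^+$, none is of type~$\textsf{i}$ or~$\textsf{ii}$, and any curve meeting the generating curves is a SBC. For $j\neq k$ this is immediate, since $C_j'$ is $C_j$ with its generating curves left unchanged (for $j<i$), diminished by one (for $j=i$), or enlarged by curves that may be isotoped arbitrarily close to existing ones (for $j>i$); in each case $C\cap C_j'=C\cap C_j$ as a family of arcs, so niceness and the classification descend from $B_{\Cscr}$, and type~$\mathrm{II}$ persists because the SBC $l\subset C_i$ survives the deletion of $l_i$. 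For $j=k$ I would run the arc-tracing argument used at the ends of $\S$\ref{3.1} and $\S$\ref{3.2} (cf.\ Figures~\ref{special1-4} and~\ref{fig-Special-D}): each arc of $C\cap C_k$ with a leg on the deleted puncture $l_i$ is followed through the two copies of $D_k$ spliced into $D$ and matched with a curve of $C\cap C_k'$. The decisive structural point is that the free boundary of $\widetilde C$ lies on $\partial^-$, so every leg produced by the modification lands on $\partial^-$; consequently the procedure can create neither a leg on $\partial^+$ nor a curve with both legs on $\partial^+$, hence no curve of type~$\textsf{ii}$. Since $B_{\Cscr}$ provides no type~$\textsf{i}$ curves, the only mechanism that could splice an arc into a loop or into a curve with both legs on a SBC is absent, which rules out closed curves and type~$\textsf{i}$ outputs; and the ``SBC whenever the generating curves are met'' clause, together with the ``exactly one leg on $\partial^-$'' half of Definition~\ref{d14}, transports along the matching exactly as in $\S$\ref{3.2}.

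The step I expect to be the main obstacle is the remaining, more delicate half of niceness for $C\cap C_k'$: excluding a punctured disk or separating punctured cylinder $\widehat D\subset C_k'$ whose interior contains a generating-curve-meeting arc while $\partial\widehat D$ is disjoint from $C$. One must verify that deleting the puncture $l_i$ and splicing in the pushed-off disk $D$ cannot trap such a region. I would handle this by following the separating curves of $C_k$ across the replacement and invoking Proposition~\ref{p5} to control the spheres that such a trapped region would generate in $N$, together with the reducedness of $\Cscr'$. Once this is settled, all clauses of $B_{\Cscr'}$ hold, and with the two bookkeeping computations the lemma follows.
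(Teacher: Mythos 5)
Your bookkeeping matches the paper: equivalence and reducedness of $\Cscr'$ come from Lemma~\ref{l10} as part of the construction, and your count (the deleted puncture $l_i$ is a generating curve on $C_i$ with $k<i$, the inserted punctures of $D$ are essential curves on $C_j$ with $j>i$, so $g_j(\Cscr')=g_j(\Cscr)$ for $j<i$ and $g_i(\Cscr')=g_i(\Cscr)-1$) is exactly right; your arc-tracing for $C\cap C_k'$ is also the paper's mechanism. The genuine gap is at the step you yourself flag as the main obstacle, which you leave as a plan rather than a proof --- and the plan is misdirected. In Definition~\ref{d14} the exclusion of punctured disks and separating punctured cylinders is imposed \emph{only} on components of $C\cap C_k'$ that intersect the generating curves. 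The transport argument you already invoke (and which the paper carries out) shows that any such component is an unmodified curve of $C\cap C_k$: a curve of $C\cap C_k$ with a leg on $l_i$ is not a SBC, hence by $B_{\Cscr}$ is disjoint from the generating curves of $C_k$, and the arcs spliced in from the parallel copy $\widetilde{C}$ are chosen away from them; since the generating curves on $C_k'$ agree with those on $C_k$, a generating-curve-meeting component of $C\cap C_k'$ is therefore a SBC. A SBC joins $l_k^+$ to $l_k^-$, while a punctured disk or separating punctured cylinder on $C_k'$ is cut off by interior curves and leaves $\partial^\pm C_k'$ in its complement, so neither region can contain the interior of such a spanning arc: the clause you worried about is vacuous. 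No sphere argument, no Proposition~\ref{p5}, and no appeal to reducedness of $\Cscr'$ is needed, and as written your ``I would handle this by\ldots'' paragraph leaves the lemma unproved exactly where you predicted trouble, even though the resolution is one sentence.

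Separately, your structural claim that ``every leg produced by the modification lands on $\partial^-$'' is false: arcs of $C\cap\widetilde{C}$ may end on the punctures of $\widetilde{C}$, so the modification also produces curves in $C\cap C_k'$ with legs on the \emph{new} punctures of $C_k'$ (Figure~\ref{fig-type-3-6} shows such outputs), and your mechanism for excluding type $\textsf{i}$, type $\textsf{ii}$ and closed curves does not cover them. The paper closes this with a dichotomy in the case $t=k$: a curve of $C\cap C_k'$ not present in $C\cap C_k$ either has a leg on $\partial^-$, hence cannot be of type $\textsf{i}$ or $\textsf{ii}$, or corresponds under the parallel copy to a curve of $C\cap\bar{C}\subset C\cap C_i$, in which case being of type $\textsf{i}$ or $\textsf{ii}$ would contradict $B_{\Cscr}$. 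Likewise, the absence of closed curves is not because ``the splicing mechanism is absent'' in general, but because each SBC of $C\cap C_i$ has exactly one leg on $\partial^-$ and therefore, by niceness, crosses the generating curve $l_i$ exactly once, so no arc of $C\cap\widetilde{C}$ has both ends on the splicing circle and no loop can form. You need this second alternative and the exactly-once crossing to make your exclusions complete; with those repairs, and with the vacuity observation above replacing your fourth paragraph, your argument coincides with the paper's proof.
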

\begin{proof}		
We first show that  $C\cap C_k'$ is nice. Let $l'\in C\cap C_k$ have a leg on $l_i$. There is a curve $\tilde{l}\in C\cap C_i$ which  intersects  $l_i$, and is thus a SBC. So $l'$  changes to a curve $l''$ in $C'_k$ with at least one leg on $\partial^-$. Since $l'$ is not a SBC, it does not intersect the generating curves on $C_k$. Thus $l''$ is disjoint from the generating curves on $C'_k$. If $l''\in C\cap C_k'$ is a curve which is not in $C\cap C_k$ and does not intersect the generating curves, then $l''$ is either obtained as above, or  $l''\in C\cap \bar{C}$. In both cases, it follows that $l''$ is disjoint from the generating curves on $C'_k$. Therefore, if $l''\in C\cap C'_k$ intersects the generating curves on $C'_k$, $l''\in C\cap C_k$. Since the generating curves on $C_k'$ are the same as the generating curves on $C_k$, $l''$ intersects the generating curves on $C_k$ as well. Therefore, $l''$ is a SBC. This argument proves that $C\cap C'_k$ is nice. \\

It also follows from the above argument that if $l\in C\cap\Cscr'$ intersects the generating curves, it is a SBC. Moreover, it follows that $C\cap\Cscr'$ does not include closed curves or curves with both legs on $\partial^+$. The remaining non-trivial claim is that $C\cap\Cscr'$ does not include intersections of type $\textsf{i}$ or $\textsf{ii}$. Suppose otherwise, that $l\in C\cap C'_j$ is a SBC and $l'\in C\cap C'_t$, with $t<j$,  has a leg on $l$ and is of type $\textsf{i}$ or $\textsf{ii}$. If $t\neq k$, then  $C'_t=C_t$, and $l'\in C\cap C_t$. Since $l$ is a SBC, by the above argument (for showing $C\cap C_k'$ is nice), we have  $l\in C\cap C_j$. Thus, $l'$ is of type $\textsf{i}$ or $\textsf{ii}$ in $\mathscr{C}$ which is not possible. As a result, we are lead to assume $t=k$. Since there are no type $\textsf{i}$ or $\textsf{ii}$ curves in $C\cap\mathscr{C}$,  $l'\notin C\cap\mathscr{C}$. By construction, either $l'$ has a leg on $\partial^-$, which means that $l'$ is not of type $\textsf{i}$ or $\textsf{ii}$, or $l'$ corresponds to a curve in $C\cap\bar{C}$. In the latter case, $l'\in C\cap C_i$, which is  not possible since $\mathscr{C}$ does not contain intersections of types $\textsf{i}$ and $\textsf{ii}$. 
\end{proof}

Note that we may have $n_j({\mathscr{C}'})\neq n_j(\mathscr{C})$.  Moreover, $C\cap\mathscr{C}'$ may contain curves both legs on $\partial^-$. Figure \ref{fig-type-3-6} illustrates two examples. Nevertheless, we may apply Lemma \ref{l27} to obtain an equivalent reduced simplifier $S_{\textsf{iii}}(\Cscr,i,k)=\mathscr{C}''$ such that $C\cap\mathscr{C}''$ does not contain curves with both legs on $\partial^-$. By Lemma \ref{l27} and the argument of Lemma~\ref{new-lemma},  we further find $g_j({\mathscr{C}''})\leq g_j(\mathscr{C})$ for $j<i$ and $g_i({\mathscr{C}''})<g_i(\mathscr{C})$. 
	
	\begin{figure}
	\def\svgwidth{14cm}
	\begin{center}
	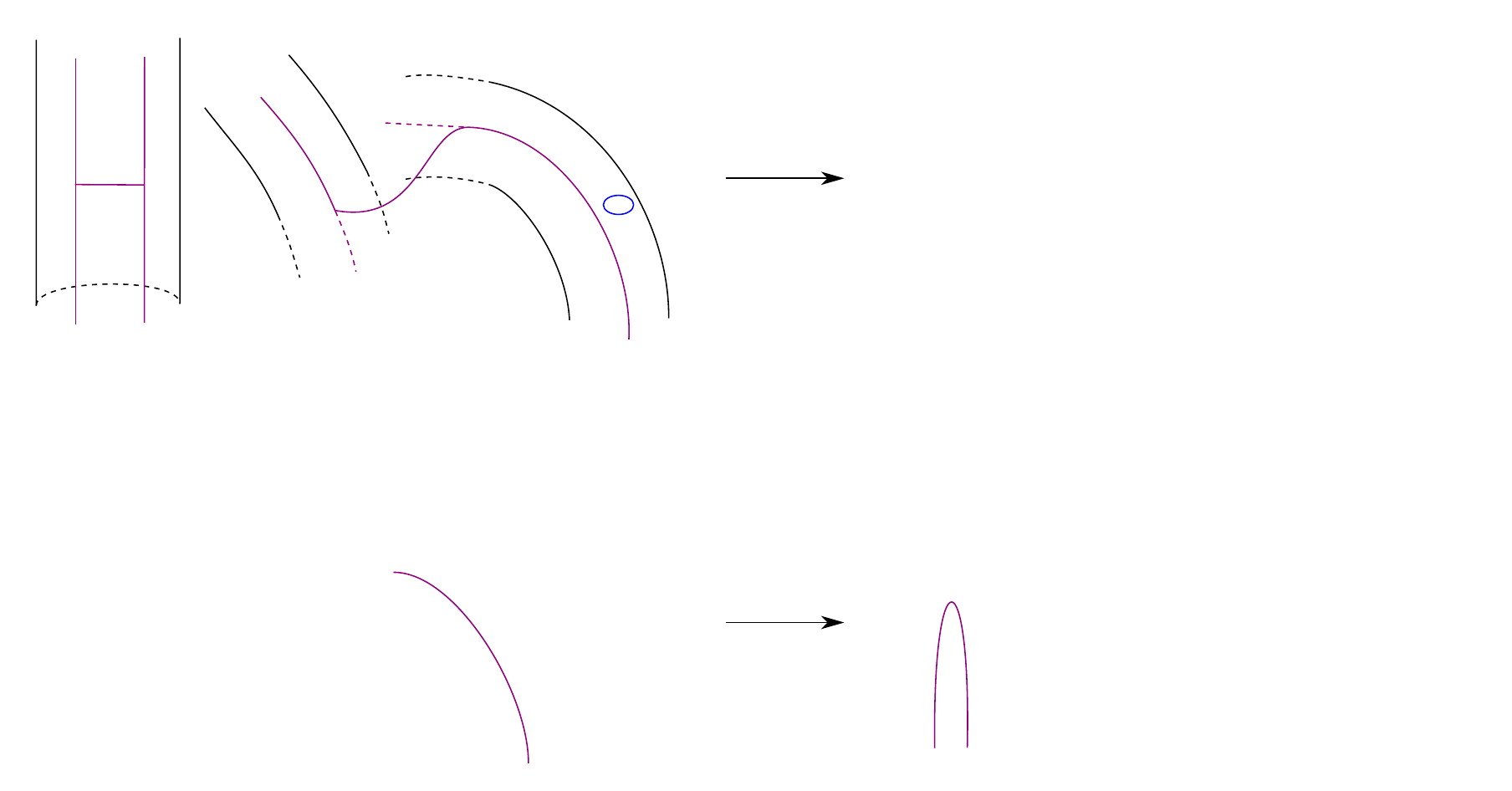
	\caption{ $l'\in C\cap C_k$ has legs on the SBCs $l_1\in C\cap C_j$ and $l_2\in C\cap C_i$ (top-left and bottom-left). $l'$ changes to  curve $l''\in C\cap C'_k$ which is of type $\textsf{iii}$ and has one leg on $l_1$ (top-right), or has both legs on $\partial^-$ (bottom-right).}
	\label{fig-type-3-6}
	\end{center}
	\end{figure}

\begin{proposition}\label{p28}
	Suppose that $B_\Cscr$ is satisfied for the simplifier $\mathscr{C}$ and that $C\cap\Cscr$ does not contain curves with both legs on $\partial^-$. Then there is an equivalent simplifier $\mathscr{C}'$ such that $B_{\Cscr'}$ is satisfied, and $C\cap \mathscr{C}'$  does not contain curves of type $\textsf{iii}$ or curves with both legs on $\partial^-$. 
\end{proposition}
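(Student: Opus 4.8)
The plan is to run the reverse induction of Proposition~\ref{p26} one more time, with the type~$\textsf{i}$/$\textsf{ii}$ moves of $\S$\ref{3.1}--\ref{3.2} replaced throughout by the type~$\textsf{iii}$ move $S_{\textsf{iii}}(\mathscr{C},i,k)$ constructed in $\S$\ref{3.5}. Recall that this move starts from a SBC $l\in C\cap C_i$ and a type~$\textsf{iii}$ curve $l_k\in C\cap C_k$ with $\partial l_k\cap l\neq\emptyset$ (here necessarily $k<i$, since the leg of $l_k$ lies on a puncture of $C_k$ that is a generating curve on $C_i$), and, by Lemma~\ref{new-lemma} together with Lemma~\ref{l27}, returns an equivalent reduced simplifier $\mathscr{C}''$ for which $B_{\mathscr{C}''}$ holds, $C\cap\mathscr{C}''$ has no curve with both legs on $\partial^-$, $g_j(\mathscr{C}'')\le g_j(\mathscr{C})$ for $j<i$, and $g_i(\mathscr{C}'')<g_i(\mathscr{C})$. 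I would take $n_i(\mathscr{C})$ as the obstruction to be killed and the non-negative integers $g_i(\mathscr{C})$ as the quantities that force termination.

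Concretely, I would introduce the statement $B^{\textsf{iii}}_\mathscr{C}(i)$: \emph{there is an equivalent reduced simplifier $\mathscr{C}'$ with $B_{\mathscr{C}'}$ satisfied, with no curve of $C\cap\mathscr{C}'$ having both legs on $\partial^-$, and with $n_j(\mathscr{C}')=0$ for all $j>i$}. We may assume $n>0$. The base case $B^{\textsf{iii}}_\mathscr{C}(n)$ is vacuous in its last clause, so one simply takes $\mathscr{C}'=\mathscr{C}$ and invokes the hypotheses of the proposition. At the other end, $B^{\textsf{iii}}_\mathscr{C}(1)$ implies the proposition: since $C_1$ carries no generating curves, no type~$\textsf{iii}$ curve can be attached to a SBC at level $1$, so $n_j=0$ for every $j\ge 1$; hence $C\cap\mathscr{C}'$ contains no type~$\textsf{iii}$ curve, while the iterated use of Lemma~\ref{l27} inside $S_{\textsf{iii}}$ has kept it free of curves with both legs on $\partial^-$.

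For the inductive step $B^{\textsf{iii}}_\mathscr{C}(i)\Rightarrow B^{\textsf{iii}}_\mathscr{C}(i-1)$ I start from a simplifier with $n_j=0$ for $j>i$ and repeatedly apply $S_{\textsf{iii}}$ at the smallest index $h\ge i$ with $n_h\neq 0$, producing a sequence $\mathscr{C}^{(0)},\mathscr{C}^{(1)},\dots$. Because a move at level $h$ leaves $g_j$ non-increasing for every $j<h$ and strictly drops $g_h$, the integer $g_i$ is monotone non-increasing along the whole sequence and therefore stabilizes at some $d_i$; once it is constant no move can fire at level $i$, so $n_i(\mathscr{C}^{(d)})=0$ for $d\ge d_i$. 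The smallest bad index is then at least $i+1$, the sequence $g_{i+1}$ becomes monotone for $d\ge d_i$ and stabilizes at some $d_{i+1}\ge d_i$ forcing $n_{i+1}=0$, and iterating yields $d_i\le d_{i+1}\le\cdots\le d_n$ with $n_j(\mathscr{C}^{(d_n)})=0$ for all $j\ge i$, that is for all $j>i-1$. Since each $S_{\textsf{iii}}$ preserves the statement $\mathbf{B_\Cscr}$ (in particular the niceness of $C\cap\mathscr{C}$) and the absence of curves with both legs on $\partial^-$, the simplifier $\mathscr{C}^{(d_n)}$ witnesses $B^{\textsf{iii}}_\mathscr{C}(i-1)$; if a final adjustment of niceness is needed, Lemma~\ref{l17} supplies it without altering the punctured cylinders of index $>i-1$ beyond adding generating curves arbitrarily close to the old ones.

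The delicate point, exactly as in Proposition~\ref{p26}, is that a single move $S_{\textsf{iii}}(\cdot,h,k)$ can \emph{resurrect} type~$\textsf{iii}$ intersections at levels $>h$ (it introduces new generating curves on the cylinders $C_j$, $j>i$) and can reintroduce curves with both legs on $\partial^-$, which must then be cleared again by Lemma~\ref{l27}; this is precisely why termination cannot be read off from a single count of $n_i$ but must come from the nested stabilization above. The main thing I expect to have to verify carefully is that each move genuinely does not increase $g_j$ for $j<h$ (so the nesting $d_i\le\cdots\le d_n$ is legitimate) and that the closeness of the newly created generating curves to the old ones preserves both the niceness of $C\cap\mathscr{C}$ and the already-cleared conditions $n_j=0$ at the higher indices, together with the remaining clauses of $B_{\mathscr{C}''}$ (no closed curves, no curves with both legs on $\partial^+$, and the SBC property for curves meeting the generating curves) as recorded in Lemma~\ref{new-lemma}.
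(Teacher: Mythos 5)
Your proposal is correct and is essentially the paper's own proof: the paper likewise iterates the move $S_{\textsf{iii}}(\Cscr^{(d)},h_d,k)$ at the smallest index $h_d$ with $n_{h_d}(\Cscr^{(d)})\neq 0$ (automatically $h_d\geq 2$, since $C_1$ carries no generating curves) and terminates via exactly your nested stabilization $d_2\leq d_3\leq\cdots\leq d_n$ of the non-increasing, non-negative quantities $g_i$, applying Lemma~\ref{l27} after each move to re-clear curves with both legs on $\partial^-$ while Lemma~\ref{new-lemma} preserves $B_{\Cscr}$. The only difference is cosmetic: your outer reverse induction on $i$ is redundant, since the inner stabilization argument already yields $n_j=0$ for all $j$ in a single pass starting at index $2$, which is how the paper presents it.
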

\begin{proof}
	Let $\mathscr{C}^{(d)}=\{C_j^{(d)}\}_{j=1}^n$, $d\geq0$, be a sequence of simplifiers with $\Cscr^{(0)}=\Cscr$, where $\mathscr{C}^{(d+1)}$ is obtained from $\mathscr{C}^{(d)}$ as follows. Let $h_d$ be the smallest index  with  $n_{h_d}({\mathscr{C}^{(d)}})\neq0$ (since $C_1^{(d)}$ includes no generating curves, $h_d\geq2$). Let $l\in C\cap C_{h_d}^{(d)}$ be a SBC and $l'\in C\cap C_k^{(d)}$ have one leg on $l$ and one on $\partial^-$. Suppose that the disk $D_{l'}^\circ\subset C$ (constructed earlier) includes no curves of type $\textsf{iii}$. By the discussion of $\S$\ref{3.5}, we obtain the simplifier $\mathscr{C}^{(d+1)}=S_{\textsf{iii}}(\Cscr^{(d)},h_d,k)$. Note that $B_{\Cscr^{(d)}}$ is satisfied and $C\cap \mathscr{C}^{(d)}$  does not contain  curves with both legs on $\partial^-$. Moreover, we have $g_2(\mathscr{C}^{(d)})\geq g_2(\mathscr{C}^{(d+1)})\geq0$ for $d\geq 0$. Therefore, there is an integer $d_2$ such that $g_2(\mathscr{C}^{(d)})= g_2(\mathscr{C}^{(d+1)})$ for $d\geq d_2$. By the discussion after the proof of Lemma~\ref{new-lemma}, it follows that $n_2({\mathscr{C}^{(d)}})= 0$ for $d\geq d_2$. Consequently, $h_d>2$ for $d\geq d_2$. By repeating this process, we obtain a sequence $d_2\leq d_3\leq \cdots$ of integers so that for all $d\geq d_i$ we have $g_i(\Cscr^{(d)})=g_i(\Cscr^{(d_i)})$ and $n_i({\Cscr{(d)}})=0$. For $d\geq d_n$, we find $n_i({\Cscr^{(d)}})=0$ for  $i=1,\ldots,n$, completing the proof.
\end{proof}

\subsection{Semi-reduced simplifiers and sliding cylinders over one-another}\label{ss-5-7-1}	
	Let $\mathscr{C}=\left(C_i\right)_{i=1}^n$ be obtained from Proposition \ref{p28}. We claim that {\emph{there is an equivalent simplifier $\mathscr{C}'=\left(C'_i\right)_{i=1}^{n'}$ so that  $C\cap C'_i= \emptyset$ unless $C'_i$ has no punctures}}. Let $N_i=N[\Cscr^i]$ and $N_i'=N[(\Cscr')^i]$.  If $C_i$ has no punctures and $C\cap C_i=\emptyset$,  we can assume $i=n$. To prove the claim for $(N,\Cscr,C)$, it then suffices to prove it for $(N_{n-1},\Cscr_{n-1},C)$.  Therefore, we are down to showing the claim when $C$ intersects all the non-punctured cylinders in $\Cscr$.\\
	
	Suppose that $l\in C\cap C_i$ is not a SBC. Then the legs of $l$ are on the punctures of $C_i$. Let  $l_k$ be such a puncture, which is a generating curve on  $C_k$, for some $k>i$. This gives a curve $l'_k\in C\cap C_k$ which intersects the generating curves on $C_k$, and is a SBC by Proposition \ref{p28}. Since $C\cap\mathscr{C}$ has no curves of types $\textsf{i}-\textsf{iii}$, the other leg of $l$ is on another SBC $l''_j\in C\cap C_j$ (Figure \ref{fig-semi-reduced-2}-left). If $l''_j=l'_k$,  there is only one SBC  (Figure \ref{fig-semi-reduced-2}-right).

	\begin{figure}
	\def\svgwidth{13cm}
	\begin{center}
	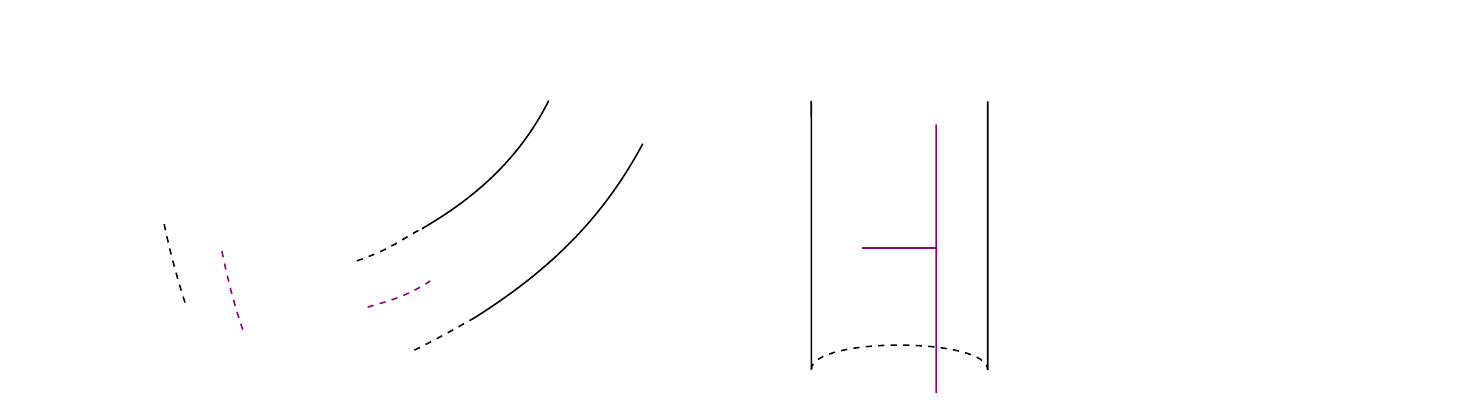
	\caption{$l\in C\cap C_i$ has both legs on two SBCs $l'_k\in C\cap C_k$ and $l''_j\in C\cap C_j$ (left). The cases $l''_j\neq l'_k$ and $l''_j=l'_k$ are illustrated as well (middle and right). }
	\label{fig-semi-reduced-2}
	\end{center}
	\end{figure}

	\begin{definition}\label{d-semi-redu}
	A simplifier $\mathscr{C}=\left(C_i\right)_{i=1}^n$ is called {\emph{semi-reduced}} if $l_i^+=\partial^+C_i$ does not bound a disk in $\partial^+_i$ for each $C_i$ which has punctures and  curves on $C\cap\Cscr$  are SBCs or of type $\textsf{iv}$.
	\end{definition}

\begin{figure}[b]
	\def\svgwidth{16cm}
	\begin{center}
	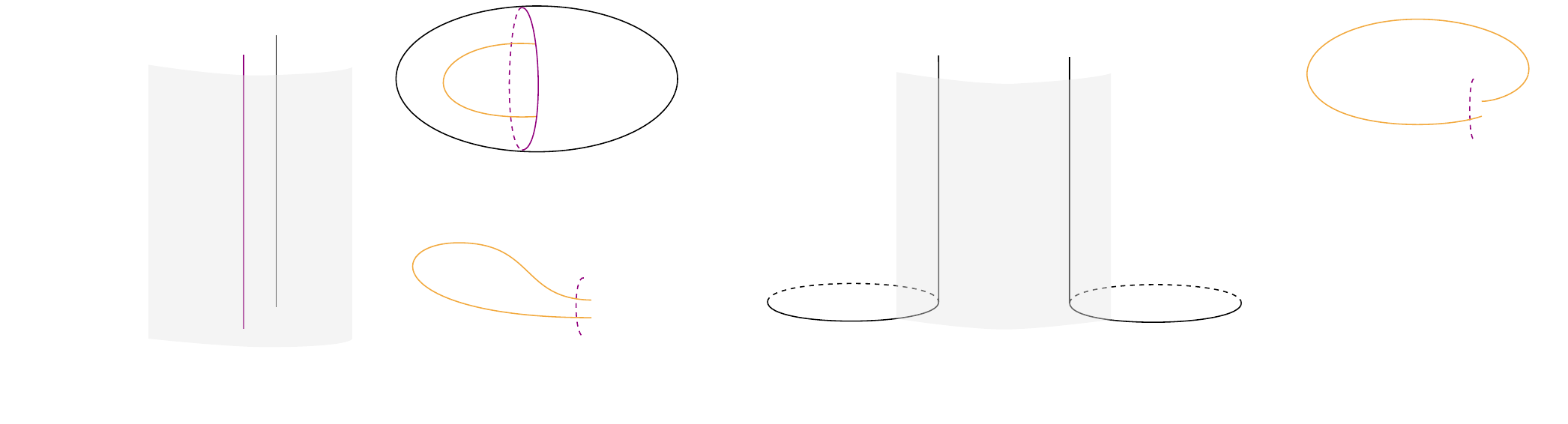
	\caption{$(1)$ and $(3)$:  (one of) the solid cylinders associated with $C_n$ in $N_{n-1}$ and a copy of $\bar{C}\subset C$ with $\partial \bar{C}=\bar{l}_1\cup\bar{l}^+\cup\bar{l}_2\cup\bar{l}^-$ are illustrated in $N_{n-1}$. The curves $\bar{l}_1$ and $\bar{l}_2$ are identified with the SBCs $l_1$ and $l_2$ in $N$. $(2)$ and $(4)$: $\bar{l}^+$ and $l^+_n$ are illustrated on $\partial^+$. }
	\label{fig-semi-reduced-5-1}
	\end{center}
	\end{figure}	
		
	The simplifier obtained by Proposition \ref{p28} is clearly semi-reduced. We deform a given simplifier to obtain an equivalent semi-reduced simplifier. Let $\mathcal{C}_{\mathscr{C}}$ denote the set of non-punctured cylinders in $\mathscr{C}$ and $\mathcal{C}'_{\mathscr{C}}$ denote the set of all the punctured cylinders in $\mathscr{C}$ such that their punctures are the generating curves on the cylinders in $\mathcal{C}_{\mathscr{C}}$. It is clear that if $\mathcal{C}'_{\mathscr{C}}=\emptyset$, $\mathcal{C}_\mathscr{C}=\mathscr{C}$.
	\begin{definition}\label{d29}
	Two SBCs  $l_i\in C\cap C_{j_i}$, for $i=1,2$   are called {\emph{adjacent}} in $C$ (respectively, in $C_{j_1}$ if $j_1=j_2$) if $C\setminus\{l_1,l_2\}$ (respectively, $C_{j_1}\setminus\{l_1,l_2\}$) has a component without any SBCs.
	\end{definition} 

	 If $l_1,l_2\in C\cap C_i$ are two adjacent SBCs in $C$, it is not necessary for $l_1$ and $l_2$ to be adjacent in $C_i$. If  $C_i\in\mathcal{C}_\mathscr{C}$,  Remark \ref{r2.5} implies that we can assume $i=n$.  Let $\bar{C}$ be the closure of the component of $C\setminus\{l_1,l_2\}$ with no SBCs, which corresponds to a rectangle in $N_{n-1}$  with $\partial \bar{C}=\bar{l}_1\cup\bar{l}^+\cup\bar{l}_2\cup\bar{l}^-$. Here $\bar{l}^\pm\subset\partial^\pm N_{n-1}$, while  $\bar{l}_1$ and $\bar{l}_2$ give $l_1$ and $l_2$ in $N$, respectively. Two cases may be distinguished: case $1$ when both  $\bar{l}_1$ and $\bar{l}_2$ are on one of  $D_i^{C_n}\times I$, say  $D_1^{C_n}\times I$ (Figure \ref{fig-semi-reduced-5-1}-left), and case $2$ when $\bar{l_i}\subset D_i^{C_n}\times I$ for $i=1,2$ (Figure \ref{fig-semi-reduced-5-1}-right). 	 Let $L=\bar{C}\cap\mathscr{C}=\{l'_{i_1},\dots,l'_{i_k}\}$ where $l'_{i_s}\in C\cap C_{i_s}$, $1\leq s\leq k$. In case  $2$, it is clear that the two legs of $l'_{i_s}$ are on two different punctures of $C_{i_s}$. In fact,  one leg of $l'_{i_s}$ in $N_{n-1}$ is on an essential curve $l''_1\subset\partial D_1\times I$ and one is on an essential curve $l''_2\subset(\partial D_2)\times I$. $l''_1$ and $l''_2$ are generating curves on $C_n$. If $l''_1=l''_2$ on $C_n$, then $C_{i_s}$ is not a surface of genus zero in $N$.\\

	\begin{figure}[!h]
	\def\svgwidth{6cm}
	\begin{center}
	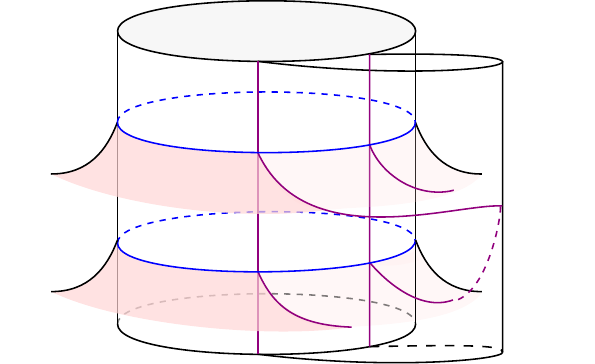
	\caption{ $\bar{l}_1$ and $\bar{l}_2$ are on  $D_1^{C_n}\times I\subset N_{n-1}$. The legs of $l'_{i_s}\in \bar{C}\cap C_{i_s}$ are on different punctures $l''_1$ and $l''_2$ of $C_{i_s}$ and  each component of $\bar{C}\setminus l'_{i_s}$ has nonempty intersection with $C_{i_s}$.}
	\label{fig-semi-reduced-6}
	\end{center}
	\end{figure}
	
	In case $1$, the legs of each $l'_{i_s}$ are on the same puncture of $C_{i_s}$. To see this,  let $\bar{L}\subset L$ consist of the curves with legs on two different punctures. Choose $l'_{i_s}\in\bar{L}$ such that at least one component of $\bar{C}\setminus l'_{i_s}$ has empty intersection with $\bar{L}$ (see Figure \ref{fig-semi-reduced-6}). Suppose that the legs of $l'_{i_s}$ are on the punctures $l''_1$ and  $l''_2$ of $C_{i_s}$ and $\partial l'_{i_s}=\{l''_1\cap l_1,l''_2\cap l_2\}$. There are curves $l'$ and $l''$ in $\bar{C}\cap C_{i_s}$ with legs given by $l_2\cap l''_1$ and  $l_1\cap l''_2$, respectively. Clearly, $l'$ and $l''$ have legs on different punctures of $C_{i_s}$. This contradicts the assumption on $l'_{i_s}$.
	
	\begin{definition}\label{d31}
	Let the adjacent SBCs $l_i\in C\cap C_{j_i}$, for $i=1,2$, and the rectangle $\bar{C}$ be as above. We say that $l_1$ and $l_2$ are {\emph{equivalent}} if $\bar{C}\cap\mathcal{C}'_{\mathscr{C}}\neq\emptyset$	and $j_1=j_2$ only if we are in  case $2$. 
	\end{definition}

Let $\mathscr{C}=\left(C_i\right)_{i=1}^n$ be a semi-reduced simplifier,  $l_i\in C\cap C_{j_i}$, for $i=1,2$, be two equivalent curves  and the rectangle $\bar{C}$ and $L=\bar{C}\cap\mathscr{C}=\{l'_{i_1},\dots,l'_{i_k}\}$ be defined as before. So, $\bar{C}\cap\mathcal{C}'_{\mathscr{C}}\neq\emptyset$ and $\bar{C}\cap\mathcal{C}_\mathscr{C}=\emptyset$.  If $j_1\neq j_2$, slide $C_{j_1}$ over $C_{j_2}$ using $\bar{C}$ to obtain ${C}_{j_1,j_2}$ (Figure \ref{fig-slide-1}-top).  Let $l_{i_s}^i$ be the generating curves on $C_{j_i}$, for $i=1,2$, which  are also disjoint punctures on the punctured cylinders $C_{i_s}$, while we have  $\partial l'_{i_s}=\{l_1\cap l_{i_s}^1,l_2\cap l_{i_s}^2\}$ (Figure \ref{fig-slide-1}-bottom-right). A neighborhood of $C_{j_1}\cup C_{j_2}\cup C_{j_1,j_2}$ is illustrated in Figure \ref{fig-slide-1}-bottom-left. Let $D_{i_s}\subset C_{i_s}$ be a punctured disk with punctures $l^1_{i_s}$ and $l^2_{i_s}$ and with boundary $l''_{i_s}=C_{j_1,j_2}\cap C_{i_s}$. Remove  the disk $D_{i_s}^\circ$,  from $C_{i_s}$ to obtain the cylinder $C'_{i_s}$, with one fewer puncture in comparison with $C_{i_s}$, as the punctures $l^1_{i_s}$ and $l^2_{i_s}$ are replaced with the puncture $l''_{i_s}$.   Over $C_{j_1,j_2}$, we have the generating curves $l''_{i_s}$ for $1\leq s\leq l$. Set $C'_i=C_i$ for $i\neq i_1,\ldots,i_k$, while $C_{i_s}'$ is obtained from $C_{i_s}$ by removing a neighborhood of $l_{i_s}^1\cup l_{i_s}^2\cup l'_{s}$. If $j_1<j_2$, By Lemma \ref{l13}, $\mathscr{C}'=(C'_1,\ldots,C'_{j_2}, C_{j_1,j_2},C'_{j_2+1},\ldots,C'_n)$ is a semi-reduced simplifier which is equivalent to $\mathscr{C}$, and has fewer punctures.\\
	
		\begin{figure}[!h]
	\def\svgwidth{15cm}
	\begin{center}
	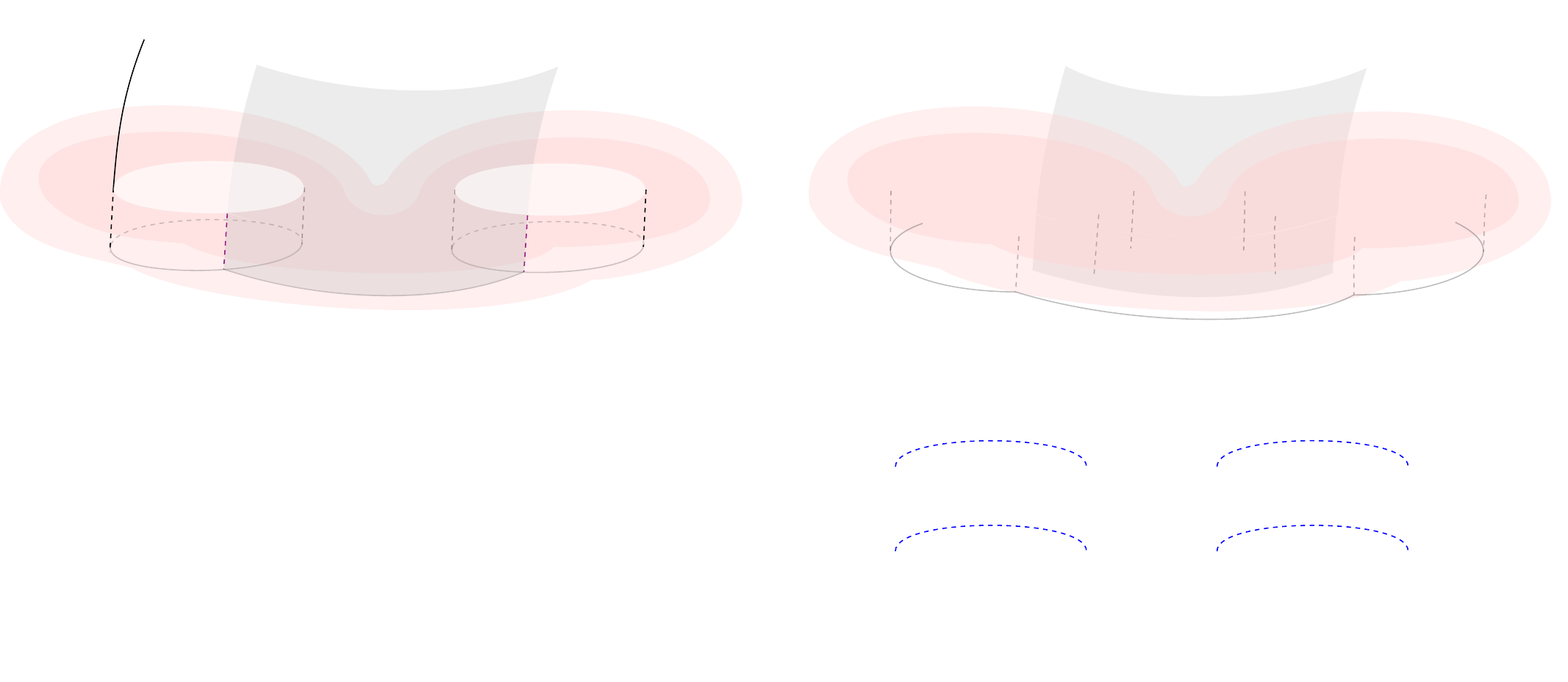
	\caption{ $l_1$ and $l_2$ are SBCs on different cylinders $C_{j_1},C_{j_2}\in\mathcal{C}_\mathscr{C}$. $l'_k\in C\cap C_k$ is a curve in the rectangle $\bar{C}\subset C$ with legs on $l_1$ and $l_2$ (top-left). Slide $C_{j_1}$ over $C_{j_2}$ using $\bar{C}$ to obtain the cylinder $C_{j_1,j_2}$ (bottom-left). $\bar{C}\cap\Cscr=\{l'_{i_1},\ldots,l'_{i_k}\}$, where $l'_{i_s}$ has  legs on $l^i_{i_s}\subset C_{j_i}\cap C_{i_s}$ for $i=1,2$. A neighborhood of $l''_{i_s}$ includes parts from $C_{i_s}$, $C$, $C_{j_1}$, $C_{j_2}$ and $C_{j_1,j_2}$ (right).
	}
	\label{fig-slide-1}
	\end{center}
	\end{figure}

If $j_1=j_2$, consider a parallel copy $C'_{j_1}$ of $C_{j_1}$ which intersects $\bar{C}$.  $C_{j_1}'$ intersects each $C_{i_s}$ in an essential curve $l''_{i_s}$ which is parallel with $l_{i_s}$ in $C_{i_s}$, and they are thus the two boundaries of a (once punctured) disk $D_{i_s}$. Remove $D_{i_s}$  from $C_{i_s}$ and denote the resulting cylinder with $C'_{i_s}$. Set $\Cscr'=(C'_{1},\ldots,C'_{j_1},C_{j_1},C'_{j_1+1},\ldots,C'_n)$, where $C'_k=C_k$ for $k\neq j_1,i_1,i_2,\ldots,i_k$.  By Lemma \ref{l13}, $\mathscr{C}'$ is a semi-reduced simplifier equivalent to $\mathscr{C}$. Corresponding to the SBC $l_1\in C\cap C_{j_1}$, there is a curve $l_1'\in C\cap C_{j_1}'$, which is adjacent to $l_1$ in $C$. As in the case  $j_1\neq j_2$, slide $C'_{j_1}$ over $C_{j_1}$ to obtain a semi-reduced simplifier with fewer punctures.
The above discussion implies that using pairs of equivalent SBCs, we may reduce the number of punctures, while keeping the simplifier semi-reduced. Repeating the above process, we may thus assume that the semi-reduced simplifier $\Cscr=\left(C_i\right)_{i=1}^n$ does not include any equivalent SBCs.

\subsection{Removing two SBCs when there are no equivalent SBCs}\label{ss-5-7-3}
Let us now assume that the semi-reduced simplifier $\Cscr=\left(C_i\right)_{i=1}^n$ does not include any equivalent SBCs. We would next like to reduce the total number of SBCs. By definition, all the punctures of any fixed cylinder $C_i\in\mathcal{C}'_{\mathscr{C}}$ are generating curves on the cylinders in $\mathcal{C}_\mathscr{C}$. Since $C$ intersects all the cylinders in $\mathcal{C}_\mathscr{C}$, there are curves in $C\cap C_i$ which are not SBCs. In fact, let $l_{j_1}\subset C_i$ be a puncture which is a generating curve on  $C_{j_1}\in\mathcal{C}_\mathscr{C}$. Since $C\cap C_{j_1}\neq\emptyset$, there is a SBC $l_1\in C\cap C_{j_1}$. Therefore, there is a curve $l'\in C\cap C_i$ such that $\partial l'$ has a component given by $l_1\cap l_{j_1}$. $l'$ is not a SBC, so it has another leg on a puncture $l_{j_2}$ of $C_i$, where $l_{j_2}$ is a generating curve on some $C_{j_2}\in\mathcal{C}_\mathscr{C}$. Thus, there is a SBC $l_2\in C_{j_2}\cap C$ such that $\partial l'=\{l_2\cap l_{j_2}, l_1\cap l_{j_1}\}$. So, $l_1$ and $l_2$ are adjacent  in $C$. Since $l_1$ and $l_2$ can not be equivalent, we have $j_1=j_2$. As a result, each $l'\in C\cap C_i$ which is not a SBC, has both legs on the same puncture $l_j$. There is a (punctured) disk $D_{l'}\subset C_i$ such that $\partial D_{l'}=l'\cup\bar{l}_j$, for an arc $\bar{l}_j\subset l_j$. By the above argument, for each puncture $l_j$ of $C_i$, there is a curve $l'\in C\cap C_i$ such that $\partial l'\subset l_j$. Thus, we can choose $l'$ such that $D_{l'}$ is a disk without punctures and $(D_{l'})^\circ\cap\mathscr{C}=\emptyset$.\\

	\begin{figure}[!h]
	\def\svgwidth{16cm}
	\begin{center}
	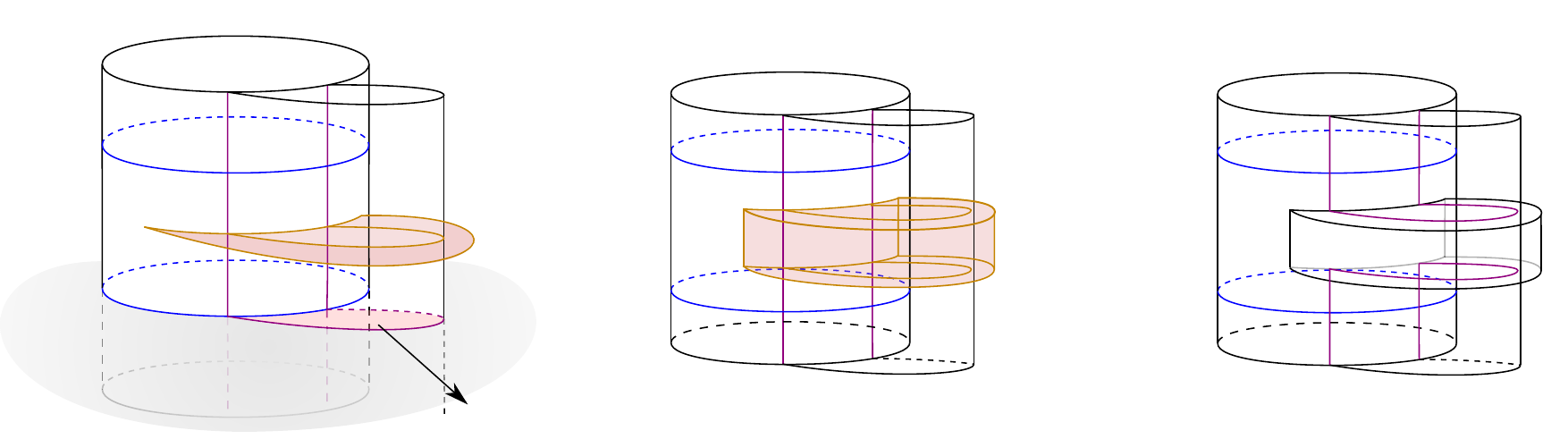
	\caption{ The adjacent SBCs $l_1$ and $l_2$ in $C$ are connected by $l'$ and are adjacent in $C_j$. $D_{l'}$ is a disk with $D_{l'}\cap\mathscr{C}=\emptyset$. $\bar{D}$ is a disk with $\bar{D}^\circ\cap\mathscr{C}=\emptyset$ and $\partial\bar{D}$ is disjoint from the generating curves on $C_j$. The boundary of $l''=\bar{D}\cap \bar{C}$ is on $l_1$ and $l_2$ (left). We have $\partial\bar{D}=l''_j\cup\bar{l}_j$. The interior of $\bar{D}\times I$ is disjoint from $\mathscr{C}$ and $l''_j\times I$ is a disk disjoint from the generating curves on $C_j$ (middle). $C'_j$ is obtained from $C_j$ by replacing $l''_j\times I$ with $\partial(\bar{D}\times I)\setminus(l''_j\times I)$ (right).}
	\label{fig-empty-3}
	\end{center}
	\end{figure}
	
In the above setup,	let $\bar{C}$ be the closure of the component of $C\setminus\{l_1,l_2\}$ with $l'\subset \bar{C}$ and $l_j$ be a generating curve on $C_j\in\mathcal{C}_\mathscr{C}$ (for $j=j_1=j_2$). In particular, $l_1,l_2\in C\cap C_j$ are two SBCs such that $\partial l'=\{l_1\cap l_j,l_2\cap l_j\}$. Note that $l_1$ and $l_2$ are adjacent in  $C_j$; otherwise, there is a SBC $l\in C\cap C_j$ that $l\cap \partial D_{l'}\neq \emptyset$. Thus, $D_{l'}^\circ\cap\mathscr{C}\neq\emptyset$, which contradicts our assumption on $D_{l'}$. Let $\bar{D}$ be an enlarged parallel copy of $D_{l'}$  such that $\bar{D}^\circ\cap\mathscr{C}=\emptyset$ and $\partial\bar{D}=l''_j\cup \bar{l}_j$, where $l''_j=\bar{D}\cap C_j$ is disjoint from the generating curves on $C_j$, and $\bar{l}_j=\partial\bar{D}\setminus l''_j$ is close and parallel to $l''=\bar{D}\cap \bar{C}$. The intersection $l''\cap l''_j$ is a pair of points on $l_1$ and $l_2$. Consider a neighborhood of $\bar{D}$, denoted by $\bar{D}\times I$, such that $(\bar{D}^\circ\times I)\cap\mathscr{C}=\emptyset$ and $l''_j\times I$ is a disk disjoint from the generating curves on $C_j$. Using Lemma \ref{l10}, replace $l''_j\times I$ with {$\partial(\bar{D}\times I)\setminus(l''_j\times I)$} to obtain the cylinder $C'_j$ and an equivalent semi-reduced simplifier (Figure \ref{fig-empty-3}-right). $l_1,l_2\in C\cap C_j$ correspond to $l_+',l_-'\in C\cap C'_j$ with $\partial l'_\pm\subset\partial^\pm$ (Figure \ref{fig-empty-3}-right).\\

\begin{figure}[!h]
\begin{center}
\def\svgwidth{15cm}
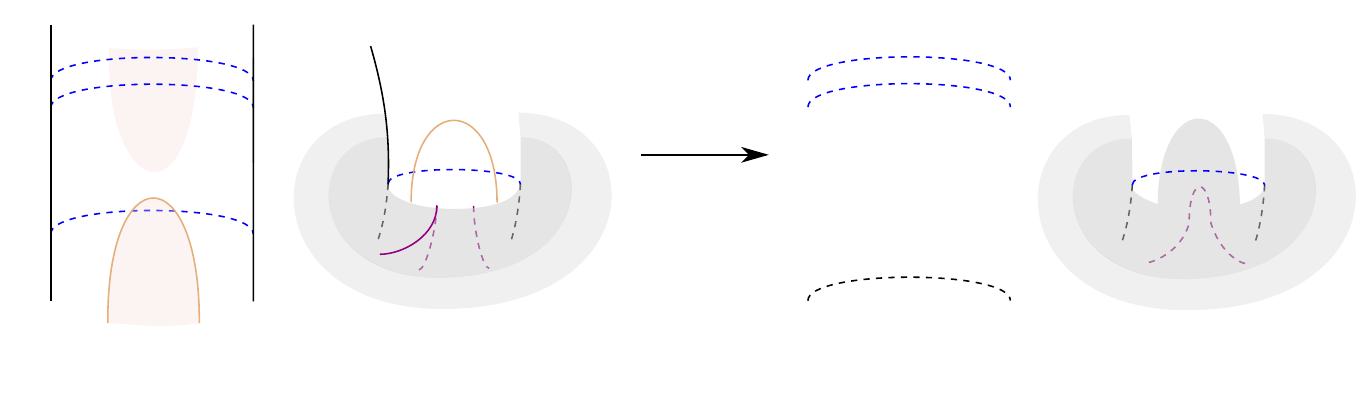
\caption{$D^\pm$ contains $l'_\pm$ and $D_k\subset D^+$ is such that $D_k^\circ\cap \mathscr{C}=\emptyset$ (1). The intersections of $l'_+$ with $l_k$ give $l''_1,l''_2\in C\cap C_k$ (2). Removing $l_k$ from $D_k$ is equivalent to changing $C_k$ to $C'_k$ ((3) and (4)). A leg of $l''_1$ is identified with a leg of $l''_2$.}
\label{fig-empty-5}
\end{center}
\end{figure}

	 We can choose the disjoint disks $D^\pm\subset C'_j$ such that 
$l_\pm'\subset (D^\pm)^\circ$ and the interior of $C\cap C_j'$ is disjoint from $\partial D^\pm$. The curves $l'_\pm$ determine the intersections $\partial D^\pm\cap(C'_j)^\circ$.  Choose $D^\pm$ such that each generating curve $l_k$ on $C_j'$ which is a puncture on some $C_k$, intersects only one of $D^+$ and $D^-$ and meets the corresponding curve $l'_+$ or $l'_-$ in exactly two points. The two intersection points of $l_k$ with $l'_+$ (or $l'_-$) give two curves $l''_1, l''_2\in C\cap C_k$, so that each one has at least one leg in $l_k\cap l_+'$ (or $l_k\cap l_-'$). Note that we may have $l''_1=l''_2$. Let $l_k$ intersect $D^+$. There is a disk $D_k\subset D^+$ with $\partial D_k=\tilde{l}_k\cup\tilde{l}$, where $\tilde{l}\subset\partial D^+$ and $\tilde{l}_k\subset l_k$. Choose $l_k$ such that the generating curves on $C'_j$ do not enter $D_k^\circ$ (Figure \ref{fig-empty-5}-left). Attach a copy of $D_k$ to $C_k$ along $\tilde{l}_k$ to construct $C'_k$. In $C'_k$, the arcs $l''_1$, $l''_2$ and $D_k\cap l'_+$ glue together and give a single curve in $C\cap C_k'$  (Figure \ref{fig-empty-5}). Similarly, by moving other generating curves which intersect $D^\pm$, we may remove them from $D^\pm$. Denote the resulting simplifier by $\mathscr{C}'=\big(C'_i\big)_{i=1}^n$. \\

	The above process changes $l_1$ , $l_2$, and the intersection curves with one leg on $l_1$ or $l_2$, in $C\cap\mathscr{C}$, as illustrated in Figure \ref{fig-empty-6}. In fact, if $l''_1=l''_2$, $l''_1$ is changed to a closed curve in $C\cap C'_k$. If $l''_1\neq l_2''$, the other leg of  $l''_1$ (and $l''_2$) is on a SBC. In this case, $l''_1$ and $l''_2$ are changed to a curve $l''$ which has boundary on SBCs  $\tilde{l}_1$ and $\tilde{l}_2$ (which may be the same). If $l_1$ and $l_2$ are the only SBCs in $C\cap\mathscr{C}$, thus, $C\cap\mathscr{C}'$ is of type $\textrm{I}$ (Figure \ref{fig-empty-6}-left). Using Lemma \ref{l8}, we obtain an equivalent reduced simplifier  such that its positive and negative boundary is included in the positive and negative boundary of $\Cscr'$. Therefore, the intersection of $C$ with this latter reduced simplifier is of type  $\textrm{I}$. Using Proposition \ref{p23}, we obtain an equivalent simplifier which includes $C$.\\

\begin{figure}
\begin{center}
\def\svgwidth{13cm}
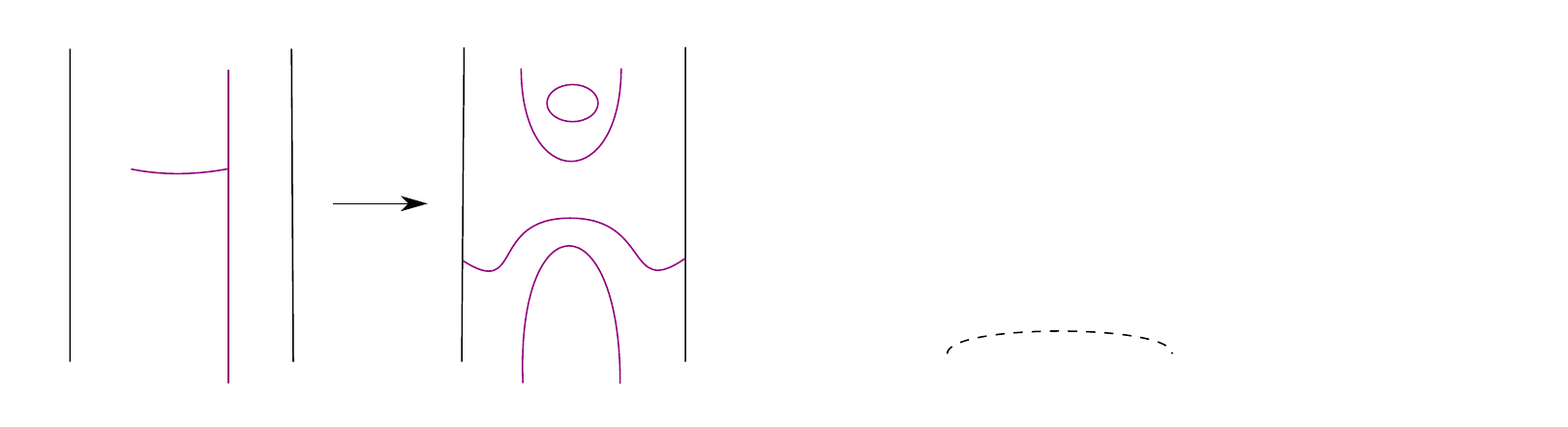
\caption{Left: $C\cap\mathscr{C}$ has only two SBCs $l_1$ and $l_2$. The curves in $C\cap\mathscr{C}\setminus\{l_1,l_2\}$ are changed to closed curves in $C\cap\mathscr{C}'$. $C\cap\mathscr{C}'$ is of type $\textrm{I}$. Right: $C\cap\mathscr{C}$ has more than two SBCs. Closed curves in $C\cap\mathscr{C}'$ are the boundaries of disks in $C$.
}
\label{fig-empty-6}
\end{center}
\end{figure}

Suppose now that $C$ includes SBCs other than $l_1$ and $l_2$. Each closed curve $l\in C\cap\mathscr{C}'$ bounds a disk in $C$ (Figure \ref{fig-empty-6}-right) and is  a curve on a cylinder $C'_i$ which has some punctures. Since $\mathscr{C}$ is a semi-reduced simplifier,  $\mathscr{C}'$ satisfies the first condition in Definition \ref{d-semi-redu}.  Thus, by Proposition \ref{p5}, $l$ bounds a punctured disk on $C'_i$. It is clear that $C\cap\mathscr{C}'$ is nice. By Remark~\ref{r-a-l24}, we can then remove  all the closed curves in $C\cap\mathscr{C}'$. Then using Remark \ref{r-a-l25}, we may also remove $l'_1$ and $l'_2$ to obtain an equivalent semi-reduced simplifier $\mathscr{C}''=\left(C_i''\right)_{i=1}^n$ with $C\cap C_i''\subset C\cap C_i'$ for $1\leq i\leq n$ and $\mathcal{C}_{\mathscr{C}'}\subset\mathcal{C}_{\mathscr{C}''}$, such that the number of SBCs in $C\cap\mathscr{C}''$ is  less than the number of SBCs in $C\cap\mathscr{C}$ and $p_{\mathscr{C}''}\leq p_{\mathscr{C}'}\leq p_{\mathscr{C}}$, where $p_\Cscr$ denotes the number of punctures in $\Cscr$.\\

A few remarks are necessary. First, note that we may have $\mathcal{C}_{\mathscr{C}''}\neq\mathcal{C}_{\mathscr{C}'}$. In fact, when we use Remark~\ref{r-a-l24} or Remark~ \ref{r-a-l25}, some punctures may be removed and some punctured cylinders in $\mathscr{C}'$ may thus become cylinders without punctures in $\mathscr{C}''$. We may also have $\mathcal{C}'_{\mathscr{C}''}\neq\mathcal{C}'_{\mathscr{C}'}$. The second observation is that some cylinders in $\mathcal{C}_{\mathscr{C}''}$ may not intersect $C$. The third observation is that $\mathcal{C}_{\Cscr'}$ and $\mathcal{C}'_{\Cscr'}$ are possibly different from $\mathcal{C}_{\Cscr''}$ and $\mathcal{C}'_{\Cscr''}$.  Thus, some SBCs in the intersection of $\Cscr''$ with $C$ may be equivalent on $C$. Finally, note that we may have $C\cap(\mathscr{C}''\setminus\mathcal{C}_{\mathscr{C}''})=\emptyset$.
\subsection{An equivalent simplifier containing \texorpdfstring{$\boldsymbol{C}$}{Lg}}\label{3.9}\label{ss-5-7-4}
	 \begin{proposition}\label{p29}
	 Given a semi-reduced simplifier $\mathscr{C}=\left(C_i\right)_{i=1}^n$ for $N$ and the cylinder $C$, there is an equivalent semi-reduced simplifier $\mathscr{C}'=\left(C'_i\right)_{i=1}^{n'}$ for $N$ and an integer $m\leq n'$ such that $C$ does not intersect $C_i'$ for $i>m$, and the induced cylinder in $N'=N[(\Cscr')^m]$ only intersects $\mathcal{C}_{\Cscr'_m}$.
	 \end{proposition}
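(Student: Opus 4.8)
The plan is to assemble the two reduction procedures of \S\ref{ss-5-7-1} and \S\ref{ss-5-7-3} into a single well-founded induction, and then to move the non-punctured cylinders disjoint from $C$ to the top of the simplifier. First I would normalise the position of $C$: whenever a cylinder $C_i\in\mathcal{C}_{\mathscr{C}}$ is disjoint from $C$, Remark \ref{r2.5} lets me reorder it into the last slot $i=n$, so that the statement for $(N,\mathscr{C},C)$ reduces to the statement for $(N_{n-1},\mathscr{C}_{n-1},C)$; the cylinder removed this way is a candidate for the future indices $i>m$. Iterating, I may assume $C$ meets \emph{every} cylinder in $\mathcal{C}_{\mathscr{C}}$. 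Since $\mathscr{C}$ is semi-reduced (Definition \ref{d-semi-redu}, the output of Propositions \ref{p26} and \ref{p28}), each curve of $C\cap\mathscr{C}$ is then a SBC or of type $\textsf{iv}$, i.e.\ has both legs on SBCs.

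With this normalisation in force I would induct on the lexicographic measure $(s,p_{\mathscr{C}})$, where $s$ is the number of SBCs in $C\cap\mathscr{C}$ and $p_{\mathscr{C}}$ is the number of punctures of $\mathscr{C}$. In a single step I first invoke the sliding construction of \S\ref{ss-5-7-1}: every pair of equivalent SBCs (Definition \ref{d31}) is cancelled by sliding one cylinder over another along the associated rectangle $\bar C$, which strictly lowers $p_{\mathscr{C}}$, keeps $\mathscr{C}$ semi-reduced, and does not increase $s$; hence after finitely many slides I reach an equivalent semi-reduced simplifier with no equivalent SBCs. If $C$ still meets some $C_i\in\mathcal{C}'_{\mathscr{C}}$, then, because $C$ meets all of $\mathcal{C}_{\mathscr{C}}$, the opening analysis of \S\ref{ss-5-7-3} yields a non-SBC curve on $C_i$ whose two legs, by the absence of equivalent SBCs, lie on a single non-punctured cylinder $C_j$ in two adjacent SBCs $l_1,l_2$ (Definition \ref{d29}). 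The excision of \S\ref{ss-5-7-3}, built on Lemmas \ref{l10} and \ref{l13}, removes this pair, strictly decreasing $s$ while not increasing $p_{\mathscr{C}}$. Thus the measure descends under both operations and the process terminates.

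At termination $C$ is disjoint from every cylinder in $\mathcal{C}'_{\mathscr{C}}$. I would then argue that $C$ meets no punctured cylinder at all: a non-punctured cylinder carries only SBCs in $C\cap\mathscr{C}$ (a type-$\textsf{iv}$ curve would need legs on punctures), while a curve meeting a punctured cylinder propagates, by following its legs through the puncture relation $l_k\mapsto C_k$ to the highest-index punctured cylinder reached — which, being maximal in index, has all its punctures on non-punctured cylinders and so lies in $\mathcal{C}'_{\mathscr{C}}$ — forcing $C$ to meet a $\mathcal{C}'_{\mathscr{C}}$ cylinder, contrary to termination. Hence $C\cap\mathscr{C}\subset\mathcal{C}_{\mathscr{C}}$. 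Finally I push all non-punctured cylinders that $C$ misses, together with all punctured cylinders, above the non-punctured cylinders that $C$ meets, using Remark \ref{r2.5}; this produces $\mathscr{C}'=(C_i')_{i=1}^{n'}$ and an index $m$ such that $C\cap C_i'=\emptyset$ for $i>m$, and such that inside $N'=N[(\mathscr{C}')^m]$ the induced cylinder meets only $\mathcal{C}_{\mathscr{C}'_m}$.

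I expect the main obstacle to be the termination bookkeeping across the layered puncture structure. Both Remark \ref{r2.5} and \S\ref{ss-5-7-3} are engineered for the interface between $\mathcal{C}_{\mathscr{C}}$ and $\mathcal{C}'_{\mathscr{C}}$, so the delicate points are (i) verifying that clearing $C$ from $\mathcal{C}'_{\mathscr{C}}$ really clears it from \emph{every} punctured cylinder (the ``highest punctured cylinder'' chain argument above, which must be made precise because a curve on a deeper punctured cylinder need not immediately exhibit a leg reaching $\mathcal{C}'_{\mathscr{C}}$), and (ii) re-establishing the normalisation ``$C$ meets all of $\mathcal{C}_{\mathscr{C}}$'' after each excision, since the construction of \S\ref{ss-5-7-3} may leave a non-punctured cylinder disjoint from $C$. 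Interleaving the Remark \ref{r2.5} reduction with the strict descent of $(s,p_{\mathscr{C}})$ under sliding and SBC-excision is what keeps the induction well founded.
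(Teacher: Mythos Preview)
Your overall plan matches the paper's: interleave the normalisation of Remark~\ref{r2.5}, the sliding of \S\ref{ss-5-7-1}, and the SBC-excision of \S\ref{ss-5-7-3}, and run a well-founded descent. The gap is in your choice of measure. You induct on $(s,p_{\mathscr{C}})$ with the SBC count first and assert that sliding ``does not increase $s$''. This is not supported by \S\ref{ss-5-7-1}: the slide \emph{adds} a new non-punctured cylinder $C_{j_1,j_2}$ to the simplifier while keeping both $C_{j_1}$ and $C_{j_2}$ (and in the case $j_1=j_2$ one first inserts a full parallel copy $C'_{j_1}$ of $C_{j_1}$), and these new cylinders meet $C$ in fresh SBCs. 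So $s$ can increase under sliding, and your lexicographic order $(s,p_{\mathscr{C}})$ need not drop.

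The paper uses the opposite order, $(p_{\mathscr{C}},s)$ with punctures first: sliding strictly lowers $p_{\mathscr{C}}$ (so the pair drops regardless of what happens to $s$), while excision keeps $p_{\mathscr{C}}$ non-increasing and strictly lowers $s$ (this is exactly what the last paragraph of \S\ref{ss-5-7-3} records). With that single swap your argument goes through essentially as written. Your ``highest punctured cylinder'' chain at the end is also more than is needed: once you have passed to $N'=N[(\Cscr')^{m}]$ and the process has terminated, the paper simply invokes the remark that $\mathcal{C}'_{\mathscr{C}}=\emptyset$ forces $\mathcal{C}_{\mathscr{C}}=\mathscr{C}$, so in $\Cscr'_m$ every cylinder $C$ meets is already non-punctured.
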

	 \begin{proof}
	 We may assume that $C$ intersect each cylinder in $\mathcal{C}_\mathscr{C}$. Set $\mathscr{C}^{(0)}=\mathscr{C}$, $n^0=n$  and $N^0=N$. If $\mathcal{C}'_{\mathscr{C}^{(0)}}=\emptyset$, we have $\mathscr{C}^{(0)}=\mathcal{C}_{\mathscr{C}^{(0)}}$ and the claim is trivial.  Suppose that the equivalent semi-reduced simplifiers $\Cscr^{(j)}=\{C^j_i\}_{i=1}^{n^j}$ are constructed for $j=0,\ldots,k-1$ so that the following are satisfied. $C^j_i$ is disjoint from $C$ for $i>m^j$, where $m^j\leq n^j$. If $N^j=N[(\Cscr^{(j)})^{m^j}]$, then $C$ (which survives in $N^j$) intersects every non-punctured cylinder in the simplifier $\Cscr^{(j)}_{m^j}$ for $N^j$. Moreover, if $p_{j}=p_{\Cscr^{(j)}}$ and $e_j$ denote the number of punctures and SBCs in $\Cscr^{(j)}$, respectively, we have $p_0\geq p_1\geq \cdots\geq p_{k-1}$, while the equality $p_{j-1}=p_{j}$ is satisfied only if $e_{j-1}>e_j$.\\

If $e_{k-1}\neq 0$ and the intersection of $C$ with $\Cscr^{(k-1)}$ does not include any pair of equivalent SBCs,  we use the construction of $\S$~\ref{ss-5-7-3} to obtain an equivalent semi-reduced simplifier $\mathscr{C}^{(k)}=\{C_i^k\}_{i=1}^{n^k}$. Then  $p_{k}$ is bounded above by $p_{k-1}$ while $e_k$ is less than $e_{k-1}$. On the other hand, if there are pairs of equivalent SBCs in $\Cscr^{(k-1)}$, again we use the discussion of \S~\ref{ss-5-7-3} to obtain an equivalent semi-reduced simplifier $\Cscr^{(k)}$ so that $p_k< p_{k-1}$. \\

By repeating this inductive process, we obtain a simplifier $\Cscr'=\left(C'_i\right)_{i=1}^{n'}=\Cscr^{(m)}$ such that either the intersection of $\Cscr'$ with $C$ is of type $\mathrm{I}$ (i.e. we have $e_{m}=0$), or such that $C$ only intersects the non-punctured cylinders in $N_m$ (i.e. the cylinders in $\mathcal{C}_{\Cscr'_m}$). In the latter case, we are done, while in the former case, Proposition~\ref{p23} completes the proof.
\end{proof}

	Let $\mathscr{C}=\left(C_i\right)_{i=1}^n$ be such that $C\cap\mathscr{C}$ is semi-reduced and $C\cap(\mathscr{C}\setminus\mathcal{C}_\mathscr{C})=\emptyset$. By Remark \ref{r2}, we may assume $C$ intersects $C_{k+1},\ldots,C_n$, and is disjoint from $C_1,\ldots,C_k$. Therefore, for $i\leq k< j$, $C_i$ and $C_j$ are disjoint and there are no generating curves on $C_j$. By Remark \ref{r2}, we may then change the indices and assume that $C$ intersects $C_1,\dots, C_{n-k}$ (which do not have punctures) and  is disjoint from $C_i$, $i>n-k$. The problem of finding a simplifier $\Cscr'$ for $N$ which is equivalent to $\Cscr$ and includes $C$ is then reduced to the similar problem for $N_{n-k}=N[\Cscr^{n-k}]$, the simplifier $\Cscr_{n-k}$ and the cylinder $C$ which survives the removal of the $k$ cylinders in $\Cscr^{n-k}$.  \\
	
	 If $\partial^\pm\setminus\{l_i^\pm\}_{i=1}^n= \{S_j^\pm\}_{j=1}^{k}$,  then  each $S_j^\pm$ is a punctured sphere. Each puncture of $S_j^\pm$, for $1\leq j\leq k$, corresponds to one of the curves $l_i^\pm$, for $1\leq i\leq n$, and each curve $l_i^\pm$ corresponds to two punctures, denoted $l_i^{\pm+}$ and $l_i^{\pm-}$. Each $S_j^\pm$ corresponds to a component of $\partial_0^\pm=\partial^\pm N_0$. We may thus label the spheres so that $S_j^+$ and $S_j^-$ correspond to the same connected  component $N_0^j$ of $N_0$.  It also follows from this observation that 	 $l_j^{++}$ (or $l_j^{+-}$) appears in the boundary of $S_j^+$ if and only if   $l_j^{-+}$ (or $l_j^{--}$) appears in the boundary of $S_j^-$.\\
	 
By adding a parallel copy $C_{n+i}$ of $C_i$ to the simplifier, we obtain a new equivalent simplifier with all the previous properties, which does not include any pair of curves $l_i^{++}$ and $l_i^{+-}$ which are both punctures of the same sphere $S_j^+$ (which means that $l_i^{-+}$ and $l_i^{--}$ are not both punctures of  $S_j^-$). By the above discussion, we may further assume that $\mathscr{C}=\left(C_i\right)_{i=1}^n$ is such that $\mathcal{C}_\mathscr{C}=\mathscr{C}$. Further, $C$ intersects each $C_i$ in $\mathscr{C}$ in some SBCs (condition A). Moreover, $l_i^{++}$ and $l_i^{+-}$ are not both on the boundary of $S_j^+$ and $l_i^{-+}$ and $l_i^{--}$ are not both on the boundary of $S_j^-$ for $i=1,\ldots,n$ and $j=1,\ldots,k$ (Condition B). The boundary of the curves in  $C\cap S_j^\pm$ is on the punctures of $S_j^\pm$. We may distinguish two possibilities. 
	\begin{enumerate}
	\setlength\itemsep{-0.3em}
	\item[{\bf{M.1}}]\label{M1}  Both legs of every curve $l_+\in C\cap S_j^+$ are on the same puncture of $S_j^+$;
	\item[{\bf{M.2}}]\label{M2} There is a curve $l_+\in S_j^+\cap C$ with boundary on two different punctures $\bar{l}_1^+$ and $\bar{l}_2^+$ of $S_j^+$. 
	\end{enumerate}
	
In case {\bf{M.1}}, choose $l_+\in C\cap S_j^+$ and the puncture $\bar{l}$ on $S_j^+$ so that both legs of $l_+$ are on $\bar{l}$, while $l_+$ and (an arc on) $\bar{l}$ bound a disk $D^+_{l_+}$ on $S^+_j$, which does not include any punctures.  Further, assume that $l_+$ is such that $(D^+_{l_+})^\circ\cap C=\emptyset$. Without loss of generality, we can assume that $\bar{l}$ corresponds to $l^+_n$. There are SBCs $l_1,l_2\in C\cap C_n$, such that $\partial l_+=\{\partial l_1\cap l_n^+,\partial l_2\cap l_n^+\}$. Let $C''$ denote  the component of $C_n\setminus\{l_1,l_2\}$ with $\partial D^+_{l_+}\cap C''\neq\emptyset$.  Let $C'$ denote the component of $C\setminus\{l_1,l_2\}$ with $l_+\subset C'$. Note that $C'\cap\partial^-\subset S_j^-$. Let $l_-=C'\cap\partial^-$ and $\bar{l}_n=C''\cap\partial^-$. We then have
	 $$\partial l_-=\partial\bar{l}_n=\{\partial l_1\cap l_n^-,\partial l_2\cap l_n^-\}\quad\quad\text{and}\quad\quad l_-\cup\bar{l}_n=\partial (C'\cup D^+_{l_+}\cup C''),$$ 
	 where $C'\cup D^+_{l_+}\cup C''$ is a disk. By Lemma \ref{l11}, $l_-\cup\bar{l}_n$ bounds a disk $D_{l_-}^-\subset\partial^-$ and from $(D^+_{l_+})^\circ\cap C=\emptyset$, it is clear that $(D_{l_-}^-)^\circ\cap C=\emptyset$ (Figure \ref{figlast-3}-left).\\
	 
	 \begin{figure}[!h]
	 \def\svgwidth{14cm}
	 \begin{center}
	 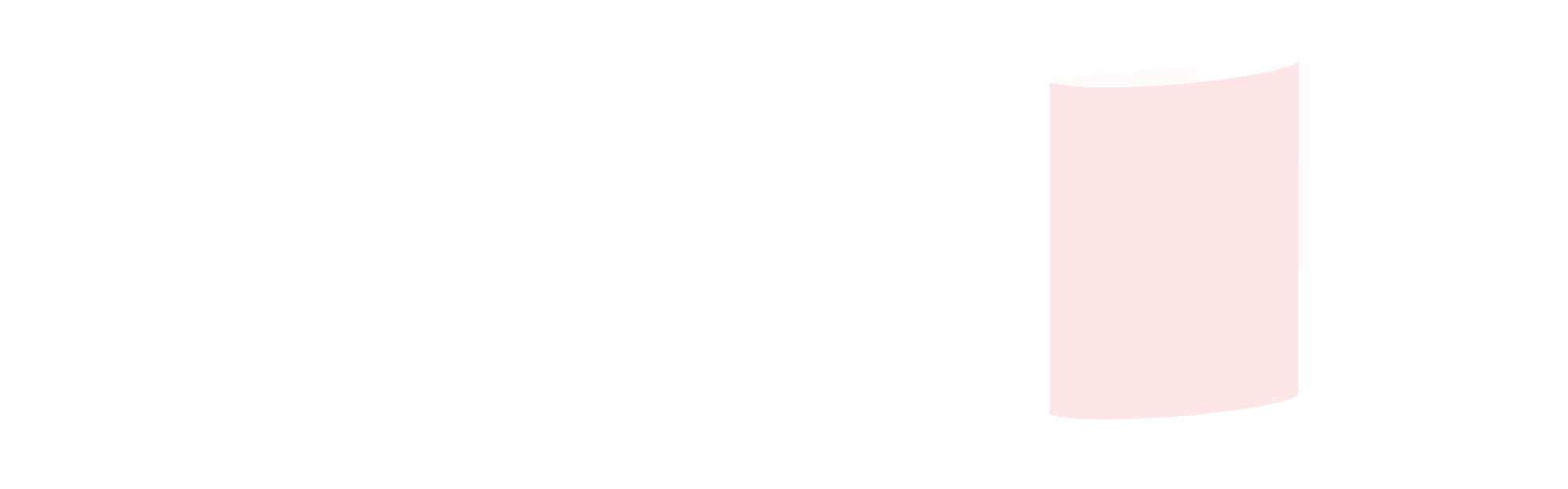
	 \caption{Left: the boundary of $l_+\in C\cap S_j^+$  determines two SBCs $l_1,l_2\in C\cap C_n$, which are adjacent in $C$ and $C_n$. $D_{l_+}^+$, together with $C'\subset C$  and $C''\subset C_n$ determine a disk in $N$ with boundary on $S_j^-$, and thus a disk $D_{l_-}^-\subset S_j^-$ with the same boundary. Right: The cylinder $C''_n$ is obtained from $C_n$ by moving $l_n^+$ through $D_{l_+}^+$ and $l_n^-$ through $D_{l_-}^-$. $l_1$ and $l_2$ are changed to a closed curve $l_{1,2}\in C\cap C''_n$.}
	 \label{figlast-3}
	 \end{center}
	 \end{figure}
	
Move $l_n^+$ through $D_{l_+}^+$ and $l_n^-$ through $D_{l_-}^-$ and correspondingly, isotope $C_n$ to a cylinder $C_n''$ so that in $C\cap C''_n$, $l_1$ and $l_2$ are changed to a closed curve  $l_{1,2}$ (Figure \ref{figlast-3}-right). It is clear that $l_{1,2}$  bounds a disk $D$ (resp. $D'$) in $C$ (resp. in $C''_n$) such that $D^\circ\cap \mathscr{C}=\emptyset$. Using Lemma \ref{l12}, change $C''_n$ by replacing $D'$ with $D$ and denote the resulting cylinder by $C'_n$.  Set $\mathscr{C}'=\left(C_1,\ldots,C_{n-1},C'_n\right)$. Then $\mathscr{C}'$ is a simplifier equivalent to $\Cscr$ which satisfies the conditions A and B. Repeating this process for other curves in $C\cap S_j$ with both legs on $\bar{l}$, we obtain an equivalent simplifier, denoted again by $\mathscr{C}=(C_i)_{i=1}^n$, with $C\cap C_n=\emptyset$. This reduces the problem to the study of the intersections of  $C\subset N_{n-1}$ with the simplifier $\Cscr_{n-1}$, which  satisfies the conditions A and B.\\
	
	\begin{figure}[!h]
	\def\svgwidth{14cm}
	\begin{center}
	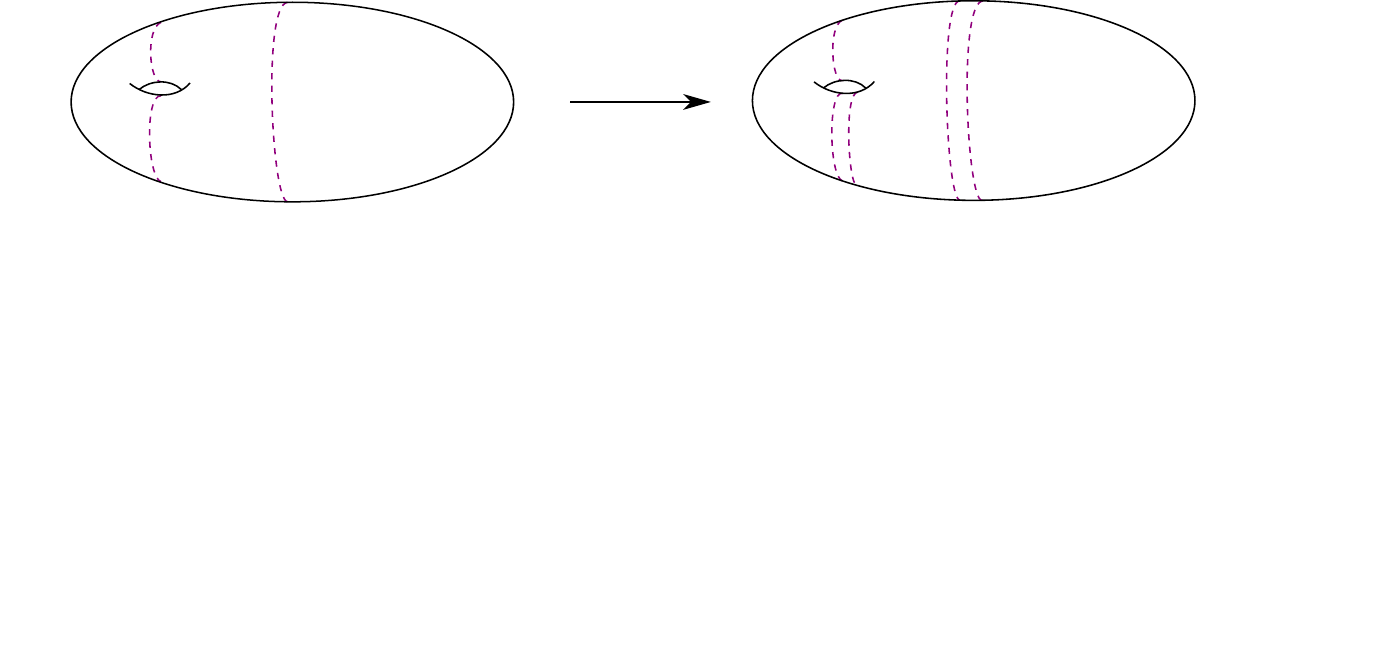
	\caption{The legs of $l_+\in C\cap S_j^+$ are on different punctures of $S_j^+$ (top-left). $\partial l_+$ determines two SBCs $l_1\in C\cap C_{i_1}$  and $l_2\in C\cap C_{i_2}$. $l_+$ is on the boundary of the rectangle $C'\subset C$ (bottom-left). $C_{1,2}$ is obtained by sliding $C_{i_1}$ over $C_{i_2}$ (bottom-right).  $S_j^+$ is changed to $\bar{S}_j^{+}$ and a new component, denoted $S_{k+1}^+$, is added to $\partial^\pm\setminus\{l_j^\pm\}_{j=1}^n$ (top-right).}
	\label{figlast-4}
	\end{center}
	\end{figure}

Next, let us consider case {\bf{M.2}}.	Suppose that $\bar{l}_1^+$ and $\bar{l}_2^+$ are obtained by cutting $\partial^+$ along $l_{i_1}^+=\partial^+ C_{i_1}$ and $l_{i_2}^+=\partial^+ C_{i_2}$, respectively, where $l_{i_1}^+\neq l_{i_2}^+$. Let $C'\subset C$ be the closure of the rectangle in the complement of SBCs determined by $l_+$. Let $l_-=C'\cap\partial^-\subset S_j^-$. Let $\bar{l}_1^-$ and $\bar{l}_2^-$ be the two punctures of $S_j^-$ obtained by cutting $\partial^-$ along $l_{i_1}^-=\partial^- C_{i_1}$ and $l_{i_2}^-=\partial^- C_{i_2}$. Consider the cylinder $C_{1,2}$ obtained by sliding $C_{i_1}$ over $C_{i_2}$ using the rectangle $C'$ (Figure \ref{figlast-4}-bottom). By Lemma \ref{l13}, $\mathscr{C}'=\left(C_1,\ldots,C_{n},C_{1,2}\right)$ is a simplifier for $N$ equivalent to $\mathscr{C}$. After the above process, $\partial^\pm-\cup_{j=1}^n l_j^\pm-l_{1,2}^\pm$ (with $l_{1,2}^\pm=\partial^\pm C_{1,2}$) has one more component, denoted $S^\pm_{k+1}$, in comparison with $\partial^\pm\setminus\{l_j^\pm\}_{j=1}^n$. When we consider $S_{k+1}^\pm$ as a subsurface of $\partial^\pm$, we have $\partial S_{k+1}^\pm=l_{1,2}^\pm\cup l_{i_1}^\pm\cup l_{i_2}^\pm$. Furthermore, $S_j^\pm$ changes to a  corresponding component, denoted $\bar{S}_j^{\pm}$, which is obtained from $S_j^\pm$ by removing the punctures and a punctured disk containing the two punctures $\bar{l}_1^\pm$ and $\bar{l}_2^\pm$ (Figure \ref{figlast-4}-top-right). \\

If $l^+\cap l_{1,2}^+=\emptyset$, let $N'=N[C_{1,2}]$, which is equipped with the simplifier $\Cscr'_{n}$ satisfying conditions A and B. If $\tilde{S}_j^{\pm}$ is obtained from $\bar{S}_j^{\pm}$ by attaching disks to the punctures corresponding to $l_{1,2}^\pm$ and $\tilde{S}_{k+1}^{\pm}$ is obtained from $S_{k+1}^\pm$ by attaching disks to the punctures corresponding to $l_{1,2}^\pm$, then
	$$\partial^\pm N'\setminus \cup_{i=1}^n l_i^\pm=S_1^\pm\cup\dots\cup S_{j-1}^{\pm}\cup \tilde{S}_j^{\pm}\cup S_{j+1}^{\pm}\cup\dots\cup S_k^{\pm}\cup \tilde{S}_{k+1}^{\pm}.$$
Moreover, $\tilde{S}_{k+1}^{\pm}\setminus  C$ is a union of disks, while the number of punctures in $\tilde{S}_j^{\pm}$ is one less than the number of punctures in $\bar{S}_j^{\pm}$ (Figure \ref{fig-last-6}-left). If $l^+\cap l_{1,2}^+\neq\emptyset$ and correspondingly $l^-\cap l_{1,2}^-\neq\emptyset$, since $l\cap(l_1^\pm\cup l_2^\pm)\neq\emptyset$, $S_{k+1}^\pm\setminus C$ is a union of disks (Figure \ref{fig-last-6}-right), and we set $N'=N$.

\begin{figure}[!h]
\begin{center}
\def\svgwidth{\textwidth}
\begingroup%
  \makeatletter%
  \providecommand\color[2][]{%
    \errmessage{(Inkscape) Color is used for the text in Inkscape, but the package 'color.sty' is not loaded}%
    \renewcommand\color[2][]{}%
  }%
  \providecommand\transparent[1]{%
    \errmessage{(Inkscape) Transparency is used (non-zero) for the text in Inkscape, but the package 'transparent.sty' is not loaded}%
    \renewcommand\transparent[1]{}%
  }%
  \providecommand\rotatebox[2]{#2}%
  \newcommand*\fsize{\dimexpr\f@size pt\relax}%
  \newcommand*\lineheight[1]{\fontsize{\fsize}{#1\fsize}\selectfont}%
  \ifx\svgwidth\undefined%
    \setlength{\unitlength}{749.2486016bp}%
    \ifx\svgscale\undefined%
      \relax%
    \else%
      \setlength{\unitlength}{\unitlength * \real{\svgscale}}%
    \fi%
  \else%
    \setlength{\unitlength}{\svgwidth}%
  \fi%
  \global\let\svgwidth\undefined%
  \global\let\svgscale\undefined%
  \makeatother%
  \begin{picture}(1,0.18250743)%
    \lineheight{1}%
    \setlength\tabcolsep{0pt}%
    \put(0,0){\includegraphics[width=\unitlength,page=1]{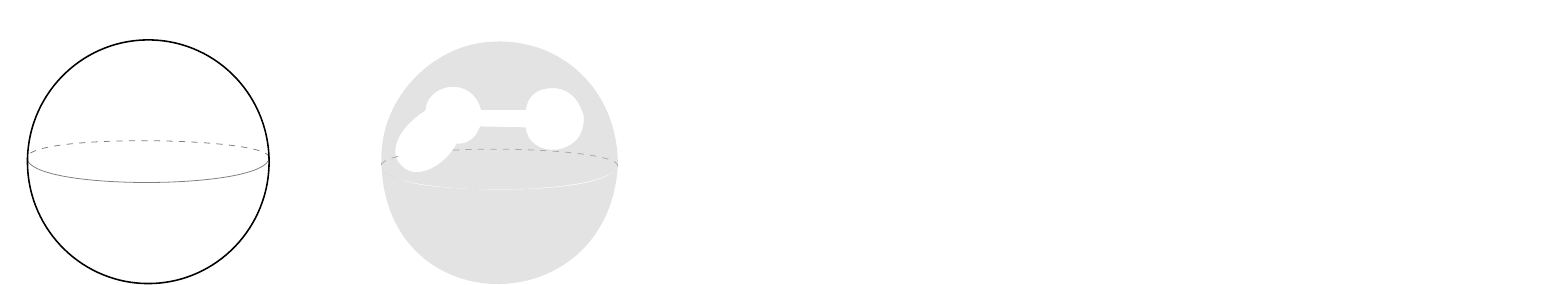}}%
    \put(-0.00117454,0.16832418){\color[rgb]{0,0,0}\makebox(0,0)[lt]{\lineheight{0}\smash{\begin{tabular}[t]{l}$\tilde{S}_{k+1}^{+}$\end{tabular}}}}%
    \put(0,0){\includegraphics[width=\unitlength,page=2]{last-6.pdf}}%
    \put(0.25700355,0.16774488){\color[rgb]{0,0,0}\makebox(0,0)[lt]{\lineheight{0}\smash{\begin{tabular}[t]{l}$\tilde{S}_{k+1}^{+}\setminus C$\end{tabular}}}}%
    \put(0.53910195,0.15843113){\color[rgb]{0,0,0}\makebox(0,0)[lt]{\lineheight{0}\smash{\begin{tabular}[t]{l}$S_{k+1}^{+}$\end{tabular}}}}%
    \put(0,0){\includegraphics[width=\unitlength,page=3]{last-6.pdf}}%
    \put(0.78997467,0.15882623){\color[rgb]{0,0,0}\makebox(0,0)[lt]{\lineheight{0}\smash{\begin{tabular}[t]{l}$S_{k+1}^+\setminus C$\end{tabular}}}}%
    \put(0,0){\includegraphics[width=\unitlength,page=4]{last-6.pdf}}%
  \end{picture}%
\endgroup%
	
\caption{Left: $\tilde{S}_{k+1}^{+}\setminus C$ is a union of disks.  Right:  $C\cap S_{k+1}^+\neq\emptyset$ and $S_{k+1}^\pm\setminus C$ consists  disks.}
\label{fig-last-6}
\end{center}
\end{figure}

	\begin{lemma}\label{l30}
	Let $\mathscr{C}=\left(C_i\right)_{i=1}^n$ be a simplifier for $N$ such that $C\cap\mathscr{C}$ is semi-reduced and $C\cap(\mathscr{C}\setminus\mathcal{C}_\mathscr{C})=\emptyset$. Then there is an equivalent simplifier $\mathscr{C}'=\{C'_i\}_{i=1}^{n'}$ and an integer $m\leq n'$ such that $C$ does not intersect $C'_i$ for $i>m$. Moreover, the simplifier $\{C'_i\}_{i=1}^m$ for the manifold $N'=N[(\Cscr')^m]$ consists of non-punctured cylinders which intersect $C$ in SBCs, while   each component of $\partial^\pm N'\setminus (\partial^\pm C\cup \partial^\pm C'_1\cup\cdots\cup\partial^\pm C'_m) $ is a disk.
	\end{lemma}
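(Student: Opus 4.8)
The plan is to run the two local moves \textbf{M.1} and \textbf{M.2} set up just before the statement as the successive steps of a descent, continuing until every complementary region of $\partial^\pm C$ inside the punctured spheres $S_j^\pm$ has been reduced to a disk. The three asserted properties then read off the terminal configuration.

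First I would install the normal form furnished by the discussion preceding the lemma. Using Remark~\ref{r2.5} I reindex so that $C$ meets $C_1,\dots,C_{n-k}$ and is disjoint from $C_{n-k+1},\dots,C_n$; since by hypothesis $C$ avoids every punctured cylinder, the cylinders it does meet lie in $\mathcal{C}_\mathscr{C}$ and are non-punctured, and after passing to $N_{n-k}=N[\mathscr{C}^{n-k}]$ they remain non-punctured. It thus suffices to treat the triple $(N_{n-k},\mathscr{C}_{n-k},C)$ with $\mathcal{C}_{\mathscr{C}_{n-k}}=\mathscr{C}_{n-k}$, reattaching $C_{n-k+1},\dots,C_n$ at indices $>m$ at the very end; these never meet $C$, so they do not interfere with the conclusions about the first $m$ cylinders. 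On the reduced triple I enforce condition A (that $C$ meets each cylinder in SBCs) and condition B (that for each $i$ the two punctures $l_i^{\pm+},l_i^{\pm-}$ do not both lie on one $S_j^\pm$) by adjoining parallel copies of cylinders exactly as described before the statement.

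The heart of the argument is the descent. As long as some $S_j^\pm$ fails to be cut into disks by $\partial^\pm C$, I select a curve of $C\cap S_j^+$ and branch on the two possibilities. If its legs lie on the same puncture of $S_j^+$ we are in case \textbf{M.1}: moving $l_n^+$ through the puncture-disk $D^+_{l_+}$ and $l_n^-$ through the disk $D^-_{l_-}$ produced by Lemma~\ref{l11}, then cancelling the resulting closed curve by Lemma~\ref{l12}, makes $C\cap C_n=\emptyset$ and drops a cylinder. If the legs lie on distinct punctures we are in case \textbf{M.2}: sliding $C_{i_1}$ over $C_{i_2}$ along the rectangle $C'$ (a simplifier by Lemma~\ref{l13}) either deletes the slid cylinder, lowering the puncture count of $S_j^\pm$ by two, or retains it while splitting off the component $S_{k+1}^\pm$, whose complement with $C$ is already a union of disks (Figures~\ref{figlast-3} and~\ref{figlast-4}). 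In every branch the moves preserve semi-reducedness, and after possibly re-adjoining parallel copies and clearing the stray closed or $\partial^+$-curves that appear, via Remarks~\ref{r-a-l24} and~\ref{r-a-l25}, they also restore conditions A and B, so the descent may continue.

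The main obstacle I anticipate is pinning down a single complexity that strictly drops under \emph{both} moves at once: \textbf{M.2} can create a new cylinder and a new boundary component, so a naive count of cylinders or of punctures can increase, and one must instead descend on a lexicographic quantity — for instance the number of non-disk components of $\partial^\pm\setminus(\{l_i^\pm\}\cup\{l^\pm\})$ refined by the total puncture count $p_\mathscr{C}$ — and verify branch by branch, mirroring the puncture bookkeeping recorded after Figure~\ref{figlast-4}, that each case decreases it. Granting this, the descent terminates at a configuration in which every $S_j^\pm\setminus C$ is a union of disks; the surviving cylinders are non-punctured and meet $C$ in SBCs, so reindexing the cylinders that meet $C$ as $1,\dots,m$ and reattaching the untouched cylinders at the higher indices produces the required $\mathscr{C}'=\{C'_i\}_{i=1}^{n'}$, with $N'=N[(\mathscr{C}')^m]$ satisfying all three properties.
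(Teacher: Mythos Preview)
Your plan is the paper's: normalize to conditions A and B, iterate the two moves \textbf{M.1} and \textbf{M.2} on a sphere $S_j^+$ that is not yet cut into disks by $C$, and argue termination. The only substantive difference is in the termination measure. You flag this as the main obstacle and propose a lexicographic descent; the paper instead gets by with a single scalar. Define
\[
D_\mathscr{C}\;=\;\bigl\{\,S_j^+:\ S_j^+\setminus C\ \text{is not a union of disks}\,\bigr\},
\qquad
\tilde p_\mathscr{C}\;=\;\sum_{S_j^+\in D_\mathscr{C}}\#\{\text{punctures of }S_j^+\}.
\]
The point is that the moves are always applied to a sphere $S_j^+\in D_\mathscr{C}$, and the new boundary piece they create (the pair-of-pants $S_{k+1}^+$ or $\tilde S_{k+1}^+$ in \textbf{M.2}) is, by the analysis recorded in Figures~\ref{figlast-4} and~\ref{fig-last-6}, already cut into disks by $C$ and hence contributes nothing to $\tilde p$, while the surviving descendant of $S_j^+$ carries strictly fewer punctures. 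In \textbf{M.1} the chosen cylinder $C_n$ becomes disjoint from $C$ and is set aside, again dropping punctures from the spheres in $D_\mathscr{C}$. Thus $\tilde p_{\mathscr{C}^{(i)}}<\tilde p_{\mathscr{C}^{(i-1)}}$ at every step and the process halts; no lexicographic refinement is needed. Your appeal to Remarks~\ref{r-a-l24} and~\ref{r-a-l25} for clearing stray curves is also unnecessary here, since \textbf{M.1} and \textbf{M.2} as set up in \S\ref{3.9} act on the boundary pattern and do not produce closed or $\partial^+$-only intersection curves.
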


\begin{proof}
As discussed earlier, we may assume that $\mathscr{C}=\left(C_i\right)_{i=1}^n$  satisfies the conditions A and B. Set $\mathscr{C}^{(0)}=\mathscr{C}=\left(C_i\right)_{i=1}^n$ and $N^0=N$. Let $\partial^+\setminus(\partial^+ C_1\cup\cdots\cup\partial^+ C_n)=S_1^+\cup\dots\cup S_k^+$ and set
	$$D_\mathscr{C}=\{S_j^+\ |\text{ some\ regions\ in\ }S_j^+\setminus C\text{ are\ not\ disks}\}.$$
	Let $\tilde{p}_\mathscr{C}$ denote the total number of the punctures in the punctured spheres in  $D_\mathscr{C}$. If $D_\mathscr{C}=\emptyset$, the claim follows by our earlier considerations.  Let $\{\mathscr{C}^{(i)}\}_{i\geq 1}$ be a sequence of simplifiers such that $\mathscr{C}^{(i)}$ is obtained from $\mathscr{C}^{(i-1)}$ by following the construction in the cases {\bf{M.1}} and {\bf{M.2}} for a sphere $S_j^+\in D_{\mathscr{C}^{(i-1)}}$. It follows that $\tilde{p}_{\mathscr{C}^{(i)}}<\tilde{p}_{\mathscr{C}^{(i-1)}}$. In particular, the above process stops for some simplifier $\Cscr^{(d)}$ with $D_{\Cscr^{(d)}}=\emptyset$, which is equivalent to $\Cscr$ and has the desired properties.
	\end{proof}

	\begin{proposition}\label{p31}
	Let $N$ be a manifold with a simplifier $\mathscr{C}=\left(C_i\right)_{i=1}^n$ such that each $C_i$ is a cylinder without punctures, $C\cap\mathscr{C}$ consists of SBCs, and 
\begin{equation}\label{eq:complement}
\partial^\pm\setminus(\partial^\pm C\cup \partial^\pm C_1\cup\cdots\cup\partial^\pm C_n)=D_1^\pm\amalg\cdots\amalg D_i^\pm,
\end{equation}
where each $D_i^\pm$ is a disk. Then there is an equivalent simplifier  for $N$ which includes $C$.
\end{proposition}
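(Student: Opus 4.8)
The plan is to induct on the number $s=|C\cap\mathscr C|$ of SBCs. Since each $C_i$ is a genuine cylinder and every curve of $C\cap\mathscr C$ is an SBC, each such curve is a spanning arc of the annulus $C$ running from $l^+=\partial^+C$ to $l^-=\partial^-C$, with its two legs at the crossing points $l^+\cap l_i^+$ and $l^-\cap l_i^-$ on $\partial^\pm$; consequently these arcs are mutually parallel in $C$ and cut it into rectangles. The base case $s=0$ is immediate: then $C$ is disjoint from $\mathscr C$, and (\ref{eq:complement}) (now a system of \emph{disjoint} circles cutting $\partial^\pm$ into disks) forces each of $l^+$ and $l^-$ to be the only curve on its sphere component and hence to bound a disk in $\partial^\pm$. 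Taking the associated surfaces $\Sigma_{l^\pm}$ to be these disks puts us in the $k=0$ situation of Lemma \ref{l13}, which produces the equivalent simplifier $(C_1,\dots,C_n,C)$ containing $C$.

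For the inductive step assume $s>0$ and choose two adjacent SBCs $l_1\in C\cap C_{j_1}$ and $l_2\in C\cap C_{j_2}$, i.e. two spanning arcs cutting off a rectangle $\bar C\subset C$ with $\bar C^\circ\cap\mathscr C=\emptyset$ and $\partial\bar C=l_1\cup\alpha^+\cup l_2\cup\alpha^-$, where $\alpha^\pm\subset l^\pm$. Because no further SBC meets $\bar C$, the interior of $\alpha^+$ crosses none of the $l_i^+$, so by (\ref{eq:complement}) the arc $\alpha^+$ lies in a single complementary disk; together with sub-arcs of $l_{j_1}^+$ and $l_{j_2}^+$ it cuts off a disk in $\partial^+$, and likewise on $\partial^-$. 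These bounding disks are precisely the data needed for the reductions of \S\ref{ss-5-7-4}: if $j_1\neq j_2$ we slide $C_{j_1}$ over $C_{j_2}$ along $\bar C$ (case \textbf{M.2}), and if $j_1=j_2$ we push $l_{j_1}^\pm$ across the bounding disks so that $l_1$ and $l_2$ coalesce into a single closed curve bounding a clean disk in $C$, which is then removed (case \textbf{M.1}). In each case Lemma \ref{l13} (respectively Lemma \ref{l12}, via Lemma \ref{l10}) certifies that the new collection is a simplifier equivalent to $\mathscr C$, and the pair $l_1,l_2$ has been eliminated.

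The move just described may disturb the standing hypothesis (\ref{eq:complement}): sliding introduces a new boundary component $S_{k+1}^\pm$ and modifies the spheres $S_j^\pm$, while the merge may create closed intersection curves or curves of type $\textsf{iii}$. To restart the induction I would feed the output back into Lemma \ref{l30}, which reassembles the configuration into one where all complementary regions of $\partial^\pm$ are again disks \emph{without} raising the SBC count, and then apply the inductive hypothesis. Once $s$ has been driven to zero the intersection $C\cap\mathscr C$ contains no SBCs, i.e. it is of type $\textrm{I}$, and Proposition \ref{p23} furnishes an equivalent simplifier containing $C$ (if $C$ has become entirely disjoint from $\mathscr C$ one finishes instead with the base case above).

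The hard part will be the bookkeeping that guarantees termination rather than any single geometric construction. Each elementary step must be shown to decrease a genuine complexity — the natural candidate is the lexicographic pair formed by $s$ together with Lemma \ref{l30}'s quantity $\tilde p_{\mathscr C}$ measuring the failure of (\ref{eq:complement}) — with the SBC count never rebounding. In particular one must verify that the slide in case \textbf{M.2}, which can convert an SBC into a type $\textsf{iii}$ or a closed curve, creates no new SBCs meeting the surviving cylinders, and that the restoration of (\ref{eq:complement}) through Lemma \ref{l30} strictly lowers the complexity instead of merely exchanging one obstruction for another. Establishing this monotonicity is where the real work lies.
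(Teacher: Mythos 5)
Your overall scheme --- induct on the number of SBCs, eliminating an adjacent pair via the \textbf{M.1}/\textbf{M.2} moves of \S\ref{ss-5-7-4} and then restoring \eqref{eq:complement} with Lemma \ref{l30} --- has a genuine gap exactly where you locate it, and it is not merely unfinished bookkeeping: the monotonicity you need fails. In the \textbf{M.2} move the cylinders $C_{i_1}$ and $C_{i_2}$ are \emph{not} removed; Lemma \ref{l13} produces the enlarged simplifier $(C_1,\ldots,C_n,C_{1,2})$, so every old SBC survives, and $C$ in general meets the new cylinder $C_{1,2}$ (which carries translates of the SBC-bearing portions of $C_{i_1}$ and $C_{i_2}$, band-summed along a rectangle parallel to a piece of $C$ itself) in new SBCs. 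Hence $s$ can strictly increase under the very move meant to decrease it, and the lexicographic pair $(s,\tilde p_{\mathscr{C}})$ is not a valid complexity. This is why the paper orders the argument the other way around: the \textbf{M.1}/\textbf{M.2} moves and the measure $\tilde p_{\mathscr{C}}$ belong to Lemma \ref{l30}, which is applied \emph{before} Proposition \ref{p31}, and no induction occurs inside \ref{p31} at all. Moreover, re-invoking Lemma \ref{l30} mid-induction would require re-verifying its hypotheses (semi-reducedness and $C\cap(\mathscr{C}\setminus\mathcal{C}_{\mathscr{C}})=\emptyset$), which your moves do not obviously preserve.

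The missing idea is that the hypothesis \eqref{eq:complement} is strong enough to finish in a single, non-inductive step. Each complementary disk $D_i^+$ is a polygon whose edges lie on rectangles, namely the components of $C$ and of the $C_j$ cut open along the SBCs; capping $D_i^+$ off with these rectangles and with the matching disk $D_i^-$ yields spheres $\bar S_i$ disjoint from $\mathscr{C}$ after a small perturbation. These spheres split $N=\overline{M}_1\#\cdots\#\overline{M}_k\#(\partial^+\times I)$ with $\mathscr{C}$ and $C$ contained in the product factor, where one exhibits the explicit simplifier $(\gamma_1\times I,\ldots,\gamma_g\times I,C)$, the curves $\gamma_i$ being chosen so that $\partial^+\setminus(l^+\cup\gamma_1\cup\cdots\cup\gamma_g)$ consists of punctured spheres; this simplifier contains $C$ and is equivalent to $\mathscr{C}$ since both simplify the product piece to something $\doteq S^2\times I$. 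For what it is worth, your base case $s=0$ is sound: since $l^+$ separates the planar component of $\partial^+\setminus\partial^+\mathscr{C}$ containing it, cutting that component into disks forces it to be a closed sphere, so $l^\pm$ bound disks and Lemma \ref{l13} with $k=0$ applies. But for $s>0$ the disk hypothesis should be spent globally, as above, rather than locally to fuel moves that then destroy it.
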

\begin{proof}
Each $D_i^+$ is a polygon and for each edge $\bar{l}$ of $D_i^+$, there is a rectangle $C_{\bar{l}}$, which is a subset of $C$ or some cylinder $C_i$ and is disjoint from SBCs, such that $\bar{l}\subset \partial C_{\bar{l}}$. If ${L}_i$ denotes the set of edges of $D_i^+$, $D_i^+\cup_{\bar{l}\in {L}_i} C_{\bar{l}}$ is a disk with boundary on one of $\partial D_{i'}^-$ for $i'=1,\ldots,k.$  After relabeling the indices, we may assume that $i'=i$.  Then 
$\bar{S}_i=\left(\cup_{\bar{l}\in {L}_i} C_{\bar{l}}\right)\cup D_i^+\cup D_i^-$ for $i=1,\ldots,k$, 
 are spheres disjoint from $\mathscr{C}$, after a slight perturbation. Let $\bar{S}_i=\partial M_i$, where $M_i$  is a  $3$-manifold with one sphere boundary (since $\bar{S}_i\cap\mathscr{C}=\emptyset$, one component of $N\setminus\bar{S}_i$ does not contain $\partial^+\amalg\partial^-$). Then
\[N=\overline{M}_1\#\dots\#\overline{M}_k\#(\partial^+\times I),\]
and $\mathscr{C}\subset(\partial^+\times I)$. If $N'_0=\partial^+\times I[\Cscr]$, we have $N'_0\doteq S^2\times I$. Let $\partial^+$ be of genus $g$. There are $g$ closed curves $\gamma_1,\dots,\gamma_g$ in $\partial^+$, disjoint from $l^+$ such that $\partial^+\setminus (l^+\cup \gamma_1\cup\cdots\cup\gamma_g)$  consists of punctured spheres. Then $\mathscr{C}'=(\gamma_1\times I,\ldots,\gamma_g\times I,C)$ is a simplifier in $\partial^+\times I$, equivalent to $\mathscr{C}$. Since $N_0\doteq\overline{M}_1\#\dots\#\overline{M}_k\# N_0'$,  $\mathscr{C}'$ gives a simplifier in $N$ which is equivalent to $\mathscr{C}$.
\end{proof}
\begin{proof}[Proof of Theorem \ref{thm1}]
As discussed in \S \ref{s-equiv}, $C\cap \mathscr{C}$ is either of type $\textrm{I}$ or of type $\textrm{II}$.  By Lemma \ref{l8}, we can assume that $\mathscr{C}$ is   a reduced simplifier. Proposition \ref{p23} proves the theorem when $C\cap\mathscr{C}$ is of type $\textrm{I}$. We may thus assume that $C\cap \mathscr{C}$ is of type $\textrm{II}$. By Lemma \ref{l17}, $\mathscr{C}$ is equivalent to a reduced simplifier which has nice intersection of type $\textrm{II}$ with $C$. By Proposition \ref{p26} and Proposition \ref{p28}, $\mathscr{C}$ is equivalent to a semi-reduced simplifier $\mathscr{C}'=\left(C_i'\right)_{i=1}^{n'}$ and  by Proposition \ref{p29}, we may further assume that  $C$ survives in $N'_m=N[(\mathscr{C}')^m]$, $m\leq n'$, and only intersects  $\mathcal{C}_{\mathscr{C}_m'}$. \\

Apply Lemma \ref{l30} to $\Cscr''=\Cscr'_m=\left(C_i''\right)_{i=1}^{n''}$. Suppose that $C$ is disjoint from $C_i''$ for $i>k$ and that $C_j''$ is not punctured for $j\leq k$ and cuts $C$ in SBCs. The simplifier $\Cscr''_k$ on $N''_k=N'_m[(\Cscr'')^k]$ is equivalent to a simplifier $\mathscr{C}^*=\{C^*_i\}_{i=1}^{n^*}$ such that $C$ is disjoint from $C_i^*$ for $i>l$ and $C_j^*$ is not punctured and cuts $C$ in SBCs for $j<l$, while for the manifold $N^*_l=N''_k[(\Cscr^*)^l]$, each component of $\partial^\pm N^*_l\setminus (l^+\cup \partial^{\pm}C_1^*\cup\cdots\cup \partial^{\pm}C_l^*)$ is a disk. By Proposition \ref{p31}, $\mathscr{C}^*$  is equivalent to a simplifier for $N^*_l$ which includes $C$. By adding $C^*_{l+1},\ldots, C^*_{n^*},C''_{k+1},\ldots,C''_{n''},C'_{m+1},\ldots,C'_{n'}$  to the latter simplifier for $N^*_l$, we obtain the simplifier $\Cscr^{**}$ for  $N$, which is equivalent to $\mathscr{C}$ and includes $C$. 
\end{proof}

\bibliography{simplifying-structures}
\Addresses
\end{document}